\documentclass[11pt,reqno,a4paper]{amsart}

\usepackage{docmute}

\usepackage{graphicx}
\usepackage{midpage}
\usepackage[margin=25truemm]{geometry}
\usepackage{stix2}
\usepackage{amsmath,amssymb,amsthm,amsfonts}
\usepackage{tikz-cd}
\usepackage{setspace}
\makeatletter
\DeclareRobustCommand\showhyphens[1]{%
	\setbox0\vbox{%
		\color@begingroup
		\everypar{}%
		\parfillskip\z@skip\hsize\maxdimen
		\normalfont
		\pretolerance\m@ne\tolerance\m@ne\hbadness\z@\showboxdepth\z@\ #1%
		\color@endgroup}}
\makeatother
\usepackage[protrusion=true,expansion=true,final]{microtype}
\usepackage{hyperref}
\usepackage{lscape}
\usepackage{enumitem}
\usepackage{caption}
\usepackage{array}
\usepackage{makecell}
\usepackage{environ}
\usepackage{subfiles}

\numberwithin{equation}{subsection}
\usetikzlibrary{calc}

\hypersetup{% hyperrefオプションリスチE
setpagesize=false,
 bookmarksnumbered=true,%
 bookmarksopen=true,%
 colorlinks=true,%
 linkcolor=red,
 citecolor=blue,
}

\let\it\relax
\let\Im\relax
\newcommand{\it}[1]{\textit{#1}}

\newcommand{\op}{^\mathrm{op}}

\DeclareMathOperator{\id}{\mathrm{id}}

\let\hat\relax

\newcommand{\dgcat}{\mathrm{dgcat}}
\newcommand{\hqe}{\mathrm{Hqe}}
\newcommand{\dg}{_{\mathrm{dg}}}

\newcommand{\ox}{\otimes}
\newcommand{\rhom}[2]{\mathbb{R}\mathrm{Hom}_{#1}(#2)}
\newcommand{\rend}[2]{\mathbb{R}\mathrm{End}_{#1}(#2)}
\newcommand{\lox}{\otimes^{\mathbb{L}}}
\newcommand{\lquot}{/^{\mathbb{L}}}

\newcommand{\hat}{_\land}
\newcommand{\hut}{_\vee}

\newcommand{\calA}{\mathcal{A}}
\newcommand{\calB}{\mathcal{B}}
\newcommand{\calC}{\mathcal{C}}
\newcommand{\calD}{\mathcal{D}}

\newcommand{\calK}{\mathcal{K}}

\newcommand{\calS}{\mathcal{S}}
\newcommand{\calT}{\mathcal{T}}
\newcommand{\calU}{\mathcal{U}}

\newcommand{\bbU}{\mathbb{U}}
\newcommand{\bbV}{\mathbb{V}}
\newcommand{\bbW}{\mathbb{W}}

\newcommand{\bbZ}{\mathbb{Z}}

\newcommand{\scrA}{\mathscr{A}}
\newcommand{\scrB}{\mathscr{B}}
\newcommand{\scrC}{\mathscr{C}}
\newcommand{\scrD}{\mathscr{D}}

\newcommand{\scrK}{\mathscr{K}}

\newcommand{\scrP}{\mathscr{P}}

\newcommand{\scrT}{\mathscr{T}}
\newcommand{\scrU}{\mathscr{U}}

\newcommand{\scrX}{\mathscr{X}}
\newcommand{\scrY}{\mathscr{Y}}
\newcommand{\scrZ}{\mathscr{Z}}

\newcommand{\dem}[1]{\mathcal{H}^d(#1)}
\newcommand{\demfp}[1]{\mathcal{H}^d_{\mathrm{fp}}(#1)}

\newcommand{\demdg}[1]{\mathscr{H}^d(#1)}
\newcommand{\demfpdg}[1]{\mathscr{H}^d_{\mathrm{fp}}(#1)}
\newcommand{\demfddg}[1]{\mathscr{H}^d_{\mathrm{fd}}(#1)}
\newcommand{\der}[1]{\mathcal{D}(#1)}

\newcommand{\Ar}[3]{\ar[from=#1,to=#2,#3]}

\newcommand{\cma}{\mathpunct{,}}
\newcommand{\tauAr}[3]{\ar[from={#1}, to={#2}, #3,color=gray, dash pattern=on 3pt off 4pt]}

\DeclareMathOperator{\pvd}{\mathrm{pvd}}
\DeclareMathOperator{\per}{\mathrm{per}}
\DeclareMathOperator{\Mod}{\mathrm{Mod}}
\DeclareMathOperator{\rep}{\mathrm{rep}}

\DeclareMathOperator{\Kerd}{\mathrm{Ker}_{\it{d}}}
\DeclareMathOperator{\Cok}{\mathrm{Cok}}
\DeclareMathOperator{\Cokd}{\mathrm{Cok}_{\it{d}}}
\DeclareMathOperator{\Im}{\mathrm{Im}}

\DeclareMathOperator{\Cone}{\mathrm{Cone}}
\DeclareMathOperator{\Cocone}{\mathrm{CoCone}}

\DeclareMathOperator{\add}{\mathrm{add}}

\DeclareMathOperator{\End}{\mathrm{End}}

\DeclareMathOperator{\cpx}{\mathrm{Cpx}}

\NewEnviron{xsmallmatrix}{%
	\vcenter{\hbox{\scalebox{1.20}{$\renewcommand\thickspace{\kern.0em}\begin{smallmatrix}\BODY\end{smallmatrix}$}}}%
}

\theoremstyle{definition}

\newtheorem{theorem}{Theorem}[section]
\newtheorem*{theorem*}{Theorem}
\newtheorem{proposition}[theorem]{Proposition}
\newtheorem*{proposition*}{Proposition}

\newtheorem{lemma}[theorem]{Lemma}
\newtheorem{cor}[theorem]{Corollary}
\newtheorem*{cor*}{Corollary}

\newtheorem{definition}[theorem]{Definition}
\newtheorem*{definition*}{Definition}

\newtheorem{remark}[theorem]{Remark}
\newtheorem{example}[theorem]{Example}

\newtheorem{fact}[theorem]{Fact}

\begin{document}

\title[Extended Module Categories in Higher Cluster Tilting Theory]{Extended Module Categories in Higher Cluster Tilting Theory}

% \author{Nao Mochizuki}

% \address{Graduate School of Mathematics, Nagoya University, Furocho, Chikusaku, Nagoya 464-8602, Japan}
% \email{mochizuki.nao.n8@s.mail.nagoya-u.ac.jp}

\author[Nao Mochizuki]{Nao Mochizuki}
	\address{Graduate School of Mathematics, Nagoya University, Furocho, Chikusa-ku, Nagoya 464-8602, Japan}
	\email{mochizuki.nao.n8@s.mail.nagoya-u.ac.jp} %

\begin{abstract}
In this paper, we study ideal quotients of triangulated categories by higher cluster tilting subcategories. Koenig and Zhu proved that the ideal quotient by a $2$-cluster tilting subcategory is an abelian category; moreover, by Morita's theorem, it is equivalent to the module category over the $2$-cluster tilting subcategory. We generalize this result to higher cluster tilting subcategories. More precisely, we show that the natural DG-enhancement of the ideal quotient of a triangulated category by a $(d+1)$-cluster tilting subcategory is an abelian $d$-truncated DG-category. In the appendix, we prove a Morita-type theorem for abelian $d$-truncated DG-categories, which asserts that an abelian $d$-truncated DG-category with enough projectives is equivalent to a $d$-extended module category over a $d$-truncated DG-category. As an application, we show that the ideal quotient of a triangulated category by a $(d+1)$-cluster tilting subcategory is equivalent to a $d$-extended module category over a
$d$-truncated DG-category.
\end{abstract}

\maketitle

\tableofcontents

\setcounter{section}{-1}

\section{Introduction}

Higher cluster tilting subcategories in triangulated categories arise naturally in higher Auslander--Reiten theory and higher cluster category theory \cite{IY2008,GKO2013}. 
These subcategories have rich homological structures. In particular, the theory of $2$-cluster tilting subcategories is well developed. A fundamental result due to Koenig and Zhu \cite{KZ2008} states that the ideal quotient of a triangulated category by a $2$-cluster tilting subcategory is an abelian category with enough projectives. This result plays an important role in the study of $2$-cluster tilting subcategories in cluster categories \cite{BMR2007,KR2007}. 

A natural question is what replaces this ideal quotient for higher cluster tilting subcategories. For $d>1$, one should not expect the ordinary ideal quotient to be an abelian category. Instead, the higher homological information that is invisible in the ordinary quotient is naturally encoded in the negative cohomology of a DG-enhancement of the quotient. This leads to the notion of an abelian $d$-truncated DG-category, introduced by the author \cite{moc2025}, as a DG-categorical analogue of the abelian $(d,1)$-category introduced by Stefanich \cite{ste2023}.

In this paper, we prove that, for a $(d+1)$-cluster tilting subcategory of an algebraic triangulated category, the DG-enhanced ideal quotient is an abelian $d$-truncated DG-category. To formulate and prove this result, we work mainly with DG-categories.

More precisely, we use stable DG-categories, introduced by Chen \cite{che2023}, rather than the usual pretriangulated DG-categories. Stable DG-categories are DG-categorical analogues of stable $(\infty,1)$-categories. The point of using them here is that their connective DG-quotients enhance ideal quotients of homotopy categories, whereas pretriangulated DG-quotients naturally enhance Verdier quotients. This distinction is essential for our main theorem.

Our main result is the following:
\begin{theorem}[$=$ Theorem~\ref{thm:preabelian} $+$ Proposition~\ref{prop:enough_proj} $+$ Proposition~\ref{prop:enough_inj}]
Let $\scrT$ be an essentially small, locally $K$-projective and stable DG-category. Let $\scrC$ be a full DG-subcategory such that $H^0(\scrC)$ is a $(d+1)$-cluster tilting subcategory of $H^0(\scrT)$. Then the DG-quotient $\scrT\lquot\scrC$ is an abelian $d$-truncated DG-category with enough projectives and injectives. Moreover, an object in $\scrT\lquot\scrC$ is projective (resp. injective) if and only if it is isomorphic to a direct summand of $Q(X[-d])$ (resp. $Q(X[d])$) for some $X\in\scrC$, where $Q\colon \scrT\to \scrT\lquot\scrC$ is the DG-quotient morphism in $\hqe$.
\end{theorem}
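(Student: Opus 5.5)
We prove the statement in three stages, matching the three cited results: first that $\scrT\lquot\scrC$ is a $d$-truncated DG-category whose homotopy category has the required finite (co)limit structure (the ``preabelian'' part, Theorem~\ref{thm:preabelian}); then that it is abelian in the $d$-truncated sense; and finally the description of projectives and injectives (Propositions~\ref{prop:enough_proj} and \ref{prop:enough_inj}).

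\textbf{Truncation.} Since $\scrT$ is stable and we use the connective DG-quotient, the morphism complexes of $\scrT\lquot\scrC$ are computed as cones of maps factoring through $\scrC$. We show $H^{-i}\mathrm{Hom}_{\scrT\lquot\scrC}(QX,QY)=0$ for $i\ge d$, and that $H^{-i}$ for $0\le i<d$ agrees with $H^0(\scrT)(X[i],Y)$ modulo maps factoring through $\scrC$. The input is the $(d+1)$-cluster tilting vanishing $H^0(\scrT)(\scrC,\scrC[j])=0$ for $1\le j\le d$. Applying it to $\scrC$-resolutions of $X$ of length $d$ (each $X$ admits triangles $C_{j+1}\to C_j\to X_j\to$ with $C_j\in\scrC$, by cluster tilting), the connective quotient kills exactly $H^{\le -d}$.

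\textbf{Existence of finite (co)limits and the abelian axioms.} For a morphism $f\colon QX\to QY$ we lift it to $\scrT$, take the cone in $\scrT$, and show that $Q$ of the cocone, corrected by a right $\scrC$-approximation, gives the kernel in the $d$-truncated sense. Dually, the cone corrected by a left $\scrC$-approximation gives the cokernel. The abelian condition (coimage $\simeq$ image in the truncated sense) is then checked using the octahedral axiom together with the $(d+1)$-cluster tilting property. This step mirrors the Koenig--Zhu argument, with the degree shifts absorbed by truncation. We expect it to be the main obstacle: one must show the comparison map is an equivalence on all $H^{-i}$ for $0\le i<d$, not merely on $H^0$. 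We would handle this by induction on $i$, using the long exact sequences of $\mathrm{Hom}(QC[-d],-)$.

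\textbf{Projectives and injectives.} For $X\in\scrC$ we show that $Q(X[-d])$ is projective, i.e.\ $\mathrm{Hom}(Q(X[-d]),-)$ sends the defining exact sequences to exact sequences, with no higher $\mathrm{Ext}$. Via the computation above, this reduces to $H^0(\scrT)(X[-d],\scrC[j])=0$ in the relevant range, i.e.\ to the cluster tilting vanishing. For enough projectives, given $Y\in\scrT$ we take a right $\scrC$-approximation of $Y[d]$, shift by $[-d]$, and show that the resulting $Q(C[-d])\to QY$ is an effective epimorphism; its cone lies in the appropriate aisle. For the converse direction, if $P$ is projective, its cover $Q(C[-d])\to P$ splits, so $P$ is a direct summand. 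Injectives follow by the dual argument with $[d]$ and left approximations.
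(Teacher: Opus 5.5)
Your proposal has the right overall shape: lift to $\scrT$, use approximations, take $Q(\scrC[-d])$ as the projectives, and split a cover to get the converse. But there are two genuine gaps.

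\textbf{The Hom computation is false.} Your truncation step claims that $H^{-i}\bigl((\scrT\lquot\scrC)(QX,QY)\bigr)\cong\calT(X[i],Y)/[\calC]$ for $0<i<d$, and your projectivity step later reduces to ``the computation above''. That claim does not hold. What is true is $H^{-i}\bigl((\scrT\lquot\scrC)(QX,QY)\bigr)\cong H^0\bigl((\scrT\lquot\scrC)(QX,\Omega^iQY)\bigr)\cong\calT(X,K_i)/[\calC]$. Here $K_i$ is obtained by iterating cocones of right $\scrC$-approximations (Corollary~\ref{cor:loop}), and in general $K_1\not\cong Y[-1]$.

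For a concrete failure, take the paper's $A_2$ example with $d=2$ and $M=P^1_0\oplus P^1_2$. The right approximation $P^1_2\to P^2_2$ has cocone $P^2_1$. Hence $H^{-1}\bigl((\scrT\lquot\scrC)(QP^2_1,QP^2_2)\bigr)\cong\calT(P^2_1,P^2_1)/[\add M]=k$, whereas $\calT(P^2_1[1],P^2_2)=\calT(P^1_3,P^2_2)=0$.

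The shift formula is valid only for sources in $\scrC[-d]$ (Proposition~\ref{prop:end}, via a degree count on $\scrT(-,Y)|_{\scrC}\lox_{\scrC}\scrT(X,-)|_{\scrC\op}$). Your phrase ``the connective quotient kills exactly $H^{\le -d}$'' supplies no mechanism for $d$-truncation. What is needed is the loop computation of Corollary~\ref{cor:loop}, i.e.\ that cartesian squares survive $Q$ under the $\calC$-surjectivity hypothesis of Proposition~\ref{prop:ker_in_ideal_quot}. After that, $\Omega^d(QT)\cong QK_d=0$ by Fact~\ref{thm:splicing}.

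\textbf{No criterion for $d$-monomorphisms and $d$-epimorphisms.} The more serious gap is that you never say how to recognise $d$-monomorphisms and $d$-epimorphisms of $\scrT\lquot\scrC$ inside $\calT$, and every remaining step depends on this. The paper's key input is Proposition~\ref{prop:d_mono_chara_main}. Take a bicartesian square of $f$ along $\iota\colon X\to C$ with $C\in\scrC$, and its triangle $X\to Y\oplus C\to Z\xrightarrow{\delta}X[1]$. Then $Qf$ is a $d$-monomorphism iff $\delta[-1]$ factors through $\calC_0^{d-1}$ iff $(g\circ-,\pi\circ-)$ is surjective on $\calC$. This is proved via Lemma~\ref{lem:chara_d_mono_first} and the octahedral axiom, and Proposition~\ref{prop:d_epi_chara_main} is the dual.

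With this criterion, axiom (2) of Definition~\ref{def:abelian_d_DG} follows; that axiom is what must be verified, not a coimage/image comparison.
\begin{itemize}
\item Choose $\iota$ to be a left $\scrC$-approximation.
\item The image square is cartesian by (1)$\Rightarrow$(3) together with Proposition~\ref{prop:ker_in_ideal_quot}, and cocartesian by the dual of that proposition.
\item So it yields an exact $3$-term complex, and uniqueness of cokernels (dual of Lemma~\ref{lem:unique_ker}) identifies the given right exact complex with it.
\end{itemize}

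Projectivity also depends on the criterion:
\begin{itemize}
\item Projectivity of $Q(C[-d])$ means $\calT(C[-d],X)\to\calT(C[-d],Y)$ is onto whenever $Qf$ is a $d$-epimorphism. This holds because $Y\to\Cone f$ then factors through $\calC_{-d+1}^0$, and $\calT(\calC[-d],\calC_{-d+1}^0)=0$.
\item Your map $Q(C[-d])\to QY$ comes from the same triangle the paper uses. It is a $d$-epimorphism by the converse implication of Proposition~\ref{prop:d_epi_chara_main}.
\item ``Its cone lies in the appropriate aisle'' is not an argument, since $\calC_{-d+1}^0$ is not an aisle of any $t$-structure on $\calT$.
\end{itemize}

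Your fallback, an induction using long exact sequences of $\mathrm{Hom}(Q(C[-d]),-)$, cannot substitute for the criterion. It presupposes that $Q(C[-d])$ is projective and that these functors jointly detect isomorphisms in all degrees. That is essentially the Morita statement, which the paper only obtains after the abelian structure is established.
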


We also prove the corresponding Morita theorem. In Koenig--Zhu's theorem, the
abelian quotient is described as a module category over the cluster tilting
subcategory. Our Morita theorem shows that the same philosophy persists in the
higher setting: abelian $d$-truncated DG-categories with enough projectives
are precisely finitely presented $d$-extended module categories. The following theorem was already known, in the finite-length case, by \cite{plogmann2026}.

% 以下の定理は，長さ有限のアーベルd-truncated DG-圏の場合はすでに知られている．

\begin{theorem}[$=$ Corollary~\ref{cor:morita}]
Let $\scrA$ be a locally $K$-projective abelian $d$-truncated DG-category with an essentially small homotopy category. Let $\scrB$ be a progenerating full DG-subcategory in $\scrA$. Then the following DG-functor is a quasi-equivalence:
$$\scrA(-,*)|_{\scrB} \colon \scrA \xrightarrow{\sim} \demfpdg{\scrB}$$
where $\demfpdg{\scrB}$ is the finitely presented $d$-extended module category (Definition~\ref{def:fp}) over $\scrB$.
\end{theorem}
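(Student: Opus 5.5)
The strategy is the classical Morita argument carried out in the $d$-truncated DG setting. Write $F=\scrA(-,*)|_{\scrB}\colon \scrA\to \demdg{\scrB}$. I would show three things: $F$ is quasi-fully faithful, it lands in $\demfpdg{\scrB}$, and it is essentially surjective onto $\demfpdg{\scrB}$ up to homotopy.

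\textbf{Step 1: the projectives.} For $P\in\scrB$, $F(P)$ is the representable $\scrB(-,P)$. By the DG Yoneda lemma, $F$ restricted to $\scrB$ is quasi-fully faithful. Since $\scrA$ is locally $K$-projective, this holds without extra cofibrant replacement. Both sides are closed under direct summands up to homotopy, so the same holds on $\add\scrB$. Moreover, representables are projective in $\demdg{\scrB}$: morphisms out of them compute evaluation, which is exact.

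\textbf{Step 2: resolutions and exactness.} Because $\scrB$ is progenerating and $\scrA$ is abelian $d$-truncated, every $X\in\scrA$ admits a presentation, that is, a right exact sequence $P_1\to P_0\to X\to 0$ with $P_i\in\add\scrB$, in the sense of the truncated exact structure. Since each object of $\scrB$ is projective, $\scrA(B,-)$ sends such sequences to exact ones. This is the truncated analogue of "$\mathrm{Hom}(P,-)$ is exact", and I expect it to follow from the definition of projectivity. Hence $F(X)$ is the cokernel in $\demdg{\scrB}$ of $F(P_1)\to F(P_0)$, which is the definition of finitely presented, so $F(X)\in\demfpdg{\scrB}$.

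\textbf{Step 3: full faithfulness on all objects.} Take $X,Y\in\scrA$ and apply $\scrA(-,Y)$ and $\demdg{\scrB}(-,F(Y))$ to the presentation of $X$. Both produce left exact sequences, in the truncated sense of fibre sequences in $\tau_{\le 0}$ of complexes, of the form
\[
0\to \mathrm{Hom}(X,Y)\to \mathrm{Hom}(P_0,Y)\to \mathrm{Hom}(P_1,Y).
\]
Step 1 identifies the last two terms compatibly. The first terms are then quasi-isomorphic by a five-lemma argument on cohomology in degrees $-d+1,\dots,0$.

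\textbf{Main obstacle.} The delicate point is making "cokernel" meaningful at the chain level. The cokernel of $P_1\to P_0$ in a $d$-truncated DG-category is a truncated cone, $\tau^{\ge -d+1}$ of a homotopy cofibre. I need the morphism complex out of it to be the homotopy fibre of the restriction map, truncated to $d$ degrees. I would first establish this universal property as a lemma, if it is not already among the earlier results on abelian $d$-truncated DG-categories. I would then apply it on both sides.

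\textbf{Step 4: essential surjectivity.} Let $M\in\demfpdg{\scrB}$ with presentation $F(P_1)\xrightarrow{f} F(P_0)\to M\to 0$. By Step 1, $f=F(g)$ up to homotopy for some $g\colon P_1\to P_0$. Set $X=\Cok g$ in $\scrA$. By Step 2, $F$ preserves this cokernel, so $F(X)\simeq M$.

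Combining Steps 2–4, $F\colon\scrA\to\demfpdg{\scrB}$ is quasi-fully faithful with essentially surjective $H^0$, and is therefore a quasi-equivalence.
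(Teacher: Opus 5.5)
Your proposal has a genuine gap in Step~2, and it carries over to Steps~3 and~4. For $d\ge 2$, an object of an abelian $d$-truncated DG-category is in general \emph{not} the cokernel of a morphism between two projectives. Suppose you pick a $d$-epimorphism $P_0\to X$ with kernel $K$ and a $d$-epimorphism $P_1\to K$. The octahedral axiom shows that $\tau^{\le 0}\Cocone\bigl(\scrA(P_0,-)\to\scrA(P_1,-)\bigr)$ differs from $\scrA(X,-)\cong\tau^{\le 0}\Cocone\bigl(\scrA(P_0,-)\to\scrA(K,-)\bigr)$ by $\Cocone\bigl(\scrA(K,-)\to\scrA(P_1,-)\bigr)$. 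The cohomology of this term in degrees $\le 0$ vanishes exactly when $P_1\to K$ is a $1$-epimorphism, and a mere $d$-epimorphism does not suffice. A concrete counterexample: take $d=2$, a finite-dimensional algebra $R$ in degree $0$, $\scrA=\demfddg{R}$ and $\scrB=\{R\}$. The cokernel in $\scrA$ of $f\colon P_1\to P_0$ in $\add R$ is $\tau^{>-2}\Cone(f)=\Cone(f)$, which has $H^{-1}=\mathrm{Ker}\,f$. Hence a module $M$ of projective dimension $2$, placed in degree $0$, is not such a cokernel. Yet $M\cong\tau^{>-2}(P^{-2}\to P^{-1}\to P^{0})$ lies in $\demfpdg{R}$ in the sense of Definition~\ref{def:fp}. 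So Step~2 fails for such $X$. Step~3 computes $\scrA(X,Y)$ from a presentation that need not exist. Step~4 only reaches cokernels of maps between two representables, which form a proper part of $\demfpdg{\scrB}$. Your argument is the classical one and is valid only for $d=1$.

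The missing idea is to use length-$d$ projective resolutions and $d$-iterated cokernels. One splices $d+1$ conflations $A^{-(i+1)}\to P^{-i}\to A^{-i}$ to write every object as $\Cokd P^\bullet$ with $P^\bullet\in\scrK^{[-d,0]}(\scrP)$ (Proposition~\ref{prop:ess_surj}, built on Proposition~\ref{prop:iteraded_ker}). One then proves $\scrA(-,\Cokd P^\bullet)|_{\scrP}\cong\tau^{>-d}P^\bullet|_{\scrP}$ (Proposition~\ref{prop:proj_d_cok}).

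This second step is also where your expectation that the needed exactness ``follows from the definition of projectivity'' is too optimistic. Projectivity only yields triangles for conflations (Lemma~\ref{lem:ex_proj}). For right exact sequences, the paper factors the first map as a $1$-epimorphism followed by a $d$-monomorphism. It then uses that $\scrA(P,-)$ preserves $1$-epimorphisms (Proposition~\ref{lem:pres_1_epi}, Lemma~\ref{lem:right_ex_proj}), which in turn relies on realizing $\scrA$ as a $d$-extended heart (Lemma~\ref{lem:truncation}).

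Once these inputs are in place, your Yoneda-plus-truncation idea does work in corrected form. Comparing $\tau^{\le 0}\rhom{\scrA}{P^\bullet,Y}$ with $\tau^{\le 0}\rhom{\scrB}{P^\bullet|_{\scrB},F(Y)}$ gives full faithfulness directly, and Definition~\ref{def:fp} gives essential surjectivity. The paper instead packages the same inputs by identifying both sides with the DG-quotient $\scrK^{[-d,0]}(\scrP)\lquot\scrP[d]$ (Corollary~\ref{cor:ideal}).
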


Combining these two theorems, we obtain the following higher analogue of the module-theoretic description in Koenig--Zhu's theorem.

\begin{cor}
Let $\calT$ be an essentially small algebraic triangulated category and $M\in \calT$ a $(d+1)$-cluster tilting object. Let $\scrT$ be a stable DG-category such that $H^0(\scrT)\simeq \calT$, $\Lambda:=\tau^{> -d}\End_{\scrT}(M)$ the DG-endomorphism algebra of $M$. Then there exists an equivalence of categories:
$$\calT/[\add M] \simeq \demfp{\Lambda}$$
where $\calT/[\add M]$ is the ideal quotient of $\calT$ by $M$ and $\demfp{\Lambda}$ is the finitely presented $d$-extended module category over $\Lambda$. 
\end{cor}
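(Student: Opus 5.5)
We deduce the corollary by combining the two theorems above, applied to the DG-subcategory generated by $M$.

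First we fix the enhancement. Since $\calT$ is algebraic, it has a pretriangulated, hence stable, DG-enhancement $\scrT$. After cofibrant replacement in $\hqe$ we may take $\scrT$ essentially small and locally $K$-projective; this does not change $H^0(\scrT)\simeq\calT$ or the quasi-isomorphism type of $\End_{\scrT}(M)$.

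Next we apply the first theorem. Let $\scrC\subset\scrT$ be the full DG-subcategory on objects isomorphic in $H^0$ to objects of $\add M$. Then $H^0(\scrC)=\add M$ is $(d+1)$-cluster tilting. By Theorem~\ref{thm:preabelian} and Proposition~\ref{prop:enough_proj}, $\scrT\lquot\scrC$ is an abelian $d$-truncated DG-category with enough projectives. Its projectives are the summands of $Q(X[-d])$ with $X\in\scrC$, so $P:=Q(M[-d])$ is a progenerator: its additive closure in $H^0$ contains all projectives. We take $\scrB$ to be the full DG-subcategory on $P$ and apply Corollary~\ref{cor:morita}. This gives a quasi-equivalence $\scrT\lquot\scrC\simeq\demfpdg{\scrB}$. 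Passing to $H^0$, and assuming $\demfp{-}=H^0\demfpdg{-}$ and that quasi-equivalent DG-categories have equivalent $\demfp{}$ (both part of Definition~\ref{def:fp}), we get $H^0(\scrT\lquot\scrC)\simeq\demfp{\End_{\scrT\lquot\scrC}(P)}$. Local $K$-projectivity and essential smallness of $\scrT\lquot\scrC$ should follow from the explicit construction of the DG-quotient.

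Two identifications remain.

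\emph{(a) The homotopy category of the quotient.} We need $H^0(\scrT\lquot\scrC)\simeq\calT/[\add M]$. This is the stated feature of connective DG-quotients of stable DG-categories: they enhance ideal quotients. It should be part of the construction preceding Theorem~\ref{thm:preabelian}.

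\emph{(b) The endomorphism algebra.} We need $\End_{\scrT\lquot\scrC}(P)\simeq\tau^{>-d}\End_{\scrT}(M)$ in the homotopy category of DG-algebras, keeping in mind that $\End_{\scrT}(M[-d])\cong\End_{\scrT}(M)$. We expect (b) to be the main obstacle. The plan is to use the description of morphism complexes in $\scrT\lquot\scrC$ as the cone of $\scrT(-,\scrC)\lox_{\scrC}\scrT(\scrC,-)\to\scrT(-,-)$, followed by truncation. For $X=Y=M[-d]$ we compute the cohomology $H^{-i}$ for $0\le i<d$. Cluster tilting gives $\calT(M,M[i])=0$ for $0<i\le d$, hence $\calT(M[-d],M[-d][-i])=0$ for those $i$; this is the vanishing needed to check that the correction term contributes nothing in degrees $-d<\ast\le0$. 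It should follow that the natural map $\tau^{>-d}\End_{\scrT}(M[-d])\to\End_{\scrT\lquot\scrC}(P)$ is a quasi-isomorphism. Concretely, in degrees $-d<-i<0$ the natural map should identify $H^{-i}$ of the quotient with $\calT(M,M[-i])$. Degree $0$ needs separate care: $H^0$ of the quotient is $\calT(M,M)$ modulo maps factoring through $\add M[d]$ after shifting, and the cluster tilting vanishing forces this ideal to be zero, since such maps would lie in $\calT(M,M[d])$-type groups. If the direct comparison is awkward, we would instead compute $H^*$ of both sides and exhibit the DG-algebra map via the quotient functor $Q$. Since $Q$ is a DG-functor, the map is automatically compatible with composition.

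Finally, $\demfp{-}$ depends only on the quasi-isomorphism class of $\Lambda$, because derived categories and Morita theory are invariant under quasi-equivalence. Combining (a) and (b) with the equivalence above gives $\calT/[\add M]\simeq\demfp{\Lambda}$.
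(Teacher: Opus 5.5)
Your decomposition matches the paper's. The statement comes from Corollary~\ref{cor:main_one_obj} together with Proposition~\ref{prop:derived_ideal_quot}, which is your step (a). The proof of Corollary~\ref{cor:main_one_obj}, via Theorem~\ref{thm:main}, is exactly Morita theory for the progenerator $Q(\scrC[-d])$ followed by identifying its endomorphisms. The problem is in step (b), which you rightly call the heart of the matter: it rests on the wrong vanishing. You claim cluster tilting gives $\calT(M[-d],M[-d][-i])=0$. But $\calT(M[-d],M[-d][-i])\cong\calT(M,M[-i])\cong H^{-i}(\Lambda)$, which is in general nonzero; it is precisely what has to survive in the quotient. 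If it vanished, $\Lambda$ would be concentrated in degree $0$. Nothing about negative shifts follows from $\calT(M,M[i])=0$ for $0<i\le d$.

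The vanishing that actually controls the correction term in \eqref{eq:standard_tri} is on the $\scrC$-side. For $X=M[-d]$ and $C\in\add M$ one has $H^j\scrT(X,C)\cong\calT(M,C[j+d])=0$ for $-d<j\le 0$ by cluster tilting, and for $j>0$ by connectivity. Hence $\scrT(X,-)|_{\scrC\op}$ is concentrated in degrees $\le -d$. Since $\scrT(-,Y)|_{\scrC}$ is connective, $\scrT(-,Y)|_{\scrC}\lox_{\scrC}\scrT(X,-)|_{\scrC\op}$ has cohomology only in degrees $\le -d$. The long exact sequence of \eqref{eq:standard_tri} then gives $H^i\scrT(X,Y)\cong H^i\bigl((\scrT\lquot\scrC)(QX,QY)\bigr)$ for all $-d<i\le 0$ at once; this is Proposition~\ref{prop:end}. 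It covers degree $0$ too, so your separate degree-$0$ argument is unnecessary, though it is correct and uses the right vanishing $\calT(M,M'[d])=0$. With this replacement your argument coincides with the paper's.

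There is also a smaller correction: a pretriangulated DG-category is not stable in the sense used here, because stability includes connectivity (see Example~\ref{ex:stable_DG}, where one passes to $\tau^{\le 0}$). This is not cosmetic. The DG-quotient of a pretriangulated enhancement enhances a Verdier quotient, which is zero here because $\calT=\add M*\add M[1]*\cdots*\add M[d]$. The statement already provides a stable $\scrT$, so use that one, replacing it only by an identity-on-objects cofibrant replacement to obtain local $K$-projectivity.
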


We also apply the general result to Amiot--Guo's generalized cluster
categories \cite{Amiot2009,Guo11}. In this case, the quotient can be described explicitly in terms of
a truncation of the Calabi--Yau DG-algebras. 

\subsection*{Acknowledgements}
The author is grateful to Prof. Yasuaki Ogawa for suggesting this problem and for many helpful discussions. The author also thanks Prof. Hiroyuki Nakaoka, Prof. Osamu Iyama, and Junyang Liu for valuable comments. This work was financially supported by JST SPRING, Grant Number JPMJSP2125.

% The notion of abelian $d$-truncated DG-category is introduced by the author and is a DG-categorical analogue of the abelian $d$-category introduced by Stefanich \cite{ste2023}. When $d=1$, it coincides with the usual abelian category viewed as a DG-category. Therefore, our result is a generalization of the Koenig--Zhu result to the case of higher cluster tilting subcategories.

\section{Preliminaries}

\subsection{Size conventions of DG-categories}
In this subsection, we fix a universe $\bbU$ satisfying the axiom of infinity. We recall some results on the homotopy theory of DG-categories. For details, see \cite{gon2005,toe2007}.

\begin{fact}[{\cite{gon2005}}]
\label{thm:model_str_dgcat}
There exists a cofibrantly generated model structure on $\dgcat_{\bbU}$ such that the weak equivalences are the quasi-equivalences. We denote by $\hqe_{\bbU}$ the homotopy category of $\dgcat_{\bbU}$ with respect to this model structure. 
\end{fact}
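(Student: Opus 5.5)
This Fact is imported from the literature and is not proved in the paper, but I would reconstruct it as follows. The plan is to apply the recognition theorem for cofibrantly generated model categories (Hovey, Thm.~2.1.19) to the category $\dgcat_{\bbU}$ of $\bbU$-small DG-categories. The weak equivalences are the quasi-equivalences. I would fix explicit generating sets:
\begin{itemize}
\item $I$ consists of $\emptyset\to \mathbf{k}$, where $\mathbf{k}$ is the DG-category with one object and endomorphism ring $k$, together with the maps $\mathcal{S}(n-1)\to\mathcal{D}(n)$. Here $\mathcal{S}(n-1)$ has two objects and the sphere complex $k[n-1]$ as morphisms from the first to the second, and $\mathcal{D}(n)$ replaces it by the disk complex (the cone of $\id_{k[n-1]}$).
\item $J$ consists of the maps $\mathcal{A}\to\mathcal{D}(n)$, where $\mathcal{A}$ has the same two objects and no nonidentity morphisms, together with the inclusion $\mathbf{k}\to\mathcal{K}$. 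Here $\mathcal{K}$ is the DG-category with two objects, freely generated by a closed degree-$0$ morphism that is a homotopy equivalence (a morphism $f$, a quasi-inverse $g$, and homotopies $r_1,r_2$ with $dr_1=gf-1$ and $dr_2=fg-1$, plus the higher coherence cell making it contractible).
\end{itemize}

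First I check the formal axioms. $\dgcat_{\bbU}$ is locally presentable, so it is complete and cocomplete and every domain is small. Quasi-equivalences satisfy two-out-of-three and are closed under retracts, since both properties are detected by $H^0$ and by the Hom-complexes.

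Next I identify the lifting classes.
\begin{itemize}
\item A functor $F$ has the right lifting property against $I$ if and only if it is surjective on objects and surjective on closed morphisms with prescribed boundary. This is equivalent to $F$ being surjective on objects and every $F_{x,y}$ being a surjective quasi-isomorphism. Such maps are exactly the quasi-equivalences that are surjective on objects and componentwise trivial fibrations.
\item Lifting against the maps $\mathcal{A}\to\mathcal{D}(n)$ means that each $F_{x,y}$ is surjective. Lifting against $\mathbf{k}\to\mathcal{K}$ means that homotopy equivalences lift along $F$ with prescribed source.
\end{itemize}
From these descriptions I verify that $I\text{-inj}=W\cap J\text{-inj}$. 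The nontrivial direction is that a quasi-equivalence with both lifting properties against $J$ is surjective on objects. This holds because every object of the target is $H^0$-isomorphic to an image $F(x)$, and the $\mathcal{K}$-lifting then produces a preimage.

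The step I expect to be the main obstacle is showing that relative $J$-cell complexes are quasi-equivalences. Pushouts along $\mathcal{A}\to\mathcal{D}(n)$ are easy: they add a contractible free cell to one Hom-complex and to the complexes generated from it, so the Hom-complexes change only by acyclic summands.

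Pushouts along $\mathbf{k}\to\mathcal{K}$ are the real difficulty. They adjoin a new object $y$ together with a homotopy equivalence to an existing object $x$. Here I would present $\mathcal{K}$ itself as an $I$-cell complex and compute the Hom-complexes of the pushout $\mathcal{B}=\mathcal{A}\sqcup_{\mathbf{k}}\mathcal{K}$ by a filtration by word length in the new generators. The aim is to show that
\[
\mathcal{B}(y,y)\simeq\mathcal{A}(x,x)\quad\text{and}\quad \mathcal{B}(a,b)\simeq\mathcal{A}(a,b),
\]
via the associated graded, using that $\mathcal{K}(x,x)\simeq k$ is a contractible extension. This requires a careful choice of $\mathcal{K}$ (Drinfeld's or Tabuada's explicit model) so that the filtration quotients are visibly acyclic.

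Finally, since quasi-equivalences are stable under transfinite composition (by a colimit argument on Hom-complexes and $H^0$), this gives $J\text{-cell}\subseteq W\cap I\text{-cof}$. The inclusion into $I\text{-cof}$ follows because the maps in $J$ lift against $I\text{-inj}$, which is checked directly from the descriptions above. The recognition theorem then yields the model structure, and $\hqe_{\bbU}$ is defined as its homotopy category.
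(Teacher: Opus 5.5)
The paper gives no proof of this Fact; it is quoted from Tabuada. Your skeleton is the standard one: Hovey's recognition theorem with Tabuada's $I$ and $J$, the description of $I$-injectives, and surjectivity on objects via lifting along $\mathbf{k}\to\mathcal{K}$.

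\textbf{The gap.} The step that carries the real content is that $\mathcal{A}\to\mathcal{B}=\mathcal{A}\sqcup_{\mathbf{k}}\mathcal{K}$ is a quasi-equivalence. The mechanism you sketch for it does not work as stated.
\begin{enumerate}
\item Plain word length in $f,g,r_1,r_2,r_{12}$ is compatible with $d$ in neither direction, since $dr_1=gf-1$.
\item The weights $w(f)=w(g)=1$, $w(r_1)=w(r_2)=2$, $w(r_{12})=3$ do give a filtration by subcomplexes, but its graded pieces are not visibly acyclic. Already in $\mathcal{K}(2,1)$, the weight-graded complex (where $dr_1=gf$, $dr_2=fg$) contains the cycle $g r_2-r_1 g$ of weight $3$ and degree $-1$. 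It is not a boundary, because every degree $-2$ word from $2$ to $1$ has weight at least $4$.
\item There is nothing left to choose: the $\mathcal{K}$ in your $J$ is Drinfeld's, which is the one Tabuada uses.
\item $\mathcal{B}(y,y)\simeq\mathcal{A}(x,x)$ cannot be read off from an associated graded whose weight-zero part is $k\cdot 1_y$. Fortunately this is not needed.
\item You take $\mathcal{K}(1,1)\simeq k$ as input, but it is the special case $\mathcal{A}=\mathbf{k}$ of the very claim being proved.
\item Surjectivity on objects uses, without proof, that every $H^0$-isomorphism extends to a DG functor out of $\mathcal{K}$.
\end{enumerate}

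\textbf{How to close it (over any commutative $k$).} Take $a,b\in\mathcal{A}$. The weight-zero part of $\mathcal{B}(a,b)$ is $\mathcal{A}(a,b)$. Every word of positive weight factors uniquely as $\alpha_0p_1\alpha_1\cdots p_m\alpha_m$, where the $\alpha_i$ are morphisms of $\mathcal{A}$ and the $p_i$ are prime loops at $x$, namely $r_1$, $g r_2^n f$ and $g r_2^n r_{12}$ for $n\geq 0$.
\begin{itemize}
\item In the weight-graded complex, the differential never lowers $m$.
\item Its $m$-preserving part is the tensor-product differential built from $d_{\mathcal{A}}$ on the $\alpha_i$ and the following differential on the span $P$ of prime loops: $r_1\mapsto gf$, $g r_2^n r_{12}\mapsto \pm g r_2^{n+1}f$, $g r_2^n f\mapsto 0$.
\item This pairs off all basis elements of $P$, so $P$ is contractible.
\item Each prime loop has weight at least $2$, so the filtration by $m$ is finite in each weight.
\end{itemize}
Hence every positive-weight piece is acyclic, and $\mathcal{A}(a,b)\to\mathcal{B}(a,b)$ is a quasi-isomorphism. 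Essential surjectivity comes from $f$.

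For the extension lemma, let $f$ be closed and invertible in $H^0$. Choose $g$, $r_1$, $r_2$ with the required boundaries; then $f r_1-r_2 f$ is a cycle. Replace $r_2$ by $r_2+c$, where $c$ is a cycle with $[cf]=[f r_1-r_2 f]$. Such a $c$ exists because $c\mapsto cf$ is a quasi-isomorphism. After this change $f r_1-r_2 f$ is a boundary, so $r_{12}$ exists.
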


\begin{remark}
We call a $\bbU$-small DG-category \emph{$\bbU$-cofibrant} if it is cofibrant with respect to the above model structure. If $\bbU\in \bbV$ are universes satisfying the axiom of infinity, then any $\bbU$-cofibrant DG-category is $\bbV$-cofibrant because the generating cofibrations and the generating trivial cofibrations in $\dgcat_{\bbU}$ are also generating cofibrations and generating trivial cofibrations in $\dgcat_{\bbV}$. For details, see \cite{gon2005,toe2007}. This implies that the homotopy category $\hqe_{\bbU}$ is a full subcategory of $\hqe_{\bbV}$ and the embedding $\hqe_{\bbU}\subset \hqe_{\bbV}$.
\end{remark}

\begin{remark}
Since the above model structure is cofibrantly generated, we have a functorial cofibrant replacement functor $Q\colon \dgcat_{\bbU}\to \dgcat_{\bbU}$ such that $Q(\scrA)$ is $\bbU$-cofibrant for each $\bbU$-small DG-category $\scrA$. Moreover, we also have a natural quasi-equivalence $Q(\scrA)\to \scrA$ for each $\bbU$-small DG-category $\scrA$, which is the identity on objects. By using this functorial cofibrant replacement functor, we can construct the monoidal structure $-\lox-\colon \hqe_{\bbU}\times \hqe_{\bbU}\to \hqe_{\bbU}$ on $\hqe_{\bbU}$ by putting $\scrA\lox\scrB:=Q(\scrA)\ox \scrB$. A morphism $f\colon X\to Y$ in a DG-category $\scrA$ is called \emph{closed} if $f$ has degree $0$ and $d(f)=0$. A closed morphism $f\colon X\to Y$ in $\scrA$ is called an \emph{isomorphism} if it is an isomorphism in the homotopy category $H^0(\scrA)$. 
\end{remark}

\begin{definition}
\label{def:dg_module}
Let $\scrA$ be a $\bbU$-small DG-category. A \emph{right DG-module} over $\scrA$ is a DG-functor $M\colon \scrA\op\to \scrC_\bbU(k)$, where $\scrC_\bbU(k)$ is the DG-category of $\bbU$-small complexes of $k$-modules. If a right DG-module $M$ satisfies that $M(A)$ is an acyclic complex for each $A\in\scrA$, then we call $M$ an \emph{acyclic right DG-module}. A right DG-module $M$ is called \emph{$K_{\bbU}$-projective} if Hom-complexes from $M$ to any acyclic right DG-module are acyclic. 
\end{definition}

\begin{definition}
\label{def:derived_cat}
Let $\scrA$ be a $\bbU$-small DG-category. We define the \emph{derived DG-category} $\scrD_\bbU (\scrA)$ of $\scrA$ as the $\bbU$-small DG-category of $K_{\bbU}$-projective right DG-modules over $\scrA$. The homotopy category $\calD_\bbU (\scrA)$ of $\scrD_\bbU (\scrA)$ is called the \emph{derived category} of $\scrA$.
\end{definition}

\begin{remark}
Let $\bbU\in \bbV$ be universes satisfying the axiom of infinity. Assume that $\scrA$ is a $\bbU$-small DG-category and $M\colon \scrA\op\to \scrC_\bbU(k)$ is a right DG-module over $\scrA$. Then $M$ is a $K_{\bbU}$-projective right DG-module over $\scrA$ if and only if $M$ is a $K_{\bbV}$-projective right DG-module over $\scrA$. This is because the closed morphisms from $M$ to any acyclic right DG-module $N\colon \scrA\op\to \scrC_\bbV(k)$ factor through a submodule $N'\colon \scrA\op\to \scrC_\bbV(k)$ of $N$ such that $N'(A)$ is a $\bbV$-small acyclic complex for each $A\in\scrA$. Thus, we can regard the derived DG-category $\scrD_\bbU (\scrA)$ of $\scrA$ as a full DG-subcategory of $\scrD_\bbV (\scrA)$. In this sense, $K$-projectivity of a right DG-module over $\scrA$ does not depend on the choice of universes. Thus, we can simply say that $M$ is a $K$-projective right DG-module over $\scrA$ without specifying the universe.
\end{remark}

\begin{remark}
Let $\bbU\in \bbV$ be universes satisfying the axiom of infinity. If $\scrA$ is a $\bbU$-small DG-category, then the derived DG-category $\scrD_\bbU (\scrA)$ is $\bbV$-small. 
\end{remark}

\begin{fact}[{\cite[Theorem~6.1]{toe2007}}]
\label{thm:internal_hom}
The category $\hqe_{\bbU}$ admits an internal hom functor $\rep_{\bbU}(-,-)$ which is defined as follows: 
$$\rep_{\bbU}(\scrA,\scrB):=\{M\in \der{\scrA\op\lox\scrB}\mid M \text{ is right quasi-representable}\}.$$
Here, a DG-bimodule $M\in \der{\scrA\op\lox\scrB}$ is called \emph{right quasi-representable} if for each $A\in\scrA$, there exists $B\in\scrB$ such that $M(A,-)\cong \scrB(-,B)$ in $\der{\scrB}$. 

Moreover, the isomorphism classes in the homotopy category of $\rep_{\bbU}(\scrA,\scrB)$ are in bijection with the morphisms in $\hqe_{\bbU}$ from $\scrA$ to $\scrB$.
\end{fact}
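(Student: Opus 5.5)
The approach is to follow To\"en's strategy. Since this is quoted from \cite[Theorem~6.1]{toe2007}, I only indicate how I would reconstruct it. The argument runs in two stages. First, prove the ``moreover'' part, a bijection between $\hqe_{\bbU}(\scrA,\scrB)$ and isomorphism classes in $H^0(\rep_{\bbU}(\scrA,\scrB))$. Second, deduce the internal-hom adjunction
$$\hqe(\scrA\lox\scrB',\scrB)\cong \hqe(\scrB',\rep(\scrA,\scrB))$$
by currying bimodules. Throughout, I replace $\scrA$ by its cofibrant replacement $Q(\scrA)$, which is harmless in $\hqe$. Then morphisms $\scrA\to\scrB$ in $\hqe$ are homotopy classes of honest DG-functors $Q(\scrA)\to\scrB$.

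For the bijection, define $\Phi$ by sending a DG-functor $f\colon Q(\scrA)\to\scrB$ to the bimodule $(A,B)\mapsto \scrB(B,f(A))$. After $K$-projective replacement this is right quasi-representable, with $M(A,-)\cong \scrB(-,f(A))$. Homotopic functors (via a path object of $\scrB$) give isomorphic bimodules, so $\Phi$ descends to $\hqe$.

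For surjectivity, let $\widehat{\scrB}\subset\scrD(\scrB)$ be the full DG-subcategory of $K$-projective right quasi-representable modules. The Yoneda functor $\scrB\to\widehat{\scrB}$ is quasi-fully faithful by the DG Yoneda lemma and quasi-essentially surjective by definition, so it is a quasi-equivalence and invertible in $\hqe$. A right quasi-representable $K$-projective bimodule $M$ over $Q(\scrA)\op\ox\scrB$ is literally a DG-functor $Q(\scrA)\to\widehat{\scrB}$, $A\mapsto M(A,-)$. Composing with the inverse of Yoneda gives a morphism in $\hqe$, and unwinding shows $\Phi$ sends it back to $M$ up to isomorphism.

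The main obstacle is injectivity: if $\Phi(f)\cong\Phi(g)$ in $\calD(\scrA\op\lox\scrB)$, I must show $f$ and $g$ are equal in $\hqe$. My first attempt is to translate a bimodule isomorphism into a DG-functor from $Q(\scrA)$ into a path-object DG-category of $\widehat{\scrB}$. Concretely, I would use the category of closed degree-zero morphisms that become isomorphisms in $H^0$, and then use cofibrancy of $Q(\scrA)$ to lift. If this is hard to make rigorous directly, I would instead follow To\"en more faithfully. That route compares the Dwyer--Kan mapping space $\mathrm{Map}(\scrA,\scrB)$ with the nerve of the category of right quasi-representable bimodules and quasi-isomorphisms, and takes $\pi_0$ on both sides.

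Finally, for the adjunction, use the isomorphism of module categories
$$\scrD\bigl((\scrA\lox\scrB')\op\lox\scrB\bigr)\simeq \scrD\bigl(\scrB'\op\lox(\scrA\op\lox\scrB)\bigr).$$
Under it, bimodules right quasi-representable over $\scrB$ correspond to bimodules whose restriction to each $B'\in\scrB'$ lies in $\rep(\scrA,\scrB)$, i.e.\ right quasi-representable over $\rep(\scrA,\scrB)$. This step needs the Yoneda identification $\scrD(\rep(\scrA,\scrB))\supset\rep(\scrA,\scrB)$. Applying the bijection on both sides gives the adjunction, natural in all variables. Since $\rep_{\bbU}(\scrA,\scrB)$ is only $\bbV$-small, one uses the inclusion $\hqe_{\bbU}\subset\hqe_{\bbV}$ from the preceding remarks. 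One then checks that it is quasi-equivalent to a $\bbU$-small DG-category, by choosing a small set of representatives of quasi-representable modules.
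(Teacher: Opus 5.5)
The paper gives no proof of this Fact; it is quoted from To\"en. The comparison is therefore with To\"en's argument, and your route is genuinely different but sound.

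\textbf{How the routes differ.} To\"en proves a space-level statement: $\mathrm{Map}(\scrA,\scrB)$ is weakly equivalent to the nerve of the category of right quasi-representable bimodules and quasi-isomorphisms. He builds both the bijection and the internal Hom on that. You stay at $\pi_0$ and use only cofibrant replacement, the Yoneda quasi-equivalence $Y\colon\scrB\to\widehat{\scrB}$, and Tabuada's path object $P(-)$.

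\textbf{What each buys.} Your route is more elementary, and it covers everything the paper actually uses, namely the bijection in both directions. The paper uses it to define $\Kerd$ and $\Cokd$ in Remark~\ref{rem:ker_mor}, and to check commutativity in $\hqe$ via bimodule isomorphisms in Theorems~\ref{thm:main} and~\ref{thm:morita}. To\"en's route additionally gives the higher homotopy of mapping spaces, which no $\pi_0$ argument sees.

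\textbf{Injectivity is easier than you fear.} It needs no lifting. Take a $K$-projective replacement $P\to\Phi(f)$. Its slices $P(A,-)$ are $K$-projective because $Q(\scrA)$ has cofibrant Hom-complexes; you need this same fact for ``literally a DG-functor into $\widehat{\scrB}$''. The given isomorphism then becomes an honest quasi-isomorphism $P\to\Phi(g)$. Each of $P\to\Phi(f)$ and $P\to\Phi(g)$ is a strictly natural transformation of DG-functors $Q(\scrA)\to\widehat{\scrB}$ whose components are invertible in $H^0(\widehat{\scrB})$. Such a transformation is exactly a DG-functor into $P(\widehat{\scrB})$ with zero homotopy components; this is the same computation that makes the diagonal a DG-functor. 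Hence $Yf$ and $Yg$ are right homotopic to the same functor, so $f=g$ in $\hqe$.

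\textbf{The currying step needs more than the module-category isomorphism.} Bimodules over $(\scrB')\op\lox\scrA\op\lox\scrB$ and over $(\scrB')\op\lox\rep(\scrA,\scrB)$ live in different derived categories, so rearranging variables alone does not compare them. The comparison should be the functor $M\mapsto\mathrm{Hom}(-,M(b',-))|_{\rep(\scrA,\scrB)}$. Once $M$ has $K$-projective slices, this is just ``compose with Yoneda''. It is right adjoint to $-\lox_{\rep(\scrA,\scrB)}U$, where $U(X,a,b)=X(a,b)$ is the evaluation bimodule. The unit and counit are invertible slice-wise on the two classes of right quasi-representable objects. This gives the bijection of isomorphism classes that your final step needs.
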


The locally $K$-projective condition is more tractable than the cofibrancy condition. 

\begin{definition}
\label{def:locally_K_projective}
A $\bbU$-small DG-category $\scrA$ is called \emph{locally $K$-projective} if for each pair of objects $A,B\in\scrA$, the Hom-complex $\scrA(A,B)$ is a $K$-projective complex of $k$-modules.
\end{definition}

\subsection{DG-quotients of DG-categories}

In this subsection, we recall the notion of DG-quotients of DG-categories and their properties. For details, see \cite{Kel1999,dri2004}. We continue to work with a universe $\bbU$ satisfying the axiom of infinity. Let $\scrT$ be a locally $K$-projective $\bbU$-small DG-category and $\scrC$ a full DG-subcategory of $\scrT$.

% \color{blue}
% \begin{notation}
% We note the following notations in this paper:
% \begin{enumerate}
% 	\item We denote by $\calT$ and $\calC$ the homotopy categories $H^0(\scrT)$ and $H^0(\scrC)$ respectively.
% 	\item We denote $\calD(\scrT)$ the derived category of $\scrT$. 
% 	\item We denote a representable DG-module by $X\hat:=\scrT(-,X)\in \calD(\scrT)$ for $X\in\scrT$.
% 	\item Let $F\colon \scrT\to \scrT'$ be a DG-functor and $M\in \calD(\scrT')$ a right DG-module over $\scrT'$. We denote by $M|_{\scrT}$ the restriction of $M$ along $F$, i.e., $M|_{\scrT}:=M\circ F\in \calD(\scrT)$.
% \end{enumerate}
% \end{notation}
% \normalcolor

\begin{fact}(\cite[Proposition~4.6]{dri2004})
\label{thm:recollement_der_quot}
There exists a $\bbU$-small DG-category $\scrT\lquot\scrC$ and a DG-functor $\scrT\to \scrT\lquot\scrC$ which induces a recollement of triangulated categories:
$$
\begin{tikzcd}[column sep=5em]
\calD(\scrT\lquot\scrC) & \calD(\scrT) & \calD(\scrC)
\Ar{1-1}{1-2}{"(-)|_{\calT}"{description}}
\Ar{1-2}{1-3}{"(-)|_{\calC}"{description}}
\Ar{1-2}{1-1}{"-\lox_{\scrT}(\scrT\lquot\scrC)|_{\scrT\op}", bend left=30}
\Ar{1-3}{1-2}{"-\lox_{\scrC}\scrT|_{\scrC\op}", bend left=30}
\Ar{1-3}{1-2}{"\rhom{\scrC}{\scrT|_{\scrC},-}"', bend right=30}
\Ar{1-2}{1-1}{"\rhom{\scrT}{(\scrT\lquot\scrC)|_{\scrT},-}"', bend right=30}
\end{tikzcd}
$$
We call such a DG-category $\scrT\lquot\scrC$ a \emph{DG-quotient} of $\scrT$ by $\scrC$. We also denote the DG-functor by $Q\colon \scrT\to \scrT\lquot\scrC$ and call it a \emph{quotient DG-functor}.
\end{fact}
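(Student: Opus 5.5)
The plan is to construct $\scrT\lquot\scrC$ by Drinfeld's explicit recipe and then read off the recollement from standard facts about compactly generated triangulated categories. The local $K$-projectivity of $\scrT$ is what allows us to skip cofibrant replacement: all the tensor products that appear below then already compute derived tensor products.

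\emph{Construction.} Let $\scrT\lquot\scrC$ be obtained from $\scrT$ by freely adjoining, for each $C\in\scrC$, a morphism $\varepsilon_C\colon C\to C$ of degree $-1$ with $d(\varepsilon_C)=\id_C$. Let $Q$ be the evident DG-functor, which is the identity on objects. Concretely, the Hom-complex $(\scrT\lquot\scrC)(Y,X)$ is the direct sum over $n\ge 0$ of
$$\scrT(C_n,X)\ox k[1]\ox\scrT(C_{n-1},C_n)\ox k[1]\ox\cdots\ox k[1]\ox\scrT(Y,C_1),$$
with $C_1,\dots,C_n$ ranging over $\scrC$, and with a differential that uses composition in $\scrT$ to absorb the $\varepsilon$'s. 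Filtering by $n$, one recognizes this complex as the cone of the map
$$\mathrm{Bar}\bigl(\scrT(-,X)|_{\scrC}\bigr)\ox_{\scrC}\scrT(Y,-)|_{\scrC}\longrightarrow \scrT(Y,X),$$
where the source is the two-sided bar construction. Because $\scrT$ is locally $K$-projective, the bar construction is a $K$-projective resolution. Hence
$$Q^{*}Q_{!}\,\scrT(-,X)\;\cong\;\Cone\bigl(\scrT(-,X)|_{\scrC}\lox_{\scrC}\scrT|_{\scrC\op}\to\scrT(-,X)\bigr)\quad\text{in }\calD(\scrT).$$
Here $Q_!=-\lox_{\scrT}(\scrT\lquot\scrC)|_{\scrT\op}$ and $Q^*$ denotes restriction. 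This Hom-complex computation is the step I expect to be the main obstacle: one must check carefully that the differential matches the bar differential, and that the filtration is exhaustive and split in each degree, so that the cone identification holds.

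\emph{The $\scrC$-side.} Since $\scrC\subset\scrT$ is full, the induction functor $-\lox_{\scrC}\scrT|_{\scrC\op}$ sends representables to representables and is fully faithful on them. Both sides commute with coproducts and the representables are compact generators, so induction is fully faithful on all of $\calD(\scrC)$. Its essential image is the localizing subcategory $\calU$ generated by the modules $\scrT(-,C)$ with $C\in\scrC$. Its right adjoint is restriction $(-)|_{\calC}$, and restriction in turn has the right adjoint $\rhom{\scrC}{\scrT|_{\scrC},-}$. Because $\calU$ is compactly generated, Brown representability shows that $\calU$ is both localizing and colocalizing. This yields the standard recollement with $\calD(\scrT)$ in the middle, $\calU\simeq\calD(\scrC)$ on the right, and
$$\calU^{\perp}=\{M\in\calD(\scrT)\mid M|_{\calC}\cong 0\}$$
on the left.

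\emph{Identifying $\calU^\perp$ with $\calD(\scrT\lquot\scrC)$.} The goal is to show that restriction $Q^*$ is fully faithful with essential image $\calU^\perp$.

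First, $Q^*$ lands in $\calU^\perp$. In $\scrT\lquot\scrC$ every $C\in\scrC$ is contractible, since $\id_C=d(\varepsilon_C)$. Therefore $M(C)$ is acyclic for every DG-module $M$ over $\scrT\lquot\scrC$.

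Second, $Q^*$ is fully faithful. It suffices to show that the counit $Q_!Q^*\to\id$ is an isomorphism, and since both sides preserve coproducts this can be checked on the representables $(\scrT\lquot\scrC)(-,X)$. By the computation above, $Q^*$ of such a representable is the cone of the $\calU$-cellular approximation of $\scrT(-,X)$. Applying $Q_!$ kills $\calU$, because $Q_!$ of $\scrT(-,C)$ is a contractible representable. So $Q_!Q^*$ returns $Q_!\,\scrT(-,X)=(\scrT\lquot\scrC)(-,X)$.

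Third, the essential image is all of $\calU^\perp$. The same formula shows that $Q^*Q_!$ is exactly the localization functor onto $\calU^\perp$. Hence every $M\in\calU^\perp$ satisfies $M\cong Q^*Q_!M$ and so lies in the image.

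The two adjoints $Q_!$ and $\rhom{\scrT}{(\scrT\lquot\scrC)|_{\scrT},-}$ of $Q^*$ exist formally and give the remaining arrows of the recollement. Smallness of $\scrT\lquot\scrC$ holds because its objects coincide with those of $\scrT$ and its Hom-complexes are small direct sums of small complexes.
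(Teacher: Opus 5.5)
Your argument is correct. The paper itself gives no proof here: the Fact is imported from Drinfeld, and the paper then reasons in the opposite direction from you. It obtains the standard triangle \eqref{eq:standard_tri} as a corollary of the recollement, namely as the canonical recollement triangle specialized to $X=\scrT(-,T)$. You instead fix Drinfeld's explicit $\varepsilon$-model and prove \eqref{eq:standard_tri} first, at chain level, as an isomorphism $(\scrT\lquot\scrC)(Y,X)\cong\Cone\bigl(B(\scrT(-,X)|_{\scrC},\scrC,\scrT(Y,-)|_{\scrC})\to\scrT(Y,X)\bigr)$ with the two-sided bar construction. You then derive the recollement by localizing $\calD(\scrT)$ at $\calU$.

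The citation is shorter and independent of any particular model. That suffices for the paper, since everything later, such as \eqref{eq:standard_tri} and Proposition~\ref{prop:ker_in_ideal_quot}, uses only the recollement and the uniqueness of the DG-quotient in $\hqe$. Your route is self-contained, gives a concrete model, and shows exactly where local $K$-projectivity enters.

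A few points deserve more than you give them.

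\begin{enumerate}
\item The step you flag as the main obstacle is routine: the identification is an isomorphism of graded modules whose differentials visibly agree, so no filtration argument is needed there. What does need an argument is that the direct-sum totalization of the bar resolution is $K$-projective. This follows from its exhaustive, degreewise split filtration by bar degree. The subquotients are sums of shifts of modules $V\ox_k\scrC(-,C)$ with $V$ a $K$-projective complex, and graded-split extensions and such unions preserve $K$-projectivity.
\item In the full-faithfulness step you must show that the counit $Q_!Q^*R\to R$ is invertible for $R=(\scrT\lquot\scrC)(-,QX)=Q_!P$, where $P=\scrT(-,X)$; an abstract isomorphism $Q_!Q^*R\cong R$ is not enough. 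This follows from the triangle identity: the counit at $Q_!P$ composed with $Q_!(\eta_P)$ is the identity, and $Q_!(\eta_P)$ is invertible because the cocone of $\eta_P$ lies in $\calU$.
\item In the essential-image step, you pass from representables to all modules. To justify this, observe that the $M$ for which the cocone of $\eta_M$ lies in $\calU$ form a localizing subcategory.
\end{enumerate}
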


\begin{remark}
The DG-quotient $\scrT\lquot\scrC$ is unique up to isomorphism in $\hqe_{\bbU}$ since DG-quotients are defined by a universal property in $\hqe$ (see \cite{tab2010} for details). 
\end{remark}

The following is a characterization of the DG-quotient functor $Q\colon \scrT\to \scrT\lquot\scrC$.

\begin{fact}(\cite[Proposition~1.4]{dri2004})
\label{prop:char_quot_functor}
If a functor $Q\colon \scrT\to \scrU$ satisfies the following conditions, then $Q$ is a quotient DG-functor of $\scrT$ by $\scrC$:
\begin{enumerate}
	\item $Q$ induces an essentially surjective functor $\calT\to \calU$, and any object in $\calC$ is sent to the zero object in $\calU$.
	\item The cocone $\Cocone Q_T$ of the natural morphism $Q_T\colon \scrT(*,T) \to \scrU(Q*,QT)$ satisfies that the counit morphism 
	$$\epsilon_{\Cocone Q_T}\colon (\Cocone Q_T)|_{\scrC}\lox_{\scrC}\scrT|_{\scrC\op} \to \Cocone Q_T$$
	is an isomorphism in $\calD(\scrT)$ for each $T\in\scrT$.  
	\item The dual of (2).
\end{enumerate}
\end{fact}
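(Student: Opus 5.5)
The plan is to deduce the recollement of Fact~\ref{thm:recollement_der_quot} for $Q\colon\scrT\to\scrU$ from the Bousfield localization of $\calD(\scrT)$ at the localizing subcategory $\mathrm{Loc}(\calC)$ generated by the representables $\scrT(-,C)$, $C\in\scrC$. By uniqueness of DG-quotients in $\hqe$, it is enough to show two things: the restriction $Q^*=(-)|_{\scrT}\colon \calD(\scrU)\to\calD(\scrT)$ is fully faithful, and its essential image is the right orthogonal $\calC^{\perp}=\{M\mid M|_{\scrC}=0\}$. Write $Q_!=-\lox_{\scrT}\scrU|_{\scrT\op}$ for the left adjoint of $Q^*$. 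For $T\in\scrT$ the unit of the adjunction at the representable $\scrT(-,T)$ is the natural map $Q_T\colon\scrT(-,T)\to\scrU(Q-,QT)=Q^*Q_!\scrT(-,T)$, so we get a triangle
$$\Cocone Q_T\to \scrT(-,T)\to Q^*Q_!\scrT(-,T)\to .$$

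\textbf{Step 1: orthogonality.} For any $N\in\calD(\scrU)$ and $C\in\scrC$, Yoneda gives $\calD(\scrT)(\scrT(-,C)[n],Q^*N)\cong H^{-n}N(QC)$. This vanishes because $QC\cong0$ in $H^0(\scrU)$ by (1). Hence the image of $Q^*$ lies in $\calC^\perp$.

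\textbf{Step 2: identify the unit.} Condition (2) says that $\Cocone Q_T$ is in the essential image of $-\lox_{\scrC}\scrT|_{\scrC\op}$, which is $\mathrm{Loc}(\calC)$. Combined with Step 1, the triangle above exhibits $Q^*Q_!\scrT(-,T)$ as the $\calC^\perp$-localization of $\scrT(-,T)$. Both $Q^*$ and $Q_!$ preserve coproducts and triangles, so this extends from representables to all of $\calD(\scrT)$. Consequently, for every $M$ the cocone of the unit $M\to Q^*Q_!M$ lies in $\mathrm{Loc}(\calC)$, and in particular $Q_!$ kills $\mathrm{Loc}(\calC)$; this also follows directly from $Q_!\scrT(-,C)=\scrU(-,QC)=0$.

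\textbf{Step 3: full faithfulness and image.} Apply $Q_!$ to the triangle for $T$. Since $Q_!\Cocone Q_T=0$ by Step 2, the counit $Q_!Q^*\scrU(-,QT)\to\scrU(-,QT)$ is an isomorphism. By the essential surjectivity in (1), the $\scrU(-,QT)$ form a set of compact generators of $\calD(\scrU)$, and $Q^*$ commutes with coproducts, so the counit is an isomorphism everywhere and $Q^*$ is fully faithful. If $M\in\calC^\perp$, the unit $M\to Q^*Q_!M$ has cocone in $\mathrm{Loc}(\calC)\cap\calC^\perp=0$, so $M$ lies in the image of $Q^*$. This produces the recollement, with $(-)|_{\calC}$ and its two adjoints on the right.

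\textbf{Step 4: the main obstacle, upgrading to $\hqe$.} A recollement of derived categories alone does not pin down $\scrU$ in $\hqe$. For that I would compare $Q$ with Drinfeld's explicit quotient $\scrT/\scrC$. There is a comparison morphism $\scrT/\scrC\to\scrU$ in $\hqe$, obtained from Tabuada's universal property, because $Q$ sends $\scrC$ to contractible objects. I then need to check that this morphism is quasi-fully faithful; it is quasi-essentially surjective by (1). Its Hom-complexes $\scrU(QX,QT)$ are the values of the bimodule $\scrU(Q-,Q*)$. Step 2, applied in the variable $-$, identifies this bimodule with the localization of $\scrT$ in the first variable. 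Condition (3), via the mirror argument for left modules, gives the same in the second variable. Together these give the two-sided localization of the diagonal bimodule $\scrT$, which is exactly what Drinfeld's Hom-complexes compute (cf.\ \cite{dri2004}). Making this two-sided identification compatible with composition is where I expect most of the technical work; the locally $K$-projective hypothesis is what makes the derived tensor products here computable termwise.
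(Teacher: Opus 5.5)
The paper gives no proof of this Fact and simply cites Drinfeld, so I am assessing your argument on its own terms. Steps 1--3 are correct. From (1) and (2) alone they show that $Q^*$ is fully faithful with essential image $\calC^\perp$, which is the recollement of Fact~\ref{thm:recollement_der_quot}. Condition (3) is never used. If that Fact is read literally as the definition of a quotient DG-functor, you are done at that point.

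However, the paper uses the present Fact to produce isomorphisms in $\hqe$ (Corollary~\ref{cor:ideal}), so your Step~4 is genuinely needed, and there the proof has a gap. You only sketch Step~4, and the route you propose does not close. An abstract isomorphism of bimodules $\scrU(Q-,Q*)\cong(\scrT/\scrC)(-,*)$, even a two-sided one, does not show that the comparison $\Phi\colon\scrT/\scrC\to\scrU$ is quasi-fully faithful. For that you would need to know the isomorphism is the one induced by $\Phi$. Conversely, the ``compatibility with composition'' you worry about is not the real issue: it is already built into $\Phi$ being a morphism in $\hqe$. (For the same reason, your opening reduction ``by uniqueness in $\hqe$ it suffices to get the recollement'' is only justified once Step~4 is done.)

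The missing idea is that quasi-full faithfulness of $\Phi$ can be checked one variable at a time, and there it is forced by uniqueness of localization triangles. Let $Q_{\mathrm{Dr}}\colon\scrT\to\scrT/\scrC$ be Drinfeld's functor, which is bijective on objects. Since $\Phi\circ Q_{\mathrm{Dr}}\cong Q$ in $\hqe$, for each $T$ the map induced by $\Phi$,
\[
(\scrT/\scrC)(Q_{\mathrm{Dr}}-,Q_{\mathrm{Dr}}T)\longrightarrow\scrU(Q-,QT),
\]
is a morphism in $\calD(\scrT)$ under $\scrT(-,T)$. Both sides lie in $\calC^\perp$. Both structure maps from $\scrT(-,T)$ have cocone in $\mathrm{Loc}(\calC)$: for $\scrU$ this is condition (2), and for Drinfeld's quotient it is the standard triangle \eqref{eq:standard_tri}. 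A map between two $\calC^\perp$-localizations of the same object that is compatible with the localization maps is an isomorphism. Hence every component of $\Phi$ is a quasi-isomorphism, since the objects of $\scrT/\scrC$ are those of $\scrT$, and (1) gives quasi-essential surjectivity.

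Equivalently, using your own Steps 1--3: $Q^*\cong Q_{\mathrm{Dr}}^*\Phi^*$, with both $Q^*$ and $Q_{\mathrm{Dr}}^*$ fully faithful onto $\calC^\perp$. So $\Phi^*$ is an equivalence, hence so is its adjoint $\Phi_!$, which sends representables to representables; this is exactly quasi-full faithfulness of $\Phi$. Either way, no two-sided identification is required, and condition (3) is superfluous.
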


\begin{cor}
There exists a triangle for each $X\in\calD(\scrT)$:
$$X|_{\scrC}\lox_{\scrC}(\scrT|_{\calC\op}) \xrightarrow{\epsilon_X} X\xrightarrow{\eta_{X}} \bigl(X\lox_{\scrT}(\scrT\lquot\scrC)|_{\scrT\op}\bigr)|_{\scrT}\dashrightarrow$$
where $\epsilon_X$ is a counit morphism of the adjunction $-\lox_{\scrC}\scrT\dashv (-)|_{\scrC}$ and $\eta_X$ is a unit morphism of the adjunction $-\lox_{\scrT}(\scrT\lquot\scrC)\dashv (-)|_{\scrT}$. In particular, if $X\cong T\hut$ for some $T\in\scrT$, then we have the following triangle:
$$
\scrT(-,T)|_{\scrC}\lox_{\scrC}\scrT(*,-)|_{\scrC\op} \xrightarrow{\epsilon_{T\hat}} \scrT(*,T)\xrightarrow{\eta_{T\hat}} (\scrT\lquot\scrC)(Q*,QT)\dashrightarrow
$$
In this case, $\eta_{T\hat}$ corresponds to the natural morphism $Q_T\colon \scrT(*,T)\to (\scrT\lquot\scrC)(Q*,QT)$ induced by the DG-quotient functor $Q\colon \scrT\to \scrT\lquot\scrC$ and $\epsilon_{T\hat}$ corresponds to the natural morphism 
$$m^{\scrC}_T\colon \scrT(-,T)|_{\scrC}\lox_{\scrC}\scrT(*,-)|_{\scrC\op}\to \scrT(*,T)$$ induced by the composition in $\scrT$. Consequently, we have the following triangle in $\calD(\scrT)$:
\begin{equation}\label{eq:standard_tri}
\scrT(-,T)|_{\scrC}\lox_{\scrC}\scrT(*,-)|_{\scrC\op} \xrightarrow{m^{\scrC}_T} \scrT(*,T)\xrightarrow{Q_T} (\scrT\lquot\scrC)(Q*,QT)\dashrightarrow
\end{equation}
\end{cor}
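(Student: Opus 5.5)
The triangle comes from the recollement of Fact~\ref{thm:recollement_der_quot}, via the general fact that any recollement of triangulated categories yields a canonical triangle linking the counit of the left-adjoint pair and the unit of the other left-adjoint pair. Write $i^*:=(-)|_{\calC}$ with left adjoint $i_!:=-\lox_{\scrC}\scrT|_{\scrC\op}$, and $j_*:=(-)|_{\calT}\colon\calD(\scrT\lquot\scrC)\to\calD(\scrT)$ with left adjoint $j^*:=-\lox_{\scrT}(\scrT\lquot\scrC)|_{\scrT\op}$. The recollement gives three facts: $i_!$ and $j_*$ are fully faithful; $\operatorname{Im} i_! = \operatorname{Ker} j^*$ (equivalently $i^*j_*=0$); and $(\operatorname{Im} i_!, \operatorname{Im} j_*)$ is a semi-orthogonal decomposition.

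First I complete $\epsilon_X\colon i_!i^*X\to X$ to a triangle $i_!i^*X\to X\to Y\dashrightarrow$. Applying $i^*$ and using that $i^*\epsilon_X$ is an isomorphism (since $i_!$ is fully faithful), I get $i^*Y=0$. Objects killed by $i^*$ are exactly those in the essential image of $j_*$, which is right orthogonal to $\operatorname{Im} i_!$. Hence $Y\cong j_*Z$ for some $Z$.

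Next I identify $Z$ with $j^*X$. Since $j^*i_!=0$, applying $j^*$ to the triangle shows $j^*X\cong j^*Y\cong j^*j_*Z\cong Z$, the last step using full faithfulness of $j_*$. I then check that under this identification the map $X\to Y\cong j_*j^*X$ is the unit $\eta_X$. This holds because both maps correspond, by adjunction, to the same isomorphism $j^*X\to Z$. This gives the first triangle.

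For $X=T\hut$, I compute the terms directly. The restriction $(T\hut)|_{\scrC}$ is $\scrT(-,T)|_{\scrC}$, so $i_!i^*T\hut=\scrT(-,T)|_{\scrC}\lox_{\scrC}\scrT(*,-)|_{\scrC\op}$, and the counit is the evaluation/composition map $m^{\scrC}_T$ by the standard description of the tensor–restriction adjunction. The functor $j^*$ takes representables to representables, namely $j^*T\hut=(QT)\hut$. Restricting along $Q$ gives $(\scrT\lquot\scrC)(Q*,QT)$, and the unit is the map induced by $Q$ on Hom-complexes, i.e.\ $Q_T$ (Yoneda). Local $K$-projectivity guarantees that these derived tensors are computed by the naive bar/tensor constructions, so the identifications hold on the nose up to quasi-isomorphism.

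The main obstacle is the bookkeeping of identifying the counit and unit with $m^{\scrC}_T$ and $Q_T$ concretely. The abstract recollement argument itself is routine.
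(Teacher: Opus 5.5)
Your proposal is correct and takes the same route as the paper. The paper states this corollary without a separate argument, as the canonical triangle of the recollement in Fact~\ref{thm:recollement_der_quot}, followed by identifying the counit with $m^{\scrC}_T$ and the unit with $Q_T$ on representables via Yoneda; you carry out exactly this, and in Beilinson--Bernstein--Deligne's definition the triangle is even one of the recollement axioms. The one slip is cosmetic: $\mathrm{Im}\, i_!=\mathrm{Ker}\, j^*$ is not equivalent to $i^*j_*=0$, which only gives $\mathrm{Im}\, i_!\subseteq\mathrm{Ker}\, j^*$, but the facts you actually use, $j^*i_!=0$ and $\mathrm{Ker}\, i^*=\mathrm{Im}\, j_*$, both hold in the recollement.
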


DG-quotients of pretriangulated DG-categories are also pretriangulated, and they naturally enhance Verdier quotients of triangulated categories as follows.

\begin{fact}\cite[Theorem~3.4]{dri2004}
\label{thm:dg-quot_Verdier}
If $\scrT$ is pretriangulated, then the DG-quotient $\scrT\lquot\scrC$ is also pretriangulated, and the induced functor $H^0(\scrT)\to H^0(\scrT\lquot\scrC)$ is a Verdier quotient functor with respect to the triangulated subcategory generated by $\calC$ in $\calT$. In particular, if $\calC$ is a triangulated subcategory of $\calT$, then there exists a triangle equivalence 
$$H^0(\scrT\lquot\scrC)\simeq \calT/\calC.$$
\end{fact}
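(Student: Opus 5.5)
The plan is to work entirely inside the recollement of Fact~\ref{thm:recollement_der_quot}. Write $L:=-\lox_{\scrT}(\scrT\lquot\scrC)|_{\scrT\op}$ and $R:=(-)|_{\scrT}$. The right adjoint $R\colon\calD(\scrT\lquot\scrC)\to\calD(\scrT)$ is fully faithful. Its essential image is the kernel $\calC^{\perp}:=\{M\mid M|_{\scrC}\cong0\}$, and $RL$ is the corresponding Bousfield localization, whose kernel is the localizing subcategory $\mathrm{Loc}(\calC)$ generated by the representables $C\hut$, $C\in\scrC$. Thus $\calD(\scrT\lquot\scrC)\simeq\calD(\scrT)/\mathrm{Loc}(\calC)$. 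By the triangle \eqref{eq:standard_tri}, under this equivalence the Yoneda image of $QT$ corresponds to $RL(T\hut)=(\scrT\lquot\scrC)(Q*,QT)$.

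First I show that $H^0(\scrT)\to H^0(\scrT\lquot\scrC)$ is essentially surjective; this is immediate from Drinfeld's construction, which has the same objects as $\scrT$. Hence the Yoneda image of $H^0(\scrT\lquot\scrC)$ in $\calD(\scrT\lquot\scrC)$ is $\{L(T\hut)\mid T\in\scrT\}$. Pretriangulatedness of $\scrT\lquot\scrC$ means that this full subcategory is closed under shifts and cones in $\calD(\scrT\lquot\scrC)$. Closure under shifts is clear, since $L$ is exact and $H^0(\scrT)$ (identified with its Yoneda image in $\calD(\scrT)$) is closed under shifts because $\scrT$ is pretriangulated.

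Closure under cones is the main step. A morphism $L(X\hut)\to L(Y\hut)$ need not lie in the image of $H^0(\scrT)$, so I have to control these Hom-groups. The representables $T\hut$ are compact in $\calD(\scrT)$, and $\mathrm{Loc}(\calC)$ is compactly generated by the compact objects $C\hut$. I would therefore invoke Neeman's localization theorem (Thomason--Trobaugh--Neeman). It says that the induced functor
$$\per(\scrT)/\mathrm{thick}(\calC)\to\calD(\scrT)/\mathrm{Loc}(\calC)$$
is fully faithful, with image dense in the compacts. Restricting to $H^0(\scrT)\subset\per(\scrT)$, which is a triangulated subcategory, gives $H^0(\scrT)/\langle\calC\rangle$, where $\langle\calC\rangle$ is the triangulated subcategory of $H^0(\scrT)$ generated by $\calC$. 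This restriction is still fully faithful for the following reasons.

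\begin{enumerate}
	\item A Verdier quotient does not change when the subcategory is replaced by its thick closure, since objects that vanish in the quotient automatically form a thick subcategory.
	\item The thick closure of $\langle\calC\rangle$ inside $H^0(\scrT)$ is $\mathrm{thick}(\calC)\cap H^0(\scrT)$.
	\item Morphisms in $\per(\scrT)/\mathrm{thick}(\calC)$ between objects of $H^0(\scrT)$ can be computed by roofs through $H^0(\scrT)$. I expect this to follow from a standard "cofinality" argument. The alternative is to verify it directly via Verdier's criterion, by factoring maps from $H^0(\scrT)$-objects to $\mathrm{thick}(\calC)$ through $\langle\calC\rangle$ up to summands.
\end{enumerate}

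Step (3) is where I expect the real technical obstacle to lie.

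Granting this, the functor $H^0(\scrT\lquot\scrC)\to\calD(\scrT\lquot\scrC)$ identifies $H^0(\scrT\lquot\scrC)$ with the image of the triangulated category $H^0(\scrT)/\langle\calC\rangle$ under an exact fully faithful functor. This image is therefore closed under cones: any morphism is, up to isomorphism, the image of a morphism in $H^0(\scrT)$ composed with inverses of maps whose cones lie in $\langle\calC\rangle$. Its cone is then $L$ of a cone computed in $H^0(\scrT)$. This proves that $\scrT\lquot\scrC$ is pretriangulated. The same identification shows that $H^0(\scrT)\to H^0(\scrT\lquot\scrC)$ is the Verdier quotient by $\langle\calC\rangle$. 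When $\calC$ is already triangulated, $\langle\calC\rangle=\calC$ and we obtain $H^0(\scrT\lquot\scrC)\simeq\calT/\calC$.
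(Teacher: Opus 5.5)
Your argument is correct, but it takes a different route from the source. The paper gives no proof of this statement: it quotes it from Drinfeld, whose paper works with an explicit model of the quotient, obtained by adjoining to each $C\in\scrC$ a degree $-1$ endomorphism $\epsilon_C$ with $d(\epsilon_C)=\id_C$. So what you have written is a self-contained alternative, essentially in the spirit of Keller's original construction of DG quotients. You use only three inputs:
\begin{itemize}
\item the recollement of Fact~\ref{thm:recollement_der_quot};
\item the identification $\calD(\scrT\lquot\scrC)\simeq\calD(\scrT)/\mathrm{Loc}(\calC)$ that the recollement induces;
\item the Thomason--Trobaugh--Neeman theorem.
\end{itemize}
This buys model-independence and a stronger by-product: $\per(\scrT\lquot\scrC)$ is the idempotent completion of $\per\scrT/\mathrm{thick}(\calC)$ for arbitrary $\scrT$, and the pretriangulated statement follows by restriction.

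The cost is that Neeman's compact-generation theorem is used as a black box. In addition, the recollement alone determines $\scrT\lquot\scrC$ only up to Morita equivalence. Replacing the quotient by $\per\dg(\scrT\lquot\scrC)$ leaves the recollement unchanged, but then $H^0(\scrT)\to H^0$ is no longer a Verdier quotient functor in general. So essential surjectivity of $H^0(Q)$ must come from a concrete construction, which you correctly take from Drinfeld's model.

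The step you flag as the main obstacle, (3), is a short check. You should write it out rather than appeal to a cofinality argument. Verdier's criterion reads as follows. Let $\mathcal{X}'\subset\mathcal{X}$ be a triangulated subcategory and $\mathcal{N}\subset\mathcal{X}$ a thick subcategory. Suppose every morphism from an object of $\mathcal{X}'$ to an object of $\mathcal{N}$ factors through an object of $\mathcal{N}\cap\mathcal{X}'$. Then $\mathcal{X}'/(\mathcal{N}\cap\mathcal{X}')\to\mathcal{X}/\mathcal{N}$ is fully faithful. (One shows fullness by replacing a roof $X\leftarrow W\to Y$ with one whose cone lies in $\mathcal{N}\cap\mathcal{X}'$; faithfulness follows from the same factorization.)

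Apply this with $\mathcal{X}'=H^0(\scrT)$, $\mathcal{X}=\per\scrT$ and $\mathcal{N}=\mathrm{thick}(\calC)$.
\begin{itemize}
\item Every $N\in\mathrm{thick}(\calC)$ is a direct summand of some $N'\in\langle\calC\rangle$, because the thick closure of a triangulated subcategory is its closure under summands.
\item $\langle\calC\rangle\subset H^0(\scrT)$, because the Yoneda image of $H^0(\scrT)$ is a triangulated subcategory of $\per\scrT$.
\end{itemize}
Hence any morphism $T\to N$ factors as $T\to N'\to N$. Combined with your points (1) and (2), this gives full faithfulness of $H^0(\scrT)/\langle\calC\rangle\to\per\scrT/\mathrm{thick}(\calC)$ and completes the proof.
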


On the other hand, if $\scrT$ is a connective DG-category, then the DG-quotient $\scrT\lquot\scrC$ is also connective, and it naturally enhances the quotient of the homotopy category $H^0(\scrT)$ by the ideal of morphisms factoring through $\calC$ as follows.

\begin{definition}
We call $\scrT$ \emph{connective} if $H^i(\scrT(X,Y))=0$ for any $X,Y\in\scrT$ and $i>0$.
\end{definition}

\begin{proposition}
\label{prop:derived_ideal_quot}
Let $\scrT$ be a connective DG-category. Then the DG-quotient $\scrT\lquot\scrC$ is also connective and the induced functor $H^0(\scrT)\to H^0(\scrT\lquot\scrC)$ is a quotient functor with respect to the ideal of morphisms factoring through $\calC$ in $\calT$. In particular, there exists a natural equivalence of categories:
$$H^0(\scrT\lquot\scrC)\simeq \calT/[\calC]$$
where $\calT/[\calC]$ is the quotient of $\calT$ by the ideal of morphisms factoring through $\calC$.
\end{proposition}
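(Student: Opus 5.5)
The plan is to read everything off the standard triangle \eqref{eq:standard_tri} in $\calD(\scrT)$,
$$\scrT(-,T)|_{\scrC}\lox_{\scrC}\scrT(*,-)|_{\scrC\op} \xrightarrow{m^{\scrC}_T} \scrT(*,T)\xrightarrow{Q_T} (\scrT\lquot\scrC)(Q*,QT)\dashrightarrow,$$
and take the long exact cohomology sequence after evaluating at an object $S\in\scrT$. Write $M_{S,T}:=\scrT(-,T)|_{\scrC}\lox_{\scrC}\scrT(S,-)|_{\scrC\op}$ for the first term evaluated at $S$. Since every object of $\scrT\lquot\scrC$ is (up to isomorphism in $H^0$) of the form $QS$ by Fact~\ref{thm:recollement_der_quot}/Fact~\ref{prop:char_quot_functor}(1), it suffices to compute $H^i$ of $(\scrT\lquot\scrC)(QS,QT)$ for $S,T\in\scrT$.

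The first key step is to show that $M_{S,T}$ is connective, i.e. $H^i(M_{S,T})=0$ for $i>0$. I will use the bar-type (or semi-free) resolution of the derived tensor product over $\scrC$: $M_{S,T}$ is the totalization of the simplicial complex $\bigoplus \scrT(C_n,T)\ox\scrC(C_{n-1},C_n)\ox\cdots\ox\scrT(S,C_0)$. All the factors are connective, and the complexes are $K$-projective because $\scrT$ is locally $K$-projective, so the derived tensor products are computed by ordinary ones. Alternatively, I can replace $\scrT$ by a quasi-equivalent locally $K$-projective connective model, via the truncation $\tau^{\le 0}$ on Hom-complexes, which preserves composition. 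Bar degrees sit in nonpositive cohomological degrees, so the totalization is connective. In degree $0$, the right exactness of $H^0$ of the coend gives
$$H^0(M_{S,T})\cong \Cok\Bigl(\bigoplus H^0\scrT(C',T)\ox H^0\scrC(C,C')\ox H^0\scrT(S,C)\to \bigoplus H^0\scrT(C,T)\ox H^0\scrT(S,C)\Bigr),$$
which is the ordinary tensor product $H^0\scrT(-,T)|_{\calC}\ox_{\calC}H^0\scrT(S,-)|_{\calC}$. Under this identification, $H^0(m^{\scrC}_T)$ is the composition map, whose image is exactly the ideal $[\calC](S,T)$ of morphisms factoring through objects of $\calC$.

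Next, the long exact sequence
$$H^i(M_{S,T})\to H^i\scrT(S,T)\to H^i(\scrT\lquot\scrC)(QS,QT)\to H^{i+1}(M_{S,T})$$
gives connectivity: for $i>0$ both outer terms vanish. For $i=0$ it gives the exact sequence $H^0(M_{S,T})\to \calT(S,T)\to H^0(\scrT\lquot\scrC)(QS,QT)\to H^1(M_{S,T})=0$. Hence $H^0(\scrT\lquot\scrC)(QS,QT)\cong\calT(S,T)/[\calC](S,T)$, naturally in $S,T$ since $Q_T$ is induced by the functor $Q$. Combined with essential surjectivity, this makes $H^0(Q)$ full and essentially surjective with kernel the ideal $[\calC]$. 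That is exactly the claimed equivalence $H^0(\scrT\lquot\scrC)\simeq\calT/[\calC]$.

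The main obstacle is the degree-$0$ identification of $H^0(M_{S,T})$ and of the image of $H^0(m^{\scrC}_T)$. This needs care with the spectral sequence, or the truncation argument, showing that $H^0$ of a derived tensor product of connective modules is the ordinary tensor product of the $H^0$'s. I would first try to prove it directly: replace the modules by semi-free connective resolutions, which exist for connective modules over a connective DG-category, with generators in nonpositive degrees. Then $H^0$ of the tensor product is the cokernel computed from the generators in degrees $0$ and $-1$.
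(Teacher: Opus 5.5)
Your proposal is correct and follows essentially the same route as the paper: you evaluate the standard triangle \eqref{eq:standard_tri} and use connectivity to kill $H^{>0}$ of the derived tensor product over $\scrC$. You also use it to identify that tensor product's $H^0$ with $\calT(-,T)|_{\calC}\ox_{\calC}\calT(*,-)|_{\calC\op}$, whose image under composition is $[\calC]$, and then read off the result from the long exact sequence. The only difference is that you spell out what the paper leaves implicit: the bar-complex justification of these two facts about the derived tensor product, and the vanishing $H^{i}\bigl((\scrT\lquot\scrC)(QS,QT)\bigr)=0$ for $i>0$, which is what gives the connectivity claim.
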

\begin{proof}
Since $\scrT$ is connective, we have $H^i(\scrT(-,T)|_{\scrC}\lox_{\scrC}\scrT(*,-)|_{\scrC\op})=0$ for $i>0$. Hence the triangle \eqref{eq:standard_tri} implies the following long exact sequence:
$$H^0(\scrT(-,T)|_{\scrC}\lox_{\scrC}\scrT(*,-)|_{\scrC\op}) \to H^0(\scrT(*,T)) \to H^0((\scrT\lquot\scrC)(Q*,QT)|_{\scrT}) \to 0$$
in $\Mod \calT$. Again, since $\scrT$ is connective, we have the following isomorphisms:
$$H^0(\scrT(-,T)|_{\scrC}\lox_{\scrC}\scrT(*,-)|_{\scrC\op}) \cong \calT(-,T)|_{\calC}\ox_{\calC}\calT(*,-)|_{\calC\op}$$
and this isomorphism is compatible with the natural morphism 
$$m^{\calC}_T\colon \calT(-,T)|_{\calC}\ox_{\calC}\calT(*,-)|_{\calC\op}\to \calT(-,T)$$
induced by the composition in $\calT$. Thus, we have the following commutative diagram with exact rows:
$$\begin{tikzcd}
\calT(-,T)|_{\calC}\ox_{\calC}\calT(*,-)|_{\calC\op} & \calT(*,T) & (\calT/[\calC])(*,T) & 0\\
H^0\bigl(\scrT(-,T)|_{\scrC}\lox_{\scrC}\scrT(*,-)|_{\scrC\op}\bigr)  & H^0(\scrT(*,T))  & H^0((\scrT\lquot\scrC)(Q*,QT)|_{\scrT}) & 0
\Ar{1-1}{1-2}{"m^{\scrC}_T"}
\Ar{1-2}{1-3}{""}
\Ar{1-3}{1-4}{}
\Ar{2-1}{2-2}{"m^{\calC}_T"}
\Ar{2-2}{2-3}{""}
\Ar{2-3}{2-4}{}
\Ar{1-1}{2-1}{"\cong"}
\Ar{1-2}{2-2}{equal}
\Ar{1-3}{2-3}{}
\end{tikzcd}$$
in $\Mod \calT$. Thus, the third vertical morphism is an isomorphism. This shows that the natural functor $\calT/[\calC]\to H^0(\scrT\lquot\scrC)$ is fully faithful. The essential surjectivity of this functor is clear. 
\end{proof}

\subsection{Limits and approximations in connective DG-categories}

In this subsection, we recall some limits and colimits in connective DG-categories and discuss relations between approximations and exactness of $3$-term complexes in DG-quotients of connective DG-categories. We fix a universe $\bbU$ satisfying the axiom of infinity. Let $\scrT$ be a locally $K_\bbU$-projective $\bbU$-small DG-category and $\scrC$ a full DG-subcategory of $\scrT$. 

\begin{definition}
Let $\scrT$ be a DG-category. We say that $\scrT$ is \emph{additive} if the homotopy category $H^0(\scrT)$ is an additive category. 
\end{definition}

\begin{remark}
If $\scrT$ is an additive connective DG-category, then the DG-quotient $\scrT\lquot\scrC$ is also additive and connective for any full DG-subcategory $\scrC$ of $\scrT$.
\end{remark}

\begin{definition}(\cite[Definition~3.39]{che2023})
\label{def:3-term_cpx}
For a sextuple $(X,Y,Z,f,g,h)$ where $X,Y,Z\in\scrT$ and $f\in\scrT(X,Y),g\in\scrT(Y,Z)$ are closed morphisms of degree $0$ and $h\in\scrT(X,Z)$ is a morphism of degree $-1$ which satisfies $d(h)=-g\circ f$, we denote it simply by 
$$X\xrightarrow{f}Y\xrightarrow{g}Z\quad(h\colon X\to Z)$$
and call it a \emph{$3$-term complex} in $\scrT$.
\end{definition}

\begin{definition}[{\cite[\S 3.2]{che2023}}]
\label{def:cat_3-term_cpx}
Define the category $\cpx^3(\scrT)$ of $3$-term complexes in $\scrT$ as follows:
\begin{itemize}
	\item An object of $\cpx^3(\scrT)$ is a $3$-term complex in $\scrT$.
	\item A morphism from a $3$-term complex $(X,Y,Z,f,g,h)$ to another $3$-term complex $(X',Y',Z',f',g',h')$ is a homotopy equivalence class of sextuples $(a,b,c,s,s',t)$ where $a\in\scrT(X,X')^0, b\in\scrT(Y,Y')^0, c\in\scrT(Z,Z')^0$ are closed morphisms of degree $0$ and $s\in\scrT(X,Y')^{-1}, s'\in\scrT(Y,Z')^{-1}$ and $t\in\scrT(X,Z')^{-2}$ are morphisms of degree $-1$ and $-2$ respectively,
	which satisfy the following equations:
  $$d(s)=-b\circ f+f'\circ a, \quad d(s')=-c\circ g+g'\circ b, \quad d(t)=-c\circ h+h'\circ a+s'\circ f+s\circ g.$$
	The definition of homotopy equivalence is in \cite[\S 3.2]{che2023}.
\end{itemize}
\end{definition}

\begin{definition}(\cite[Definition~3.49]{che2023})
\label{def:exactness}
Let $(X,Y,Z,f,g,h)$ be a $3$-term complex in $\scrT$. 
\begin{enumerate}
	\item We say that a $3$-term complex $(X,Y,Z,f,g,h)$ is \emph{left exact} if the natural morphism 
	$(f\hat, h\hat)^t\colon X\hat\to \Cocone g\hat$
	induces isomorphisms for each $i\leq 0$:
	$$H^i(X\hat) \xrightarrow{\cong} H^i(\Cocone g\hat).$$
	In this case, we call $X$ the \emph{kernel} of $g$. 
	\item Dually, we say that a $3$-term complex $(X,Y,Z,f,g,h)$ is \emph{right exact} if the natural morphism $(g\hut, h\hut)\colon Z\hut\to \Cocone f\hut$ induces isomorphisms for each $i\leq 0$:
	$$H^i(Z\hut) \xrightarrow{\cong} H^i(\Cocone f\hut).$$
	In this case, we call $Z$ the \emph{cokernel} of $f$.
	\item If a $3$-term complex $(X,Y,Z,f,g,h)$ is both left and right exact, we say that it is \emph{exact}.
\end{enumerate}
\end{definition}

\begin{lemma}
\label{lem:unique_ker}
Let $(X,Y,Z,f,g,h)$ and $(X',Y',Z',f',g',h')$ be left exact $3$-term complexes in $\scrT$. If the following diagram exists in $\scrT$:
$$\begin{tikzcd}
Y & Z \\ Y' & Z'
\Ar{1-1}{1-2}{"g"}
\Ar{2-1}{2-2}{"g'"}
\Ar{1-1}{2-1}{"b"'}
\Ar{1-2}{2-2}{"c"}
\Ar{1-1}{2-2}{"s'", red}
\end{tikzcd}
\quad\quad d(s')=-c\circ g + g'\circ b
$$
where $b\in\scrT(Y,Y')^0$ and $c\in\scrT(Z,Z')^0$ are isomorphisms. Then there exists an isomorphism $(a,b,c,s,s',t)$ from $(X,Y,Z,f,g,h)$ to $(X',Y',Z',f',g',h')$ in $\cpx^3(\scrT)$ where $a\in\scrT(X,X')^0$ is an isomorphism. 
\end{lemma}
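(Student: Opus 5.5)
The plan is to work entirely with right DG-modules over $\scrT$ in $\calD(\scrT)$ and to use the Yoneda embedding $X\mapsto X\hat=\scrT(-,X)$, which is fully faithful at the level of Hom complexes.

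\textbf{Step 1: compare the cocones.} The square $(b,c,s')$ with $d(s')=-c\circ g+g'\circ b$ is exactly the data of a closed degree-$0$ morphism of cocones
\[
\phi\colon \Cocone g\hat\to \Cocone g'\hat .
\]
Here $\Cocone g\hat = Y\hat\oplus Z\hat[-1]$ with the twisted differential, and $\phi$ is given by the matrix with entries $b\hat$, $c\hat$ and $s'\hat$ in the off-diagonal slot. The sign conventions are fixed so that $d(s')=-cg+g'b$ is precisely the closedness condition. Since $b$ and $c$ are isomorphisms in $H^0(\scrT)$, $b\hat$ and $c\hat$ are quasi-isomorphisms of DG-modules. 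By the five lemma applied to the long exact cohomology sequences of the cocone triangles, $\phi$ is a quasi-isomorphism.

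\textbf{Step 2: lift to $X'$.} Left exactness gives that $\alpha=(f\hat,h\hat)^t\colon X\hat\to\Cocone g\hat$ and $\alpha'\colon X'\hat\to \Cocone g'\hat$ induce isomorphisms on $H^i$ for $i\le 0$. Consider the composite
\[
\phi\circ\alpha\colon X\hat\to \Cocone g'\hat .
\]
We want to factor it, up to homotopy, through $\alpha'$. Apply $H^0$ of Hom complexes out of the representable $X\hat$. By Yoneda, $H^i\mathrm{Hom}(X\hat,M)\cong H^i(M(X))$. Therefore
\[
\alpha'_*\colon H^i(X'\hat(X))\to H^i(\Cocone g'\hat(X))
\]
is bijective for $i\le 0$. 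Here I regard $X\hat$ as $K$-projective, which holds since representables are. Taking $i=0$, there is a closed $a\in\scrT(X,X')^0$ with $\alpha'\circ a\hat$ homotopic to $\phi\circ\alpha$. Unwinding a homotopy of degree $-1$ into components in $\scrT(X,Y')^{-1}$ and $\scrT(X,Z')^{-2}$ yields $s$ and $t$ satisfying
\[
d(s)=-bf+f'a,\qquad d(t)=-ch+h'a+s'f+sg .
\]
This gives the morphism $(a,b,c,s,s',t)$ in $\cpx^3(\scrT)$.

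\textbf{Step 3: $a$ is an isomorphism.} On cohomology in degrees $\le 0$ we have $\alpha'\circ a\hat\simeq \phi\circ\alpha$. Here $\alpha,\alpha',\phi$ are isomorphisms on $H^{i}$ for $i\le 0$, so $a\hat$ induces isomorphisms on $H^i$ for all $i\le 0$, evaluated at every object. In particular $H^0(a\hat)\colon \calT(-,X)\to\calT(-,X')$ is an isomorphism of functors. By the Yoneda lemma in $H^0(\scrT)$, $a$ is an isomorphism. To see that the sextuple is an isomorphism in $\cpx^3(\scrT)$, construct the reverse data symmetrically from $(b^{-1},c^{-1})$ with a suitable homotopy replacing $s'$, and show both composites are homotopic to the identity. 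This uses the uniqueness part of the same lifting, namely injectivity of $\alpha'_*$ on $H^0$ and also on $H^{-1}$ to control homotopies. Alternatively, use a result from \cite{che2023} that a morphism in $\cpx^3(\scrT)$ with all three components isomorphisms is an isomorphism.

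\textbf{Main obstacle.} I expect the main obstacle to be the bookkeeping in Step 2: matching a homotopy between morphisms into the cocone with the exact sign conventions of Definition~\ref{def:cat_3-term_cpx}. A second difficulty is checking invertibility in $\cpx^3(\scrT)$ rather than merely componentwise. If \cite{che2023} lacks the componentwise criterion, I would argue via the equivalence between $\cpx^3(\scrT)$ and a category of morphisms $X\hat\to\Cocone g\hat$ in the homotopy category.
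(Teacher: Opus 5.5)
Your proof is correct and is essentially the paper's argument. Steps 1--2 unpack the lifting statement \cite[Lemma~4.5]{che2023} that the paper simply cites; they only need surjectivity of $H^0$ of $X'\hat\to\Cocone g'\hat$ evaluated at $X$. Step 3 is the paper's observation that $a$ is an isomorphism by left exactness of both complexes. For the last step, your ``alternative'' is exactly what the paper does: it cites \cite[Proposition~3.40]{che2023}, that a morphism in $\cpx^3(\scrT)$ with isomorphic components is an isomorphism. So you can drop the sketchier reverse-data construction.
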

\begin{proof}
Since $(X',Y',Z',f',g',h')$ is left exact, there exists a morphism $(a,b,c,s,s',t)$ from $(X,Y,Z,f,g,h)$ to $(X',Y',Z',f',g',h')$ in $\cpx^3(\scrT)$ by \cite[Lemma~4.5]{che2023}. Moreover, since $b$ and $c$ are isomorphisms, by definition of left exactness, $a$ is also an isomorphism. Thus, $(a,b,c,s,s',t)$ is an isomorphism in $\cpx^3(\scrT)$ by \cite[Proposition~3.40]{che2023}.
\end{proof}

\begin{definition}[{\cite[Definition~3.14]{che2023}}]
\label{def:cartesian}
Consider the following diagram in $\scrT$:
$$\begin{tikzcd}X & Y \\ Z & W
\Ar{1-1}{1-2}{"f"}
\Ar{2-1}{2-2}{"k"}
\Ar{1-1}{2-1}{"g"'}
\Ar{1-2}{2-2}{"j"}
\Ar{1-1}{2-2}{"h", red}
\end{tikzcd}\quad\quad d(h)=j\circ f - k\circ g$$
\begin{enumerate}
	\item We say that the above diagram is \emph{cartesian} if the canonical morphism 
	$$(f,g,h)\colon X\hat\to \Cocone(-j\hat, k\hat) $$
	induces the following isomorphisms for each $i\leq 0$:
	$$H^i(X\hat) \xrightarrow{\cong} H^i(\Cocone(-j\hat, k\hat)).$$
	\item Dually, we say that the above diagram is \emph{cocartesian} if the canonical morphism
	$$(j,k,h)\colon W\hut\to \Cocone(-f\hut, g\hut) $$
	induces the following isomorphisms for each $i\leq 0$:
	$$H^i(W\hut) \xrightarrow{\cong} H^i(\Cocone(-f\hut, g\hut)).$$
	\item If the above diagram is both cartesian and cocartesian, we say that it is \emph{bicartesian}.
\end{enumerate}
\end{definition}

To discuss approximation theory in connective DG-categories, we explain the notion of approximations in connective DG-categories. Note that the notion of approximations in connective DG-categories is defined by using usual approximations in the homotopy category of a connective DG-category as follows.  

\begin{definition}
\label{def:approx}
Let $f\colon X\to Y$ be a closed morphism in $\scrT$. 
\begin{enumerate}
	\item $f$ is called a \emph{right $\scrC$-approximation} if the following map is surjective and $X\in \scrC$:
	$$f\circ -\colon \calT(-,X)|_{\calC} \to \calT(-,Y)|_{\calC}$$
	\item $f$ is called a \emph{left $\scrC$-approximation} if the following map is surjective and $Y\in \scrC$:
	$$-\circ f\colon \calT(Y,-)|_{\calC} \to \calT(X,-)|_{\calC}$$
	\item If any object $Y\in\scrT$ admits a right $\scrC$-approximation, we say that $\scrC$ is \emph{contravariantly finite} in $\scrT$.
	\item If any object $X\in\scrT$ admits a left $\scrC$-approximation, we say that $\scrC$ is \emph{covariantly finite} in $\scrT$.
	\item We say that $\scrC$ is \emph{functorially finite} in $\scrT$ if $\scrC$ is both contravariantly and covariantly finite in $\scrT$.
\end{enumerate}
\end{definition}

More generally, we can also define the notion of $\scrC$-epic and $\scrC$-monic morphisms (already defined for usual linear categories in \cite{bel1994}) in $\scrT$ as follows:

\begin{definition}
\label{def:epic_monic}
Let $f\colon X\to Y$ be a closed morphism in $\scrT$. We say that $f$ is \emph{$\scrC$-epic} if the following map is surjective:
$$f\circ -\colon \calT(-,X)|_{\calC} \to \calT(-,Y)|_{\calC}.$$
Dually, we say that $f$ is \emph{$\scrC$-monic} if the following map is surjective:
$$-\circ f\colon \calT(Y,-)|_{\calC} \to \calT(X,-)|_{\calC}.$$
\end{definition}

\begin{remark}
$\scrC$-epic morphisms are right $\scrC$-approximations if their domains are in $\scrC$. Dually, $\scrC$-monic morphisms are left $\scrC$-approximations if their codomains are in $\scrC$. 
\end{remark}

The following is a key proposition for the main result of this paper:

\begin{proposition}
\label{prop:ker_in_ideal_quot}
Consider the following diagrams in $\calT$ and $\calT\lquot\calC$ respectively:
\begin{equation}
\label{eq:diag_cart}
\begin{tikzcd}
K & T \\ X & Y
\Ar{1-1}{1-2}{"k"}
\Ar{2-1}{2-2}{"f"'}
\Ar{1-1}{2-1}{"i"'}
\Ar{1-2}{2-2}{"\pi"}
\Ar{1-1}{2-2}{"h", red}
\end{tikzcd}\quad\quad\quad
\begin{tikzcd}
QK & QT \\ QX & QY
\Ar{1-1}{1-2}{"Qk"}
\Ar{2-1}{2-2}{"Qf"'}
\Ar{1-1}{2-1}{"Qi"'}
\Ar{1-2}{2-2}{"Q\pi"}
\Ar{1-1}{2-2}{"Qh", red}
\end{tikzcd}
\end{equation}
If the left diagram is cartesian in $\scrT$ and the following induced morphism is an epimorphism in $\Mod \calC$:
$$(f\circ -, \pi\circ -)|_{\calC}\colon \calT(*,X)|_{\calC}\oplus \calT(*,T)|_{\calC} \to \calT(*,Y)|_{\calC},$$
then the right-hand diagram is also cartesian in $\scrT\lquot\scrC$. In particular, if $T\in\scrC$, $QK$ is a kernel of $Qf$ in $\scrT\lquot\scrC$.
\end{proposition}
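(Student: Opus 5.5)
The approach is to turn cartesianness of the right-hand square into a statement about right DG-modules over $\scrT$. Then I push the left-hand square through the localization triangle \eqref{eq:standard_tri} and use the $\calC$-epic hypothesis to control the correction term.

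\textbf{Step 1: reformulate both conditions in terms of a cone.} Write
$$D:=\Cocone\bigl((-\pi\hat,\,f\hat)\colon T\hat\oplus X\hat\to Y\hat\bigr),\qquad E:=\Cone\bigl((k,i,h)\colon K\hat\to D\bigr).$$
By Definition~\ref{def:cartesian}, the left square is cartesian exactly when $H^i(K\hat)\to H^i(D)$ is bijective for all $i\le 0$. By the long exact sequence this is a condition of the form $H^i(E)=0$ for $i<0$, together with injectivity of $H^1(K\hat)\to H^1(D)$ (equivalently, control of $H^0(E)$).

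Every object of $\scrT\lquot\scrC$ is isomorphic to some $QU$ in the homotopy category, so it is enough to test the right square on objects $QU$, that is, after restricting along $Q$. Let $\Phi(M):=\bigl(M\lox_{\scrT}(\scrT\lquot\scrC)\bigr)|_{\scrT}$. This is an exact functor with $\Phi(T\hat)\cong(\scrT\lquot\scrC)(Q*,QT)$. Hence $\Phi(D)$ is the cocone defining cartesianness of the right square, and $\Phi(E)$ is the corresponding cone. The claim therefore becomes the same degree conditions for $\Phi(E)$.

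\textbf{Step 2: use the triangle for $\Phi$.} The Corollary after Fact~\ref{prop:char_quot_functor} gives a triangle
$$E|_{\scrC}\lox_{\scrC}\scrT|_{\scrC\op}\to E\to \Phi(E)\dashrightarrow .$$
So it suffices to show that the first term is negligible in the relevant degrees ($\le 0$), that is, that its cohomology is concentrated in degrees $\ge 1$ and that the connecting map in degree $0$ behaves correctly.

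\textbf{Step 3: use the epic hypothesis on $E|_{\scrC}$.} The hypothesis that $(f\circ-,\pi\circ-)|_{\calC}$ is an epimorphism in $\Mod\calC$ says that $H^0$ of $T\hat\oplus X\hat\to Y\hat$ is surjective after restriction to $\calC$. Feeding this into the long exact sequence of the restricted triangle $D|_{\scrC}\to (T\hat\oplus X\hat)|_{\scrC}\to Y\hat|_{\scrC}$, and combining it with cartesianness, I expect to get $H^i(E|_{\scrC})=0$ for all $i\le 0$. The surjectivity supplies exactly the missing degree-$0$ piece, namely injectivity of $H^1$ on $\calC$.

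\textbf{Step 4: the main obstacle.} It remains to see that $E|_{\scrC}\lox_{\scrC}\scrT|_{\scrC\op}$ still has no cohomology in degrees $\le 0$. I expect this to be the hard step: for modules that are merely bounded below, the derived tensor product has no a priori degree bound. My first attempt is to use connectivity, which is the standing context of this subsection: the quotient is connective by Proposition~\ref{prop:derived_ideal_quot}, and $\scrT|_{\scrC\op}$ has cohomology in degrees $\le 0$.

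If connectivity alone does not give the bound, I would avoid the tensor entirely. Instead I would compare $H^0$ and the negative cohomology of $\Phi(D)$ directly with those of $D$ via the five lemma, applied to the long exact sequences coming from \eqref{eq:standard_tri} for $K$, $T$, $X$, $Y$. The epic hypothesis again supplies the needed surjectivity at the $H^0$ end.

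\textbf{Step 5: the special case.} For the final claim, take $T\in\scrC$, so that $QT\cong 0$. The cartesian square then says exactly that $QK\xrightarrow{Qi}QX\xrightarrow{Qf}QY$, with homotopy $Qh$, is left exact in the sense of Definition~\ref{def:exactness}. Hence $QK$ is a kernel of $Qf$.
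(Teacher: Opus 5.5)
\textbf{The proof has a genuine gap at Step~4.} Your skeleton matches the paper's: the same cone $E=\Cone\bigl((k,i,h)^t\colon K\hat\to D\bigr)$, the same localization triangle $E|_{\scrC}\lox_{\scrC}\scrT|_{\scrC\op}\to E\to\Phi(E)\dashrightarrow$, and the same final reduction to objects of the form $QU$. But you leave Step~4 open, and neither of your suggestions closes it.

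\emph{Connectivity gives no bound.} On your information, $H^{\leq 0}(E)=0$ and $H^{\geq 2}(E)=0$, so a priori $E|_{\scrC}\simeq M[-1]$ with $M:=H^1(E|_{\scrC})\in\Mod\calC$. The derived tensor $M[-1]\lox_{\scrC}\scrT|_{\scrC\op}$ can then have Tor-type cohomology of $M$ in every degree $\leq 1$, and nothing in your argument controls it.

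\emph{The five-lemma fallback is circular.} To compare the triangles \eqref{eq:standard_tri} for $K$ and for $D$, you need $K\hat|_{\scrC}\lox_{\scrC}\scrT|_{\scrC\op}\to D|_{\scrC}\lox_{\scrC}\scrT|_{\scrC\op}$ to be an isomorphism in the relevant degrees. That is the same statement about $E|_{\scrC}\lox_{\scrC}\scrT|_{\scrC\op}$.

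\textbf{The missing observation is that $E|_{\scrC}\cong 0$ in $\calD(\scrC)$, i.e.\ $M=0$.} This is exactly where the $\calC$-epic hypothesis enters, and your Step~3 puts it in the wrong degree.
\begin{enumerate}
\item In degrees $\leq 0$ the hypothesis is not needed. We have $H^i(E|_{\scrC})=H^i(E)|_{\calC}$, and since $H^1(K\hat)=0$, cartesianness of the left square is equivalent to $H^i(E)=0$ for all $i\leq 0$. (Incidentally, in Step~1 the degree-$0$ part of cartesianness is surjectivity of $H^0(K\hat)\to H^0(D)$, not injectivity on $H^1$.)
\item The hypothesis is used in degree $1$. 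By connectivity,
\[
H^1(D)\cong\Cok\bigl((f\circ-,\pi\circ-)\colon\calT(*,X)\oplus\calT(*,T)\to\calT(*,Y)\bigr),
\]
so $H^1(D|_{\scrC})=0$ precisely by the hypothesis.
\item Then $H^1(E|_{\scrC})=0$, because $H^2(K\hat)=0$. Together with $H^{\geq 2}(E)=0$ and item~1, this gives $E|_{\scrC}=0$.
\end{enumerate}
Consequently the correction term $E|_{\scrC}\lox_{\scrC}\scrT|_{\scrC\op}$ vanishes identically and $E\to\Phi(E)$ is an isomorphism. So $H^{\leq 0}(\Phi(E))=0$ follows from $H^{\leq 0}(E)=0$, with no estimate on derived tensor products.

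This is the paper's argument. There it is phrased as the map $K\hat|_{\scrC}\to D|_{\scrC}$ being an isomorphism in $\calD(\scrC)$ because $H^1(D|_{\scrC})=0$.
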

\begin{proof}
First, we note that the canonical morphism
\begin{equation}
\label{eq:isom_cart}
(k\hat, i\hat, h\hat)^t|_{\scrC}\colon K\hat|_{\scrC} \to \Cocone(-{\pi}\hat|_{\scrC}, f\hat|_{\scrC})
\end{equation}
is an isomorphism in $\calD(\scrC)$ because 
$H^1\bigl(\Cocone(-{\pi}\hat|_{\scrC}, f\hat|_{\scrC})\bigr)=0$. 
For brevity, we denote $\Phi := \Cocone(-{\pi}\hat, f\hat)$. We have the following diagram in $\calD(\scrT)$:
$$
\begin{tikzcd}
K\hat|_{\scrC}\lox_{\scrC}\scrT|_{\scrC\op}
&
K\hat
&
QK\hat|_{\scrT}
\Ar{1-1}{1-2}{"\epsilon_{K\hat}"}
\Ar{1-2}{1-3}{}
\Ar{1-1}{2-1}{"\bigl((k\hat\cma i\hat\cma h\hat)^t|_{\scrC}\bigr)\lox_{\scrC}\scrT|_{\scrC\op}"'}
\Ar{1-2}{2-2}{"(k\hat\cma i\hat\cma h\hat)^t"}
\Ar{1-3}{2-3}{"\bigl((Qk\hat\cma Qi\hat\cma Qh\hat)^t\bigr)|_{\scrT}"}
\\
\Phi|_{\scrC}\lox_{\scrC}\scrT|_{\scrC\op}
&
\Phi
&
\bigl(\Cocone(-{Q\pi}\hat, Qf\hat)\bigr)|_{\scrT}
\Ar{2-1}{2-2}{"\epsilon_{\Phi}"}
\Ar{2-2}{2-3}{}
\Ar{2-1}{3-1}{}
\Ar{2-2}{3-2}{}
\Ar{2-3}{3-3}{}
\\
\Cone\bigl((k\hat,i\hat,h\hat)^t|_{\scrC}\lox_{\scrC}\scrT|_{\scrC\op}\bigr)
&
\Cone(k\hat,i\hat,h\hat)^t
&
\Cone\bigl((Qk\hat,Qi\hat,Qh\hat)^t|_{\scrT}\bigr)
\Ar{3-1}{3-2}{}
\Ar{3-2}{3-3}{}
\end{tikzcd}
$$
where each row and each column are triangles in $\calD(\scrT)$. Since the morphism \eqref{eq:isom_cart} is an isomorphism in $\calD(\scrC)$, we have 
$$\Cone\bigl((k\hat,i\hat,h\hat)^t|_{\scrC}\lox_{\scrC}\scrT|_{\scrC\op}\bigr)=0$$
and hence the following morphism is an isomorphism in $\calD(\scrT)$:
$$\Cone (k\hat, i\hat, h\hat)^t \to \Cone\bigl((Qk\hat,Qi\hat,Qh\hat)^t|_{\scrT}\bigr).$$
Since we assume that the $H^i\Cone (k\hat, i\hat, h\hat)^t=0$ for $i\leq 0$, we have the following isomorphisms in $\Mod\calT$ for each $i\leq 0$:
$$H^i(QK\hat|_{\scrT}) \cong H^i\bigl(\Cocone(-{Q\pi}\hat, Qf\hat)|_{\scrT}\bigr).$$
Because $(-)|_{\calT}\colon\calD(\scrT\lquot\scrC)\to \calD(\scrT)$ preserves standard $t$-structures and is fully faithful, we have the following isomorphisms in $\Mod(\calT/[\calC])$ for each $i\leq 0$:
$$H^i(QK\hat)\cong H^i\Cocone(-{Q\pi}\hat, Qf\hat).$$
This shows that the right-hand diagram in \eqref{eq:diag_cart} is cartesian in $\scrT\lquot\scrC$. 
\end{proof}

\begin{definition}
\label{def:fin_comp}
Let $\scrT$ be a connective and additive DG-category. We say that $\scrT$ is \emph{finitely complete} if any closed morphism of degree $0$ has a kernel. Dually, we say that $\scrT$ is \emph{finitely cocomplete} if any closed morphism of degree $0$ has a cokernel. 
\end{definition}

\begin{cor}
\label{cor:ideal_quot_fin_comp}
Let $\scrT$ be a finitely complete DG-category and $\scrC$ a contravariantly finite full DG-subcategory of $\scrT$. Then the DG-quotient $\scrT\lquot\scrC$ is also finitely complete. 
\end{cor}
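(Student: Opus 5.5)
Since $Q\colon \scrT\to\scrT\lquot\scrC$ is essentially surjective on homotopy categories (Fact~\ref{prop:char_quot_functor}(1), or Proposition~\ref{prop:derived_ideal_quot}), it suffices to produce a kernel for morphisms coming from $\scrT$. There is one wrinkle: a closed degree-$0$ morphism $QX\to QY$ in $\scrT\lquot\scrC$ is only represented in $H^0$ by the image of a morphism $f\colon X\to Y$ of $\scrT$, by the fullness part of Proposition~\ref{prop:derived_ideal_quot}. So the first step is to note that kernels depend only on the homotopy class of the morphism up to isomorphism of source and target. Concretely, if $g\simeq g'$ via a homotopy $s$, then a left exact $3$-term complex ending in $g$ can be transported to one ending in $g'$: we keep the same $f$ and modify $h$ by $s\circ f$. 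The condition in Definition~\ref{def:exactness} only involves $\Cocone g\hat$, which changes by an isomorphism. Composing with isomorphisms is handled similarly. This reduces the statement to showing that $Qf$ has a kernel for every closed degree-$0$ morphism $f\colon X\to Y$ in $\scrT$.

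Given such $f$, choose a right $\scrC$-approximation $\pi\colon T\to Y$ with $T\in\scrC$, using contravariant finiteness. Form $X\oplus T$, which exists since $\scrT$ is additive. Since $\scrT$ is finitely complete, the closed morphism $(f,\pi)\colon X\oplus T\to Y$ has a kernel, i.e.\ a left exact $3$-term complex $K\to X\oplus T\to Y$ with homotopy $h$. Splitting the first map into components $i\colon K\to X$ and $k\colon K\to T$, and adjusting signs (replacing $\pi$ by $-\pi$ if needed, which does not affect the approximation property), we obtain a square as in \eqref{eq:diag_cart} with $d(h)=\pi\circ k-f\circ i$. I would check that left exactness of the kernel is literally the cartesian condition of Definition~\ref{def:cartesian}. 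This holds because $\Cocone((f,\pi)\hat)$ coincides with $\Cocone(-\pi\hat,f\hat)$ up to the sign convention, since $(X\oplus T)\hat\cong X\hat\oplus T\hat$.

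Next, the map $(f\circ-,\pi\circ-)|_{\calC}$ is surjective in $\Mod\calC$, because already $\pi\circ-$ is surjective on $\calC$. Since $T\in\scrC$, Proposition~\ref{prop:ker_in_ideal_quot} then shows that $QK$ is a kernel of $Qf$ in $\scrT\lquot\scrC$. Finally, the definition of finite completeness requires $\scrT\lquot\scrC$ to be connective and additive; this holds by the remark following the definition of additivity.

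The main obstacle I expect is the bookkeeping in the first step and the sign identification: matching the kernel of $(f,\pi)$ with a cartesian square exactly in the form required by Proposition~\ref{prop:ker_in_ideal_quot}, and justifying that kernels transport along homotopies and isomorphisms in $H^0$. For the latter I would use \cite[Lemma~4.5, Proposition~3.40]{che2023}, as in Lemma~\ref{lem:unique_ker}.
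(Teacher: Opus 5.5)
Your proposal is correct and is the argument the paper intends. The paper's proof is the one-line remark that the statement is a direct consequence of Proposition~\ref{prop:ker_in_ideal_quot}, applied to the pullback of $f$ along a right $\scrC$-approximation $\pi\colon T\to Y$; that pullback exists because $\scrT$ is finitely complete. Your extra bookkeeping correctly fills in details the paper leaves implicit: lifting morphisms along the full, essentially surjective functor $H^0(Q)$, homotopy invariance of kernels, and realizing the cartesian square as a kernel of a map out of $X\oplus T$. Taking the kernel of $(-\pi,f)\colon T\oplus X\to Y$ gives exactly $d(h)=\pi\circ k-f\circ i$, so the sign issue disappears.
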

\begin{proof}
This is a direct consequence of Proposition~\ref{prop:ker_in_ideal_quot}.
\end{proof}

Although not directly related to our main results, the above corollary can be regarded as a DG-enhancement of the theorem of Beligianis, Marmaridis and Reiten \cite{bel1994,bel2007}. Moreover, this proposition can also be used in the proof of results on left triangulated categories by Chen. We shall discuss this. 

\begin{fact}[{\cite[Theorem~2.12]{bel1994}, \cite[Chapter~II, 1]{bel2007}}]
\label{thm:BM}
Let $\calT$ be an additive category with kernels and $\calC$ a contravariantly finite full subcategory of $\calT$. Then the quotient category $\calT/[\calC]$ naturally admits a left triangulated structure. Moreover, in addition, if $\calT$ admits cokernels and $\calC$ is functorially finite in $\calT$, then the quotient category $\calT/[\calC]$ naturally admits a pretriangulated structure.
\end{fact}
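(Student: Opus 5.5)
Since this is quoted as a Fact, I would only recover the classical argument of Beligiannis--Marmaridis, written so that it matches the DG-language above. The plan is to build the left triangulated structure on $\calT/[\calC]$ explicitly: first a loop functor, then a class of left triangles. The pretriangulated part then follows by dualizing and checking a compatibility.

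\emph{Loop functor.} For each $X\in\calT$ fix a right $\calC$-approximation $p_X\colon C_X\to X$ and a kernel $\Omega X\to C_X$ of $p_X$. For a morphism $f\colon X\to Y$, the map $f\circ p_X$ factors through $p_Y$ because $p_Y$ is $\calC$-epic. This gives a map $C_X\to C_Y$ and hence an induced map $\Omega X\to\Omega Y$. Any two such lifts differ by a map $C_X\to C_Y$ whose composite with $p_Y$ is zero. Such a map factors through $\Omega Y\to C_Y$, so the induced maps on kernels differ by a morphism factoring through $C_X\in\calC$. Hence $\Omega$ is a well-defined additive endofunctor of $\calT/[\calC]$. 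The same argument shows that $\Omega$ does not depend, up to natural isomorphism, on the chosen approximations.

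\emph{Left triangles.} For $f\colon X\to Y$, form the pullback $L$ of $(f,p_Y)\colon X\oplus C_Y\to Y$, or equivalently the kernel of this map. The map $(f,p_Y)$ is $\calC$-epic, so this is exactly the situation of Proposition~\ref{prop:ker_in_ideal_quot} with $T=C_Y\in\calC$. This yields a sequence
$$\Omega Y\to L\to X\xrightarrow{f} Y$$
in $\calT/[\calC]$, and I would declare the left triangles to be the sequences isomorphic to ones of this form. I would then check the axioms of a left triangulated category in the sense of Keller--Vossieck in the following order.
\begin{enumerate}
\item Closure under isomorphisms, and that every morphism embeds in a left triangle; both hold by construction.
\item That $0\to X\xrightarrow{\id}X$ extends to a left triangle. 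Here the pullback of $\id_X$ along $p_X$ is $C_X\cong 0$ in the quotient.
\item The rotation axiom: $\Omega Y\to L\to X$ is again a left triangle. I would identify $\Omega X\to\Omega Y\to L\to X$ with the standard triangle of $L\to X$, using the two-pullback lemma.
\item The morphism axiom. It follows from the functoriality of pullbacks together with the factorization property of $p_Y$ used to define $\Omega$.
\end{enumerate}

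\emph{Octahedral axiom.} This is the step I expect to be the main obstacle. Given $X\xrightarrow{f}Y\xrightarrow{g}Z$, I would take iterated pullbacks along $p_Z$ and then along $f$. The pasting lemma for pullbacks gives the comparison maps between the three standard triangles. The difficulty is that the approximations chosen for $Y$ and $Z$ differ, so identifying the third triangle with a standard one requires an explicit isomorphism in the quotient. That isomorphism comes from comparing two $\calC$-epic kernels, which is possible because any two right approximations are isomorphic up to summands in $\calC$.

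\emph{Pretriangulated part.} If $\calT$ also has cokernels and $\calC$ is covariantly finite, the dual construction gives a right triangulated structure with suspension $\Sigma$ defined by left $\calC$-approximations and their cokernels. It remains to check that $\Sigma\dashv\Omega$ on $\calT/[\calC]$ and that the two classes of triangles are compatible under this adjunction. I would obtain the adjunction from the universal properties of cokernels and kernels: a map $\Sigma X\to Y$ corresponds to a map of the cokernel sequence of $X\to C^X$ into the kernel sequence of $C_Y\to Y$. The triangle compatibility would then be a diagram chase in the quotient category.
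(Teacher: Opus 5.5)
Your outline is a correct reconstruction of the classical Beligiannis--Marmaridis argument, which is what the paper cites. The paper gives no proof of this Fact itself. It does, however, re-derive it, together with its extension to algebraic left triangulated $\calT$ (Corollary~\ref{cor:alg_enhance}), by a genuinely different route.

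\textbf{The paper's route.} Regard $\calT$ as a DG-category concentrated in degree $0$. Having kernels makes it finitely complete in the sense of Definition~\ref{def:fin_comp}: a kernel of $g$ gives a left exact $3$-term complex with $h=0$.
\begin{itemize}
\item Proposition~\ref{prop:derived_ideal_quot} identifies $H^0(\scrT\lquot\scrC)$ with $\calT/[\calC]$.
\item Proposition~\ref{prop:ker_in_ideal_quot} shows that your object $L=X\times_Y C_Y$ becomes a kernel of $Qf$. This is exactly the link you noticed.
\item As in Corollary~\ref{cor:loop}, $Q(\ker p_X)$ is a loop object of $QX$, so your $\Omega$ agrees with the DG one.
\item Corollary~\ref{cor:ideal_quot_fin_comp} (and its dual) makes $\scrT\lquot\scrC$ finitely complete (and cocomplete). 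Fact~\ref{prop:htpy_cat_lef} then yields the left triangulated, or pretriangulated, structure wholesale.
\end{itemize}
In particular, the octahedral axiom, which you single out as the main obstacle, and the adjunction $\Sigma\dashv\Omega$ with its compatibilities come for free from \cite{moc2025}.

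\textbf{What each route buys.} The DG route avoids all hand verification of axioms and works where $\calT$ has no honest pullbacks. Its cost is the comparison with the classical structure, which the paper calls straightforward and omits. Your route is elementary, needs nothing beyond kernels in $\calT$, and gives the triangles explicitly.

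\textbf{Steps in your sketch that each need one more line.}
\begin{itemize}
\item \emph{$\Omega$ on the quotient.} Well-definedness also requires that $\Omega$ kills $[\calC]$. If $f=ba$ factors through $C\in\calC$, write $b=p_Yb'$ and use the lift $b'ap_X$; it vanishes on $\Omega X$.
\item \emph{Morphism axiom.} The given square commutes only modulo $[\calC]$, so plain functoriality of pullbacks is not enough. If $cf-f'b=p_{Y'}e$, then with the convention $f\pi_X=p_Y\pi_{C_Y}$ and $p_{Y'}c'=cp_Y$, the comparison map $L\to L'$ needs the corrected component $c'\pi_{C_Y}-e\pi_X$.
\item \emph{Trivial triangle.} The required triangle is $0\to X\xrightarrow{1}X\to 0$, which is the standard triangle of $X\to 0$. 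What you checked is its rotation, and rotation only goes one way in a left triangulated category.
\item \emph{Octahedral step.} The comparison works because the base change $L_g\to Y$ of the $\calC$-epic map $p_Z$ is again $\calC$-epic. Hence $C_{L_g}\to L_g\to Y$ is a right approximation of $Y$. Your ``isomorphic up to summands'' remark, made precise as $(p,0)\cong(p,p')\cong(0,p')$ over $Y$ for two approximations $p,p'$, then gives $(X\times_Y C)\oplus C'\cong C\oplus(X\times_Y C')$.
\end{itemize}
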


The above theorem also holds more generally for algebraic left triangulated categories replacing additive categories with kernels. To explain this, we need the following definition of algebraic left triangulated categories. 

The following proposition shows that finitely complete DG-categories provide a DG-enhancement of left triangulated categories:
\begin{fact}[{\cite[Proposition~2.17]{moc2025}}]
\label{prop:htpy_cat_lef}
Let $\scrT$ be a finitely complete DG-category. Then the homotopy category $H^0(\scrT)$ admits a canonical left triangulated structure. In addition, if $\scrT$ is also finitely cocomplete, then $H^0(\scrT)$ admits a canonical pretriangulated structure.
\end{fact}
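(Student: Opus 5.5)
The strategy is to transport the triangulated structure of $\calD(\scrT)$ to $H^0(\scrT)$ through the Yoneda embedding, using truncation at degree $0$. Write $\calT:=H^0(\scrT)$ and $\calD^{\leq 0}(\scrT)$ for the connective part of the standard $t$-structure. Since $\scrT$ is connective, each representable $X\hat$ lies in $\calD^{\leq 0}(\scrT)$, and $X\mapsto X\hat$ is fully faithful on homotopy categories. The left exactness in Definition~\ref{def:exactness} and the cartesian condition in Definition~\ref{def:cartesian} say exactly that $X\hat\to \tau^{\leq 0}\Cocone(g\hat)$, respectively $X\hat\to\tau^{\leq 0}\Cocone(-j\hat,k\hat)$, is an isomorphism. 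So finite completeness means: the essential image $\calY\subset \calD^{\leq 0}(\scrT)$ of $\calT$ is closed under the operation $\tau^{\leq 0}\Cocone(-)$ applied to morphisms between objects of $\calY$. I would prove that, for any triangulated category with a $t$-structure, such a full subcategory of the aisle carries a left triangulated structure, and then pull it back along the equivalence $\calT\simeq\calY$.

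\textbf{Step 1: the loop functor.} Define $\Omega X$ as the kernel of $0\to X$, equivalently the cartesian square of $0\to X\leftarrow 0$. Then $(\Omega X)\hat\cong\tau^{\leq 0}(X\hat[-1])$. Functoriality comes from the functor $\tau^{\leq 0}\circ[-1]$ on $\calD^{\leq 0}(\scrT)$, which preserves $\calY$ by the closure property. Concretely, morphisms are lifted using \cite[Lemma~4.5]{che2023}, and their uniqueness up to isomorphism uses Lemma~\ref{lem:unique_ker}. I would also check that $\Omega$ is additive.

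\textbf{Step 2: the left triangles.} For a closed $f\colon X\to Y$, choose a kernel $K\xrightarrow{k}X\xrightarrow{f}Y$ (with homotopy $h$). Then $K\hat\cong \tau^{\leq0}\Cocone(f\hat)$, and truncating the rotated triangle $Y\hat[-1]\to \Cocone(f\hat)\to X\hat\to Y\hat$ gives a canonical morphism $\Omega Y\to K$. Declare the left triangles to be the sequences isomorphic to
$$\Omega Y\to K\to X\to Y,$$
and check that this class does not depend on the chosen kernel (Lemma~\ref{lem:unique_ker}).

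\textbf{Step 3: the axioms.} Most axioms follow by truncating statements about triangles in $\calD(\scrT)$, using that $\tau^{\leq 0}$ is right adjoint to the inclusion of the aisle.
\begin{itemize}
\item The identity axiom, the existence of a left triangle on every morphism, and closure under isomorphisms are immediate from Step 2.
\item The morphism axiom (completing a commutative square to a morphism of left triangles) follows from TR3 in $\calD(\scrT)$ followed by $\tau^{\leq 0}$, or directly from \cite[Lemma~4.5]{che2023}.
\item Rotation: $\Omega X\xrightarrow{-\Omega f}\Omega Y\to K\to X$ should again be a left triangle. This amounts to $\tau^{\leq 0}$ of the rotated triangle, together with the identification $\tau^{\leq 0}\Cocone\bigl(\tau^{\leq0}(K\hat)\to X\hat\bigr)\cong\tau^{\leq 0}(Y\hat[-1])$. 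That identification holds because the truncation only changes the cone in degrees $\geq 1$.
\end{itemize}

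\textbf{Main obstacle: the octahedral axiom.} I expect this to be the hardest step. Truncation does not commute with cocones in general, so applying $\tau^{\leq 0}$ to the octahedron in $\calD(\scrT)$ needs a careful check of the degree-$0$ and degree-$1$ cohomology. The key point is that for $M\in\calD^{\leq 0}$ and $N\in \calD^{\leq 0}$, one has $\tau^{\leq 0}\Cocone(M\to N)\cong\tau^{\leq 0}\Cocone(M\to \tau^{\leq 0}N)$ whenever $M$ is replaced by its truncation. Using this, the three cocones in the octahedron can be computed inside $\calY$.

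If this direct argument becomes messy, my fallback is to use pasting of cartesian squares (the pasting lemma in \cite[\S3]{che2023}): realise the octahedron as a composite of two cartesian squares, and compare kernels via Lemma~\ref{lem:unique_ker}.

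The additive structure of $\calT$ and the naturality of $\Omega$ then give the canonical left triangulated structure. The pretriangulated statement follows by applying the dual argument to cokernels and $\tau^{\geq 0}$ of cones, which gives a right triangulated structure with suspension $\Sigma$. The two structures are compatible because $\Omega$ is left adjoint to $\Sigma$; this adjunction is read off from the adjunction between the truncations.
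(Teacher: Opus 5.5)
The paper gives no argument here; it quotes \cite[Proposition~2.17]{moc2025}.

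\textbf{Left triangulated half.} Judged on its own, this part of your proposal is sound. Connectivity places $\calT$ inside $\calD^{\leq 0}(\scrT)$ via $X\mapsto X\hat$. Finite completeness says exactly that the essential image is closed under $\tau^{\leq 0}\Cocone(-)$. LT1--LT4 then follow by truncating triangles in $\calD(\scrT)$.

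The truncation fact you state for the octahedron is garbled, however: with $N\in\calD^{\leq 0}$ it is vacuous. What you need is
$$\tau^{\leq 0}\Cocone(\tau^{\leq 0}u)\cong\tau^{\leq 0}\Cocone(u)\quad\text{for every morphism } u\colon M\to N \text{ in } \calD(\scrT).$$
By the octahedral axiom, replacing $M$ (resp. $N$) by its truncation changes the cocone by $(\tau^{\geq 1}M)[-1]$ (resp. $(\tau^{\geq 1}N)[-1]$). Both lie in $\calD^{\geq 2}(\scrT)$, so $H^{\leq 0}$ is unchanged (compare Lemma~\ref{lem:step1}). Applied to $\Cocone((gf)\hat)\to\Cocone(g\hat)$, this gives LT4 directly. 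Applied to $\Cocone(f\hat)\to X\hat$, it gives the rotation axiom.

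\textbf{Pretriangulated half: the genuine gap.} A pretriangulated category in the sense of Beligiannis--Reiten (the notion used in Fact~\ref{thm:BM}) needs more than a left triangulation, a right triangulation and an adjunction. It also requires two gluing axioms, which relate left triangles to right triangles through the unit and counit of the adjunction. These axioms do not follow from the mere existence of the adjunction, and your proposal never verifies them.

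Your framework also cannot see them. The left triangulation lives in $\calD(\scrT)$. The right triangulation must be built in $\calD(\scrT\op)$ by running your argument for $\scrT\op$, because a cokernel $Z$ of $f\colon X\to Y$ is characterised there by $Z\hut\cong\tau^{\leq 0}\Cocone(f\hut)$. Your ``$\tau^{\geq 0}$ of cones'' in $\calD(\scrT)$ only computes $H^0\Cone(f\hat)$, the cokernel of $\calT(-,X)\to\calT(-,Y)$ in $\Mod\calT$, not the DG-cokernel. Since the two triangulations sit in different ambient categories, the gluing axioms must be proved at the DG level. One way: build the required fill-in from the homotopies of the two $3$-term complexes using \cite[Lemma~4.5]{che2023} and its dual, then check the connecting morphisms by tracking those degree $-1$ homotopies.

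Finally, the adjunction goes the other way from what you wrote:
$$\calT(\Sigma X,Y)\cong H^{-1}(\scrT(X,Y))\cong\calT(X,\Omega Y),$$
so $\Sigma\dashv\Omega$, as in Remark~\ref{rem:loop_obj} and as the definition of a pretriangulated category requires. This comes from both sides representing $H^{-1}(\scrT(X,Y))$, not from an adjunction between truncations.
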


\begin{definition}[{\cite{chen2026enhancedlefttriangulatedcategories}}]
\label{def:alg_left_tri}
Let $(\calT, \Omega, \Delta)$ be a left triangulated category. We say that it is \emph{algebraic} if there exists a finitely complete DG-category $\scrT$ such that $H^0(\scrT)\simeq \calT$ as left triangulated categories. In this case, we say that $\scrT$ is a \emph{DG-enhancement} of $\calT$. Similarly, we can also define the notion of algebraic pretriangulated categories and their DG-enhancements by using finitely complete and finitely cocomplete DG-categories respectively.
\end{definition}

The following is a generalization of Fact~\ref{thm:BM} for algebraic left triangulated categories:
\begin{cor}
\label{cor:alg_enhance}
Let $\calT$ be an algebraic left triangulated category and $\calC$ a contravariantly finite full subcategory of $\calT$. Then the quotient category $\calT/[\calC]$ naturally admits a left triangulated structure. Moreover, it has a DG-enhancement given by the DG-quotient of a DG-enhancement of $\calT$ by a DG-enhancement of $\calC$. 
\end{cor}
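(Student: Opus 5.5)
We reduce the statement to Corollary~\ref{cor:ideal_quot_fin_comp} and Fact~\ref{prop:htpy_cat_lef}, after passing to a suitable DG-enhancement. By Definition~\ref{def:alg_left_tri}, there is a finitely complete DG-category $\scrT$ with $H^0(\scrT)\simeq\calT$ as left triangulated categories. Replacing $\scrT$ by a cofibrant replacement if necessary, we may assume $\scrT$ is locally $K$-projective. Finite completeness is defined through cohomology of Hom-complexes, so it is invariant under quasi-equivalence. We then take $\scrC$ to be the full DG-subcategory of $\scrT$ on the objects corresponding to $\calC$; thus $H^0(\scrC)\simeq\calC$ is a DG-enhancement of $\calC$.

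Contravariant finiteness of $\scrC$ in $\scrT$ in the sense of Definition~\ref{def:approx} only involves the homotopy categories $\calT$ and $\calC$, so it is exactly the hypothesis on $\calC$. Corollary~\ref{cor:ideal_quot_fin_comp} then shows that the DG-quotient $\scrT\lquot\scrC$ is finitely complete; it is also connective and additive by the preceding remark. By Fact~\ref{prop:htpy_cat_lef}, $H^0(\scrT\lquot\scrC)$ carries a canonical left triangulated structure. By Proposition~\ref{prop:derived_ideal_quot}, $H^0(\scrT\lquot\scrC)\simeq\calT/[\calC]$ via the functor induced by $Q$. Transporting the structure along this equivalence gives a left triangulated structure on $\calT/[\calC]$ with DG-enhancement $\scrT\lquot\scrC$, which is the second assertion.

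The main obstacle is the word ``naturally'': the structure should be the expected one, not merely some left triangulated structure. Concretely, the loop functor on $\calT/[\calC]$ should send $Y$ to the kernel $K$ of a right $\calC$-approximation $\pi\colon T\to Y$, and left triangles should come from homotopy pullbacks along $\pi$, as in Fact~\ref{thm:BM}. To verify this, we would use Proposition~\ref{prop:ker_in_ideal_quot} with $T\in\scrC$ and $X=0$. The epimorphism hypothesis there holds because $\pi$ is a right $\scrC$-approximation, so $QK$ is the kernel of $Q\pi\colon QT\to QY$ in $\scrT\lquot\scrC$. Since $QT\cong 0$, this gives $\Omega(QY)\cong QK$.

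For a general morphism $f\colon X\to Y$, we would form the cartesian square pulling back $(f,\pi)$ in $\scrT$, whose source is the direct sum $X\oplus T$. The induced map $\calT(*,X)|_{\calC}\oplus\calT(*,T)|_{\calC}\to\calT(*,Y)|_{\calC}$ is surjective, again because $\pi$ is a right approximation. Proposition~\ref{prop:ker_in_ideal_quot} then shows this square stays cartesian after applying $Q$. Hence the canonical left triangles in $H^0(\scrT\lquot\scrC)$ are the images of the standard triangles of the ideal-quotient construction. Checking that the identifications are compatible with the chosen approximations is routine bookkeeping, and it is where most care is needed.
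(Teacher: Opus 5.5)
Your proposal is correct and follows the paper's route. The paper likewise gets finite completeness of $\scrT\lquot\scrC$ from Corollary~\ref{cor:ideal_quot_fin_comp}, takes the canonical left triangulated structure on its homotopy category from Fact~\ref{prop:htpy_cat_lef}, and asserts without details that this structure agrees with the Beligiannis--Marmaridis one. Your explicit use of Proposition~\ref{prop:derived_ideal_quot} to identify $H^0(\scrT\lquot\scrC)$ with $\calT/[\calC]$, and your sketch of the comparison via Proposition~\ref{prop:ker_in_ideal_quot}, spell out steps the paper leaves implicit or omits.
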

\begin{proof}
We already have two left triangulated structures on $\calT/[\calC]$. One is given by \cite[Proposition~2.17]{moc2025} and Corollary~\ref{cor:ideal_quot_fin_comp}. The other is given by the canonical construction of a left triangulated structure on $\calT/[\calC]$ (see \cite{bel1994,bel2007}). Checking that these two left triangulated structures coincide is straightforward, and we omit the details.
\end{proof}

\begin{cor}
\label{cor:alg_enhance_ideal_quot}
Let $\calT$ be an algebraic pretriangulated category and $\calC$ a functorially finite full subcategory of $\calT$. Then the quotient category $\calT/[\calC]$ naturally admits a pretriangulated structure. Moreover, it has a DG-enhancement given by the DG-quotient of a DG-enhancement of $\calT$ by a DG-enhancement of $\calC$.
\end{cor}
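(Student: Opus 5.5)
The plan is to mirror the proof of Corollary~\ref{cor:alg_enhance}, using the finitely complete case together with its dual. Fix a DG-enhancement $\scrT$ of $\calT$, that is, a finitely complete and finitely cocomplete (connective, additive) DG-category with $H^0(\scrT)\simeq\calT$ as pretriangulated categories. Let $\scrC$ be the full DG-subcategory on the objects of $\calC$; we may assume $\scrT$ is locally $K$-projective by cofibrant replacement. By Proposition~\ref{prop:derived_ideal_quot}, the DG-quotient $\scrT\lquot\scrC$ is connective and additive, and $H^0(\scrT\lquot\scrC)\simeq\calT/[\calC]$. It therefore suffices to show that $\scrT\lquot\scrC$ is both finitely complete and finitely cocomplete. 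Fact~\ref{prop:htpy_cat_lef} then equips $H^0(\scrT\lquot\scrC)\simeq\calT/[\calC]$ with a canonical pretriangulated structure, and by construction this structure is DG-enhanced by $\scrT\lquot\scrC$.

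\textbf{Finite completeness.} Since $\calC$ is contravariantly finite, Corollary~\ref{cor:ideal_quot_fin_comp} applies directly. Concretely, every morphism in $H^0(\scrT\lquot\scrC)$ lifts to some $f\colon X\to Y$ in $\scrT$. Take a right $\scrC$-approximation $\pi\colon T\to Y$ and form the cartesian square of $(f,\pi)$ in $\scrT$; this square exists because $\scrT$ is finitely complete. The map $(f\circ-,\pi\circ-)|_{\calC}$ is then surjective, so Proposition~\ref{prop:ker_in_ideal_quot} shows that $QK$ is a kernel of $Qf$.

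\textbf{Finite cocompleteness.} This is the dual argument. Pass to opposite DG-categories: $\scrT\op$ is finitely complete, and $\calC\op$ is contravariantly finite in $\calT\op$ because $\calC$ is covariantly finite. We also have $(\scrT\lquot\scrC)\op\simeq\scrT\op\lquot\scrC\op$ in $\hqe$, since the characterization in Fact~\ref{prop:char_quot_functor} is self-dual: conditions (2) and (3) are swapped. Applying Corollary~\ref{cor:ideal_quot_fin_comp} to $\scrT\op$ and $\scrC\op$, and noting that kernels in the opposite category are cokernels under Definition~\ref{def:exactness}, yields cokernels in $\scrT\lquot\scrC$.

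\textbf{Comparison with Beligiannis--Marmaridis.} To say ``naturally admits'' in the sense of Fact~\ref{thm:BM}, I would check that the resulting pretriangulated structure agrees with the classical one, in which $\Omega$ and $\Sigma$ are defined via $\scrC$-approximations. For the left-triangulated half this is Corollary~\ref{cor:alg_enhance}. The right-triangulated half is its dual, and the compatibility conditions of a pretriangulated category are inherited from $H^0$ of a finitely bicomplete DG-category by Fact~\ref{prop:htpy_cat_lef}.

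\textbf{Main obstacle.} The delicate step is this comparison: showing that the loop and suspension functors and the distinguished triangles coming from kernels and cokernels in $\scrT\lquot\scrC$ match those built from approximations in $\calT/[\calC]$. I would attack it via the explicit kernel from Proposition~\ref{prop:ker_in_ideal_quot}. For an object $Y$, take $f=0\colon 0\to Y$ and a right $\scrC$-approximation $\pi\colon T\to Y$. The resulting kernel is $QK$ with $K$ the fibre of $\pi$, which is exactly the classical $\Omega Y$, and triangles are compared through the same cartesian squares.
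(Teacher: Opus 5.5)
Your proposal is correct and follows essentially the same route as the paper, whose proof simply refers back to Corollary~\ref{cor:alg_enhance}. That route is: get finite completeness of $\scrT\lquot\scrC$ from Corollary~\ref{cor:ideal_quot_fin_comp} and finite cocompleteness dually, use Fact~\ref{prop:htpy_cat_lef} to put a pretriangulated structure on $H^0(\scrT\lquot\scrC)\simeq\calT/[\calC]$, and identify it with the Beligiannis--Marmaridis structure; your sketch of that last comparison via loop objects coming from right $\scrC$-approximations (as in Corollary~\ref{cor:loop}) goes slightly beyond the paper, which calls the identification straightforward and omits it.
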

\begin{proof}
This is similar to the proof of Corollary~\ref{cor:alg_enhance}, and we omit the details.
\end{proof}

We can also apply the above results to the setting of exact DG-categories. The following theorem is already known by \cite{chen2026enhancedlefttriangulatedcategories}, and we give another proof by using Proposition~\ref{prop:ker_in_ideal_quot}.

\begin{theorem}(\cite[Theorem~3.1]{chen2026enhancedlefttriangulatedcategories})
Let $(\scrB,\calS)$ be a connective exact DG-category with enough projectives. Denote by $\scrP$ the full DG-subcategory of $\scrB$ consisting of projective objects. Then the DG-quotient $\scrB\lquot\scrP$ is a finitely complete DG-category.
\end{theorem}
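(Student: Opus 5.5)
We want to show that every closed degree-$0$ morphism $\bar f\colon QX\to QY$ in $\scrB\lquot\scrP$ has a kernel. The plan is to lift $\bar f$ to $\scrB$, replace it by a deflation using projectives, take the kernel there, and push the resulting cartesian square through Proposition~\ref{prop:ker_in_ideal_quot}. Here $\scrP$ plays the role of $\scrC$.

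First, the setup. Since $(\scrB,\calS)$ is a connective exact DG-category, $\scrB$ is connective and additive. By Proposition~\ref{prop:derived_ideal_quot}, $H^0(\scrB\lquot\scrP)\simeq H^0(\scrB)/[H^0(\scrP)]$, and the quotient is again connective and additive. Kernels are invariant under isomorphism of morphisms in $H^0$ (via Lemma~\ref{lem:unique_ker} and the invariance of left exactness). So it suffices to treat $\bar f = Qf$ for a closed degree-$0$ morphism $f\colon X\to Y$ in $\scrB$. Such a lift exists because $Q$ is full on $H^0$ and every object of the quotient has the form $QX$.

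Second, enough projectives give a deflation $\pi\colon T\to Y$ with $T\in\scrP$. The morphism
$$(f,\pi)\colon X\oplus T\to Y$$
is then also a deflation: it is the composite of the split projection $X\oplus T\to T$ (a deflation) with $\pi$, and deflations are closed under composition in Chen's exact DG-categories. Being a deflation, it fits into a conflation
$$K\to X\oplus T\xrightarrow{(f,\pi)}Y\quad(h).$$
I then check that this conflation yields a cartesian square
$$\begin{tikzcd} K & T \\ X & Y \Ar{1-1}{1-2}{"k"} \Ar{1-1}{2-1}{"i"'} \Ar{1-2}{2-2}{"\pi"} \Ar{2-1}{2-2}{"f"'} \Ar{1-1}{2-2}{"h", red}\end{tikzcd}$$
in the sense of Definition~\ref{def:cartesian}. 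The reason is that conflations are left exact $3$-term complexes, and left exactness of $K\to X\oplus T\to Y$ translates into the cartesian condition, since $\Cocone(-\pi\hat, f\hat)$ is the cocone of $(f,\pi)\hat$ up to the sign on $\pi$. Signs are absorbed by replacing $k$ with $-k$.

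Third, I verify the epimorphism hypothesis of Proposition~\ref{prop:ker_in_ideal_quot}:
$$(f\circ-,\pi\circ-)|_{\calP}\colon \calB(*,X)|_{\calP}\oplus\calB(*,T)|_{\calP}\to\calB(*,Y)|_{\calP}$$
must be surjective. This follows because $\pi$ is a deflation and objects of $\scrP$ are projective. Maps from a projective $P$ lift along deflations in $H^0$: the relevant $H^1$ of the cocone vanishes by projectivity, since $\mathrm{Ext}^1(P,K)=0$. Already the component $\pi\circ-$ is surjective. Since $T\in\scrP$, Proposition~\ref{prop:ker_in_ideal_quot} then says that $QK$ is a kernel of $Qf$ in $\scrB\lquot\scrP$.

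The main obstacle is the step identifying a conflation with a cartesian square in Chen's sense. This requires unwinding Chen's definition of conflations (homotopy short exact $3$-term complexes that are both left and right exact) and matching the cocone of the row map with $\Cocone(-\pi\hat,f\hat)$ degreewise for all $i\le 0$. I would argue via the standard isomorphism of cones in $\calD(\scrB)$ between $\Cocone((f,\pi)\hat)$ and $\Cocone(-\pi\hat,f\hat)$, which is induced by the sign automorphism of $T\hat$. The lifting property of projectives against deflations is routine by comparison and I would quote it from Chen's work.
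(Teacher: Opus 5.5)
Your overall plan matches the paper's proof. You lift to a closed $f\colon X\to Y$ in $\scrB$ and take a deflation $\pi\colon T\to Y$ with $T\in\scrP$; this is a right $\scrP$-approximation, so $\pi\circ-$ is already surjective on $\scrP$. You then produce a cartesian square over $f$ and $\pi$ and apply Proposition~\ref{prop:ker_in_ideal_quot} with $T\in\scrP$.

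However, your argument that $(f,\pi)\colon X\oplus T\to Y$ is a deflation is wrong as written. The composite of the split projection $X\oplus T\to T$ with $\pi$ is $(0,\pi)$, not $(f,\pi)$, so closure of deflations under composition does not give what you claim. Without $(f,\pi)$ being a deflation you have no conflation $K\to X\oplus T\to Y$, and hence no square. Recall that $\scrB$ is not assumed to have kernels, so the deflation property is exactly what produces $K$.

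The claim itself is true and can be repaired. For instance, write $(f,\pi)=(f,\mathrm{id}_Y)\circ(\mathrm{id}_X\oplus\pi)$. Here $\mathrm{id}_X\oplus\pi$ is a deflation, since direct sums of conflations are conflations. The map $(f,\mathrm{id}_Y)$ is the split projection $X\oplus Y\to Y$ precomposed with the automorphism $\left(\begin{smallmatrix}1&0\\ f&1\end{smallmatrix}\right)$ of $X\oplus Y$.

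The cleaner fix is what the paper does. The pullback axiom of an exact DG-category says directly that the homotopy pullback of the deflation $\pi$ along $f$ exists, i.e.\ there is a cartesian square in the sense of Definition~\ref{def:cartesian} with sides $K\to T$, $K\to X$ over $\pi$ and $f$. This removes the detour through $(f,\pi)$ entirely. It also removes the step you flagged as the main obstacle. By Definition~\ref{def:cartesian}, a square is cartesian exactly when the $3$-term complex $K\to X\oplus T\to Y$ (with the sign on $\pi$) is left exact, so there is nothing further to unwind.

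Your remark that kernels depend only on the homotopy class of the morphism, so that replacing an arbitrary $\bar f$ by some $Qf$ is harmless, addresses a point the paper leaves implicit.
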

\begin{proof}
We give another proof of this theorem by using Proposition~\ref{prop:ker_in_ideal_quot}. Let $f\colon X\to Y$ be a closed morphism of degree $0$ in $\scrB$. Since $\scrB$ has enough projectives, there exists a right $\scrP$-approximation $\pi\colon P\to Y$ of $Y$. Then we can take a cartesian diagram in $\calB$ as follows by using the pullback axiom of exact DG-categories:
$$\begin{tikzcd}
K & P \\ X & Y
\Ar{1-1}{1-2}{"k"}
\Ar{2-1}{2-2}{"f"'}
\Ar{1-1}{2-1}{"i"'}
\Ar{1-2}{2-2}{"\pi"}
\Ar{1-1}{2-2}{"h", red}
\end{tikzcd}$$
By Proposition~\ref{prop:ker_in_ideal_quot}, the image of the above diagram in $\scrB\lquot\scrP$ is also cartesian. Thus, $QK$ is a kernel of $Qf$ in $\scrB\lquot\scrP$. This shows that $\scrB\lquot\scrP$ is finitely complete.
\end{proof}

\subsection[Stable DG-categories and Abelian d-truncated DG-categories]{Stable DG-categories and Abelian \texorpdfstring{$d$}{d}-truncated DG-categories}

We fix an additive connective DG-category $\scrT$.

\begin{definition}(\cite[Definition~6.1]{che2024})
\label{def:stable_DG}
Let $\scrT$ be an additive connective DG-category. We say that $\scrT$ is \emph{stable} if the following conditions are satisfied:
\begin{enumerate}
	\item[(St1)] $\scrT$ is finitely complete and finitely cocomplete.
	\item[(St2)] Any left exact $3$-term complex is also right exact. 
	\item[(St2)$\op$] Any right exact $3$-term complex is also left exact. 
\end{enumerate}
\end{definition}

\begin{example}
\label{ex:stable_DG}
Let $\scrT'$ be a pretriangulated DG-category. Then $\tau^{\leq 0}\scrT'$ is stable by \cite[Example~6.2]{che2024}. Conversely, if $\scrT$ is a stable DG-category, then it is a connective cover of some pretriangulated DG-category by \cite[Proposition~3.26]{che2024b}. Thus, stable DG-categories provide connective DG-enhancements of algebraic triangulated categories.
\end{example}

\begin{theorem}(\cite[Theorem~6.5]{che2024})
Let $\scrT$ be a stable DG-category. Then the homotopy category $H^0(\scrT)$ admits a triangulated structure. 
\end{theorem}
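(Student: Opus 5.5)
The plan is to build the triangulated structure directly from the DG-enhancement, with the suspension given by cokernels of maps to the zero object. For each $X\in\scrT$, axiom (St1) provides a cokernel of $X\to 0$. Call the resulting right exact $3$-term complex $X\to 0\to \Sigma X$, with a homotopy $h$ of degree $-1$. By (St2)$\op$ it is also left exact, so $X$ is a kernel of $0\to\Sigma X$. Dually, taking kernels of $0\to X$ produces $\Omega X$. Lemma~\ref{lem:unique_ker}, together with its dual for cokernels, makes $\Sigma$ and $\Omega$ well defined up to isomorphism on $H^0(\scrT)$. Functoriality comes from the existence of morphisms in $\cpx^3(\scrT)$ (\cite[Lemma~4.5]{che2023}). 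The exactness of the two complexes then identifies $\calT(\Omega X,Y)$ with $\calT(X,\Sigma Y)$ naturally, so $\Sigma$ and $\Omega$ are quasi-inverse. Equivalently, the bicartesian squares with two zero corners are exactly those witnessing $X\cong\Omega\Sigma X$.

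Next I will define the distinguished triangles. Every exact $3$-term complex $X\xrightarrow{f}Y\xrightarrow{g}Z$ with homotopy $h$ gives a connecting morphism $Z\to\Sigma X$, obtained by comparing two squares. The first is the square $(X,Y,0,Z)$ with homotopy $h$, which is cocartesian by right exactness. The second is the square $(X,0,0,\Sigma X)$. The comparison morphism comes from the universal property of the cocartesian square $(X,Y,0,Z)$ with respect to $(X,0,0,\Sigma X)$. I declare the triangles isomorphic to $X\to Y\to Z\to\Sigma X$ obtained this way to be distinguished.

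With this definition, (TR1) follows from finite cocompleteness together with (St2), since every morphism has a cokernel and the resulting complex is exact. The rotation axiom (TR2) comes from pasting bicartesian squares. Rotation corresponds to the pasted square
$$(Y,Z,0,\Sigma X),$$
built from $(X,Y,0,Z)$ and $(X,0,0,\Sigma X)$. The fact I need is a pasting lemma: in a stable DG-category, if the left square is bicartesian, then the outer rectangle is bicartesian if and only if the right square is. I will prove this from the long exact sequences of cocones defining cartesianness, using the octahedral axiom in $\calD(\scrT)$. The sign in $-\Sigma f$ comes from the homotopy data. (TR3) follows by lifting morphisms of $3$-term complexes (\cite[Lemma~4.5]{che2023}) and then extending them across the pushouts.

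The main obstacle is the octahedral axiom (TR4). My approach is to build a $3\times 3$ grid of bicartesian squares. Given composable maps $f$ and $g$, I take successive cokernels and form the pushouts in $\scrT$. The pasting lemma then shows that the induced map $\Cone f\to\Cone(gf)\to\Cone g$ is an exact $3$-term complex. Keeping track of all the higher homotopies (degree $-1$ and $-2$ data) is delicate. Alternatively, I would invoke \cite[Proposition~3.26]{che2024b}, which realizes $\scrT$ as $\tau^{\le0}$ of a pretriangulated DG-category $\scrT'$. Then $H^0(\scrT)=H^0(\scrT')$, and the triangulated structure is inherited. In this route, the remaining work is to check that exact $3$-term complexes in $\scrT$ correspond to cone sequences in $\scrT'$. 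That comparison is done via the cocones in $\calD(\scrT)$, using that truncation does not change $H^i$ for $i\le 0$.
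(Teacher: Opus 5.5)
The paper contains no proof of this statement. It is quoted from \cite[Theorem~6.5]{che2024} without argument, so your proposal can only be judged on its own merits. On those merits, your second route is correct and already suffices by itself; your first route is a sound but unfinished outline.

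\textbf{Second route.} It uses what the paper's own citations supply. Example~\ref{ex:stable_DG} records, via \cite[Proposition~3.26]{che2024b}, that $\scrT$ is quasi-equivalent to $\tau^{\le 0}\scrT'$ for some pretriangulated $\scrT'$. Since $\tau^{\le 0}$ does not change $H^0$ of Hom-complexes, $H^0(\scrT)\simeq H^0(\scrT')$ is triangulated, and that proves the statement. Your comparison of exact $3$-term complexes in $\scrT$ with cone sequences in $\scrT'$ is also correct: the cocone of the truncated map and the truncation of the cocone agree in degrees $\le 0$. However, that comparison is only needed to identify this structure with an intrinsic one, not for the bare statement. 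One check remains: this route is valid only if Chen's connective-cover result is not itself proved using Theorem~6.5. Otherwise it is circular.

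\textbf{First route.} This is a genuinely different, self-contained approach: suspension as the cokernel of $X\to 0$, triangles from exact $3$-term complexes with the connecting map given by the universal property of a cocartesian square, and rotation by pasting. The ingredients you name are sound.
\begin{itemize}
\item Pasting of cocartesian squares holds in any connective DG-category. The comparison map between the cocones of the right square and of the outer rectangle has the same cone as the comparison map for the left square. That cone lies in $\calD^{\geq 1}$ because the corner object is connective, so it does not affect $H^{\le 0}$.
\item $X\cong\Omega\Sigma X$ and $\Sigma\Omega X\cong X$ follow directly from the exactness of $X\to 0\to\Sigma X$ and of $\Omega X\to 0\to X$.
\item A small slip: in (TR1) it is (St2)$\op$, not (St2), that makes the cokernel complex exact.
\end{itemize}

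As written, though, this route does not prove (TR4). You announce the $3\times 3$ grid of bicartesian squares but leave the degree $-1$ and $-2$ homotopy data unchecked, and the sign $-\Sigma f$ in (TR2) is asserted rather than derived. On its own it is an outline. If completed, it would buy an intrinsic description of the distinguished triangles (exactly the exact $3$-term complexes) without appealing to an embedding into a pretriangulated DG-category.
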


For the rest, we fix a stable DG-category $\scrT$. We may denote the homotopy category $H^0(\scrT)$ by $\calT$. We denote the shift functor of $\calT$ by $[1]$.

\begin{definition}(\cite[Section~3]{IY2008})
\label{def:cluster_tilting_subcat}
Let $\scrT$ be a stable DG-category and $\scrC$ a full DG-subcategory of $\scrT$. We say that $\scrC$ is a \emph{$(d+1)$-cluster tilting DG-subcategory} of $\scrT$ if the following conditions are satisfied:
\begin{enumerate}
	\item $\calC$ is functorially finite in $\calT$.
	\item For any $X\in\scrT$, we have $X\in\scrC$ if and only if $\calT(X,\calC[i])=0$ for any $1\leq i\leq d$.
	\item For any $X\in\scrT$, we have $X\in\scrC$ if and only if $\calT(\calC,X[i])=0$ for any $1\leq i\leq d$.
\end{enumerate}
\end{definition}

\begin{remark}
The definition above of a $(d+1)$-cluster tilting DG-subcategory in a stable DG-category is equivalent to $\calC\subset\calT$ being a $(d+1)$-cluster tilting subcategory in the usual sense.
\end{remark}

\begin{definition}
\label{def:ast_sub}
Let $\scrX,\scrY$ be full subcategories of $\calT$. We denote by $\scrX*\scrY$ the full subcategory of $\scrT$ consisting of objects $M\in\scrT$ such that there exists a triangle with $X\in\scrX$ and $Y\in\scrY$:
$$X\to M\to Y\to X[1]$$
Note that this operation is associative, i.e., we have the following equality for any full subcategories $\scrX,\scrY,\scrZ$ of $\calT$:
$$(\scrX*\scrY)*\scrZ=\scrX*(\scrY*\scrZ).$$ 
\end{definition}

The following theorem is useful.

\begin{fact}(\cite[Theorem~3.1]{IY2008})
\label{thm:splicing}
Let $\scrT$ be a stable DG-category and $\scrC$ a $(d+1)$-cluster tilting DG-subcategory of $\scrT$. Then we have the following equality of subcategories of $\calT$:
$$\scrT=\scrC*\scrC[1]*\cdots*\scrC[d].$$
\end{fact}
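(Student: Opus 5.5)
The plan is to follow the standard Iyama--Yoshino argument, which works entirely inside the triangulated category $\calT=H^0(\scrT)$. The DG-structure enters only through the triangulated structure on $\calT$ (Chen's theorem). So I would prove the equivalent statement: every $M\in\calT$ lies in $\calC*\calC[1]*\cdots*\calC[d]$. I argue by constructing a tower of right approximation triangles and then showing that the last term lands in $\calC[d]$.

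First, set $M_0:=M$. Using contravariant finiteness of $\calC$, choose a right $\calC$-approximation $C_0\to M_0$ and complete it to a triangle
$$M_1'\to C_0\to M_0\to M_1'[1].$$
It is cleaner to rotate and write the triangle as
$$C_0\to M_0\to N_1\to C_0[1],$$
where $N_1$ is the cone of the approximation. Since $C_0\to M_0$ is a right approximation, the long exact sequence gives $\calT(\calC,N_1[-1])\to \calT(\calC,C_0)\to \calT(\calC,M_0)\to 0$. Hence $\calT(\calC,N_1)\to \calT(\calC,C_0[1])$ is injective. Iterating, put $N_1=N_1'[1]$, take a right $\calC$-approximation of $N_1'$, and so on.

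This yields triangles $C_j\to N_j'\to N_{j+1}'[1]\to C_j[1]$ for $j=0,\dots,d-1$, with $N_0'=M$. Splicing them and using associativity of $*$ (Definition~\ref{def:ast_sub}) gives
$$M\in \calC*\calC[1]*\cdots*\calC[d-1]*\{N_d'[d]\}.$$

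The key step is to show $N_d'\in\calC$. By condition (3) of Definition~\ref{def:cluster_tilting_subcat}, it suffices to check $\calT(\calC,N_d'[i])=0$ for $1\le i\le d$. I would prove the following by induction on $j$: $\calT(\calC,N_j'[i])=0$ for $1\le i\le j$ and $\calT(\calC,N_j'[i])\cong\calT(\calC,M[i-j])$-type shifts for larger $i$. Each step uses two facts: surjectivity of the approximation kills the new bottom degree, and the vanishing $\calT(\calC,C_j[i])=0$ for $1\le i\le d$ identifies $\calT(\calC,N_{j+1}'[i+1])\cong\calT(\calC,N_j'[i])$ for $1\le i\le d-1$. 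At $j=d$ this yields the required vanishing in degrees $1,\dots,d$.

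The main obstacle is the index bookkeeping in this induction, in particular the boundary degree $i=d$, where $\calT(\calC,C_j[d+1])$ need not vanish. I expect only injectivity to be needed there: the exact sequence $\calT(\calC,C_j[i])\to\calT(\calC,N_j'[i])\to\calT(\calC,N_{j+1}'[i+1])\to\calT(\calC,C_j[i+1])$ suffices, with surjectivity of the approximation used at $i=0$. The reverse inclusion $\calC*\cdots*\calC[d]\subseteq\calT$ is trivial, which gives the equality.
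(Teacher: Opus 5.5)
Your proposal is correct and is essentially the standard Iyama--Yoshino argument; the paper gives no proof of its own, citing \cite[Theorem~3.1]{IY2008}, and later uses exactly your tower of right $\scrC$-approximation triangles ending in an object of $\scrC$. Your worry about the ``boundary'' degree is unnecessary: the identification $\calT(\calC,N_{j+1}'[i+1])\cong\calT(\calC,N_j'[i])$ is only needed for $i+1\le j+1\le d$, so $\calT(\calC,C_j[d+1])$ never enters the argument.
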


\begin{remark}
\label{rem:shihted}
By shifting the above equation, we also have the following equation as subcategories of $\calT$:
$$\scrT=\scrC[-d]*\cdots*\scrC[-1]*\scrC.$$
\end{remark}

% \color{blue}
% \begin{lemma}
% Let $\scrC$ be a $(d+1)$-cluster tilting DG-subcategory of $\scrT$. Consider the following $3$-term complex in $\scrT$:
% $$X\xrightarrow{f} Y\xrightarrow{g} Z \quad (h\colon X\to Z)$$
% where $Y\in\scrC$ and $g$ is a right $\scrC$-approximation of $Z$. 
% \end{lemma}
% \normalcolor

% \subsection[Abelian d-truncated DG-categories]{Abelian \texorpdfstring{$d$}{d}-truncated DG-categories}
% In this subsection, we fix a connective DG-category $\scrA$. 
Next, we explain the notion of abelian $d$-truncated DG-categories. The notion of abelian $d$-truncated DG-categories is a $d$-dimensional analogue of abelian DG-categories. We fix a connective DG-category $\scrA$ for the rest of this subsection.

\begin{definition}
\label{def:d-truncated}
We say that $\scrA$ is \emph{$d$-truncated} if $H^i(\scrA(X,Y))=0$ for any $X,Y\in\scrA$ and $i\leq -d$. 
\end{definition}

\begin{definition}
\label{def:n-mono}
Let $f\colon X\to Y$ be a closed morphism of degree $0$ in $\scrA$. We say that $f$ is an \emph{$n$-monomorphism} if the following conditions hold.
\begin{enumerate}
	\item The following morphism is a monomorphism in $\Mod \calA$:
	$$H^{-n+1}(\scrA(*,X))\to H^{-n+1}(\scrA(*,Y))$$
	\item The following morphism is an isomorphism in $\Mod \calA$ for each $i\leq -n$:
	$$H^i(\scrA(*,X))\to H^i(\scrA(*,Y))$$
\end{enumerate}
Dually, we say that $f$ is an \emph{$n$-epimorphism} if the following conditions hold.
\begin{enumerate}
	\item The following morphism is a monomorphism in $\Mod \calA$:
	$$H^{-n+1}(\scrA(Y,*))\to H^{-n+1}(\scrA(X,*))$$
	\item The following morphism is an isomorphism in $\Mod \calA$ for each $i\leq -n$:
	$$H^i(\scrA(Y,*))\to H^i(\scrA(X,*))$$
\end{enumerate}
\end{definition}

\begin{definition}
\label{def:loop_obj}
Let $X\in\scrA$ be an object. We say that $Y\in\scrA$ is a \emph{loop object} of $X$ and denote it by $\Omega X$ if there exists an isomorphism $\scrA(*,Y)\cong \tau_{\leq 0}(\scrA(*,X)[-1])$ in $\calD(\calA)$. Dually, we say that $Z\in\scrA$ is a \emph{suspension object} of $X$ and denote it by $\Sigma X$ if there exists an isomorphism $\scrA(Z,*)\cong \tau_{\leq 0}(\scrA(X,*)[-1])$ in $\calD(\calA\op)$.
\end{definition}
% \textcolor{blue}{Include the definition of loop object in connective DG-categories.}

\begin{remark}
\label{rem:loop_obj}
By choosing a loop object $\Omega X$ for each $X\in\scrA$, we can define a loop functor $\Omega\colon\calA\to \calA$ on the homotopy category $\calA$ of $\scrA$ by sending $X$ to $\Omega X$. Dually, we also have a suspension functor $\Sigma\colon\calA\to \calA$ by sending $X$ to $\Sigma X$. It is already known that the loop functor $\Omega$ is right adjoint to the suspension functor $\Sigma$ (see \cite[\S 2.1]{moc2025}). Note that this is different from the syzygy functor. We do not use the notation $\Omega$ for the syzygy functor in this paper.
\end{remark}

\begin{remark}
\label{rem:d-trunc_loop}
Assume that $\scrA$ has loop objects. Then $\scrA$ is $d$-truncated if and only if $\Omega^d X=0$ holds for any $X\in\scrA$ if and only if $\Sigma^d X=0$ holds for any $X\in\scrA$.
\end{remark}

\begin{lemma}(\cite[Proposition~3.8]{moc2025})
\label{lem:chara_d_mono_abel}
Consider the following left exact $3$-term complex in $\scrA$:
$$K\xrightarrow{k} X\xrightarrow{f} Y \quad (h\colon K\to Y)$$
Then $f$ is a $n$-monomorphism if and only if $\Omega^{n-1} K=0$ holds in $\calA$. 
\end{lemma}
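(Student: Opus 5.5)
The plan is to compute the cohomology of $\Cocone f\hat$ from the left exact $3$-term complex and compare it with $H^i(f\hat)$ in nonpositive degrees. Left exactness says that the canonical map $K\hat\to\Cocone f\hat$ induces isomorphisms $H^i(K\hat)\cong H^i(\Cocone f\hat)$ for all $i\leq 0$. We work in $\calD(\scrA)$ with the triangle
$$\Cocone f\hat\to X\hat\xrightarrow{f\hat} Y\hat\to (\Cocone f\hat)[1].$$
Since $\scrA$ is connective, all these modules vanish in positive cohomological degrees. The long exact sequence then gives, for each $i\leq 0$,
$$H^{i}(K\hat)\to H^i(X\hat)\to H^i(Y\hat)\to H^{i+1}(\Cocone f\hat)\to\cdots.$$
For $i\leq -1$ the term $H^{i+1}(\Cocone f\hat)$ is $H^{i+1}(K\hat)$, because $i+1\leq 0$.

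The first key step is to translate $n$-monomorphism into a vanishing condition. A diagram chase in this long exact sequence shows that $f$ is an $n$-monomorphism if and only if $H^i(\Cocone f\hat)=0$ for all $i\leq -n+1$. Concretely:
\begin{itemize}
\item Injectivity of $H^{j}(f\hat)$ for $j\leq -n+1$ is equivalent to surjectivity of the connecting maps $H^{j-1}(Y\hat)\to H^{j}(\Cocone f\hat)$.
\item Surjectivity of $H^{j-1}(f\hat)$ for $j-1\leq -n$ is equivalent to the connecting map $H^{j-1}(Y\hat)\to H^j(\Cocone f\hat)$ being zero.
\end{itemize}
Combining these for all $j\leq -n+1$ gives the claimed equivalence. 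By left exactness, the condition is equivalent to $H^i(K\hat)=0$ for all $i\leq -n+1$, i.e.\ $\tau_{\leq -n+1}K\hat=0$.

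The second step identifies this with $\Omega^{n-1}K=0$. By Definition~\ref{def:loop_obj} and induction, $(\Omega^{n-1}K)\hat\cong \tau_{\leq 0}(K\hat[-(n-1)])$. The induction uses $\tau_{\leq0}\bigl((\tau_{\leq 0}M)[-1]\bigr)=\tau_{\leq 0}(M[-1])$, which holds because shifting by $[-1]$ moves cohomology to higher degrees. The cohomology of $\tau_{\leq 0}(K\hat[-(n-1)])$ is $H^{i-n+1}(K\hat)$ for $i\leq 0$, so it vanishes exactly when $H^j(K\hat)=0$ for all $j\leq -n+1$. Finally, an object of $\calA$ is zero iff its representable module is zero in $\calD(\scrA)$, by Yoneda.

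The main obstacle is existence of $\Omega^{n-1}K$ when the loop object is not assumed to exist. If the statement is read as ``the module $\tau_{\leq0}(K\hat[-(n-1)])$ vanishes'', which is the meaningful content, this issue disappears; under the paper's standing hypotheses, loop objects exist via kernels of $0\to K$.

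The diagram chase at the boundary degree $i=-n+1$ needs some care. There only injectivity is required, and $H^{-n+2}$ terms must not be constrained. For $n=1$ we need $H^1(\Cocone f\hat)$, which is $0$ by connectivity because the cocone of a map of connective modules has vanishing $H^{1}$ only when the map is surjective on $H^0$. So for $n=1$ the chase must be adapted to use just $H^{\leq 0}$, giving that $f$ is a $1$-monomorphism iff $K=0$.
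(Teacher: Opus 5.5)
Your proof is correct, with one false side remark that should be deleted (see below). The paper gives no argument of its own here; it only cites \cite[Proposition~3.8]{moc2025}. Your argument is a valid self-contained proof, and it has three steps.
\begin{enumerate}
\item The exact sequences $0\to\Cok H^{i-1}(f\hat)\to H^i(\Cocone f\hat)\to \mathrm{Ker}\,H^i(f\hat)\to 0$ show that $f$ is an $n$-monomorphism if and only if $H^i(\Cocone f\hat)=0$ for all $i\leq -n+1$.
\item Since $-n+1\leq 0$, left exactness transfers this condition to $H^i(K\hat)=0$ for all $i\leq -n+1$.
\item The identification $H^i\bigl((\Omega^{n-1}K)\hat\bigr)\cong H^{i-n+1}(K\hat)$ for $i\leq 0$, together with connectivity and Yoneda, turns this into $\Omega^{n-1}K=0$.
\end{enumerate}

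The correction concerns your last paragraph. $H^1(\Cocone f\hat)\cong \Cok H^0(f\hat)$ does not vanish by connectivity in general, so the sentence claiming it does is false. It is also unnecessary. For $n=1$ your chase only runs over degrees $j\leq 0$: it needs injectivity of $H^j(f\hat)$ for $j\leq 0$ and surjectivity for $j\leq -1$, and never touches $H^1$. So the general argument already covers $n=1$, giving that $f$ is a $1$-monomorphism if and only if $K=0$, and that paragraph can simply be dropped.
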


\begin{lemma}(\cite[Proposition~3.11]{moc2025})
\label{lem:ker_id_d_mono}
Let $\scrA$ be a $d$-truncated DG-category and consider the following left exact $3$-term complex in $\scrA$:
$$K\xrightarrow{k} X\xrightarrow{f} Y \quad (h\colon K\to Y)$$
Then the morphism $k\colon K\to X$ is a $d$-monomorphism. 
\end{lemma}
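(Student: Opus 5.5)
We have to show that if $K\xrightarrow{k}X\xrightarrow{f}Y$ $(h)$ is left exact in a $d$-truncated $\scrA$, then $k$ is a $d$-monomorphism. By the definition of left exactness, $K\hat\to\Cocone f\hat$ induces isomorphisms on $H^i$ for $i\le 0$. For $i\le 0$ we therefore have a long exact sequence in $\Mod\calA$:
$$\cdots\to H^{i-1}(Y\hat)\to H^i(K\hat)\xrightarrow{H^i(k\hat)} H^i(X\hat)\xrightarrow{H^i(f\hat)} H^i(Y\hat)\to\cdots$$
The strategy is to read off injectivity and bijectivity of $H^i(k\hat)$ in the relevant degrees from this sequence, using that all the modules vanish in degrees $\le -d$.

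First, injectivity in degree $-d+1$. By $d$-truncatedness we have $H^{-d}(Y\hat)=0$. The portion $H^{-d}(Y\hat)\to H^{-d+1}(K\hat)\to H^{-d+1}(X\hat)$ is exact, so $H^{-d+1}(k\hat)$ is a monomorphism. This is condition (1) of Definition~\ref{def:n-mono} with $n=d$.

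Next, bijectivity in degrees $i\le -d$. Here $H^i(K\hat)=H^i(X\hat)=0$ by $d$-truncatedness, so $H^i(k\hat)$ is trivially an isomorphism between zero modules. This is condition (2).

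The only point requiring care is that the connecting maps really come from $\Cocone f\hat$, and that the identification $H^i(K\hat)\cong H^i(\Cocone f\hat)$ is compatible with $k\hat$. This holds because the composite $K\hat\to\Cocone f\hat\to X\hat$ is $k\hat$ by construction of $(k\hat,h\hat)^t$.

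Alternatively, one could take a kernel of $k$ and apply Lemma~\ref{lem:chara_d_mono_abel} together with Remark~\ref{rem:d-trunc_loop}, which gives $\Omega^{d-1}$ of the kernel $=0$. However, that route needs the existence of kernels and loop objects, so the direct argument above is preferable. There is no real obstacle here.
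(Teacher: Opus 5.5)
Your proof is correct and complete. Since $d\geq 1$, the degree $-d+1$ lies in the range where left exactness identifies $H^{-d+1}(K\hat)$ with $H^{-d+1}(\Cocone f\hat)$ compatibly with $k\hat$. The vanishing $H^{-d}(Y\hat)=0$ then forces injectivity, and condition (2) of Definition~\ref{def:n-mono} holds trivially because both sides vanish in degrees $\leq -d$.

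The paper gives no argument for this statement and only cites \cite[Proposition~3.11]{moc2025}. The neighbouring Lemma~\ref{lem:chara_d_mono_abel} and Remark~\ref{rem:d-trunc_loop} point to the alternative route you mention. By the octahedral axiom, $\tau^{\leq 0}\Cocone(k\hat)\cong\tau^{\leq 0}(Y\hat[-1])$, so a kernel of $k$ is a loop object $\Omega Y$, and then $\Omega^{d-1}\Omega Y=\Omega^{d}Y=0$ gives the claim. That route needs $\Omega Y$ to exist in $\scrA$. Your long-exact-sequence argument uses nothing beyond $d$-truncatedness and connectivity; it is in effect the module-level content of Lemma~\ref{lem:chara_d_mono_abel}, so it proves the statement in the generality in which it is stated.
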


\begin{definition}(\cite[Definition~3.12]{moc2025})
\label{def:abelian_d_DG}
We say that $\scrA$ is an \emph{abelian $d$-truncated DG-category} if $\scrA$ is an additive $d$-truncated connective DG-category and the following conditions are satisfied:
\begin{enumerate}
	\item $\scrA$ is finitely complete and finitely cocomplete.
	\item Any right exact $3$-term complex $(X,Y,Z,f,g,h)$ in $\scrA$ with $f$ being a $d$-monomorphism is also left exact. 
	\item Any left exact $3$-term complex $(X,Y,Z,f,g,h)$ in $\scrA$ with $g$ being a $d$-epimorphism is also right exact.
\end{enumerate}
\end{definition}

\section{Main results}

\subsection{From general triangulated categories}

In this section, we fix the following:
\begin{itemize}
	\item a $\bbV$-small stable DG-category $\scrT$ and assume that $\scrT$ is locally $\bbU$-small and locally $K$-projective. In addition, we also assume that $\scrT$ has essentially $\bbU$-small homotopy category $\calT$.
	\item a $(d+1)$-cluster tilting DG-subcategory $\scrC$ of $\scrT$.
\end{itemize}
Note that since $\scrT$ is locally $\bbU$-small and its homotopy category $\calT$ is essentially $\bbU$-small, the DG-quotient $\scrT\lquot\scrC$ can be taken in the universe $\bbV$ and it is also locally $\bbU$-small and has essentially $\bbU$-small homotopy category. 

We denote $\scrC_i^j:=\scrC[i]*\cdots*\scrC[j]$ for any integers $i\leq j$. By Fact~\ref{thm:splicing}, we have $\scrT=\scrC_0^d$. By Remark~\ref{rem:shihted}, we also have $\scrT=\scrC_{-d}^0$.

\begin{lemma}
\label{lem:exact_in_ideal_quot}
Assume that the following $3$-term complex in $\scrT$ is left exact:
$$X\xrightarrow{f} Y\xrightarrow{g} Z \quad (h\colon X\to Z).$$
If $g$ is $\scrC$-epic, then its image in $\scrT\lquot\scrC$ is also left exact. Dually, if the above $3$-term complex is right exact and $f$ is $\scrC$-monic, then its image in $\scrT\lquot\scrC$ is also right exact.
\end{lemma}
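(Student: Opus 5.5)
Lemma~\ref{lem:exact_in_ideal_quot} should follow from Proposition~\ref{prop:ker_in_ideal_quot}: we rewrite left exactness of the $3$-term complex as cartesianness of a square whose lower-left corner is $0$. Concretely, the left exact complex $X\xrightarrow{f}Y\xrightarrow{g}Z$ $(h\colon X\to Z)$ gives the square
$$\begin{tikzcd}
X & Y \\ 0 & Z
\Ar{1-1}{1-2}{"f"}
\Ar{2-1}{2-2}{"0"'}
\Ar{1-1}{2-1}{"0"'}
\Ar{1-2}{2-2}{"g"}
\Ar{1-1}{2-2}{"h", red}
\end{tikzcd}$$
with $d(h)=-g\circ f$, which matches the sign convention $d(h)=j\circ f-k\circ g$ of Definition~\ref{def:cartesian} up to the harmless sign change $h\mapsto -h$. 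Since $0\hat=0$, we have $\Cocone(-g\hat,0)\cong\Cocone(g\hat)$ up to sign, compatibly with the canonical maps from $X\hat$. Hence this square is cartesian in $\scrT$ precisely when the $3$-term complex is left exact.

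Next we check the hypothesis of Proposition~\ref{prop:ker_in_ideal_quot}, with the roles $K=X$, $T=Y$, $\pi=g$ and $X$ (of the proposition) equal to $0$. The required surjectivity becomes surjectivity of
$$(0,g\circ -)|_{\calC}\colon \calT(*,0)|_{\calC}\oplus\calT(*,Y)|_{\calC}\to\calT(*,Z)|_{\calC},$$
and this is exactly the statement that $g$ is $\scrC$-epic. So the proposition gives that the image square in $\scrT\lquot\scrC$ is cartesian. Since $Q$ is additive, $Q0\cong 0$, and reversing the first translation shows that $QX\xrightarrow{Qf}QY\xrightarrow{Qg}QZ$ $(Qh)$ is left exact in $\scrT\lquot\scrC$.

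The dual statement follows by the same argument applied to $\scrT\op$ and $\scrC\op$. Under this duality, left exact becomes right exact, $\scrC$-epic becomes $\scrC$-monic, and we use the dual of Proposition~\ref{prop:ker_in_ideal_quot}, namely cocartesian squares together with an epimorphism condition on $\calT(-,*)|_{\calC}$. Its proof is the formal dual: it uses the left-module half of the recollement, which is condition (3) of Fact~\ref{prop:char_quot_functor}. We also need $(\scrT\lquot\scrC)\op\simeq\scrT\op\lquot\scrC\op$, which holds by the universal property.

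The only step needing care is the first one: checking that the identification of $\Cocone(-g\hat,0\hat)$ with $\Cocone g\hat$ respects the canonical morphisms from $X\hat$, including the sign of $h$. Since $0\hat$ is the zero module, this is a direct inspection of the cocone formulas, and I expect no real obstacle beyond bookkeeping signs.
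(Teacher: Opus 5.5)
Your proposal is correct and takes the same route as the paper: the paper's proof is the single line ``immediate from Proposition~\ref{prop:ker_in_ideal_quot}'' (together with its dual for the right exact case). You supply the details that line leaves implicit. You read the left exact complex as a cartesian square with a zero corner, and you note that the surjectivity hypothesis of the proposition reduces exactly to $g$ being $\scrC$-epic.
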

\begin{proof}
It is immediate from Proposition~\ref{prop:ker_in_ideal_quot}.
\end{proof}

\begin{cor}
\label{cor:loop}
Take a right $\scrC$-approximation $g\colon Y\to Z$ of $Z$ and exact $3$-term complex in $\scrT$:
$$K\xrightarrow{f} Y\xrightarrow{g}Z \quad (h\colon K\to Z)$$
Then $QK$ defines a loop object of $QZ$ in $\scrT\lquot\scrC$. Dually, take a left $\scrC$-approximation $f\colon X\to Y$ of $X$ and exact $3$-term complex in $\scrT$:
$$X\xrightarrow{f} Y\xrightarrow{g} C \quad (h\colon X\to C)$$
Then $QC$ defines a suspension object of $QX$ in $\scrT\lquot\scrC$.
\end{cor}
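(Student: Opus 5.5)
We prove the first assertion; the second is its formal dual, obtained by passing to $\scrT\op$ and $\scrC\op$, which exchanges right and left $\scrC$-approximations and loop and suspension objects.

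\textbf{Step 1: the image is left exact.} Since $g\colon Y\to Z$ is a right $\scrC$-approximation, it is in particular $\scrC$-epic. The given $3$-term complex is exact, hence left exact. By Lemma~\ref{lem:exact_in_ideal_quot}, its image
$$QK\xrightarrow{Qf} QY\xrightarrow{Qg} QZ\quad (Qh\colon QK\to QZ)$$
is left exact in $\scrT\lquot\scrC$.

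\textbf{Step 2: unwind left exactness.} Since $Y\in\scrC$, we have $QY\cong 0$ in $H^0(\scrT\lquot\scrC)$. Hence the representable $\scrT\lquot\scrC(*,QY)$ is zero in $\calD(\scrT\lquot\scrC)$, and so
$$\Cocone (Qg)\hat\cong \Cocone\bigl(0\to \scrT\lquot\scrC(*,QZ)\bigr)\cong \scrT\lquot\scrC(*,QZ)[-1].$$
Left exactness therefore says that the morphism $(QK)\hat\to \scrT\lquot\scrC(*,QZ)[-1]$, built from $Qh$, induces isomorphisms on $H^i$ for all $i\leq 0$.

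Because $\scrT\lquot\scrC$ is connective (Proposition~\ref{prop:derived_ideal_quot}), $(QK)\hat$ has cohomology concentrated in degrees $\leq 0$. Thus the morphism factors through $\tau_{\leq 0}\bigl(\scrT\lquot\scrC(*,QZ)[-1]\bigr)$, and the factored map is a quasi-isomorphism. This yields
$$\scrT\lquot\scrC(*,QK)\cong\tau_{\leq 0}\bigl(\scrT\lquot\scrC(*,QZ)[-1]\bigr)$$
in $\calD(\scrT\lquot\scrC)$. By Definition~\ref{def:loop_obj}, $QK$ is a loop object $\Omega QZ$.

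The only point needing care is the identification $\Cocone(Qg)\hat\cong \scrT\lquot\scrC(*,QZ)[-1]$, and it is not a real obstacle. A closed morphism into an object isomorphic to $0$ in $H^0$ has a contractible domain representable, so the cocone of $(Qg)\hat$ is quasi-isomorphic to the shifted target. One must also check that the induced map agrees with the one coming from $Qh$ under this identification, which is a routine compatibility check.
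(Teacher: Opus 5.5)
Your proof is correct and follows the intended route: the paper gives no separate argument, treating the corollary as immediate from Lemma~\ref{lem:exact_in_ideal_quot} together with $QY\cong 0$ and connectivity of $\scrT\lquot\scrC$, which is exactly what you unwind. The compatibility check with $Qh$ that you mention at the end is not needed, since Definition~\ref{def:loop_obj} only asks for the existence of some isomorphism $\scrT\lquot\scrC(*,QK)\cong\tau_{\leq 0}\bigl(\scrT\lquot\scrC(*,QZ)[-1]\bigr)$.
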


\begin{proposition}
\label{prop:ideal_quot_d_trun}
The DG-quotient $\scrT\lquot\scrC$ is $d$-truncated.
\end{proposition}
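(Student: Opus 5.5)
The plan is to use the characterization of $d$-truncatedness via loop objects (Remark~\ref{rem:d-trunc_loop}). I will show that every object of $\scrT\lquot\scrC$ has a loop object, constructed as in Corollary~\ref{cor:loop}, and that the $d$-fold iterated loop object of any object vanishes.

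\textbf{Step 1: existence of loop objects.} By Proposition~\ref{prop:derived_ideal_quot}, every object of $\scrT\lquot\scrC$ is isomorphic in $H^0(\scrT\lquot\scrC)$ to some $QZ$ with $Z\in\scrT$.
\begin{itemize}
\item Since $\calC$ is functorially finite, choose a right $\scrC$-approximation $g\colon Y\to Z$.
\item Since $\scrT$ is stable, it is finitely complete, so $g$ has a kernel. This gives a left exact $3$-term complex $K\xrightarrow{f}Y\xrightarrow{g}Z$ $(h\colon K\to Z)$, which is exact by (St2).
\item By Corollary~\ref{cor:loop}, $QK$ is a loop object $\Omega QZ$ in $\scrT\lquot\scrC$.
\end{itemize}
Hence $\scrT\lquot\scrC$ has loop objects. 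Iterating, we obtain objects $Z=K_0, K_1, K_2,\dots$ of $\scrT$ such that $K_{j+1}$ is the cocone of a right $\scrC$-approximation $Y_j\to K_j$, and $QK_j\cong\Omega^jQZ$. Here I use that loop objects are unique up to isomorphism in the homotopy category, since they are defined by a representing property in $\calD(\calA)$.

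\textbf{Step 2: the key vanishing.} The exact $3$-term complex gives a triangle $K_{j+1}\to Y_j\to K_j\to K_{j+1}[1]$ in the triangulated category $\calT$. I claim by induction that $\calT(\calC,K_j[i])=0$ for $1\le i\le j$ (for $j\le d$). Apply $\calT(C,-)$ with $C\in\calC$ and use the long exact sequence.
\begin{itemize}
\item For $i=1$: the segment
$$\calT(C,Y_j)\to\calT(C,K_j)\to\calT(C,K_{j+1}[1])\to\calT(C,Y_j[1])$$
has first map surjective, because $Y_j\to K_j$ is a right $\scrC$-approximation. The last term vanishes by the cluster tilting condition, since $Y_j\in\scrC$ and $d\ge1$. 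Hence $\calT(C,K_{j+1}[1])=0$.
\item For $2\le i\le d$: the terms $\calT(C,Y_j[i-1])$ and $\calT(C,Y_j[i])$ both vanish, so $\calT(C,K_{j+1}[i])\cong\calT(C,K_j[i-1])$. This is zero for $i-1\le j$ by the induction hypothesis.
\end{itemize}
Thus $\calT(\calC,K_d[i])=0$ for all $1\le i\le d$. By condition (3) of Definition~\ref{def:cluster_tilting_subcat}, $K_d\in\scrC$, and therefore $QK_d\cong0$ in $\scrT\lquot\scrC$. So $\Omega^dQZ=0$ for every $Z$, and Remark~\ref{rem:d-trunc_loop} gives that $\scrT\lquot\scrC$ is $d$-truncated.

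\textbf{Main obstacle.} The delicate point is Step 1, in two respects. First, I must check that the hypotheses of Corollary~\ref{cor:loop} hold: the kernel in $\scrT$ must give an exact $3$-term complex, which uses stability. Second, the iteration must be legitimate: the object $QK_{j}$ produced at stage $j$ must serve as $\Omega^jQZ$ regardless of the choices made. This needs that loop objects are well defined up to isomorphism in $H^0(\scrT\lquot\scrC)$, so that Remark~\ref{rem:d-trunc_loop} applies to the chosen $\Omega$.

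The homological computation in Step 2 is the standard Iyama--Yoshino style argument and should be routine.
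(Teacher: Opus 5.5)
Your proof is correct and follows the paper's argument: you iterate right $\scrC$-approximations, identify $QK_i\cong\Omega^i(QT)$ via Corollary~\ref{cor:loop}, and conclude from $K_d\in\scrC$ that $\Omega^d(QT)=0$. The only difference is that you check $K_d\in\scrC$ directly with the long exact sequence, whereas the paper just cites Iyama--Yoshino (Fact~\ref{thm:splicing}) for this.
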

\begin{proof}
Since $\scrC$ is a $(d+1)$-cluster tilting DG-subcategory of $\scrT$, we can take exact $3$-term complexes in $\scrT$ for any $T\in\scrT$:
$$K_i\xrightarrow{f_i} C_i \xrightarrow{g_i} K_{i-1} \quad (h_i\colon K_i\to K_{i-1})$$ 
where $C_i\in\scrC$ and $g_i$ is a right $\scrC$-approximation for each $1\leq i\leq d$, $K_0=T$ and $K_d\in\scrC$ by Fact~\ref{thm:splicing}.

By Corollary~\ref{cor:loop}, we have $QK_i\cong \Omega^i (QT)$ for each $1\leq i\leq d$. Since $\Omega^d(QT)\cong QK_d=0$ in $\scrT\lquot\scrC$, we conclude that $H^i((\scrT\lquot\scrC)(*,QT))=0$ for any $i\leq -d$ and any $T\in\scrT$. Thus, $\scrT\lquot\scrC$ is $d$-truncated.
\end{proof}

% \begin{proposition}
% \label{prop:chara_d_mono}
% Let $f\colon X\to Y$ be a closed morphism of degree $0$ in $\scrT$ and consider the following exact $3$-term complex in $\scrT$:
% \begin{equation}
% \label{eq:3-term_mono}
% K\xrightarrow{f} Y\xrightarrow{g} Z \quad (h\colon K\to Z)
% \end{equation}
% Then $Qf\colon QX\to QY$ is a $d$-monomorphism in $\scrT\lquot\scrC$ if and only if $\defe_{\scrC}g=0$ holds in $\Mod \calC$. 
% \end{proposition}
% \begin{proof}
% If $\defe_{\scrC}g=0$ holds in $\Mod \calC$, then the image of the $3$-term complex \eqref{eq:3-term_mono} in $\scrT\lquot\scrC$ is also left exact by Lemma~\ref{lem:exact_in_ideal_quot}. Since left side morphism in left exact $3$-term complex of $d$-truncated DG-category is always a $d$-monomorphism, we conclude that $Qf$ is a $d$-monomorphism in $\scrT\lquot\scrC$. 

% We show the converse. Assume that $Qf$ is a $d$-monomorphism in $\scrT\lquot\scrC$. 
% \end{proof}

\begin{lemma}
\label{lem:chara_n_th_loop}
For any object $T\in\scrT$ and each $1\leq n\leq d$, a $n$-th loop object $\Omega^{n} (QT)$ of $QT$ in $\scrT\lquot\scrC$ is zero if and only if $T$ is in the sub DG-category $\scrC_0^n$.
\end{lemma}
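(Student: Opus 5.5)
We argue with the iterated right $\scrC$-approximation sequence from the proof of Proposition~\ref{prop:ideal_quot_d_trun}. For $T\in\scrT$ take exact $3$-term complexes
$$K_i\xrightarrow{f_i} C_i\xrightarrow{g_i} K_{i-1}\quad (h_i\colon K_i\to K_{i-1}),\qquad 1\leq i\leq d,$$
with $K_0=T$, $C_i\in\scrC$ and each $g_i$ a right $\scrC$-approximation. By Corollary~\ref{cor:loop}, $QK_n\cong\Omega^n(QT)$ in $\scrT\lquot\scrC$. By Proposition~\ref{prop:derived_ideal_quot}, $QK_n=0$ in $H^0(\scrT\lquot\scrC)\simeq\calT/[\calC]$ if and only if $K_n$ lies in $\add\calC$, which is $\calC$ since a cluster tilting subcategory is closed under summands. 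So it suffices to show
$$K_n\in\scrC\iff T\in\scrC_0^n.$$

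For ($\Rightarrow$), note that each exact $3$-term complex gives a triangle $K_i\to C_i\to K_{i-1}\to K_i[1]$ in $\calT$. Hence $K_{i-1}\in\scrC*\{K_i[1]\}$. Iterating and using associativity of $*$ gives
$$T\in \scrC*\scrC[1]*\cdots*\scrC[n-1]*\{K_n[n]\}.$$
If $K_n\in\scrC$, then $T\in\scrC_0^n$.

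For ($\Leftarrow$), we induct on $n$, and the key step is a vanishing criterion. We claim that $T\in\scrC_0^n$ (with $n\leq d$) if and only if $\calT(\calC,T[j])=0$ for $n+1\leq j\leq d$... but this is not quite the right form. The usable statement is the following: for $K$ with $\calT(\calC,K[j])=0$ for $j\geq 1$ restricted to the correct range, we have $K\in\scrC$ by condition (3) of Definition~\ref{def:cluster_tilting_subcat}.

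We track Hom-vanishing along the sequence. Apply $\calT(\calC,-)$ to $K_i\to C_i\to K_{i-1}\to K_i[1]$. Since $g_i$ is a right approximation and $\calT(\calC,C_i[j])=0$ for $1\leq j\leq d$, we get $\calT(\calC,K_i[1])=0$ and $\calT(\calC,K_i[j+1])\cong\calT(\calC,K_{i-1}[j])$ for $1\leq j\leq d-1$. Thus
$$\calT(\calC,K_n[j])\cong \calT(\calC,T[j-n])\quad\text{for } n+1\leq j\leq d,$$
and it vanishes for $1\leq j\leq n$.

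Now suppose $T\in\scrC_0^n$. Objects of $\scrC[m]$ with $0\leq m\leq n$ satisfy $\calT(\calC,C[m][k])=0$ for $1\leq k\leq d-n$, because $1\leq m+k\leq d$ in that range (and $m+k\neq 0$). This vanishing is extension-closed under $*$, so $\calT(\calC,T[k])=0$ for $1\leq k\leq d-n$. Then $\calT(\calC,K_n[j])=0$ for all $1\leq j\leq d$, and hence $K_n\in\scrC$ by Definition~\ref{def:cluster_tilting_subcat}(3).

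The main obstacle is checking the index bookkeeping in the long exact sequences, in particular the surjectivity at the boundary $j=1$, which uses the approximation property, and the case $j=d$, which uses $\calT(\calC,C_i[d])=0$. One also needs to justify that $QK_n=0$ in the DG-quotient is detected in $H^0$; this holds because an object is zero exactly when its identity morphism vanishes in $H^0$.
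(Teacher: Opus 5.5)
Your proof is correct. The reduction to $K_n\in\scrC\iff T\in\scrC_0^n$ via Corollary~\ref{cor:loop} and Proposition~\ref{prop:derived_ideal_quot} is exactly the paper's argument. So is the implication $K_n\in\scrC\Rightarrow T\in\scrC_0^n$, obtained by splicing the triangles $K_i\to C_i\to K_{i-1}\to K_i[1]$.

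For the converse you take a different route.

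\emph{The paper's route.} It starts from a decomposition of $T$ in $\scrC*\scrC[1]*\cdots*\scrC[n]$ and reads off a new chain of exact $3$-term complexes ending in some $K_n\in\scrC$. This is shorter. It tacitly uses that the maps $C_i\to K_{i-1}$ obtained this way are right $\scrC$-approximations, which holds because $\calT(\calC,\scrC_1^{m})=0$ for $m\leq d$.

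\emph{Your route.} You keep one fixed approximation chain and compute $\calT(\calC,K_n[j])$ from the long exact sequences. It vanishes for $1\leq j\leq n$ and is isomorphic to $\calT(\calC,T[j-n])$ for $n+1\leq j\leq d$. Membership $T\in\scrC_0^n$ gives $\calT(\calC,T[k])=0$ for $1\leq k\leq d-n$, so condition (3) of Definition~\ref{def:cluster_tilting_subcat} yields $K_n\in\scrC$.

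Your bookkeeping is right:
\begin{itemize}
\item surjectivity at $j=1$ comes from the approximation property;
\item the isomorphism into $j=d$ comes from $\calT(\calC,C_i[d])=0$;
\item for $n=d$ the range $1\leq k\leq d-n$ is empty, so you recover $K_d\in\scrC$.
\end{itemize}

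Your route costs a little more computation, but it proves both directions for the same chain. Combined with your forward direction, it also gives as a byproduct the Hom-vanishing description $\scrC_0^n=\{T\in\scrT\mid \calT(\calC,T[k])=0\text{ for }1\leq k\leq d-n\}$.

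In a write-up, delete the abandoned aside ``We claim \dots\ but this is not quite the right form.''
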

\begin{proof}
Assume that $\Omega^{n} (QT)=0$ holds in $\scrT\lquot\scrC$. Since we have the following exact $3$-term complexes in $\scrT$ for each $1\leq i\leq n$ by assumption:
\begin{equation}
\label{eq:3-term_loop}
K_i\xrightarrow{f_i} C_i \xrightarrow{g_i} K_{i-1} \quad (h_i\colon K_i\to K_{i-1})
\end{equation} 
where $C_i\in\scrC$ and $g_i$ is a right $\scrC$-approximation for each $1\leq i\leq n$ and $K_0=T$ by Fact~\ref{thm:splicing}, we have an isomorphism $QK_n\cong \Omega^n (QT)=0$ in $\scrT\lquot\scrC$ by Corollary~\ref{cor:loop}. Since $\scrC$ is closed under direct summands, this implies that $K_n$ is in $\scrC$ and hence $T=K_0$ is in $\scrC_0^n$.

Conversely, assume that $T$ is in $\scrC_0^n$. Then we can construct exact $3$-term complexes in $\scrT$ for each $1\leq i\leq n$ as above \eqref{eq:3-term_loop} where $C_i\in\scrC$ for each $1\leq i\leq n$, $K_0=T$ and $K_n\in\scrC$. Thus, we have $\Omega^n (QT)\cong QK_n=0$ in $\scrT\lquot\scrC$ by Corollary~\ref{cor:loop}.
\end{proof}

\begin{lemma}
\label{lem:chara_d_mono_first}
Let $n$ be an integer with $1\leq n\leq d$. Consider the following cartesian diagram in $\scrT$:
$$\begin{tikzcd}
K & C \\ X & Y
\Ar{1-1}{1-2}{"k"}
\Ar{2-1}{2-2}{"f"'}
\Ar{1-1}{2-1}{"i"'}
\Ar{1-2}{2-2}{"\pi"}
\Ar{1-1}{2-2}{"h", red}
\end{tikzcd}$$
where $C\in\scrC$ and the induced morphism
$$(f\circ-,\pi\circ-)\colon \calT(*,X)|_{\calC}\oplus \calT(*,C)|_{\calC} \to \calT(*,Y)|_{\calC}$$
is an epimorphism in $\Mod \calC$.
Then the following conditions are equivalent:
\begin{enumerate}
	\item The morphism $Qf\colon QX\to QY$ is a $n$-monomorphism in $\scrT\lquot\scrC$.
	\item $K$ is in the sub DG-category $\scrC_0^{n-1}$.
\end{enumerate}
\end{lemma}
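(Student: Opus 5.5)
Since the cartesian square has $C\in\scrC$ and the morphism $(f\circ-,\pi\circ-)|_{\calC}$ is an epimorphism in $\Mod\calC$, Proposition~\ref{prop:ker_in_ideal_quot} gives that $QK$ is a kernel of $Qf$ in $\scrT\lquot\scrC$. More precisely, since $QC\cong 0$, the image square becomes a left exact $3$-term complex
$$QK\xrightarrow{Qi} QX\xrightarrow{Qf} QY \quad (Qh\colon QK\to QY)$$
in $\scrT\lquot\scrC$. Here we identify $\Cocone(-Q\pi\hat,Qf\hat)$ with $\Cocone(Qf\hat)$, because $Q\pi\hat$ has source $QC\hat\cong 0$.

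Next we apply Lemma~\ref{lem:chara_d_mono_abel} to this left exact $3$-term complex. It says that $Qf$ is an $n$-monomorphism if and only if $\Omega^{n-1}(QK)=0$ in $\calT/[\calC]$. The lemma is stated for a general connective DG-category, and $\scrT\lquot\scrC$ is connective by Proposition~\ref{prop:derived_ideal_quot}. We also need loop objects to exist in $\scrT\lquot\scrC$; this follows from Corollary~\ref{cor:loop}, since $\scrC$ is contravariantly finite and $\scrT$ is stable, so every $QT$ has a loop object $QK'$ with $K'$ the cocone of a right $\scrC$-approximation.

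Statement (1) is therefore equivalent to $\Omega^{n-1}(QK)=0$.
\begin{itemize}
\item For $2\leq n\leq d$, we have $1\leq n-1\leq d-1$, and Lemma~\ref{lem:chara_n_th_loop} with $T=K$ gives $\Omega^{n-1}(QK)=0$ if and only if $K\in\scrC_0^{n-1}$.
\item For $n=1$ the condition reads $QK\cong 0$ in $\calT/[\calC]$. This holds if and only if $\id_K$ factors through $\calC$, that is, $K$ is a direct summand of an object of $\calC$. Since a cluster tilting subcategory is closed under direct summands (by its Hom-vanishing characterization), this is equivalent to $K\in\scrC=\scrC_0^0$.
\end{itemize}

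The main point to check is that the loop-object conventions in Lemma~\ref{lem:chara_d_mono_abel}, namely $\Omega^{0}=\mathrm{id}$ and iterated loops computed via Corollary~\ref{cor:loop}, match those in Lemma~\ref{lem:chara_n_th_loop}. The other delicate step is the identification of the image square with a left exact $3$-term complex. For that identification, I would verify the cocone isomorphism $\Cocone(-Q\pi\hat,Qf\hat)\cong\Cocone(Qf\hat)$ directly from $QC\hat\cong0$ in $\calD(\scrT\lquot\scrC)$.
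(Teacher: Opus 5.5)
Your proposal is correct and follows the paper's proof. Proposition~\ref{prop:ker_in_ideal_quot} makes $QK$ a kernel of $Qf$, Lemma~\ref{lem:chara_d_mono_abel} turns (1) into $\Omega^{n-1}(QK)=0$, and Lemma~\ref{lem:chara_n_th_loop} turns that into $K\in\scrC_0^{n-1}$; your separate treatment of $n=1$, which lies outside the literal range of Lemma~\ref{lem:chara_n_th_loop}, via closure of $\scrC$ under direct summands is a welcome extra bit of care. One cosmetic fix: $(QK,QX,QY,Qi,Qf,Qh)$ is not literally a $3$-term complex, since $d(Qh)=Q\pi\circ Qk-Qf\circ Qi$, so use the homotopy $Qh-Q\pi\circ h_{QC}\circ Qk$ with $d(h_{QC})=\id_{QC}$, as in the proof of Proposition~\ref{prop:d_mono_chara_main}.
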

\begin{proof}
Note that the image of the above cartesian diagram in $\scrT\lquot\scrC$ is also cartesian by Proposition~\ref{prop:ker_in_ideal_quot}. Thus, $QK$ is a kernel of $Qf$ in $\scrT\lquot\scrC$.
By Lemma~\ref{lem:chara_d_mono_abel}, $Qf$ is a $n$-monomorphism in $\scrT\lquot\scrC$ if and only if $\Omega^{n-1} (QK)=0$ holds in $\scrT\lquot\scrC$. By Lemma~\ref{lem:chara_n_th_loop}, this is equivalent to $K$ being in the sub DG-category $\scrC_0^{n-1}$.
\end{proof}

\begin{lemma}
\label{lem:d_mono_if_cocone}
Let $f\colon X\to Y$ be a closed morphism in $\scrT$ and $1\leq n\leq d$. If $\Cocone(f)$ is in $\scrC_0^{n-1}$, then $Qf$ is a $n$-monomorphism in $\scrT\lquot\scrC$.
\end{lemma}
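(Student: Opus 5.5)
We reduce the statement to Lemma~\ref{lem:chara_d_mono_first}. Choose a right $\scrC$-approximation $\pi\colon C\to Y$ with $C\in\scrC$; it exists because $\calC$ is functorially finite. Since $\pi$ is $\scrC$-epic, the map
$$(f\circ-,\pi\circ-)\colon \calT(*,X)|_{\calC}\oplus \calT(*,C)|_{\calC}\to \calT(*,Y)|_{\calC}$$
is an epimorphism in $\Mod\calC$. Because $\scrT$ is stable and hence finitely complete, we can complete $X\xrightarrow{f}Y\xleftarrow{\pi}C$ to a cartesian square with corner $K$ and maps $k\colon K\to C$, $i\colon K\to X$ and homotopy $h$. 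Lemma~\ref{lem:chara_d_mono_first} then tells us that $Qf$ is an $n$-monomorphism if and only if $K\in\scrC_0^{n-1}$. It therefore suffices to show $K\in\scrC_0^{n-1}$ under the hypothesis $\Cocone(f)\in\scrC_0^{n-1}$.

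The next step is to relate $K$ to $\Cocone(f)$ in the triangulated category $\calT$. In a stable DG-category a cartesian square is also cocartesian, so it gives a homotopy pullback. Consequently the cocones of the parallel maps agree: $\Cocone(k)\cong\Cocone(f)$. This yields a triangle
$$\Cocone(f)\to K\xrightarrow{k} C\to \Cocone(f)[1]$$
in $\calT$, which exhibits $K\in \scrC_0^{n-1}*\scrC$.

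The remaining task is to show $\scrC_0^{n-1}*\scrC\subseteq\scrC_0^{n-1}$, at least for $K$ itself. This is the step I expect to be the main obstacle. Write $\Cocone(f)=D$, with a triangle $C_0\to D\to D'\to C_0[1]$ where $C_0\in\scrC$ and $D'\in\scrC_1^{n-1}$. By the octahedral axiom and associativity of $*$, we have $K\in\scrC*(\scrC_1^{n-1}*\scrC)$. The key point is then that $\scrC_1^{n-1}*\scrC=\scrC*\scrC_1^{n-1}$. This holds because any triangle $E\to M\to C'\to E[1]$ with $C'\in\scrC$ and $E\in\scrC_1^{n-1}$ splits. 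Indeed, the connecting morphism $C'\to E[1]$ lies in $\calT(\calC,\scrC_2^{n}) $, and this group vanishes: by induction along the filtration, using $\calT(\calC,\calC[j])=0$ for $1\le j\le d$, which is legitimate since $n\le d$. So $M\cong E\oplus C'\in\scrC*\scrC_1^{n-1}$. Hence $K\in\scrC*\scrC*\scrC_1^{n-1}$.

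Finally, $\scrC*\scrC=\scrC$. A triangle $C_1\to M\to C_2\to C_1[1]$ with $C_1,C_2\in\scrC$ splits because $\calT(\calC,\calC[1])=0$, and $\scrC$ is closed under direct sums. Therefore $K\in\scrC*\scrC_1^{n-1}=\scrC_0^{n-1}$, and Lemma~\ref{lem:chara_d_mono_first} gives that $Qf$ is an $n$-monomorphism. For $n=1$ the argument reads $\scrC_0^{0}=\scrC$, so $K\in\scrC*\scrC=\scrC$ directly.

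One point needs care: the identification $\Cocone(k)\cong\Cocone(f)$ for the DG-cartesian square. I would obtain it from the triangle induced by cartesianness together with $(\mathrm{St2})$.
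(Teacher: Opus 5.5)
Your proposal is correct and follows the paper's proof: a right $\scrC$-approximation $\pi\colon C\to Y$, a cartesian square, the triangle $\Cocone(f)\to K\xrightarrow{k}C\to\Cocone(f)[1]$, and then Lemma~\ref{lem:chara_d_mono_first}; the paper simply asserts $K\in\scrC_0^{n-1}$, and your splitting argument (which uses $n\le d$) supplies the justification for $\scrC_0^{n-1}*\scrC\subseteq\scrC_0^{n-1}$ that the paper leaves out. One correction: you should claim only the inclusion $\scrC_1^{n-1}*\scrC\subseteq\scrC*\scrC_1^{n-1}$, not equality. The reverse inclusion fails in general: for $d\ge 2$, $\scrC[1]*\scrC$ consists only of direct sums of objects of $\scrC$ and $\scrC[1]$, while $\scrC*\scrC[1]$ contains cones of arbitrary morphisms in $\calC$. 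The inclusion is all your argument uses, so the proof stands.
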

\begin{proof}
Take a right $\scrC$-approximation $\pi\colon C\to Y$ of $Y$ and the following cartesian diagram in $\scrT$:
$$\begin{tikzcd}
K & C \\ X & Y
\Ar{1-1}{1-2}{"k"}
\Ar{2-1}{2-2}{"f"'}
\Ar{1-1}{2-1}{"i"'}
\Ar{1-2}{2-2}{"\pi"}
\Ar{1-1}{2-2}{"h", red}
\end{tikzcd}$$
Then we have the following diagram in the homotopy category $\calT$:
$$\begin{tikzcd}
\Cocone(f) & K & C & \Cocone(f)[1] \\
\Cocone(f) & X & Y & \Cocone(f)[1]
\Ar{1-1}{1-2}{}
\Ar{1-2}{1-3}{"k"}
\Ar{1-3}{1-4}{}
\Ar{2-1}{2-2}{}
\Ar{2-2}{2-3}{"f"}
\Ar{2-3}{2-4}{}
\Ar{1-1}{2-1}{equal}
\Ar{1-2}{2-2}{"i"}
\Ar{1-3}{2-3}{"f"}
\Ar{1-4}{2-4}{equal}
\end{tikzcd}$$
Since $\Cocone(f)$ is in $\scrC_0^{n-1}$ and $C\in \scrC$, we have $K\in \scrC_0^{n-1}$. Thus, by Lemma~\ref{lem:chara_d_mono_first}, $Qf$ is a $n$-monomorphism in $\scrT\lquot\scrC$.
\end{proof}

\begin{proposition}
\label{prop:d_mono_chara_main}
Consider the following bicartesian diagram in $\scrT$:
$$\begin{tikzcd}
X & C \\ Y & Z
\Ar{1-1}{1-2}{"\iota"}
\Ar{2-1}{2-2}{"g"'}
\Ar{1-1}{2-1}{"f"'}
\Ar{1-2}{2-2}{"\pi"}
\Ar{1-1}{2-2}{"h", red}
\end{tikzcd}$$
where $C\in\scrC$ and let 
$$X\xrightarrow{(f, -\iota)^t} Y\oplus C \xrightarrow{(g, \pi)} Z \xrightarrow{\delta} X[1]$$
be the associated triangle in $\calT$. Then the following conditions are equivalent:
\begin{enumerate}
	\item $Qf$ is a $d$-monomorphism in $\scrT\lquot\scrC$.
	\item The morphism $\delta[-1]\colon Z[-1]\to X$ factors through an object in $\calC_0^{d-1}$ in $\calT$.
	\item The induced morphism
	$$(g\circ-, \pi\circ-)\colon \calT(*,Y)|_{\calC}\oplus \calT(*,C)|_{\calC} \to \calT(*,Z)|_{\calC}$$
	is an epimorphism in $\Mod \calC$.
\end{enumerate}
\end{proposition}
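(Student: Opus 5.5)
We prove the cycle of implications (3)$\Rightarrow$(1)$\Rightarrow$(2)$\Rightarrow$(3).

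\emph{(3)$\Rightarrow$(1).} Since $\scrT$ is stable, the given square is cartesian. Under (3), Lemma~\ref{lem:chara_d_mono_first} applies with $n=d$, $K=X$, $k=\iota$, $i=f$. So $Qf$ is a $d$-monomorphism if and only if $X\in\scrC_0^{d-1}$. This is not what we want in general, since it would force $X$ itself to be small. We therefore use the framework of Lemma~\ref{lem:chara_d_mono_first} only to produce kernels, and argue more directly as follows.

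Under (3), Proposition~\ref{prop:ker_in_ideal_quot} shows that the image square is cartesian in $\scrT\lquot\scrC$. Since $QC=0$, this means $QX\xrightarrow{Qf}QY\xrightarrow{Qg}QZ$ is left exact. Now compute the kernel of $Qf$. Take a right $\scrC$-approximation $\pi'\colon C'\to Y$ and the cartesian square $K\to C'$, $K\to X$. Its image in the quotient is cartesian by Proposition~\ref{prop:ker_in_ideal_quot}, so $QK$ is a kernel of $Qf$, and $K$ sits in a triangle $\Cocone(f)\to K\to C'\to$.

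From the triangle $X\to Y\oplus C\to Z\xrightarrow{\delta}X[1]$ we get $\Cocone(f)\cong\Cocone(Z[-1]\xrightarrow{}\cdots)$. More precisely, the octahedral axiom applied to $X\to Y\oplus C\to Y$ gives a triangle
$$\Cocone(f)\to C\to Z\to\Cocone(f)[1],$$
whose map $C\to Z$ is $\pi$. Hence $\Cocone(f)\cong\Cocone(\pi)$. Condition (3) says every map $C''\to Z$ lifts through $(g,\pi)$. Combining this with Lemma~\ref{lem:chara_d_mono_first} for $n=d$, the task reduces to showing $K\in\scrC_0^{d-1}$. Since $\scrT=\scrC_0^d$, membership in $\scrC_0^{d-1}$ is detected by the vanishing of the $\scrC[d]$-component, equivalently by $\calT(K,\calC[d])$-type conditions. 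I would verify it through (2).

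\emph{(2)$\Leftrightarrow$(3).} Apply $\calT(C'',-)$ for $C''\in\calC$ to the triangle. The map $(g,\pi)_*$ is surjective if and only if $\delta_*\colon\calT(C'',Z)\to\calT(C'',X[1])$ vanishes, that is, if and only if every composite $C''[-1]\to Z[-1]\xrightarrow{\delta[-1]}X$ is zero. A morphism $u\colon W\to X$ factors through $\calC_0^{d-1}$ if and only if it is killed by precomposition with all maps from $\calC[-1]$. To see this, take the decomposition $W\in\scrC[-1]*\scrC_0^{d-1}$ coming from $\scrT=\scrC_{-d}^{0}$ shifted, or equivalently use $Z[-1]\in\scrC_{-1}^{d-1}=\scrC[-1]*\scrC_0^{d-1}$. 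If $u$ kills the $\scrC[-1]$-part, it factors through the cone, which lies in $\scrC_0^{d-1}$. Conversely, maps from $\calC[-1]$ to $\calC_0^{d-1}$ vanish by the cluster tilting vanishing $\calT(\calC,\calC[i])=0$ for $1\le i\le d$. This gives (2)$\Leftrightarrow$(3). Getting this factorization criterion right, with the correct index range, is the main technical step.

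\emph{(1)$\Rightarrow$(2).} By Lemma~\ref{lem:chara_d_mono_first} with $n=d$, $K\in\scrC_0^{d-1}$. In the triangle map from $\Cocone(f)\to K\to C'$ to $\Cocone(f)\to X\to Y$, the map $\delta[-1]$ factors through $\Cocone(f)\to X$. Since $\Cocone(f)\to X$ factors through $K$, and $K\in\scrC_0^{d-1}$, this gives (2). Finally, (2)$\Rightarrow$(3) as above, and then (3) gives (1) by reversing this argument: left exactness in the quotient together with Lemma~\ref{lem:d_mono_if_cocone} applied to $\Cocone(f)\cong\Cocone(\pi)$, after splitting off the $\scrC$-part, yields $K\in\scrC_0^{d-1}$.

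The hardest step is (3)$\Rightarrow$(1). I would first try to show directly that (2) forces $K\in\scrC_0^{d-1}$ via the octahedron relating $K$, $C'$, $C$ and $Z[-1]$.
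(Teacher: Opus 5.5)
There is a genuine gap: the proposal never proves (3)$\Rightarrow$(1) (nor (2)$\Rightarrow$(1)), so the cycle is not closed. The rest is fine. Your (1)$\Rightarrow$(2) and (2)$\Rightarrow$(3) match the paper's arguments. Your direct (3)$\Rightarrow$(2) is correct: you write $Z[-1]\in\scrC[-1]*\scrC_0^{d-1}$ and use $\calT(\calC[-1],\calC_0^{d-1})=0$.

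Your paragraph on (3)$\Rightarrow$(1) reduces the claim to $K\in\scrC_0^{d-1}$ and then stops. The closing sketch via Lemma~\ref{lem:d_mono_if_cocone} does not work as stated. That lemma gives only a sufficient condition, and $\Cocone(f)\cong\Cocone(\pi)$ need not lie in $\scrC_0^{d-1}$ when $Qf$ is a $d$-monomorphism. Take $X=0$ and $0\neq Y=C_1\in\scrC$. Then $Qf\colon 0\to 0$ is trivially a $d$-monomorphism, but $\Cocone(f)=C_1[-1]\notin\scrC_0^{d-1}$, since objects of $\scrC_0^{d-1}$ receive no nonzero maps from $\calC[-1]$. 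The ``splitting off the $\scrC$-part'' is never made precise.

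Your opening use of Lemma~\ref{lem:chara_d_mono_first} is also misread. With $K=X$, $k=\iota$, $i=f$, that lemma characterizes when $Qg$, not $Qf$, is a $d$-monomorphism. The claim ``$Qf$ is a $d$-monomorphism iff $X\in\scrC_0^{d-1}$'' already fails for $f=\id_X$.

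The missing idea is that (3)$\Rightarrow$(1) follows in one line from something you already wrote down. Under (3), Proposition~\ref{prop:ker_in_ideal_quot} makes the image square cartesian. Since $QC=0$, the complex $QX\xrightarrow{Qf}QY\xrightarrow{Qg}QZ$ is therefore left exact in $\scrT\lquot\scrC$. By Proposition~\ref{prop:ideal_quot_d_trun}, the quotient is $d$-truncated. Lemma~\ref{lem:ker_id_d_mono} then says that in a $d$-truncated DG-category the first map of any left exact $3$-term complex is a $d$-monomorphism. Equivalently, the kernel of $Qf$ is $\Omega(QZ)$, so its $(d-1)$-fold loop object is $\Omega^{d}(QZ)=0$, and Lemma~\ref{lem:chara_d_mono_abel} applies.

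This is exactly the paper's proof of (3)$\Rightarrow$(1). No octahedron relating $K$, $C'$, $C$ and $Z[-1]$ is needed. The step you flagged as hardest is the easiest once you use that the quotient is $d$-truncated.
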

\begin{proof}
(3) $\Rightarrow$ (1): First, the image of the above bicartesian diagram in $\scrT\lquot\scrC$ is also cartesian by Proposition~\ref{prop:ker_in_ideal_quot}. Thus, we have a left exact $3$-term complex in $\scrT\lquot\scrC$:
$$QX\xrightarrow{Qf} QY \xrightarrow{Qg} QZ \quad (Qh-Q(\pi)\circ h_{QC}\circ Q(\iota)\colon QX\to QZ)$$
where $h_{QC}\colon QC\to QC$ is a degree $-1$ morphism such that $dh_{QC}=\id_{QC}$. Since $\scrT\lquot\scrC$ is a $d$-truncated DG-category by Proposition~\ref{prop:ideal_quot_d_trun}, $Qf$ is a $d$-monomorphism in $\scrT\lquot\scrC$.

(1) $\Rightarrow$ (2): Take a right $\scrC$-approximation $\pi' \colon C'\to Y$ of $Y$ and a cartesian diagram in $\scrT$:
$$\begin{tikzcd}
K & C' \\ X & Y
\Ar{1-1}{1-2}{"\iota'"}
\Ar{2-1}{2-2}{"f"'}
\Ar{1-1}{2-1}{"k"'}
\Ar{1-2}{2-2}{"\pi'"}
\Ar{1-1}{2-2}{"h'", red}
\end{tikzcd}$$
Then $K\in \scrC_0^{d-1}$ by Lemma~\ref{lem:chara_d_mono_first}. We will show that the morphism $\delta[-1]\colon Z[-1]\to X$ factors through an object $K$ in $\calT$. We have the following diagram in $\calT$:
$$\begin{tikzcd}[column sep=30]
Z[-1] & X & Y\oplus C & Z \\
\Cone(f)[-1] & X & Y & \Cone(f)
\Ar{1-1}{1-2}{"{\delta[-1]}"}
\Ar{1-2}{1-3}{"{(f, -\iota)^t}"}
\Ar{1-3}{1-4}{"{(g, \pi)}"}
\Ar{2-1}{2-2}{}
\Ar{2-2}{2-3}{"f"}
\Ar{2-3}{2-4}{}
\Ar{1-1}{2-1}{}
\Ar{1-2}{2-2}{equal}
\Ar{1-3}{2-3}{"{(1,0)^t}"}
\Ar{1-4}{2-4}{}
\end{tikzcd}$$
Thus, it is enough to show that the morphism $\Cone(f)[-1]\to X$ factors through $K$. It immediately follows from the following diagram in $\calT$:
$$\begin{tikzcd}
\Cone(f)[-1] & K & C' & \Cone(f) \\
\Cone(f)[-1] & X & Y & \Cone(f)
\Ar{1-1}{1-2}{}
\Ar{1-2}{1-3}{"\iota'"}
\Ar{1-3}{1-4}{}
\Ar{2-1}{2-2}{}
\Ar{2-2}{2-3}{"f"}
\Ar{2-3}{2-4}{}
\Ar{1-1}{2-1}{equal}
\Ar{1-2}{2-2}{"k"}
\Ar{1-3}{2-3}{"\pi'"}
\Ar{1-4}{2-4}{equal}
\end{tikzcd}$$
which is obtained by using the octahedral axiom in $\calT$. Then $K\in\scrC_0^{d-1}$ by Lemma~\ref{lem:chara_d_mono_first}, and hence
the morphism $\Cone(f)[-1]\to X$ factors through $K$ and hence $\delta[-1]\colon Z[-1]\to X$ factors through $K$. 

(2) $\Rightarrow$ (3): Assume that the morphism $\delta[-1]\colon Z[-1]\to X$ factors through an object $K$ in $\calC_0^{d-1}$. Then, we obtain the following exact sequence in $\Mod \calT$:
$$\calT(*,Y)\oplus \calT(*,C)\xrightarrow{(g\circ-, \pi\circ-)}\calT(*,Z)\xrightarrow{\delta\circ-}\calT(*,X[1])$$
Since $\delta$ factors through an object $K$ in $\calC[1]*\cdots*\calC[d]$, we have $(\delta\circ-)|_{\calC} =0$ in $\Mod \calC$. Thus, the morphism 
$$(g\circ-, \pi\circ-)\colon \calT(*,Y)|_{\calC}\oplus \calT(*,C)|_{\calC} \to \calT(*,Z)|_{\calC}$$
is an epimorphism in $\Mod \calC$.
\end{proof}

\begin{proposition}
\label{prop:d_epi_chara_main}
Consider the following bicartesian diagram in $\scrT$:
$$\begin{tikzcd}
X & C \\ Y & Z
\Ar{1-1}{1-2}{"\iota"}
\Ar{2-1}{2-2}{"g"'}
\Ar{1-1}{2-1}{"f"'}
\Ar{1-2}{2-2}{"\pi"}
\Ar{1-1}{2-2}{"h", red}
\end{tikzcd}$$
where $C\in\scrC$ and let 
$$X\xrightarrow{(f, -\iota)^t} Y\oplus C \xrightarrow{(g, \pi)} Z \xrightarrow{\delta} X[1]$$
be the associated triangle in $\calT$. Then the following conditions are equivalent:
\begin{enumerate}
	\item $Qg$ is a $d$-epimorphism in $\scrT\lquot\scrC$.
	\item The morphism $\delta\colon Z\to X[1]$ factors through an object in $\calC_{-d+1}^0$ in $\calT$.
	\item The induced morphism
	$$(-\circ f, -\circ \iota)\colon \calT(Y,*)|_{\calC\op}\oplus \calT(C,*)|_{\calC\op} \to \calT(X,*)|_{\calC\op}$$
	is an epimorphism in $\Mod \calC\op$.
\end{enumerate}
\end{proposition}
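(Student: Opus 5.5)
The plan is to dualize the proof of Proposition~\ref{prop:d_mono_chara_main}. The cleanest route is to pass to opposite DG-categories. The opposite $\scrT\op$ of a stable DG-category is stable, with left exact and right exact $3$-term complexes interchanged and cartesian and cocartesian squares interchanged. The subcategory $\scrC\op$ is a $(d+1)$-cluster tilting DG-subcategory of $\scrT\op$, since conditions (2) and (3) of Definition~\ref{def:cluster_tilting_subcat} swap and functorial finiteness is self-dual. The shift of $H^0(\scrT\op)=\calT\op$ is $[-1]$, so $\calC_{0}^{d-1}$ computed in $\calT\op$ is $\calC_{-d+1}^{0}$ in $\calT$ (the order in $*$ is reversed, but the set is the same). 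Moreover, $(\scrT\lquot\scrC)\op\simeq \scrT\op\lquot\scrC\op$ in $\hqe$, by the universal property or the characterization in Fact~\ref{prop:char_quot_functor}, whose conditions (2) and (3) are dual to each other. Finally, a $d$-epimorphism in $\scrA$ is exactly a $d$-monomorphism in $\scrA\op$.

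Applying Proposition~\ref{prop:d_mono_chara_main} to the transposed square in $\scrT\op$ gives the statement. In $\scrT\op$, take the square with top row $Z\to Y$ (the morphism $g$), left column $Z\to C$ (the morphism $\pi$), and remaining morphisms $\iota\colon C\to X$ and $f\colon Y\to X$. Here $C\in\scrC\op$, and the square is still bicartesian. In that proposition the role of ``$f$'' is played by $g^{\op}$, so condition (1) there reads: $Qg$ is a $d$-epimorphism in $\scrT\lquot\scrC$. Condition (3) there becomes surjectivity of $\calT\op(*,X)|_{\calC}\oplus\calT\op(*,C)|_{\calC}\to\calT\op(*,\text{``}X\text{''})$ with the opposite roles. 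Unwinding the notation, this is exactly the surjectivity of $(-\circ f,-\circ\iota)$ onto $\calT(X,*)|_{\calC\op}$, which is condition (3) here. For condition (2), the triangle in $\calT\op$ associated with the transposed square is the reversed triangle $Z\to Y\oplus C\to X\to Z[-1]$ (in $\calT\op$ notation), whose connecting map is $\delta$ read backwards. Its ``$[-1]$-shift'' in $\calT\op$ is $\delta\colon Z\to X[1]$ in $\calT$, and factoring through $\calC_0^{d-1}$ in $\calT\op$ means factoring through $\calC_{-d+1}^0$ in $\calT$.

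The main obstacle is the bookkeeping of signs and shifts in identifying the connecting morphism. In practice, condition (2) only concerns factorization through a subcategory, so signs are irrelevant. If the passage to opposites feels too glib, the fallback is to rerun the three implications directly. For (3)$\Rightarrow$(1), use the cocartesian version of Proposition~\ref{prop:ker_in_ideal_quot}, i.e. Lemma~\ref{lem:exact_in_ideal_quot} in its dual form, to get a right exact complex $QX\to QY\to QZ$; the dual of Lemma~\ref{lem:ker_id_d_mono} in a $d$-truncated category then makes $Qg$ a $d$-epimorphism. For (1)$\Rightarrow$(2), take a left $\scrC$-approximation of $Y$, form the cocartesian square, and use the duals of Lemmas~\ref{lem:chara_n_th_loop} and \ref{lem:chara_d_mono_first}, with suspension objects in place of loops and $\scrC_{-d+1}^0$ via Remark~\ref{rem:shihted}, together with the octahedral axiom. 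For (2)$\Rightarrow$(3), apply $\calT(-,*)$ to the triangle: the map $-\circ\delta[-1]$ (or $-\circ\delta$) vanishes on $\calC$ because $\calT(\calC_{-d+1}^0[1],\calC)$, equivalently $\calT(\calC_{-d+1}^{0},\calC[-1])$, is controlled by the vanishing $\calT(\calC,\calC[i])=0$ for $1\le i\le d$.
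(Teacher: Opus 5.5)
Your proposal is correct and takes the same approach as the paper, whose proof consists of the single remark that it is dual to Proposition~\ref{prop:d_mono_chara_main}; passing to $\scrT\op$, $\scrC\op$ and $(\scrT\lquot\scrC)\op\simeq\scrT\op\lquot\scrC\op$ is a legitimate way to make that duality precise, and your translations of conditions (1)--(3) are right. There are two harmless slips: first, the square in $\scrT\op$ should have left column $g\op$ and top row $\pi\op$, so that $C$ sits in the top-right corner as your identification of the role of $f$ with $g\op$ requires; second, in the fallback argument for (2)$\Rightarrow$(3) the group that must vanish is $\calT(\calC_{-d+1}^{0},\calC[1])$, which it does since $\calT(\calC,\calC[1+j])=0$ for $0\le j\le d-1$, not $\calT(\calC_{-d+1}^{0},\calC[-1])$, which is nonzero in general.
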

\begin{proof}
The proof is dual to the proof of Proposition~\ref{prop:d_mono_chara_main}.
\end{proof}

\begin{cor}
\label{cor_exact_in_ideal}
Consider the following bicartesian diagram in $\scrT$:
$$\begin{tikzcd}
X & C \\ Y & Z
\Ar{1-1}{1-2}{"\iota"}
\Ar{2-1}{2-2}{"g"'}
\Ar{1-1}{2-1}{"f"'}
\Ar{1-2}{2-2}{"\pi"}
\Ar{1-1}{2-2}{"h", red}
\end{tikzcd}$$
where $C\in\scrC$. 
If $Qf$ is a $d$-monomorphism in $\scrT\lquot\scrC$ and $\iota$ is a left $\scrC$-approximation of $X$ in $\scrT$, then the image of the above bicartesian diagram in $\scrT\lquot\scrC$ is also bicartesian. In particular, we have the following exact $3$-term complex in $\scrT\lquot\scrC$:
\begin{align*}
QX & \xrightarrow{Qf} QY \xrightarrow{Qg} QZ \quad (Qh-Q(\pi)\circ h_{QC}\circ Q(\iota)\colon QX\to QZ)
\end{align*}
where $h_{QC}\colon QC\to QC$ is a degree $-1$ morphism such that $dh_{QC}=\id_{QC}$.
\end{cor}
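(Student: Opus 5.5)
The plan is to obtain cartesianness from Proposition~\ref{prop:d_mono_chara_main} and cocartesianness from Proposition~\ref{prop:d_epi_chara_main}, in each case applying the dual form of Proposition~\ref{prop:ker_in_ideal_quot}.

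\emph{Cartesian half.} Since $Qf$ is a $d$-monomorphism, the implication (1) $\Rightarrow$ (3) of Proposition~\ref{prop:d_mono_chara_main} shows that
$$(g\circ-,\pi\circ-)\colon \calT(*,Y)|_{\calC}\oplus \calT(*,C)|_{\calC}\to \calT(*,Z)|_{\calC}$$
is an epimorphism in $\Mod\calC$. The given diagram is cartesian in $\scrT$ because it is bicartesian. Proposition~\ref{prop:ker_in_ideal_quot} then says its image in $\scrT\lquot\scrC$ is cartesian.

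\emph{Cocartesian half.} I will use the dual of Proposition~\ref{prop:ker_in_ideal_quot}: if the square is cocartesian in $\scrT$ and
$$(-\circ f,-\circ\iota)\colon \calT(Y,*)|_{\calC\op}\oplus\calT(C,*)|_{\calC\op}\to\calT(X,*)|_{\calC\op}$$
is an epimorphism in $\Mod\calC\op$, then its image is cocartesian. The hypothesis that $\iota$ is a left $\scrC$-approximation makes this immediate: $-\circ\iota\colon\calT(C,*)|_{\calC\op}\to\calT(X,*)|_{\calC\op}$ is already surjective, so the map with the extra summand is surjective too. Proposition~\ref{prop:d_epi_chara_main} then also yields that $Qg$ is a $d$-epimorphism, although we do not need this. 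So the image square is bicartesian.

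\emph{The exact $3$-term complex.} Since $QC\cong 0$ in $\scrT\lquot\scrC$ (as $QC$ is zero in $H^0$), choose $h_{QC}$ of degree $-1$ with $d h_{QC}=\id_{QC}$. Compose the square with the contraction of $QC$ to get the $3$-term complex with homotopy $Qh-Q(\pi)\circ h_{QC}\circ Q(\iota)$. A sign check, using $d(h)=g\circ f-\pi\circ\iota$ in the convention of Definition~\ref{def:cartesian}, confirms this is a valid homotopy. I then need to identify its left and right exactness with the cartesian and cocartesian properties of the square. For left exactness, $\Cocone(-Q\pi\hat,Qg\hat)$ is compared with $\Cocone(Qg\hat)$: the summand $QC\hat$ is contractible in $\calD(\scrT\lquot\scrC)$, and the induced isomorphism carries $(Qf,Q\iota,Qh)^t$ to $(Qf,\,Qh-Q\pi\circ h_{QC}\circ Q\iota)^t$. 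This is exactly the identification already used in the proof of (3) $\Rightarrow$ (1) of Proposition~\ref{prop:d_mono_chara_main}. Right exactness follows by the dual argument.

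The main obstacle is careful bookkeeping rather than a conceptual gap. One must make sure the dual of Proposition~\ref{prop:ker_in_ideal_quot} holds verbatim; it should, by applying the proposition to $\scrT\op$, which is again stable with $\scrC\op$ cluster tilting. One must also check that the homotopy-correction term has the right sign so that the cocone comparison is a genuine isomorphism in $\calD$.
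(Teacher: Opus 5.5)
Your proposal is correct and follows the paper's proof. Cartesianness comes from (1)$\Rightarrow$(3) of Proposition~\ref{prop:d_mono_chara_main} together with Proposition~\ref{prop:ker_in_ideal_quot}, and cocartesianness comes from the dual of Proposition~\ref{prop:ker_in_ideal_quot}, whose surjectivity hypothesis holds automatically because $\iota$ is a left $\scrC$-approximation (so, as you note yourself, Proposition~\ref{prop:d_epi_chara_main} is not needed, despite your opening sentence).

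One small sign slip in your extra bookkeeping: in the convention of Definition~\ref{def:cartesian} this square has $d(h)=\pi\circ\iota-g\circ f$, not $g\circ f-\pi\circ\iota$. It is with this sign that $Qh-Q(\pi)\circ h_{QC}\circ Q(\iota)$ has differential $-Qg\circ Qf$.
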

\begin{proof}
Since $Qf$ is a $d$-monomorphism in $\scrT\lquot\scrC$, the image of the above bicartesian diagram in $\scrT\lquot\scrC$ is also cartesian by Proposition~\ref{prop:ker_in_ideal_quot} and Proposition~\ref{prop:d_mono_chara_main}. On the other hand, since $\iota$ is a left $\scrC$-approximation of $X$ in $\scrT$, the image of the above bicartesian diagram in $\scrT\lquot\scrC$ is also cocartesian by the dual of Proposition~\ref{prop:ker_in_ideal_quot}. Thus, the image of the above bicartesian diagram in $\scrT\lquot\scrC$ is also bicartesian.
\end{proof}

\begin{cor}
\label{cor:exact_to_exact}
Consider the following exact $3$-term complex in $\scrT$:
$$X\xrightarrow{f} Y\xrightarrow{g} Z \quad (h\colon X\to Z)$$
together with the associated triangle in $\calT$:
$$X\xrightarrow{f} Y\xrightarrow{g} Z \xrightarrow{\delta} X[1]$$
If $\delta[-1]$ factors through an object in $\calC_0^{d-1}$ and $\delta$ factors through an object in $\calC_{-d+1}^0$, then the image of the above $3$-term complex in $\scrT\lquot\scrC$ remains exact. 
\end{cor}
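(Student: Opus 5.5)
The plan is to reduce to Lemma~\ref{lem:exact_in_ideal_quot}: left exactness of the image follows once $g$ is $\scrC$-epic, and right exactness once $f$ is $\scrC$-monic. Both conditions will be read off from the long exact sequences of the triangle $X\xrightarrow{f}Y\xrightarrow{g}Z\xrightarrow{\delta}X[1]$ in $\calT$. This is the triangle associated with the exact $3$-term complex; by stability it is a genuine triangle (Example~\ref{ex:stable_DG} and the triangulated structure on $H^0(\scrT)$).

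\emph{Left exactness.} Applying $\calT(C,-)$ with $C\in\calC$ gives the exact sequence
$$\calT(C,Y)\xrightarrow{g\circ-}\calT(C,Z)\xrightarrow{\delta\circ-}\calT(C,X[1]).$$
By hypothesis $\delta[-1]$ factors through some $K\in\calC_0^{d-1}=\calC*\calC[1]*\cdots*\calC[d-1]$, so $\delta$ factors through $K[1]\in\calC_1^{d}$. We then check that $\calT(C,K[1])=0$. Each layer of $K[1]$ lies in $\calC[j]$ with $1\leq j\leq d$, and $\calT(\calC,\calC[j])=0$ for these $j$ by the cluster tilting property (Definition~\ref{def:cluster_tilting_subcat}). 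The vanishing then passes through the extensions in $*$ by the long exact Hom sequence, by induction on the number of layers.

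Hence $(\delta\circ-)|_{\calC}=0$, and $g\circ-\colon\calT(-,Y)|_{\calC}\to\calT(-,Z)|_{\calC}$ is surjective, i.e.\ $g$ is $\scrC$-epic. This is the same argument as in (2)$\Rightarrow$(3) of Proposition~\ref{prop:d_mono_chara_main}. Since the complex is exact, hence left exact, Lemma~\ref{lem:exact_in_ideal_quot} shows that its image in $\scrT\lquot\scrC$ is left exact.

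\emph{Right exactness.} Dually, applying $\calT(-,C)$ gives the exact sequence
$$\calT(Y,C)\xrightarrow{-\circ f}\calT(X,C)\xrightarrow{-\circ\delta[-1]}\calT(Z[-1],C).$$
Since $\delta$ factors through some $L\in\calC_{-d+1}^0$, the morphism $\delta[-1]\colon Z[-1]\to X$ factors through $L[-1]\in\calC_{-d}^{-1}$. Every layer of $L[-1]$ lies in $\calC[-j]$ with $1\leq j\leq d$, and $\calT(\calC[-j],\calC)\cong\calT(\calC,\calC[j])=0$, so $\calT(L[-1],C)=0$ by the same induction on layers. Hence $-\circ f$ is surjective on $\calC$, i.e.\ $f$ is $\scrC$-monic, and the dual part of Lemma~\ref{lem:exact_in_ideal_quot} gives that the image is right exact.

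Combining the two parts, the image of the $3$-term complex in $\scrT\lquot\scrC$ is exact. The only point requiring care is the index bookkeeping for the shifted subcategories $\calC_i^j$, in particular checking that all shifts land in the range $1\leq j\leq d$, where the $(d+1)$-cluster tilting vanishing applies. The vanishing of Hom into or out of an iterated extension is routine.
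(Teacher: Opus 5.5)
Your proposal is correct and essentially matches the paper, which cites the implications (2)$\Rightarrow$(3) of Propositions~\ref{prop:d_mono_chara_main} and \ref{prop:d_epi_chara_main} (in the degenerate case $C=0$) together with Proposition~\ref{prop:ker_in_ideal_quot}. You instead reprove that Hom-vanishing step directly and then apply Lemma~\ref{lem:exact_in_ideal_quot}, which is exactly the $T=0$ case of Proposition~\ref{prop:ker_in_ideal_quot}; your index bookkeeping ($\delta$ factoring through $\calC_1^{d}$, $\delta[-1]$ through $\calC_{-d}^{-1}$) is also right.
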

\begin{proof}
It immediately follows from Proposition~\ref{prop:d_mono_chara_main}, Proposition~\ref{prop:d_epi_chara_main} and Proposition~\ref{prop:ker_in_ideal_quot}.
\end{proof}

\begin{theorem}
\label{thm:preabelian}
Let $\calT$ be a stable DG-category and $\calC$ a $(d+1)$-cluster tilting DG-subcategory of $\calT$. Then the DG-quotient $\scrT\lquot\scrC$ is an abelian $d$-truncated DG-category.
\end{theorem}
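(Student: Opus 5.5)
We check the conditions of Definition~\ref{def:abelian_d_DG} one at a time for $\scrA:=\scrT\lquot\scrC$. Additivity and connectivity are inherited from $\scrT$, as remarked after the definition of additive DG-categories. $d$-truncatedness is Proposition~\ref{prop:ideal_quot_d_trun}. Finite completeness and cocompleteness follow from Corollary~\ref{cor:ideal_quot_fin_comp} and its dual, because $\scrC$ is functorially finite and $\scrT$ is stable, hence finitely complete and cocomplete. It remains to verify conditions (2) and (3); they are dual, so we describe only (3).

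For (3), we first reduce to a normal form. Let $QX\xrightarrow{a}QY\xrightarrow{b}QZ$ be a left exact $3$-term complex in $\scrA$ with $b$ a $d$-epimorphism. Every object of $\scrA$ is of the form $QT$, and $H^0(\scrA)\simeq\calT/[\calC]$, so we may lift $b$ to a closed morphism $g\colon Y\to Z$ in $\scrT$. Since kernels are unique up to isomorphism in $\cpx^3$ (Lemma~\ref{lem:unique_ker}) and exactness is invariant under isomorphism, it suffices to do the following:
\begin{itemize}
\item produce one particular kernel of $b$;
\item show that this kernel is a right exact $3$-term complex.
\end{itemize}
To produce it, we modify $g$ so that it becomes $\scrC$-epic. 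Take a right $\scrC$-approximation $\pi\colon C\to Z$ and replace $g$ by $(g,\pi)\colon Y\oplus C\to Z$. Its image in $\scrA$ is isomorphic to $b$ (up to the summand $QC\cong 0$). Form the kernel $X'\to Y\oplus C$ in $\scrT$; this gives a bicartesian square with corner $C$. By Proposition~\ref{prop:ker_in_ideal_quot}, its image is cartesian, so $QX'$ is a kernel of $b$. Rotating the square, we are exactly in the setting of Proposition~\ref{prop:d_epi_chara_main}, with $X'\to C$ as $\iota$ and $X'\to Y$ as $f$.

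Next we show that this square becomes cocartesian in $\scrA$. Since $Qg$ is a $d$-epimorphism, Proposition~\ref{prop:d_epi_chara_main} gives that
$(-\circ f,-\circ\iota)\colon \calT(Y,*)|_{\calC\op}\oplus\calT(C,*)|_{\calC\op}\to\calT(X',*)|_{\calC\op}$
is an epimorphism. This is exactly the hypothesis of the dual of Proposition~\ref{prop:ker_in_ideal_quot}, so the image square is cocartesian. Together with the cartesianness already obtained, the image is bicartesian. As in Corollary~\ref{cor_exact_in_ideal}, using $QC\cong0$ with a contracting homotopy $h_{QC}$, the complex $QX'\to QY\to QZ$ is exact, and in particular right exact. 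Transporting along the isomorphism of kernels gives right exactness of the original complex.

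The main obstacle is that the definition of $d$-epimorphism involves the hom-functor $\scrA(QZ,*)$, while Proposition~\ref{prop:d_epi_chara_main} requires the precise bicartesian-square setup with a $\scrC$-corner. We must make sure the replacement of $g$ by $(g,\pi)$ preserves both the $d$-epimorphism property and the kernel. This holds because the two morphisms are isomorphic in the arrow category of $H^0(\scrA)$, and both properties are invariant under such isomorphisms.

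We also need a little care in identifying the connecting datum $h$ of the given complex with the one produced. This identification is handled by Lemma~\ref{lem:unique_ker}, with $b,c$ identities and $s'$ a homotopy.

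Condition (2) is proved by the dual argument. We use a left $\scrC$-approximation and the cokernel, then apply Proposition~\ref{prop:d_mono_chara_main} in the direction (1)$\Rightarrow$(3), and finally Proposition~\ref{prop:ker_in_ideal_quot} to obtain left exactness.
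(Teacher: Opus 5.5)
Your proposal is correct and is essentially the paper's proof, with the roles mirrored. You write out condition (3) and leave (2) to duality; the paper does the reverse, lifting the $d$-monomorphism, pushing out along a left $\scrC$-approximation to get a bicartesian square, invoking Corollary~\ref{cor_exact_in_ideal} (that is, Propositions~\ref{prop:ker_in_ideal_quot} and \ref{prop:d_mono_chara_main}), and transporting exactness via the dual of Lemma~\ref{lem:unique_ker}, which is exactly the dual of your pullback-along-a-right-approximation argument. Your handling of the lift ($Qg\simeq b$ only up to homotopy, absorbed by the $s'$ in Lemma~\ref{lem:unique_ker}) is, if anything, slightly more careful than the paper's ``$Qf=f'$''.
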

\begin{proof}
We already know that $\scrT\lquot\scrC$ is additive and $d$-truncated by Proposition~\ref{prop:ideal_quot_d_trun}. We also know that it admits kernels and cokernels by Corollary~\ref{cor:ideal_quot_fin_comp} and its dual. 

Thus we need to show that condition (2) in Definition~\ref{def:abelian_d_DG} holds. Condition (3) in Definition~\ref{def:abelian_d_DG} can be shown by the dual argument. Take a right exact $3$-term complex in $\scrT\lquot\scrC$:
\begin{equation}
\label{eq:right_exact_3-term}
X'\xrightarrow{f'} Y' \xrightarrow{g'} Z' \quad (h'\colon X'\to Z')
\end{equation}
where $f'$ is a $d$-monomorphism in $\scrT\lquot\scrC$. Since the functor 
$$H^0(Q)\colon \calT\to H^0(\scrT\lquot\scrC)$$
is surjective on objects and degree $0$ morphisms, we can take a closed morphism $f\colon X\to Y$ of degree $0$ in $\scrT$ such that $Qf = f'$ in $\scrT\lquot\scrC$. Then we can take a bicartesian diagram in $\scrT$ as follows:
$$\begin{tikzcd}
X & C \\ Y & Z
\Ar{1-1}{1-2}{"\iota"}
\Ar{2-1}{2-2}{"g"'}
\Ar{1-1}{2-1}{"f"'}
\Ar{1-2}{2-2}{"\pi"}
\Ar{1-1}{2-2}{"h", red}
\end{tikzcd}$$
where $C\in\scrC$ and $\iota\colon X\to C$ is a left $\scrC$-approximation of $X$ in $\scrT$. Since $Qf=f'$ is a $d$-monomorphism in $\scrT\lquot\scrC$, we have the following exact $3$-term complex in $\scrT\lquot\scrC$ by Corollary~\ref{cor_exact_in_ideal}:
\begin{equation}
\label{eq:exact_in_ideal}
QX\xrightarrow{Qf} QY \xrightarrow{Qg} QZ \quad (Qh-Q(\pi)\circ h_{QC}\circ Q(\iota)\colon QX\to QZ)
\end{equation}
where $h_{QC}\colon QC\to QC$ is a degree $-1$ morphism such that $dh_{QC}=\id_{QC}$. Since $Qf=f'$, homotopy $3$-term complexes \eqref{eq:right_exact_3-term} and \eqref{eq:exact_in_ideal} are isomorphic in $\cpx^3(\scrT\lquot\scrC)$ by the dual of Lemma~\ref{lem:unique_ker}. Thus, the homotopy $3$-term complex \eqref{eq:right_exact_3-term} is also left exact in $\scrT\lquot\scrC$. This shows that condition (2) in Definition~\ref{def:abelian_d_DG} holds.
\end{proof}

% \textcolor{blue}{We now turn to the study of projective dimensions of obejects in $\scrT\lquot\scrC$.
% The following clarifies a useful discription of projectives in $\scrT\lquot\scrC$ in terms of $\scrT$.}

\begin{proposition}
\label{prop:enough_proj}
The abelian $d$-truncated DG-category $\scrT\lquot\scrC$ has enough projective objects, and the full DG-subcategory of projective objects in $\scrT\lquot\scrC$ corresponds to the additive closure of the essential image $Q(\scrC[-d])$.  
\end{proposition}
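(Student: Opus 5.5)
The plan is to work throughout with the notion of projective object in an abelian $d$-truncated DG-category from \cite{moc2025}. An object $P$ is projective if, for every $d$-epimorphism $g'\colon Y'\to Z'$, the induced map $H^0(\scrA(P,Y'))\to H^0(\scrA(P,Z'))$ is surjective. There are three steps: (a) each $Q(X[-d])$ with $X\in\scrC$ is projective; (b) every object of $\scrT\lquot\scrC$ receives a $d$-epimorphism from such an object; (c) every projective is a direct summand of one. Since the property is closed under finite sums and summands, (a) gives the "if" direction for the whole additive closure, and (b)+(c) give enough projectives and the "only if" direction.

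\textbf{Step (a).} Let $g'\colon QY\to QZ$ be a $d$-epimorphism. By fullness of $H^0(Q)$, lift it to a closed morphism $g\colon Y\to Z$ in $\scrT$. As in the proof of Theorem~\ref{thm:preabelian}, but now using a right $\scrC$-approximation $\pi\colon C\to Z$, form a bicartesian square with corner $C\in\scrC$. This gives a triangle
\[X'\to Y\oplus C\xrightarrow{(g,\pi)} Z\xrightarrow{\delta} X'[1].\]
By Proposition~\ref{prop:d_epi_chara_main}, since $Qg$ is a $d$-epimorphism, $\delta$ factors through an object $U\in\calC_{-d+1}^0$. Now take any $u\colon X[-d]\to Z$ with $X\in\scrC$. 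We have $\calT(X[-d],\calC[-d+j])=\calT(X,\calC[j])=0$ for $1\le j\le d$, so by d\'evissage along the $*$-decomposition $\calT(X[-d],U)=0$, and hence $\delta\circ u=0$. Therefore $u$ lifts through $(g,\pi)$. Passing to $\scrT\lquot\scrC$, where $QC=0$ and $H^0$ is the ideal quotient by Proposition~\ref{prop:derived_ideal_quot}, this shows that $H^0$ of $Q(X[-d])$ applied to $Qg$ is surjective. I expect this step to be the main technical point. The concern is that Proposition~\ref{prop:d_epi_chara_main} is stated for squares whose corner map need not be an approximation, so one must check that the chosen square satisfies its hypotheses. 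If it does not, I would instead invoke the dual of Lemma~\ref{lem:chara_d_mono_first}.

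\textbf{Step (b).} Let $T\in\scrT$. By Remark~\ref{rem:shihted}, $T\in\scrC[-d]*\scrC_{-d+1}^0$, so there is a triangle
\[X[-d]\xrightarrow{p}T\to U\to X[-d+1]\]
with $X\in\scrC$ and $U\in\calC_{-d+1}^0$. Then $\Cone(p)=U\in\calC_{-d+1}^0$. The dual of Lemma~\ref{lem:d_mono_if_cocone} (if $\Cone(p)\in\scrC_{-n+1}^0$ then $Qp$ is an $n$-epimorphism) gives that $Qp$ is a $d$-epimorphism. As a side remark, the vanishing $\calT(\calC[-d],U)=0$ from step (a) also shows that $p$ is a right $\scrC[-d]$-approximation, although that is not needed here.

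\textbf{Step (c).} Let $QT$ be projective and take $Qp\colon Q(X[-d])\to QT$ as in (b). Projectivity lets $\id_{QT}$ lift through $Qp$ in $H^0$, so $QT$ is a direct summand of $Q(X[-d])$ in $H^0(\scrT\lquot\scrC)$. This identifies the full DG-subcategory of projectives with the additive closure of $Q(\scrC[-d])$.
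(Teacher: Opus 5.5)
Your proposal is correct and follows essentially the same route as the paper. Enough projectives come from the splicing triangle $X[-d]\to T\to U$ with $U\in\calC_{-d+1}^0$. Projectivity of $Q(X[-d])$ follows from Proposition~\ref{prop:d_epi_chara_main} together with the vanishing $\calT(\calC[-d],\calC_{-d+1}^0)=0$. The converse comes from splitting the $d$-epimorphism $Q(X[-d])\to P$.

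Your worry in step (a) is unfounded. Proposition~\ref{prop:d_epi_chara_main} only asks for a bicartesian square with corner in $\scrC$, not an approximation. The paper simply takes the corner to be $0$, i.e.\ it uses the triangle $X\to Y\to\Cone(f)$, so your right $\scrC$-approximation $\pi$ is harmless but superfluous.
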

\begin{proof}
First, we show that for any object $X'$ in $\scrT\lquot\scrC$, there exists an object $C[-d]\in\scrC[-d]$ and a $d$-epimorphism $Q(C[-d])\to X'$ in $\scrT\lquot\scrC$. We may assume that $X'=QX$ for some object $X\in\scrT$. By Fact~\ref{thm:splicing}, we have an exact $3$-term complex in $\scrT$:
$$C[-d]\xrightarrow{} X \xrightarrow{} Y \quad (h\colon C[-d]\to Y)$$
where $C\in\scrC$ and $Y$ is in $\scrC_{-d+1}^0$. By Proposition~\ref{prop:d_epi_chara_main}, the morphism $Q(C[-d])\to QX$ is a $d$-epimorphism in $\scrT\lquot\scrC$. 

Next, we show that for any object $C[-d]\in\scrC[-d]$, the object $Q(C[-d])$ is projective in $\scrT\lquot\scrC$. Take a $d$-epimorphism $f'\colon X'\to Y'$ in $\scrT\lquot\scrC$ and a morphism $g'\colon Q(C[-d])\to Y'$ in $\scrT\lquot\scrC$. We may assume that $f'=Qf$ for some closed morphism $f\colon X\to Y$ of degree $0$ in $\scrT$ and $g'=Qg$ for some closed morphism $g\colon C[-d]\to Y$ of degree $0$ in $\scrT$. Take a triangle in $\calT$:
$$X\xrightarrow{f} Y\xrightarrow{i} \Cone(f)\to X[1]$$
Since $Qf$ is a $d$-epimorphism in $\scrT\lquot\scrC$, the morphism $i\colon Y\to \Cone(f)$ factors through an object in $\scrC_{-d+1}^0$ by Proposition~\ref{prop:d_epi_chara_main}. Consider the following exact sequence in $\Mod \calC$:
$$\calT(*,X)|_{\calC[-d]} \xrightarrow{(f\circ-)|_{\calC[-d]}} \calT(*,Y)|_{\calC[-d]} \xrightarrow{(i\circ -)|_{\calC[-d]}} \calT(*,\Cone(f))|_{\calC[-d]}.$$
Since the morphism $i\colon Y\to \Cone(f)$ factors through an object in $\scrC[-d+1]*\cdots*\scrC$, we have $(i\circ -)|_{\calC[-d]}=0$ in $\Mod \calC$. Thus, the morphism $(f\circ-)|_{\calC[-d]}\colon \calT(*,X)|_{\calC[-d]} \to \calT(*,Y)|_{\calC[-d]}$ is an epimorphism in $\Mod (\calC[-d])$. This implies that the morphism $g\colon C[-d]\to Y$ factors through $f\colon X\to Y$ in $\calT$. Thus, the morphism $g'\colon Q(C[-d])\to Y'$ factors through $f'\colon X'\to Y'$ in $\scrT\lquot\scrC$. This shows that $Q(C[-d])$ is projective in $\scrT\lquot\scrC$.

Finally, we show that any projective object in $\scrT\lquot\scrC$ is isomorphic to a direct summand of an object in $Q(\scrC[-d])$. Take a projective object $P$ in $\scrT\lquot\scrC$. Then there exists a $d$-epimorphism $f'\colon Q(C[-d])\to P$ in $\scrT\lquot\scrC$ for some object $C\in\scrC$. Since $P$ is projective, the morphism $f'\colon Q(C[-d])\to P$ splits in $\scrT\lquot\scrC$. Thus, $P$ is isomorphic to a direct summand of $Q(C[-d])$.
\end{proof}

By the dual argument, we also have the following result:

\begin{proposition}
\label{prop:enough_inj}
The abelian $d$-truncated DG-category $\scrT\lquot\scrC$ has enough injective objects, and the full sub DG-category of injective objects in $\scrT\lquot\scrC$ corresponds to the additive closure of the essential image $Q(\scrC[d])$.
\end{proposition}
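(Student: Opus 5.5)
I would dualize the proof of Proposition~\ref{prop:enough_proj} step by step, using Proposition~\ref{prop:d_mono_chara_main} in place of Proposition~\ref{prop:d_epi_chara_main} and Fact~\ref{thm:splicing} in the form $\scrT=\scrC_0^d$ in place of Remark~\ref{rem:shihted}. There are three steps: produce enough injectives, show that $Q(C[d])$ is injective, and show that every injective is a summand of such an object.

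\textbf{Enough injectives.} Let $X'\in\scrT\lquot\scrC$. By essential surjectivity of $H^0(Q)$ we may take $X'=QX$ with $X\in\scrT$. Since $\scrT=\scrC_0^{d-1}*\scrC[d]$, there is a triangle $Y\to X\to C[d]\xrightarrow{} Y[1]$ with $Y\in\scrC_0^{d-1}$ and $C\in\scrC$. Stability lets us lift it to an exact $3$-term complex $Y\to X\xrightarrow{u} C[d]$ in $\scrT$. Then $\Cocone(u)\cong Y\in\scrC_0^{d-1}$, so Lemma~\ref{lem:d_mono_if_cocone} with $n=d$ shows that $Qu\colon QX\to Q(C[d])$ is a $d$-monomorphism.

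\textbf{Injectivity of $Q(C[d])$.} Take a $d$-monomorphism $f'\colon X'\to Y'$ and a morphism $g'\colon X'\to Q(C[d])$. Lift them to closed degree-$0$ morphisms $f\colon X\to Y$ and $g\colon X\to C[d]$ in $\scrT$. Form the triangle $\Cocone(f)\xrightarrow{j}X\xrightarrow{f}Y\to$. By the implication (1)$\Rightarrow$(2) of Proposition~\ref{prop:d_mono_chara_main}, applied to a bicartesian square built from a left $\scrC$-approximation of $X$, the connecting map, and hence $j$, factors through an object of $\calC_0^{d-1}$. The proof of that implication in fact shows $\Cone(f)[-1]\to X$ factors through the kernel $K\in\scrC_0^{d-1}$, so this gives $j$ directly.

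Apply $\calT(-,C[d])$ to get the exact sequence
$$\calT(Y,C[d])\xrightarrow{-\circ f}\calT(X,C[d])\xrightarrow{-\circ j}\calT(\Cocone(f),C[d]).$$
We have $\calT(\calC_0^{d-1},\calC[d])=0$, because $\calT(\calC[i],\calC[d])=\calT(\calC,\calC[d-i])=0$ for $0\le i\le d-1$ by the cluster tilting property. Hence $-\circ j$ vanishes, so $g$ factors through $f$ in $\calT$. Applying $Q$, $g'$ factors through $f'$ in $H^0(\scrT\lquot\scrC)$.

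This is the step needing the most care. Injectivity is defined via factorization along $d$-monomorphisms, and we must check that $H^0$-level factorization suffices. This matches the projective case, so I would follow that argument verbatim.

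\textbf{Characterization.} If $I$ is injective, the first step gives a $d$-monomorphism $I\to Q(C[d])$. By injectivity this map splits, so $I$ is a direct summand of $Q(C[d])$. Conversely, direct summands of objects of the form $Q(C[d])$ are injective. Hence the injectives are exactly the additive closure of the essential image of $Q(\scrC[d])$.
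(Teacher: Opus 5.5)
Your proposal is correct and is exactly the dualization of the proof of Proposition~\ref{prop:enough_proj}; the paper's own proof of this statement consists only of the remark that the dual argument applies. Two of your choices make that dualization precise. In the first step, Lemma~\ref{lem:d_mono_if_cocone} with $n=d$ directly gives the $d$-monomorphism $QX\to Q(C[d])$. In the second step, you use the \emph{proof} rather than the statement of (1)$\Rightarrow$(2) in Proposition~\ref{prop:d_mono_chara_main} to factor $j\colon \Cocone(f)\to X$ through $K\in\scrC_0^{d-1}$. This matters because the statement only controls $\delta[-1]$, and $\delta[-1]$ factors through $j$ rather than the reverse. So your phrase ``and hence $j$'' should simply be dropped.
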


\begin{lemma}
\label{lem:proj_in_ideal_quot_IC}
Assume that the homotopy category $\calT$ is idempotent complete. Then the full sub DG-category of projective objects in $\scrT\lquot\scrC$ coincides with $Q(\scrC[-d])$. Dually, the full sub DG-category of injective objects in $\scrT\lquot\scrC$ coincides with $Q(\scrC[d])$.
\end{lemma}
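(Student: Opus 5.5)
By Proposition~\ref{prop:enough_proj}, every projective object of $\scrT\lquot\scrC$ is isomorphic to a direct summand of $Q(C[-d])$ for some $C\in\scrC$. So the task is to show that, when $\calT$ is idempotent complete, such a summand is itself isomorphic to $Q(C'[-d])$ for some $C'\in\scrC$. The injective statement is dual, using Proposition~\ref{prop:enough_inj}.

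The plan is to lift the idempotent from the quotient to $\calT$. Let $P$ be a direct summand of $Q(C[-d])$ in $H^0(\scrT\lquot\scrC)\simeq\calT/[\calC]$ (Proposition~\ref{prop:derived_ideal_quot}), and let $e'$ be the corresponding idempotent on $Q(C[-d])$. Write $e'=Qe$ for some $e\in\calT(C[-d],C[-d])$. The obstruction is that $e$ is only idempotent modulo the ideal $[\calC]$. The key observation is that this ideal vanishes on the relevant morphisms. Indeed, any morphism $C[-d]\to C[-d]$ factoring through an object $C''\in\calC$ is the composite of some $a\colon C[-d]\to C''$ and $b\colon C''\to C[-d]$. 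But $a\in\calT(C[-d],C'')\cong\calT(C,C''[d])=0$ by the $(d+1)$-cluster tilting condition, since $d\ge1$. Hence
$$\calT(C[-d],C[-d])\to (\calT/[\calC])(C[-d],C[-d])$$
is an isomorphism, and in particular a ring isomorphism. So $e$ is an honest idempotent of $C[-d]$ in $\calT$.

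Since $\calT$ is idempotent complete, $e$ splits, giving $C[-d]\cong E\oplus E'$ in $\calT$. Then $E[d]$ is a direct summand of $C$ in $\calT$. Since $\calC$ is closed under direct summands (it is cut out by the vanishing conditions in Definition~\ref{def:cluster_tilting_subcat}), we have $E[d]=:C'\in\scrC$. The functor $H^0(Q)$ is additive, so $Q(E)=Q(C'[-d])$ is a summand of $Q(C[-d])$ whose idempotent is $Qe=e'$. Therefore $P\cong Q(C'[-d])$.

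The main step, and the only delicate point, is the vanishing of $[\calC](C[-d],C[-d])$. It holds because morphisms from $\calC[-d]$ into $\calC$ vanish by cluster tilting. Everything else is formal.
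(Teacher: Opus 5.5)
Your proposal is correct and follows the paper's proof: both rest on $\calT(C[-d],C'')\cong\calT(C,C''[d])=0$ for $C,C''\in\calC$, so $H^0(Q)$ is faithful (hence fully faithful) on $\calC[-d]$, and then idempotent completeness of $\calT$ and closure of $\calC$ under direct summands finish the argument. You simply spell out the idempotent-lifting step that the paper compresses into the assertion that the essential image $Q(\calC[-d])$ is idempotent complete.
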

\begin{proof}
We only show the assertion for projective objects. The assertion for injective objects can be shown by the dual argument. By Proposition~\ref{prop:enough_proj}, the full sub DG-category of projective objects in $\scrT\lquot\scrC$ is given by the additive closure of the essential image $Q(\scrC[-d])$. Thus, it is enough to show that the essential image $Q(\scrC[-d])$ is closed under direct summands in $\scrT\lquot\scrC$. First, we show that the restricted quotient DG-functor $H^0(Q)\colon \calC[-d]\to \calT/[\scrC]$ is fully faithful. Since this functor is already known to be full, it remains to show that it is faithful. Let $f\colon C[-d]\to C'[-d]$ be a morphism in the ideal $[\scrC]$. Then there exists a factorization $C[-d]\to C''\to C'[-d]$ of $f$ in $\calT$ with $C''\in\scrC$. In this situation, the morphism $C[-d]\to C''$ is zero since $\calT(C[-d],\calC)=0$. Thus, the morphism $f\colon C[-d]\to C'[-d]$ is zero in $\calT/[\scrC]$. This shows that the functor $H^0(Q)\colon \calC[-d]\to \calT/[\scrC]$ is fully faithful. Since $\calC[-d]$ is closed under direct summands in $\calT$ and $\calT$ is idempotent complete, the essential image $Q(\calC[-d])$ is also idempotent complete. Thus, $Q(\calC[-d])$ is closed under direct summands in $\scrT\lquot\scrC$.
\end{proof}

The following proposition gives a useful description of the full sub DG-category $Q(\scrC[-d])$. This implies that the full sub DG-category $Q(\scrC[-d])$ is quasi-equivalent to the truncated full sub DG-category $\tau^{>-d}(\scrC[-d])$ in $\tau^{>-d}\scrT$.

\begin{proposition}
\label{prop:end}
The quotient DG-functor $Q\colon \scrT\to \scrT\lquot\scrC$ induces an isomorphism 
$$H^i\bigl(\scrT(X,Y)\bigr) \cong H^i\bigl((\scrT\lquot\scrC)(QX,QY)\bigr)$$
for all $-d<i\leq 0$ and $X\in\scrC[-d]$, $Y\in\scrT$.
\end{proposition}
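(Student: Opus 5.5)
We use the standard triangle \eqref{eq:standard_tri}, now with the roles of source and target swapped: fix $Y$, let $X$ vary over $\scrC[-d]$, and use the dual (left-module) version. Concretely, for fixed $X\in\scrC[-d]$ the dual of Fact~\ref{thm:recollement_der_quot} gives a triangle of left DG-modules
$$\scrT(X,-)|_{\scrC}\lox_{\scrC}\scrT(-,*)|_{\scrC} \xrightarrow{m} \scrT(X,*)\xrightarrow{Q_X} (\scrT\lquot\scrC)(QX,Q*)\dashrightarrow .$$
It suffices to show that the first term $M:=\scrT(X,-)|_{\scrC}\lox_{\scrC}\scrT(-,*)|_{\scrC\op}$ has $H^i(M)=0$ for all $i>-d$. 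The long exact cohomology sequence then gives $H^i\scrT(X,Y)\cong H^i(\scrT\lquot\scrC)(QX,QY)$ for $-d<i\le 0$. At $i=-d+1$ this needs $H^{-d+1}M=0$, and at $i=0$ it needs $H^{1}M=0$; the latter holds by connectivity.

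The key step is to compute the cohomology of the right $\scrC$-module $N:=\scrT(X,-)|_{\scrC}$, namely $C'\mapsto \scrT(X,C')$ for $C'\in\scrC$. Write $X=C[-d]$. Since $\scrT$ is a connective cover of a pretriangulated category (Example~\ref{ex:stable_DG}), we have $H^{j}\scrT(C[-d],C')\cong \calT(C,C'[d+j])$ for $j\le 0$, and these vanish for $j\le0$ with $1\le d+j\le d$, i.e.\ for $-d+1\le j\le 0$, by the cluster tilting condition. Hence $H^jN=0$ for $j>-d$, so $N\in\calD(\scrC)^{\le -d}$.

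Finally, $-\lox_{\scrC}\scrT|_{\scrC\op}$ is right t-exact for the standard t-structures, because $\scrT|_{\scrC\op}$ is a connective bimodule (both $\scrC$ and $\scrT$ are connective). Explicitly, $N$ is built as a homotopy colimit of shifts $C'^{\wedge}[m]$ with $m\ge d$, each of which is sent to $\scrT(*,C')[m]$, concentrated in degrees $\le -d$. Therefore $M\in\calD(\scrT)^{\le -d}$, i.e.\ $H^iM=0$ for $i>-d$, which is what we need.

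The main obstacle is making the right t-exactness of the derived tensor product rigorous: one must take a $K$-projective resolution of $N$ by a semi-free module with cells in degrees $\le -d$, using the connectivity of $\scrC$. One must also check that $Q_X$ is indeed the map induced by $Q$, which follows exactly as in \eqref{eq:standard_tri}. Note also that the cohomology computation only uses that $\calT(\calC,\calC[i])=0$ for $1\le i\le d$.
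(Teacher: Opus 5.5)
Your proposal is correct and follows essentially the same route as the paper: the paper evaluates the triangle \eqref{eq:standard_tri} (with $T=Y$) at $X$, uses the cluster tilting vanishing and connectivity to get $\scrT(X,-)|_{\scrC\op}\cong\tau^{\leq -d}\bigl(\scrT(X,-)|_{\scrC\op}\bigr)$, and deduces that the tensor term has no cohomology above degree $-d$; no dualization of the recollement is actually needed. The only slip is one of variance: $N=\scrT(X,-)|_{\scrC}$ is covariant in $C'$, so it is a left $\scrC$-module, resolved by shifted corepresentables $\scrC(C',-)[m]$ with $m\geq d$ rather than by $C'^{\wedge}[m]$, and this does not affect the argument.
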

\begin{proof}
Let $X$ be an object in $\scrC[-d]$ and $Y$ be an object in $\scrT$. Then we have the following triangle in $\calD(k)$:
$$\scrT(-,Y)|_{\scrC}\lox_{\scrC}\scrT(X,-)|_{\scrC\op}\xrightarrow{m^{\scrC}_{Y}} \scrT(X,Y) \xrightarrow{Q_{Y}} (\scrT\lquot\scrC)(QX,QY)\dashrightarrow.$$
Since $X\in\scrC[-d]$, we have 
$$H^i(\scrT(X,-)|_{\scrC\op}) \cong \calT(X,\calC[i])= 0 $$
for any $-d<i\leq 0$. Moreover, since $\scrT$ is connective, we also have $H^i(\scrT(X,-)|_{\scrC\op}) = 0$ for any $i>0$. Since $X\in\scrC[-d]$, $\scrC$ is a $(d+1)$-cluster tilting DG-subcategory of $\scrT$, and $\scrT$ is a connective DG-category. Thus, we have an isomorphism:
$$\scrT(-,Y)|_{\scrC}\lox_{\scrC}\scrT(X,-)|_{\scrC\op}\cong \scrT(-,Y)|_{\scrC}\lox_{\scrC}\tau^{\leq -d}\bigl(\scrT(X,-)|_{\scrC\op}\bigr).$$
Thus, we have $H^i\bigl(\scrT(-,Y)|_{\scrC}\lox_{\scrC}\scrT(X,-)|_{\scrC\op}\bigr)=0$ for any $i>-d$. This implies that the morphism $Q_{X}\colon \scrT(X,Y) \to (\scrT\lquot\scrC)(QX,QY)$ induces the following isomorphism for any $-d<i\leq 0$:
$$H^i\bigl(\scrT(X,Y)\bigr) \cong H^i\bigl((\scrT\lquot\scrC)(QX,QY)\bigr).$$
This shows the desired isomorphism.
\end{proof}

Next, we show that the DG-quotient $\scrT\lquot\scrC$ is quasi-equivalent to the DG-category of finitely presented $d$-extended modules over the truncated full sub DG-category $\tau^{>-d}\scrC$. We first give a precise definition of the DG-category of finitely presented $d$-extended modules over a connective DG-category.

\begin{definition}
\label{def:demfpdg}
Let $\scrB$ be a connective DG-category. Define the $d$-extended module DG-category $\demdg{\scrB}$ as the full DG-subcategory of $\scrD(\scrB)$ consisting of objects $M$ such that $H^i(M)=0$ for any $i \leq -d$ and $i> 0$. An object $M\in \demdg{\scrB}$ is called a finitely presented $d$-extended module if there exist the following conflations in the extriangulated category $H^0(\demdg{\scrB})$:
$$M^{-(i+1)}\to P^{-i}\to M^{-i} \dashrightarrow $$
for each $0\leq i \leq d$ where $P^0, P^{-1}, \ldots, P^{-d}$ are in $\add \scrB$ and $M^0=M$. We denote by $\demfpdg{\scrB}$ the full DG-subcategory of $\demdg{\scrB}$ consisting of finitely presented objects.
\end{definition}

\begin{remark}
If $k$ is a field and $\scrB$ is a locally finite connective DG-category, then the DG-category $\demfpdg{\scrB}$ corresponds to the full DG-subcategory of $\demdg{\scrB}$ consisting of objects $M$ such that $H^i(M)$ is finite-dimensional. In this case, the DG-category $\demfpdg{\scrB}$ is also denoted by $\demfddg{\scrB}$. 
\end{remark}

\begin{remark}
\label{rem:demfpdg}
Let $\scrB$ be a connective DG-category. Then the DG-category $\demdg{\scrB}$ is naturally quasi-equivalent to the $d$-extended module DG-category $\demdg{\tau^{>-d}\scrB}$ over the truncated full sub DG-category $\tau^{>-d}\scrB$. Thus, we can assume that $\scrB$ is $d$-truncated when we consider the DG-category $\demdg{\scrB}$. 
\end{remark}

% \begin{remark}
% If $k$ is a field and $\scrB$ is a locally finite connective DG-category, then the DG-category $\demfpdg{\scrB}$ corresponds to the full DG-subcategory of $\demdg{\scrB}$ consisting of objects $M$ such that $H^i(M)$ is finite-dimensional.
% \end{remark}

\begin{lemma}
\label{lem:restriction}
Let $\scrB$ and $\scrB'$ be locally $K$-projective connective DG-categories, and let $F\colon \scrB\to \scrB'$ be a DG-functor. Then the restriction DG-functor $(-)|_{\scrB}$ can be restricted to a DG-functor $(-)|_{\scrB}\colon \demdg{\scrB'}\to \demdg{\scrB}$.
\end{lemma}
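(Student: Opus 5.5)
The plan is to show that the restriction DG-functor $(-)|_{\scrB}\colon \scrD(\scrB')\to \scrD(\scrB)$, given by precomposition with $F$, sends $\demdg{\scrB'}$ into $\demdg{\scrB}$. Since both are defined as full DG-subcategories, the only real content is at the level of objects.

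First I will check that restriction lands in $K$-projective modules. For a $K$-projective right DG-module $M$ over $\scrB'$, the restriction $M\circ F$ is a DG-module over $\scrB$, but it need not be $K$-projective. So I will take the composite
$$\scrD(\scrB')\xrightarrow{(-)\circ F}\{\text{DG-modules over }\scrB\}\xrightarrow{\mathbf{p}}\scrD(\scrB),$$
where $\mathbf{p}$ is a functorial $K$-projective resolution. This resolution exists by the standard results of Keller and Drinfeld recalled in the preliminaries.

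Alternatively, I can realise restriction as $-\lox_{\scrB'}\scrB'(F-,*)$, i.e. tensoring with the bimodule $\scrB'(?,F-)$. The local $K$-projectivity of $\scrB$ and $\scrB'$ is exactly what makes this bimodule behave well, giving a strict DG-functor model between derived DG-categories. Either way, $(-)|_{\scrB}$ is a DG-functor $\scrD(\scrB')\to\scrD(\scrB)$ up to quasi-isomorphism of values, and this is the sense in which it is used in Fact~\ref{thm:recollement_der_quot}.

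The key step is the cohomology computation. For each object $B\in\scrB$,
$$H^i\bigl(M|_{\scrB}(B)\bigr)\cong H^i\bigl(M(FB)\bigr),$$
because the resolution is a quasi-isomorphism and restriction is computed objectwise. Hence if $H^i(M)=0$ for $i\le -d$ and $i>0$, the same vanishing holds for $M|_{\scrB}$. So the restricted DG-functor maps $\demdg{\scrB'}$ into $\demdg{\scrB}$, and fullness of these subcategories gives the claimed restriction of the DG-functor.

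I expect the only delicate point to be the first step: producing an honest DG-functor rather than merely a functor on homotopy categories, given the $K$-projectivity requirement. I will handle it with the functorial cofibrant (bar) resolution, which is the place where local $K$-projectivity of $\scrB$ and $\scrB'$ is invoked. I will not need connectivity, except to make $\demdg{-}$ meaningful.
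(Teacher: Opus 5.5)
Your proposal is correct and is essentially the paper's argument. The paper's proof only observes that restriction $\scrD(\scrB')\to\scrD(\scrB)$ preserves the standard $t$-structure, which is exactly your objectwise computation $H^i(M|_{\scrB}(B))\cong H^i(M(FB))$ showing that the vanishing conditions defining $\demdg{\scrB'}$ carry over to $\demdg{\scrB}$; your extra step of making restriction an honest DG-functor into $K$-projective modules via a functorial resolution is a detail the paper takes for granted, not a different route.
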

\begin{proof}
It is clear that the restriction DG-functor $(-)|_{\scrB}\colon \scrD(\scrB')\to \scrD(\scrB)$ preserves the $t$-structure. Thus, it induces a DG-functor $(-)|_{\scrB}\colon \demdg{\scrB'}\to \demdg{\scrB}$. 
\end{proof}

\begin{theorem}
\label{thm:main}
Let $\scrT$ be an essentially small, locally $K$-projective and stable DG-category and $\calC$ a $(d+1)$-cluster tilting DG-subcategory of $\scrT$. Then the DG-functor 
$$F_\scrC:=\tau^{>-d}\bigl(\scrT(*[-d],-)|_{\scrC}\bigr)\colon \scrT \to \demfpdg{\tau^{>-d}\scrC}$$
induces an isomorphism $\scrT\lquot\scrC \cong \demfpdg{\tau^{>-d}\scrC}$ in $\hqe$ which makes the following diagram commutative in $\hqe$:
$$\begin{tikzcd}
\scrC\ar[r,"\mathrm{incl}"]\ar[rrd,"0"', bend right =15] & \scrT \ar[r,"Q"] \ar[rd, "{F_\scrC}"'] & \scrT\lquot\calC \ar[d, "\sim"] \\
&& \demfpdg{\tau^{>-d}\scrC}
\end{tikzcd}$$
\end{theorem}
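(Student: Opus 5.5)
The plan is to combine the Morita-type theorem (Corollary~\ref{cor:morita} from the appendix) with the results of this section. By Theorem~\ref{thm:preabelian}, Proposition~\ref{prop:enough_proj} and Lemma~\ref{lem:proj_in_ideal_quot_IC}, $\scrA:=\scrT\lquot\scrC$ is a locally $K$-projective abelian $d$-truncated DG-category with enough projectives. (If local $K$-projectivity of the Drinfeld quotient is not automatic, we first replace it by a cofibrant model.) Its essentially small homotopy category is $\calT/[\calC]$, and the full DG-subcategory $\scrB:=Q(\scrC[-d])$, or its additive closure, is progenerating. Corollary~\ref{cor:morita} then gives a quasi-equivalence
$$\scrA(-,*)|_{\scrB}\colon \scrA\xrightarrow{\sim}\demfpdg{\scrB}.$$

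It remains to identify $\demfpdg{\scrB}$ with $\demfpdg{\tau^{>-d}\scrC}$. By Proposition~\ref{prop:end} with $X,Y\in\scrC[-d]$, the DG-functor $Q|_{\scrC[-d]}\colon\scrC[-d]\to\scrB$ induces isomorphisms on $H^i$ for $-d<i\le 0$. The target $\scrB$ is $d$-truncated by Proposition~\ref{prop:ideal_quot_d_trun}. Hence $\tau^{>-d}(\scrC[-d])\to\tau^{>-d}\scrB\simeq\scrB$ is a quasi-equivalence; for this we work with a zigzag through $\tau^{\le 0}$ models, using the Remark after Definition~\ref{def:demfpdg}. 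Composing with the shift isomorphism $\scrC\cong\scrC[-d]$, we obtain an isomorphism $\tau^{>-d}\scrC\cong\scrB$ in $\hqe$. Restriction and extension along a quasi-equivalence give a quasi-equivalence $\demdg{\scrB}\simeq\demdg{\tau^{>-d}\scrC}$, and it preserves finite presentation, since it sends $\add\scrB$ to $\add\tau^{>-d}\scrC$ and preserves conflations.

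Next I would check that the composite $\scrT\xrightarrow{Q}\scrA\to\demfpdg{\tau^{>-d}\scrC}$ agrees with $F_\scrC$ in $\hqe$. For $T\in\scrT$ the composite sends $T$ to $(\scrT\lquot\scrC)(Q(*[-d]),QT)|_{\scrC}$, and the map $Q_T$ gives a natural morphism of bimodules
$$\scrT(*[-d],T)|_{\scrC}\to(\scrT\lquot\scrC)(Q(*[-d]),QT)|_{\scrC}.$$
By Proposition~\ref{prop:end} this morphism is an isomorphism on $H^i$ for $-d<i\le0$. The right-hand side is concentrated in degrees $(-d,0]$, so the morphism factors through $\tau^{>-d}$ and identifies $F_\scrC$ with the composite as right quasi-representable bimodules. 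By Fact~\ref{thm:internal_hom}, the two DG-functors are therefore isomorphic in $\hqe$.

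Commutativity of the triangle with $\scrC$ follows from $Q|_\scrC\cong 0$ (Fact~\ref{prop:char_quot_functor}(1)). One can also see it directly: $F_\scrC|_\scrC=0$, because $\calT(\calC[-d],\calC[i])=0$ for $-d<i\le 0$.

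The main obstacle is making these identifications honest in $\hqe$ rather than only on homotopy categories. This concerns the bimodule-level construction of $F_\scrC$ and its comparison with the Morita functor through the truncation, together with the size and universe bookkeeping. In particular, the truncation $\tau^{>-d}$ of the bimodule $\scrT(*[-d],-)|_\scrC$ must be taken functorially, for instance via $\tau^{\le0}$ and the quotient by $\tau^{\le -d}$ on a $K$-projective model, so that $F_\scrC$ is a genuine DG-functor.
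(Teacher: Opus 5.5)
Your proposal is correct and matches the paper's proof. You apply Corollary~\ref{cor:morita} to the progenerating subcategory $Q(\scrC[-d])$ given by Proposition~\ref{prop:enough_proj}, identify it with $\tau^{>-d}(\scrC[-d])\simeq\tau^{>-d}\scrC$ via Proposition~\ref{prop:end} and the shift, and check commutativity through the bimodule isomorphism $(\scrT\lquot\scrC)(Q(*[-d]),Q(-))|_{\scrC}\cong\tau^{>-d}(\scrT(*[-d],-)|_{\scrC})$ from Proposition~\ref{prop:end}. One correction: drop the appeal to Lemma~\ref{lem:proj_in_ideal_quot_IC}, because it needs $\calT$ idempotent complete (not assumed here) and Proposition~\ref{prop:enough_proj} already gives everything Corollary~\ref{cor:morita} requires.
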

\begin{proof}
Note that, by Corollary~\ref{cor:morita}, we already have a quasi-equivalence DG-functor 
$$G:=(\scrT\lquot\scrC)(-,*)|_{Q(\scrC[-d])} \colon \scrT\lquot\scrC \to \demfpdg{Q(\tau^{>-d}(\scrC[-d]))}$$
since the full DG-subcategory $Q(\scrC[-d])$ is a progenerating full DG-subcategory of $\scrT\lquot\scrC$ by Proposition~\ref{prop:enough_proj}.
Then we have the following diagram in $\hqe$: 
$$\begin{tikzcd}[column sep=50]
\scrT & & \scrT\lquot\scrC \\
\demfpdg{\tau^{>-d}\scrC} & \demfpdg{\tau^{>-d}(\scrC[-d])} & \demfpdg{Q(\tau^{>-d}(\scrC[-d]))}
\Ar{1-1}{1-3}{"Q"}
\Ar{1-1}{2-1}{"F_\scrC"'}
\Ar{1-3}{2-3}{"G"}
\Ar{2-3}{2-2}{"{(-)|_{\tau^{>-d}(\scrC[-d])}}"}
\Ar{2-2}{2-1}{"(-)|_{\tau^{>-d}\scrC}"}
\end{tikzcd}$$
where the bottom horizontal morphism is an isomorphism in $\hqe$ which is induced by the following quasi-equivalence DG-functors respectively:
$$\tau^{>-d}Q\colon \tau^{>-d}(\scrC[-d]) \to Q(\tau^{>-d}(\scrC[-d]))\quad \text{and} \quad \tau^{>-d}[-d]\colon \tau^{>-d}\scrC \to \tau^{>-d}(\scrC[-d]).$$
We need to show that the above diagram is commutative in $\hqe$. 
It follows from the following isomorphism:
\begin{align*}
\bigl((-)|_{\tau^{>-d}\scrC} \circ (-)|_{\tau^{>-d}(\scrC[-d])} \circ G \circ Q\bigr) & = \bigl((\scrT\lquot\scrC)(Q(*), Q(-))|_{\tau^{>-d}(\scrC[-d])}\bigr)|_{\tau^{>-d}\scrC} \\
& \cong \tau^{>-d}\bigl(\scrT(*[-d],-)|_{\scrC}\bigr) \\
& = F_\scrC.
\end{align*}
The isomorphism in the second line is induced by $Q$ and Proposition~\ref{prop:end}. 
\end{proof}

% \begin{remark}
% \label{rem:main}
% In Theorem~\ref{thm:main}, we can replace $\tau^{>-d}(\scrC[-d])$ with $\tau^{>-d}\scrC$ since $\scrC$ and $\scrC[-d]$ are isomorphic in $\hqe$ by the shift morphism $[d]\colon \scrC\to \scrC[-d]$. 
% \end{remark}

The following theorem is the $(d+1)$-cluster tilting-object case of Theorem~\ref{thm:main}.

\begin{cor}
\label{cor:main_one_obj}
Let $\scrT$ be an essentially small, locally $K$-projective stable DG-category  and $M$ a $(d+1)$-cluster tilting object in $\scrT$. Let $\Lambda:=\tau^{>-d}\End_{\scrT}(M)$ be a $d$-truncated endomorphism DG-algebra of $M$. Then the DG-functor
$$F_M:=\tau^{>-d}\bigl(\scrT(M[-d],*)\bigr)\colon \scrT \to \demfpdg{\Lambda}$$
induces an isomorphism $\scrT\lquot\add(M) \cong \demfpdg{\Lambda}$ in $\hqe$ which makes the following diagram commutative in $\hqe$:
$$\begin{tikzcd}\add(M)\ar[r,"\mathrm{incl}"]\ar[rrd,"0"', bend right =15] & \scrT \ar[r,"Q"] \ar[rd, "{F_M}"'] & \scrT\lquot\add(M) \ar[d, "\sim"] \\
&& \demfpdg{\Lambda}\end{tikzcd}$$
\end{cor}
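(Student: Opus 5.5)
This corollary follows by specializing Theorem~\ref{thm:main} to $\scrC:=\add(M)$, the full DG-subcategory of $\scrT$ whose objects are isomorphic in $\calT$ to direct summands of finite direct sums of copies of $M$. Since $M$ is a $(d+1)$-cluster tilting object, $\add(M)$ is a $(d+1)$-cluster tilting DG-subcategory in the sense of Definition~\ref{def:cluster_tilting_subcat}. Functorial finiteness is automatic for the additive closure of a single object, since $\calT(M,Y)$ gives approximations after adding copies of $M$ (we may assume $\calT$ has the needed finiteness, as in the standard setting). The vanishing conditions follow from those for $M$. Theorem~\ref{thm:main} therefore yields an isomorphism $\scrT\lquot\add(M)\cong\demfpdg{\tau^{>-d}\add(M)}$ in $\hqe$, induced by $F_{\add(M)}$, with the stated commutative triangle.

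It then remains to replace $\tau^{>-d}\add(M)$ by the one-object DG-category $\Lambda=\tau^{>-d}\End_{\scrT}(M)$. The inclusion $\Lambda\hookrightarrow\tau^{>-d}\add(M)$, sending the unique object to $M$, is quasi-fully faithful. Every object of $\add(M)$ is a direct summand of some $M^{\oplus n}$ in $H^0$. Hence restriction along this inclusion induces a quasi-equivalence $\demdg{\tau^{>-d}\add(M)}\to\demdg{\Lambda}$: on derived categories it is the standard Morita equivalence, because the representables of $\add(M)$ lie in the thick closure of the representable of $M$, and it preserves the standard $t$-structures, so Lemma~\ref{lem:restriction} applies. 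We must also check that this quasi-equivalence matches the finitely presented parts. Definition~\ref{def:fp} uses conflations with middle terms in $\add\scrB$, and restriction sends $\add(\add(M))$-representables to $\add(\Lambda)$. The conflations in $H^0(\demdg{-})$ are defined intrinsically through triangles of the derived category truncated to $[-d+1,0]$, so they are transported by the equivalence. The restriction therefore identifies $\demfpdg{\tau^{>-d}\add(M)}$ with $\demfpdg{\Lambda}$.

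Composing, the DG-functor $F_{\add(M)}$ followed by restriction to $M$ is exactly $F_M=\tau^{>-d}\scrT(M[-d],*)$. Here truncation commutes with restriction, since $\tau^{>-d}$ is computed objectwise on the module. Commutativity of the diagram is inherited from Theorem~\ref{thm:main}, with the composite $\add(M)\to\demfpdg{\Lambda}$ being $0$. The main, but mild, obstacle is the identification of the finitely presented subcategories under the Morita restriction. I would handle it by noting that a conflation with a projective middle term in $\add(M)$-modules restricts to one with middle term in $\add\Lambda$, and conversely that the left adjoint $-\lox_{\Lambda}\add(M)$ sends $\add\Lambda$ into $\add$ of representables. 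This gives an inverse that preserves both $\demdg{-}$ and finite presentation.
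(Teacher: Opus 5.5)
Your proposal is correct and follows the paper's route: the paper's proof just notes that $\add(M)$ is a $(d+1)$-cluster tilting DG-subcategory and invokes Theorem~\ref{thm:main}, leaving tacit the Morita identification of $\demfpdg{\tau^{>-d}\add(M)}$ with $\demfpdg{\Lambda}$ that you spell out. One small correction: functorial finiteness of $\add(M)$ is not automatic over a general $k$, since it requires $\calT(M,Y)$ to be finitely generated over $\End(M)$. It is, however, part of what it means for $M$ to be a $(d+1)$-cluster tilting object, so you do not need any extra finiteness assumption.
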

\begin{proof}
Since $M$ is a $(d+1)$-cluster tilting object in $\scrT$, the full sub DG-category $\add(M)$ is a $(d+1)$-cluster tilting DG-subcategory of $\scrT$. Thus, the assertion follows from Theorem~\ref{thm:main}.
\end{proof}

% We can expect the $d$-truncated endomorphism DG-algebra $\Lambda$ of $M$ has a good properties like a Gorensteinness. 

% \begin{definition}
% \label{def:d-shifted_gorenstein}
% Let $\Lambda$ be a proper $d$-truncated connective DG-algebra. We say that $\Lambda$ has \emph{right $d$-shifted Gorenstein dimension at most $n$} if $\Lambda\in\dem{\Lambda}$ has injective dimension at most $n$ in $\dem{\Lambda}$. If $\Lambda\op$ also has right $d$-shifted Gorenstein dimension at most $n$, then we say that $\Lambda$ has \emph{$d$-shifted Gorenstein dimension at most $n$}.
% \end{definition}

% \begin{lemma}
% \label{lem:symmetry_of_d_gor_dim}
% Let $\Lambda$ be a proper $d$-truncated connective DG-algebra. Assume that $\Lambda$ has finite injective dimension in $\dem{\Lambda}$ and $\Lambda\op$ also has finite injective dimension in $\dem{\Lambda\op}$. Then $\Lambda$ has $d$-shifted Gorenstein dimension at most $n$ if and only if $\Lambda\op$ has $d$-shifted Gorenstein dimension at most $n$. 
% \end{lemma}
% \begin{proof}
% \textcolor{blue}{We need to proof}
% \end{proof}

% \begin{theorem}
% \label{thm:Gor}
% Let $\calT$ be a locally finite stable DG-category and $M$ a $(d+1)$-cluster tilting object in $\calT$. Let $\Lambda:=\tau^{>-d}\End_{\scrT}(M)$ be a $d$-truncated endmorphism DG-algebra of $M$. Then $\Lambda$ has $d$-shifted Gorenstein dimension at most $d$.
% \end{theorem}
% \begin{proof}
% It follows from Proposition~\ref{prop:Gorensteinness_Abel} and Lemma~\ref{lem:symmetry_of_d_gor_dim}.
% \end{proof}

\begin{proposition}
\label{prop:self_inj}
Assume that the homotopy category $\calT$ of $\scrT$ is idempotent complete. Then the abelian $d$-truncated DG-category $\scrT\lquot\scrC$ is Frobenius (that is, it has enough projectives and injectives, and projectives coincide with injectives) if and only if $\scrC[d]=\scrC[-d]$ in $\calT$.
\end{proposition}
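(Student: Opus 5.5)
We reduce the statement to a comparison of two full subcategories of $H^0(\scrT\lquot\scrC)\simeq\calT/[\calC]$. Since $\calT$ is idempotent complete, Lemma~\ref{lem:proj_in_ideal_quot_IC} says that the projective objects of $\scrT\lquot\scrC$ are exactly the objects of $Q(\scrC[-d])$, and the injective objects are exactly those of $Q(\scrC[d])$. Enough projectives and injectives holds by Propositions~\ref{prop:enough_proj} and \ref{prop:enough_inj}. Hence $\scrT\lquot\scrC$ is Frobenius if and only if the essential images of $\calC[-d]$ and $\calC[d]$ in $\calT/[\calC]$ coincide, and the task is to lift this equality to $\calT$.

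For the implication ``$\Leftarrow$'': if $\calC[d]=\calC[-d]$ as subcategories of $\calT$, then $Q(\scrC[d])=Q(\scrC[-d])$, so the projectives and injectives agree and $\scrT\lquot\scrC$ is Frobenius.

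For ``$\Rightarrow$'': take $C\in\calC$. Frobenius-ness gives $Q(C[d])\cong Q(C'[-d])$ in $\calT/[\calC]$ for some $C'\in\calC$, and we must upgrade this to an isomorphism $C[d]\cong C'[-d]$ in $\calT$. The key input is that $\calC[\pm d]$ meets $\calC$ only in $0$ when $d\geq1$. Indeed, $\calT(C[-d],\calC)=\calT(C,\calC[d])=0$, so any $C[-d]\in\calC$ has zero identity; similarly $\calT(\calC,C[d])=0$. Now write $\phi\colon C[d]\to C'[-d]$ and $\psi\colon C'[-d]\to C[d]$ for lifts of the mutually inverse isomorphisms.

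The composite $\psi\phi-1_{C[d]}$ factors through some $C''\in\calC$, say as $C[d]\to C''\to C[d]$. The second map is zero because $\calT(\calC,C[d])=\calT(\calC[-d],C)=0$: under the identification $\calC[-d]\subset\calC_{-d}^{0}$ this is exactly the cluster-tilting vanishing $\calT(\calC,\calC[d])=0$ after shifting. Hence $\psi\phi=1$ in $\calT$.

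For the other composite, $\phi\psi-1$ factors as $C'[-d]\to C''\to C'[-d]$, and the first map is zero since $\calT(C'[-d],\calC)=\calT(C',\calC[d])=0$. So $\phi\psi=1$ as well, and $C[d]\cong C'[-d]\in\calC[-d]$. This is precisely the faithfulness argument of Lemma~\ref{lem:proj_in_ideal_quot_IC}, now run on both sides. By symmetry $\calC[-d]\subset\calC[d]$, and since both subcategories are closed under isomorphism, $\calC[d]=\calC[-d]$.

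The main obstacle is only to make sure that the vanishing $\calT(\calC[-d],\calC)=0$ and $\calT(\calC,\calC[d])=0$ follow from Definition~\ref{def:cluster_tilting_subcat} with $i=d$; they do, directly. A possible subtlety is that ``projectives coincide with injectives'' is meant up to isomorphism. This is handled because $\calC[\pm d]$ are replete, and Lemma~\ref{lem:proj_in_ideal_quot_IC} already uses idempotent completeness to remove the need for summand closure.
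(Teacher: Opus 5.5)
Your proof is correct and follows the paper's route. The paper deduces the statement directly from Propositions~\ref{prop:enough_proj}, \ref{prop:enough_inj} and Lemma~\ref{lem:proj_in_ideal_quot_IC}, and you use exactly these; in addition, you spell out the step the paper leaves implicit, namely that an isomorphism $Q(C[d])\cong Q(C'[-d])$ in $\calT/[\calC]$ lifts to $\calT$ because the ideal $[\calC]$ vanishes on endomorphisms of objects of $\calC[d]$ and of $\calC[-d]$. One small point: the detour through $\calC_{-d}^0$ is unnecessary, since $\calT(\calC,\calC[d])=0$ is directly condition~(2) of Definition~\ref{def:cluster_tilting_subcat} with $i=d$.
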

\begin{proof}
It immediately follows from Propositions~\ref{prop:enough_proj}, \ref{prop:enough_inj} and Lemma~\ref{lem:proj_in_ideal_quot_IC}. 
\end{proof}

This proposition gives a criterion for $\Lambda$ in Corollary~\ref{cor:main_one_obj} to be $d$-self-injective. We give a definition of $d$-self-injective DG-algebras.

\begin{definition}
\cite[Definition~2.2]{MR4133519}
\label{def:d_self_inj}
Assume that $k$ is a field. Let $\Lambda$ be a proper $d$-truncated connective DG-algebra. We say that $\Lambda$ is \emph{$d$-self-injective} if $\Lambda\cong D(\Lambda)[d]$ in $\calD(\Lambda)$ where $D(\Lambda):=\rhom{k}{\Lambda, k}$ is the $k$-dual of $\Lambda$. Equivalently, $\Lambda$ is injective in $\dem{\Lambda}$.
\end{definition}

\begin{cor}
\label{cor:self_inj}
Assume that $k$ is a field. Let $\calT$ be a locally finite stable DG-category with idempotent complete homotopy category $\calT$ and $M$ a $(d+1)$-cluster tilting object in $\calT$. Let $\Lambda:=\tau^{>-d}\End_{\scrT}(M)$ be a $d$-truncated endomorphism DG-algebra of $M$. Then $\Lambda$ is $d$-self-injective if and only if $\add(M[d])=\add(M[-d])$ in $\calT$.
\end{cor}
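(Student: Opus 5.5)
The plan is to reduce the statement to Proposition~\ref{prop:self_inj}, using Corollary~\ref{cor:main_one_obj} to pass between $\Lambda$ and the abelian $d$-truncated DG-category $\scrT\lquot\add(M)$.

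\textbf{Translating $d$-self-injectivity.} By Corollary~\ref{cor:main_one_obj} we have $\scrT\lquot\add(M)\cong\demfpdg{\Lambda}$ in $\hqe$. Under this equivalence $Q(M[-d])$ corresponds to $F_M(M[-d])=\tau^{>-d}\scrT(M[-d],*[-d])\cong\Lambda$. Since $k$ is a field and $\scrT$ is locally finite, $\demfpdg{\Lambda}=\demfddg{\Lambda}$, and $D(\Lambda)[d]$ lies in it. By Definition~\ref{def:d_self_inj}, $\Lambda$ is $d$-self-injective if and only if $\Lambda$ is injective in $\dem{\Lambda}$, equivalently in $\demfddg{\Lambda}$, since injectivity is tested against $d$-monomorphisms and the finite part is closed under the relevant kernels and cokernels. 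Transporting along the equivalence, this holds if and only if $Q(M[-d])$ is injective in $\scrT\lquot\add(M)$.

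\textbf{Identifying the injectives.} By Proposition~\ref{prop:enough_inj} and Lemma~\ref{lem:proj_in_ideal_quot_IC}, using idempotent completeness of $\calT$, the injectives of $\scrT\lquot\add(M)$ are exactly $Q(\add M[d])$. Likewise the projectives are exactly $Q(\add M[-d])=\add Q(M[-d])$.

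\textbf{The equivalences.} Suppose $\add(M[d])=\add(M[-d])$. Then projectives and injectives coincide, so $Q(M[-d])$ is injective and $\Lambda$ is $d$-self-injective. Conversely, suppose $Q(M[-d])$ is injective. Then every projective is injective, since $\add$ of an injective is injective. To conclude $\add(M[d])\subseteq\add(M[-d])$, I would count objects in one of two ways.
\begin{enumerate}
\item Apply the $k$-duality $D$. The category $\demfddg{\Lambda}$ is equivalent to $\demfddg{\Lambda\op}\op$, with injectives $\add D\Lambda$. Then $\Lambda\cong D\Lambda[d]$ forces $\add\Lambda=\add D(\Lambda)[d]$, and $D(\Lambda)[d]$ corresponds to the injective generator $Q(M[d])$. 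This gives $\add Q(M[d])=\add Q(M[-d])$.
\item Alternatively, argue directly: an injective $Q(M[d])$ admits a $d$-epimorphism from a projective $Q(C[-d])$, and this splits because the target is projective in the Frobenius-type setting. However, that already needs injectives to be projective.
\end{enumerate}
I would use route (1): both $\Lambda$ and $D(\Lambda)[d]$ are basic generators up to $\add$, so $\add\Lambda=\add D\Lambda[d]$ in $\demfddg{\Lambda}$. Finally, lift back to $\calT$. The functor $H^0(Q)$ is fully faithful on $\add M[\pm d]$ (the argument of Lemma~\ref{lem:proj_in_ideal_quot_IC}), and $\calT$ is idempotent complete, so $\add(M[d])=\add(M[-d])$ in $\calT$. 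Proposition~\ref{prop:self_inj} packages this.

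\textbf{Main obstacle.} The hardest step is the identification $Q(M[d])\leftrightarrow D(\Lambda)[d]$ under $F_M$, i.e.\ showing $D\Lambda[d]$ is an injective cogenerator of $\demfddg{\Lambda}$ whose $\add$ is all injectives. I would first verify injectivity via the adjunction $\rhom{\Lambda}{N,D\Lambda[d]}\cong D(N)[d]$ on $\dem{\Lambda}$, which is exact on truncated modules. Then I would argue that $D\Lambda[d]$ cogenerates via Hom-vanishing.
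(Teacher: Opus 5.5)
Your proposal is correct and follows the paper's route. The paper's proof is just Proposition~\ref{prop:self_inj} (that is, Propositions~\ref{prop:enough_proj}, \ref{prop:enough_inj} and Lemma~\ref{lem:proj_in_ideal_quot_IC}) transported along Corollary~\ref{cor:main_one_obj} with $\Lambda\leftrightarrow Q(M[-d])$, exactly as you set it up. You also supply the step upgrading ``$\Lambda$ is injective'' to ``projectives $=$ injectives'', which the paper leaves implicit; this can be done without $k$-duality by counting in the Krull--Schmidt category $\calT$, since $[2d]$ gives $\add(M[-d])$ and $\add(M[d])$ the same finite number of indecomposables and $H^0(Q)$ reflects isomorphisms between their objects. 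One small correction: with the paper's grading it is $D(\Lambda)[d-1]$, not $D(\Lambda)[d]$, that lies in $\demfddg{\Lambda}$.
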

\begin{proof}
It immediately follows from Proposition~\ref{prop:self_inj}. 
\end{proof}

\subsection{From Amiot-Guo's generalized cluster categories}

In this section, we consider the case where $\calT$ is a generalized cluster category introduced by Amiot and Guo. We assume that $k$ is a field.

Let $\Pi$ be a connective DG-algebra over $k$, and let $\Pi^e$ be the enveloping DG-algebra $\Pi\otimes_k \Pi\op$. 

\begin{definition}
\label{def:calabi_yau_dga}
If $\Pi$ satisfies the following conditions, then we say that $\Pi$ is a \emph{$(d+2)$-Calabi-Yau DG-algebra}:
\begin{itemize}
	\item $\Pi$ is homologically smooth, i.e., $\Pi$ is perfect as a DG-module over $\Pi^e$.
	\item $\Pi$ is bimodule $(d+2)$-Calabi-Yau, i.e., there exists an isomorphism in $\calD(\Pi^e)$
	$$\rhom{\Pi^e}{\Pi, \Pi^e} \cong \Pi[-d-2].$$
	\item $H^0(\Pi)$ is finite-dimensional over $k$.
\end{itemize}
\end{definition}

\begin{definition}
\label{def:gen_cluster_cat}
Let $\Pi$ be a $(d+2)$-Calabi-Yau DG-algebra. Then the \emph{generalized $(d+1)$-cluster category} $\calC_{d+1}(\Pi)$ associated with $\Pi$ is defined as the Verdier quotient
$$\calC_{d+1}(\Pi) := \per \Pi/ \pvd \Pi.$$
We also define the \emph{generalized $(d+1)$-cluster DG-category} $\scrC_{d+1}(\Pi)$ associated with $\Pi$ as the DG-quotient
$$\scrC_{d+1}(\Pi) := \tau^{\leq 0}(\per\dg \Pi\lquot \pvd\dg \Pi).$$
where $\per\dg \Pi$ and $\pvd\dg \Pi$ are natural pretriangulated DG-categories which enhance $\per \Pi$ and $\pvd \Pi$, respectively. Of course, we have $H^0(\scrC_{d+1}(\Pi))\cong \calC_{d+1}(\Pi)$ as triangulated categories.
\end{definition}

\begin{remark}
\label{rem:cluster_stable}
Since $\per\dg \Pi\lquot \pvd\dg \Pi$ is pretriangulated, the generalized $(d+1)$-cluster DG-category $\scrC_{d+1}(\Pi)$ is a stable DG-category by Example~\ref{ex:stable_DG}.
\end{remark}

From now on, we fix a $(d+2)$-Calabi-Yau DG-algebra $\Pi$ and denote the image of $\Pi$ in $\scrC_{d+1}(\Pi)$ by $M_\Pi$.

\begin{fact}\cite[Theorem~4.22]{Guo11}
\label{thm:guo's theorem}
The following conditions hold:
\begin{enumerate}
\item The category $\calC_{d+1}(\Pi)$ is a Hom-finite $(d+1)$-Calabi-Yau triangulated category.
\item The full subcategory $\add(M_\Pi)$ is a $(d+1)$-cluster tilting subcategory of $\calC_{d+1}(\Pi)$.
\end{enumerate}
\end{fact}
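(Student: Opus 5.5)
This statement is cited from Guo's work, and I would prove it along the lines of Amiot and Guo. The plan has three steps: establish Hom-finiteness, establish the Calabi--Yau property, and then verify the cluster tilting conditions for $\add(M_\Pi)$. Throughout I would use the fundamental domain $\calF:=\per\Pi^{\leq 0}\cap{}^{\perp}(\per\Pi^{\leq -d-1})$ with respect to the canonical co-$t$-structure on $\per\Pi$.

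\textbf{Step 1: fundamental domain and Hom-finiteness.} Since $\Pi$ is connective and $H^0(\Pi)$ is finite-dimensional, I first show that every $H^i(\Pi)$ is finite-dimensional. This follows from smoothness together with the Calabi--Yau property, via the argument with $\pvd\Pi\subset\per\Pi$ and the relative duality $\calD(\pvd\Pi)$. Next, I would show that the projection $\per\Pi\to\calC_{d+1}(\Pi)$ restricts to an equivalence from $\calF$ onto $\calC_{d+1}(\Pi)$. This uses the $t$-structure on $\per\Pi$ and the fact that the truncations $\tau_{\geq -n}$ land in $\pvd\Pi$. For $X,Y\in\calF$ one then computes $\calC(X,Y)=\varinjlim_n\calD(\tau_{\leq n}X,Y)$. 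By the co-$t$-structure vanishing this colimit stabilises, so the morphism spaces are finite-dimensional.

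\textbf{Step 2: Calabi--Yau property.} The bimodule $(d+2)$-Calabi--Yau condition gives a functorial duality
\[
D\calD(L,N)\cong\calD(N,L[d+2])\qquad\text{for } L\in\pvd\Pi,\ N\in\per\Pi.
\]
Passing to the Verdier quotient via Amiot's triangle construction yields a bifunctorial nondegenerate pairing on $\calC_{d+1}(\Pi)$ that is shifted by one. The shift arises as the connecting morphism in the triangle $\tau_{>n}X\to X\to\tau_{\leq n}X$. This shows that $\calC_{d+1}(\Pi)$ is $(d+1)$-Calabi--Yau.

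\textbf{Step 3: cluster tilting.} I would check three things.
\begin{itemize}
\item[(a)] Vanishing: $\calC(M_\Pi,M_\Pi[i])=0$ for $1\leq i\leq d$. Since $\Pi\in\calF$ and $\Pi[i]\in\per\Pi^{\leq -i}$, the colimit computation from Step 1 reduces this to $H^i$ of truncations, which vanish in this range.
\item[(b)] Generation: every object of $\calF$ lies in $\add\Pi*\add\Pi[1]*\cdots*\add\Pi[d]$, which is obtained from minimal semifree resolutions.
\item[(c)] Maximality: if $\calC(M_\Pi,X[i])=0$ for $1\leq i\leq d$, then $X\in\add M_\Pi$.
\end{itemize}
For (c), I would use the filtration from (b) and show that the connecting maps split. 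Functorial finiteness is then automatic from Hom-finiteness. By Calabi--Yau duality, the left-hand vanishing condition is equivalent to the right-hand one.

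I expect the main obstacle to be Step 1. Proving that $\calF\to\calC_{d+1}(\Pi)$ is fully faithful requires a careful control of morphisms in the Verdier quotient through roofs with cones in $\pvd\Pi$. This step also depends on the finite-dimensionality of $H^*(\Pi)$, which I would expect to need the nontrivial smoothness argument.
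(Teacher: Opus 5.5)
The paper gives no proof of this Fact; it is quoted from Guo. Your outline is the Amiot--Guo strategy: the fundamental domain $\mathcal{F}=\add\Pi*\add\Pi[1]*\cdots*\add\Pi[d]$, Amiot's construction of the Calabi--Yau pairing on the Verdier quotient, and splitting the filtration of objects of $\mathcal{F}$. One step, however, does not work as justified. This is the identification $\calC(X,Y)\cong\mathcal{D}(X,Y)$ for $X,Y\in\mathcal{F}$, where $\calC:=\calC_{d+1}(\Pi)$. Hom-finiteness, (a) and (c) all rest on it, and it does not follow from co-$t$-structure orthogonality or from careful bookkeeping with roofs.

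Consider the exact sequence
\[
\mathcal{D}(L_n,Y)\to\mathcal{D}(X,Y)\to\mathcal{D}(\tau_{\le -n}X,Y)\to\mathcal{D}(L_n[-1],Y),\qquad L_n:=\tau_{\ge -n+1}X\in\pvd\Pi .
\]
Its error terms are morphisms \emph{into} $Y$ from $\pvd\Pi$. The condition $Y\in{}^{\perp}\mathcal{D}^{\le -d-1}$ only controls morphisms \emph{out of} $Y$.

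The missing input is the relative duality $D\mathcal{D}(L,N)\cong\mathcal{D}(N,L[d+2])$, which you only introduce in Step~2. It rewrites the error terms as $D\mathcal{D}(Y,L_n[d+2])$ and $D\mathcal{D}(Y,L_n[d+1])$. These vanish because $L_n\in\mathcal{D}^{\le 0}$, so both $L_n[d+1]$ and $L_n[d+2]$ lie in $\mathcal{D}^{\le -d-1}$.

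Without the duality the identification is false. Take $\Pi=kA_2$, which is smooth, connective and finite-dimensional, but not Calabi--Yau. Then $\pvd\Pi=\per\Pi$, so the quotient is zero, even though $\Pi\in\mathcal{F}$ and $\mathcal{D}(\Pi,\Pi)\neq 0$. So the duality has to enter already in Step~1. Conversely, the finite-dimensionality of each $H^i\Pi$ needs only smoothness. Inductively $\tau_{\le -m}\Pi\in\per\Pi\cap\mathcal{D}^{\le -m}$, and the top cohomology of such an object is a quotient of a finitely generated projective $H^0\Pi$-module.

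With this repair the rest goes through as you describe.
\begin{itemize}
\item[(a)] This is immediate, since $\Pi[i]\in\mathcal{F}$ for $0\le i\le d$ gives $\calC(\Pi,\Pi[i])=H^i\Pi=0$.
\item[(c)] First use $(d+1)$-Calabi--Yau duality to turn $\calC(M_\Pi,X[i])=0$ into $\calC(X,M_\Pi[d+1-i])=0$. Full faithfulness on $\mathcal{F}$ then shows that each connecting morphism $X\to P_j[j]$ vanishes already in $\mathcal{D}(\Pi)$, so the filtration splits step by step.
\end{itemize}
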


\begin{remark}
\label{rem:AR}
Fact~\ref{thm:guo's theorem} implies that the generalized $(d+1)$-cluster category $\calC_{d+1}(\Pi)$ has a Serre functor and hence has Auslander-Reiten triangles. In particular, the AR-translation $\tau$ of $\calC_{d+1}(\Pi)$ satisfies $\tau\cong [d]$.
\end{remark}

The following is just a reformulation of Fact~\ref{thm:guo's theorem}.

\begin{cor}
\label{cor:guo's theorem}
The following conditions hold:
\begin{enumerate}
\item The DG-category $\scrC_{d+1}(\Pi)$ is a locally finite stable DG-category.
\item The full sub DG-category $\add(M_\Pi)$ is a $(d+1)$-cluster tilting DG-subcategory of $\scrC_{d+1}(\Pi)$.
\end{enumerate}
\end{cor}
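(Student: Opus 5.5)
The plan is to translate each clause of Fact~\ref{thm:guo's theorem} into the DG-language of Definition~\ref{def:cluster_tilting_subcat} and Definition~\ref{def:stable_DG}, using Remark~\ref{rem:cluster_stable} for stability. For (1), stability of $\scrC_{d+1}(\Pi)$ is exactly Remark~\ref{rem:cluster_stable}: the DG-quotient $\per\dg \Pi\lquot \pvd\dg \Pi$ is pretriangulated by Fact~\ref{thm:dg-quot_Verdier}, and its connective truncation $\tau^{\leq 0}$ is stable by Example~\ref{ex:stable_DG}. For local finiteness, I read ``locally finite'' as requiring that $\dim_k H^i(\scrC_{d+1}(\Pi)(X,Y))<\infty$ for all $i$ and all objects $X,Y$. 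For $i>0$ these groups vanish by connectivity. For $i\leq 0$, since $\tau^{\leq 0}$ does not change cohomology in nonpositive degrees, we have
$$H^i\bigl(\scrC_{d+1}(\Pi)(X,Y)\bigr)\cong H^0\bigl((\per\dg\Pi\lquot\pvd\dg\Pi)(X,Y[i])\bigr)\cong \calC_{d+1}(\Pi)(X,Y[i]).$$
This is finite-dimensional by Hom-finiteness in Fact~\ref{thm:guo's theorem}(1). I will also note that the identification $H^0(\scrC_{d+1}(\Pi))\simeq\calC_{d+1}(\Pi)$ as triangulated categories holds: the triangulated structure coming from the stable DG-category of \cite[Theorem~6.5]{che2024} agrees with the Verdier quotient structure, since cones in $\tau^{\leq 0}$ of a pretriangulated DG-category are computed by the same cocartesian squares.

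For (2), I define $\add(M_\Pi)$ as the full DG-subcategory of $\scrC_{d+1}(\Pi)$ on objects isomorphic in $H^0$ to direct summands of finite sums of $M_\Pi$. Its homotopy category is $\add(M_\Pi)\subset\calC_{d+1}(\Pi)$. By the remark after Definition~\ref{def:cluster_tilting_subcat}, the DG notion depends only on $H^0$, so (2) follows from Fact~\ref{thm:guo's theorem}(2) directly. The shift $[i]$ used in Definition~\ref{def:cluster_tilting_subcat} is that of the triangulated structure on $H^0$, which matches the Verdier quotient shift by the identification above.

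The only step with content is making sure this identification of triangulated structures, and in particular of shift functors, is legitimate. Everything else is a direct transcription of Fact~\ref{thm:guo's theorem}. I expect this identification to be the main, though still mild, obstacle. I would handle it by citing \cite[Example~6.2]{che2024}, where the stable structure on $\tau^{\leq 0}\scrT'$ is shown to recover the triangulated structure of $H^0(\scrT')$.
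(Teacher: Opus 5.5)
Your proposal is correct and takes the same route as the paper. The paper presents this corollary simply as a reformulation of Fact~\ref{thm:guo's theorem}, using Remark~\ref{rem:cluster_stable} for stability and the identification $H^0(\scrC_{d+1}(\Pi))\simeq\calC_{d+1}(\Pi)$ of triangulated categories from Definition~\ref{def:gen_cluster_cat}; your degreewise reading of ``locally finite'' via $H^i\bigl(\scrC_{d+1}(\Pi)(X,Y)\bigr)\cong\calC_{d+1}(\Pi)(X,Y[i])$ for $i\leq 0$ is the right one, since the total cohomology of these Hom-complexes is typically infinite-dimensional.
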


The following result is a direct consequence of \cite[Proposition~2.15]{Guo11}. 

\begin{fact}[{\cite[Proposition~2.15]{Guo11}}]
\label{prop:fundamental_domain}
The DG-functor $\tau^{\leq 0}\pi\colon \tau^{\leq 0}(\per\dg\Pi)\to \scrC_{d+1}(\Pi)$ induces a quasi-equivalence 
$$\tau^{\leq 0}(\add\Pi*\add\Pi[1]*\cdots*\add\Pi[d]) \to \scrC_{d+1}(\Pi).$$
\end{fact}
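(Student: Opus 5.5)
The argument has two parts: first identify the essential image of $\add\Pi*\cdots*\add\Pi[d]$ in $\calC_{d+1}(\Pi)$, then upgrade the induced functor on $\tau^{\leq 0}$-enhancements to a quasi-equivalence by comparing Hom-complexes. The comparison rests on the localization triangle \eqref{eq:standard_tri} for the pretriangulated DG-quotient $\per\dg\Pi\lquot\pvd\dg\Pi$.

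\textbf{Step 1 (essential surjectivity).} Write $\scrF:=\add\Pi*\add\Pi[1]*\cdots*\add\Pi[d]\subset\per\Pi$. By Corollary~\ref{cor:guo's theorem}, $\add(M_\Pi)$ is $(d+1)$-cluster tilting in $\calC_{d+1}(\Pi)$. Fact~\ref{thm:splicing} then gives $\calC_{d+1}(\Pi)=\add M_\Pi*\cdots*\add M_\Pi[d]$. To lift an object of $\calC_{d+1}(\Pi)$ into $\scrF$, induct on the number of factors. At each stage, lift the connecting morphism $\add M_\Pi[j]\to X[1]$ to $\per\Pi$ and take its cone there. Lifting is possible because $\Pi$ is connective, so $\calC_{d+1}(\Pi)(\Pi[j],X[1])$ is computed by $\per\Pi(\Pi[j],X[1])$. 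This last identification is exactly the fully-faithfulness statement of Step 2, so the two steps are proved together by induction.

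\textbf{Step 2 (Hom comparison).} Fix $X,Y\in\scrF$. Apply triangle \eqref{eq:standard_tri} to $\per\dg\Pi$ and its full subcategory $\pvd\dg\Pi$. This gives a triangle
$$\per\dg\Pi(-,Y)|_{\pvd}\lox_{\pvd}\per\dg\Pi(X,-)|_{\pvd\op}\to \per\dg\Pi(X,Y)\to (\per\dg\Pi\lquot\pvd\dg\Pi)(X,Y)\dashrightarrow.$$
The claim is that the left-hand term has cohomology concentrated in degrees $\geq 2$. Granting this, the right-hand map is an isomorphism on $H^i$ for $i\leq 0$. Applying $\tau^{\leq 0}$ to both sides then yields a quasi-isomorphism, so $\tau^{\leq 0}\pi$ is quasi-fully faithful on $\scrF$.

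\textbf{Step 3 (the vanishing, the main obstacle).} Relative Calabi--Yau duality gives $\per\Pi(L,N)\cong D\per\Pi(N,L[d+2])$ for $L\in\pvd\Pi$ and $N\in\per\Pi$. Write $Y=P_0\to\cdots$ as an iterated extension of $\add\Pi[j]$ with $0\leq j\leq d$. For $L\in\pvd$, the module $\per\Pi(L,Y)$ is then dual to $\per\Pi(Y,L[d+2])$, and $\per\Pi(\Pi[j],L[d+2+i])=H^{d+2+i-j}(L)$. Hence the tensor product over $\pvd$ pairs a factor controlled by $\Hom(L,Y[\ast])$, which lives in high degrees, against $\Hom(X,L[\ast])$. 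A direct estimate of the tensor product would be cleaner through the equivalent adjunction formulation: $\pi$ is the Verdier localization, and $\Hom_{\calC}(X,Y)=\mathrm{colim}\,\Hom(X',Y)$ over maps $X'\to X$ with cone in $\pvd$. I will therefore follow Guo's original argument (\cite[Proposition~2.15]{Guo11}), which realizes $\scrF$ as a fundamental domain. Use the $t$-structure on $\per\Pi$ with heart in $\pvd$, where $\scrF\subset\calD^{\leq 0}\cap{}^{\perp}\calD^{\leq -d-2}$. Then compute $\Hom$ in the quotient via the Calabi--Yau triangle. Objects of $\pvd$ mapping to $Y[i]$ from $X$ contribute only for $i\geq 1$, by the degree bounds $X\in\calD^{\leq0}$ and $Y\in{}^{\perp}\calD^{\leq -d-2}$. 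These bounds are the delicate point: the vanishing range must be tight enough to cover $i=0$, not just $i<0$. Finally, combine with Step 1 to conclude that $\tau^{\leq 0}\pi$ restricted to $\tau^{\leq 0}\scrF$ is a quasi-equivalence onto $\scrC_{d+1}(\Pi)$.
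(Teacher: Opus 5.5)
\textbf{There is a genuine gap: Step~3, which carries all the content, is never carried out, and Step~1 is circular as a proof of Guo's result.}

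Your reduction in Step~2 is sound. Because $\per\dg\Pi\lquot\pvd\dg\Pi$ is pretriangulated, you need $\mathrm{Hom}_{\per\Pi}(X,Y[i])\to\mathrm{Hom}_{\calC_{d+1}(\Pi)}(\pi X,\pi Y[i])$ to be bijective for $i\leq 0$ and $X,Y\in\mathcal{F}:=\add\Pi*\cdots*\add\Pi[d]$. Since $X[-i]\in\calD^{\leq 0}$, it is enough to treat $i=0$ for all $X\in\per\Pi\cap\calD^{\leq 0}$.

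The one concrete input you give for Step~3 is off by one. The bound $Y\in{}^{\perp}\calD^{\leq -d-2}$ cannot give the vanishing at $i=0$. Take $X=\Pi$ and $Y=\Pi[d+1]$: they satisfy your bounds, yet $\mathrm{Hom}_{\per\Pi}(\Pi,\Pi[d+1])=H^{d+1}(\Pi)=0$. On the other hand, $\mathrm{Hom}_{\calC_{d+1}(\Pi)}(M_\Pi,M_\Pi[d+1])\cong D\End(M_\Pi)\neq 0$ by the $(d+1)$-Calabi--Yau property.

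What you need is the sharper inclusion $\mathcal{F}\subset{}^{\perp}\calD^{\leq -d-1}$, which holds because $\mathrm{Hom}(\Pi[j],Z)=H^{-j}(Z)$. With it, no estimate of the tensor product over $\pvd\dg\Pi$ is needed. For $X\in\calD^{\leq 0}$ and $s\colon X'\to X$ with cone $N\in\pvd\Pi$, the map $X\to N$ factors through $\tau^{\leq 0}N$, so roofs whose cone lies in $\pvd\Pi\cap\calD^{\leq 0}$ are cofinal. For such $N$, relative Calabi--Yau duality gives $\mathrm{Hom}(N,Y)\cong D\,\mathrm{Hom}(Y,N[d+2])=0$ and $\mathrm{Hom}(N,Y[1])\cong D\,\mathrm{Hom}(Y,N[d+1])=0$. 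The second vanishing is exactly where $-d-1$ rather than $-d-2$ is used. Hence $\mathrm{Hom}(X,Y)\to\mathrm{Hom}(X',Y)$ is bijective along a cofinal system, and $\mathrm{Hom}_{\per\Pi}(X,Y)\cong\mathrm{Hom}_{\calC_{d+1}(\Pi)}(\pi X,\pi Y)$.

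Step~1 has a separate problem. You obtain essential surjectivity from the cluster-tilting property of $\add M_\Pi$ (Corollary~\ref{cor:guo's theorem}) together with Fact~\ref{thm:splicing}. But in Guo's paper, as in Amiot's for $d=1$, the maximality half of that cluster-tilting statement is itself deduced from this fundamental-domain equivalence: one uses that every object of $\calC_{d+1}(\Pi)$ is $\pi$ of an object of $\mathcal{F}$. As a proof of \cite[Proposition~2.15]{Guo11}, your argument is therefore circular. Within this paper it only reduces one cited fact to another, since the paper gives no proof of this statement and simply quotes Guo. A genuine proof must show directly, inside $\per\Pi$, that every perfect module becomes isomorphic modulo $\pvd\Pi$ to an object of $\mathcal{F}$. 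Guo's argument does this using truncations for the standard $t$-structure and relative Calabi--Yau duality.

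The joint induction you propose is also unnecessary. The Hom comparison does not depend on Step~1, and your lifting of connecting morphisms only uses its $H^0$ part, for $\Pi[j-1]\in\calD^{\leq 0}$ against an object of $\mathcal{F}$.
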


\begin{remark}
\label{rem:fundamental_domain}
The left-hand side above is called the \emph{fundamental domain} of $\scrC_{d+1}(\Pi)$.
\end{remark}

\begin{cor}\footnote{This was suggested by Junyang Liu.}
\label{cor:fundamental_domain}
The DG-functor $\tau^{\leq 0}\pi\colon \tau^{\leq 0}(\per\dg\Pi)\to \scrC_{d+1}(\Pi)$ induces a quasi-isomorphism of DG-algebras:
$$\tau^{>-d}\Pi\cong\tau^{>-d}\rend{\Pi}{\Pi} \to \tau^{>-d}\End_{\scrC_{d+1}(\Pi)}(M_\Pi).$$
\end{cor}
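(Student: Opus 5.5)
The plan is to reduce the statement to Fact~\ref{prop:fundamental_domain} together with the vanishing of negative self-extensions in the range $-d<i\le 0$. Since $\Pi$ is connective, $\rend{\Pi}{\Pi}\simeq\Pi$. Hence the first isomorphism $\tau^{>-d}\Pi\cong\tau^{>-d}\rend{\Pi}{\Pi}$ is immediate, and it remains to show that the map
$$\tau^{\leq 0}\bigl(\per\dg\Pi\bigr)(\Pi,\Pi)\to \scrC_{d+1}(\Pi)(M_\Pi,M_\Pi)$$
induced by $\tau^{\leq 0}\pi$ is a quasi-isomorphism. Applying $\tau^{>-d}$ afterwards gives the claim, because $\tau^{>-d}$ preserves quasi-isomorphisms of connective DG-algebras. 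The morphism is a DG-algebra map by functoriality of $\tau^{\le 0}\pi$. The truncation $\tau^{>-d}$ of a connective DG-algebra is again a DG-algebra, as the quotient by the ideal $\tau^{\le -d}$, and it is functorial.

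The key step is to observe that $\Pi$ itself lies in the fundamental domain $\add\Pi*\add\Pi[1]*\cdots*\add\Pi[d]$, via trivial triangles with zero higher terms. By Fact~\ref{prop:fundamental_domain}, the functor $\tau^{\le 0}\pi$ restricted to the fundamental domain is a quasi-equivalence, in particular quasi-fully faithful. Therefore
$$H^i\bigl(\tau^{\le0}\rend{\Pi}{\Pi}\bigr)\to H^i\bigl(\End_{\scrC_{d+1}(\Pi)}(M_\Pi)\bigr)$$
is an isomorphism for all $i\le 0$. Since $\tau^{\le 0}\rend{\Pi}{\Pi}\simeq\Pi$ by connectivity, we in fact get a quasi-isomorphism $\Pi\simeq\End_{\scrC_{d+1}(\Pi)}(M_\Pi)$ of connective objects. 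The $\tau^{>-d}$ version then follows.

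The main obstacle is making sure that the full faithfulness in Fact~\ref{prop:fundamental_domain} holds at the DG level in all degrees $i\le 0$, and not only on $H^0$ (Guo's Proposition~2.15 is stated for $H^0$, i.e.\ $\Hom$ in the triangulated category). If only the $H^0$ statement is available, I would instead check directly that
$$\calC_{d+1}(\Pi)(M_\Pi,M_\Pi[i])\cong \Hom_{\per\Pi}(\Pi,\Pi[i])=H^i(\Pi)$$
for $-d<i\le 0$. For this I use that $\Pi[i]$ lies in the fundamental domain shifted into $\add\Pi[i]$, so I apply the $H^0$-statement to the pair $(\Pi[-i],\Pi)$, or more simply to $(\Pi,\Pi[i])$ after shifting. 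Then $H^i$ of both Hom complexes agree for $-d<i\le0$, which is exactly what the $\tau^{>-d}$-truncated statement needs. The shifting step is where care is required, so that both objects lie in a common shifted copy of the fundamental domain, namely $\add\Pi[i]*\cdots*\add\Pi[i+d]$ with $i\le 0<i+d$ containing both $\Pi[i]$ and $\Pi$.
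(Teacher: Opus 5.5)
Your proposal is correct. The fallback in your last paragraph is exactly the paper's proof. The paper uses only the $H^0$-level full faithfulness of $\pi$ on $\add\Pi*\cdots*\add\Pi[d]$, applied to the pair $(\Pi[-i],\Pi)$, which lies in this fundamental domain for $-d<i\leq 0$. This gives $H^i(\rend{\Pi}{\Pi})\cong H^0\bigl(\scrC_{d+1}(\Pi)(M_\Pi[-i],M_\Pi)\bigr)\cong H^i(\End_{\scrC_{d+1}(\Pi)}(M_\Pi))$. Your shifted variant with the pair $(\Pi,\Pi[i])$ is the same argument.

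Your main route is different. You read Fact~\ref{prop:fundamental_domain} literally as a quasi-equivalence of DG-categories, which is how the paper states it, and apply quasi-full faithfulness to the single object $\Pi$. Granting the Fact as stated, this is a valid deduction. It also gives more than needed, namely $\Pi\simeq\End_{\scrC_{d+1}(\Pi)}(M_\Pi)$ before truncation.

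Your caution about this route is justified. Guo's Proposition~2.15 is an $H^0$ statement, and $\Pi[-i]$ leaves the fundamental domain once $-i>d$. So the degrees $i<-d$ are not a formal consequence of it.

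The stronger claim does in fact hold, by an extra argument:
\begin{itemize}
\item compute $\mathrm{Hom}_{\calC_{d+1}(\Pi)}(\Pi,\Pi[i])$ as the colimit of $\mathrm{Hom}_{\per\Pi}(\tau^{\leq -n}\Pi,\Pi[i])$;
\item apply relative Calabi--Yau duality $\mathrm{Hom}(N,\Pi[j])\cong DH^{d+2-j}(N)$ to $N=\tau^{>-n}\Pi\in\pvd\Pi$, so the error terms vanish for $j\leq d+1$;
\item conclude that $\mathrm{Hom}_{\calC_{d+1}(\Pi)}(\Pi,\Pi[i])\cong H^i(\Pi)$ for all $i\leq 0$.
\end{itemize}

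The paper's route needs none of this. The truncation $\tau^{>-d}$ in the statement only involves degrees in which $\Pi[-i]$ stays inside the fundamental domain, so the $H^0$ statement suffices.
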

\begin{proof}
By Fact~\ref{prop:fundamental_domain}, we have the following isomorphisms:
\begin{align*}
	H^i(\rend{\Pi}{\Pi})\cong &H^i(\rhom{\Pi}{\Pi, \Pi})\cong H^0(\rhom{\Pi}{\Pi[-i],\Pi})\\
	\cong &H^0(\scrC_{d+1}(\Pi)(M_\Pi[-i],M_\Pi))\cong H^i(\End_{\scrC_{d+1}(\Pi)}(M_\Pi)).
\end{align*}
for any $-d+1\leq i \leq 0$. Thus, we have the desired quasi-isomorphism of DG-algebras.
\end{proof}

\begin{theorem}
\label{thm:cluster_module_cat}
The DG-functor $F_{M_\Pi}\colon \scrC_{d+1}(\Pi) \to \demfddg{\tau^{>-d}\Pi}$ induces an isomorphism 
$$\scrC_{d+1}(\Pi)\lquot\add(M_\Pi) \cong \demfddg{\tau^{>-d}\Pi}$$
in $\hqe$ which makes the following diagram commutative in $\hqe$:
$$\begin{tikzcd}\add(M_\Pi)\ar[r,"\mathrm{incl}"]\ar[rrd,"0"', bend right =15] & \scrC_{d+1}(\Pi) \ar[r,"Q"] \ar[rd, "{F_{M_\Pi}}"'] & \scrC_{d+1}(\Pi)\lquot\add(M_\Pi) \ar[d, "\sim"] \\&& \demfddg{\tau^{>-d}\Pi}\end{tikzcd}$$
\end{theorem}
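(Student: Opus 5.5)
The plan is to reduce the statement to Corollary~\ref{cor:main_one_obj} applied to $\scrT:=\scrC_{d+1}(\Pi)$ and $M:=M_\Pi$, and then to identify the target using Corollary~\ref{cor:fundamental_domain}.

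First I will check the hypotheses of Corollary~\ref{cor:main_one_obj}. By Corollary~\ref{cor:guo's theorem}, $\scrC_{d+1}(\Pi)$ is a locally finite stable DG-category, and $\add(M_\Pi)$ is a $(d+1)$-cluster tilting DG-subcategory. Essential smallness is clear, since $\per\Pi$ is essentially small. For local $K$-projectivity we work over a field $k$, so every complex of $k$-vector spaces is $K$-projective. Corollary~\ref{cor:main_one_obj} then yields an isomorphism in $\hqe$
$$\scrC_{d+1}(\Pi)\lquot\add(M_\Pi)\cong \demfpdg{\Lambda},\qquad \Lambda=\tau^{>-d}\End_{\scrC_{d+1}(\Pi)}(M_\Pi),$$
induced by $F_{M_\Pi}$, together with the commutative triangle (the composite with the inclusion of $\add(M_\Pi)$ is $0$).

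Next I transport this along the quasi-isomorphism $\tau^{>-d}\Pi\to\Lambda$ of Corollary~\ref{cor:fundamental_domain}. Restriction of scalars along a quasi-isomorphism of DG-algebras gives a quasi-equivalence $\scrD(\Lambda)\to\scrD(\tau^{>-d}\Pi)$. This functor preserves the standard $t$-structure, as in Lemma~\ref{lem:restriction}, and it sends $\add\Lambda$ to $\add\tau^{>-d}\Pi$. Consequently it restricts to a quasi-equivalence $\demdg{\Lambda}\to\demdg{\tau^{>-d}\Pi}$. Because it preserves conflations and free modules, it also identifies the finitely presented parts of Definition~\ref{def:demfpdg}. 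We regard $F_{M_\Pi}$ as landing in $\demfddg{\tau^{>-d}\Pi}$ via this identification.

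The remaining point, and the main (mild) obstacle, is to identify $\demfpdg{\tau^{>-d}\Pi}$ with $\demfddg{\tau^{>-d}\Pi}$. For this I invoke the remark after Definition~\ref{def:demfpdg}, which requires $\tau^{>-d}\Pi$ to be locally finite. That holds because Corollary~\ref{cor:fundamental_domain} gives $H^i(\tau^{>-d}\Pi)\cong H^i\End(M_\Pi)=\calC_{d+1}(\Pi)(M_\Pi,M_\Pi[i])$, which is finite-dimensional by the Hom-finiteness in Fact~\ref{thm:guo's theorem}.

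Finally, commutativity of the diagram follows directly from Corollary~\ref{cor:main_one_obj}, since composing with the isomorphism in $\hqe$ coming from restriction of scalars does not affect commutativity.
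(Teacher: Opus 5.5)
Your proposal is correct and follows the same route as the paper, which deduces the theorem from Theorem~\ref{thm:main} (in its one-object form, Corollary~\ref{cor:main_one_obj}), the quasi-isomorphism $\tau^{>-d}\Pi\to\tau^{>-d}\End_{\scrC_{d+1}(\Pi)}(M_\Pi)$ of Corollary~\ref{cor:fundamental_domain}, and Guo's Hom-finiteness and cluster-tilting result (Fact~\ref{thm:guo's theorem}). You also spell out the steps the paper leaves implicit: transporting $\demfpdg{-}$ along restriction of scalars, and identifying finitely presented with finite-dimensional objects using local finiteness of $\tau^{>-d}\Pi$.
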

\begin{proof}
It follows from Theorem~\ref{thm:main}, Corollary~\ref{cor:fundamental_domain}, and Fact~\ref{thm:guo's theorem}.
\end{proof}

\begin{proposition}
\label{prop:cluster_module_cat}
The $d$-truncated DG-algebra $\tau^{>-d}\Pi$ is a $d$-self-injective DG-algebra if and only if $\add\tau M_\Pi =\add\tau^{-1} M_\Pi$ holds in $\calC_{d+1}(\Pi)$.
\end{proposition}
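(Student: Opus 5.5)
The plan is to reduce the statement to Corollary~\ref{cor:self_inj}, applied to the stable DG-category $\scrT=\scrC_{d+1}(\Pi)$ and the object $M=M_\Pi$, and then to rewrite shifts in terms of the Auslander--Reiten translation.

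First we check the hypotheses of Corollary~\ref{cor:self_inj}. By Corollary~\ref{cor:guo's theorem}, $\scrC_{d+1}(\Pi)$ is a locally finite stable DG-category and $M_\Pi$ is a $(d+1)$-cluster tilting object. Idempotent completeness of $\calC_{d+1}(\Pi)$ needs a short argument. The category is Hom-finite over the field $k$ (Fact~\ref{thm:guo's theorem}), and it is the Verdier quotient $\per\Pi/\pvd\Pi$, which is known to be Krull--Schmidt (Amiot, Guo). Failing that reference, we would argue as follows. Every object lies in $\add M_\Pi*\cdots*\add M_\Pi[d]$ by Fact~\ref{thm:splicing}. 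Idempotents can then be lifted through the fundamental domain, which is quasi-equivalent to a full subcategory of the idempotent complete $\per\Pi$ by Fact~\ref{prop:fundamental_domain}. This is the step I expect to be the main technical obstacle, though it is standard in the literature. Local $K$-projectivity and essential smallness can be arranged by replacing the DG-category up to quasi-equivalence, since all statements are invariant in $\hqe$.

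Next we identify the DG-algebras. By Corollary~\ref{cor:fundamental_domain}, $\tau^{>-d}\Pi$ is quasi-isomorphic to $\Lambda=\tau^{>-d}\End_{\scrC_{d+1}(\Pi)}(M_\Pi)$. The notion of $d$-self-injectivity in Definition~\ref{def:d_self_inj} is invariant under quasi-isomorphism, because $\calD(\Lambda)$, the $k$-dual, and $\dem{\Lambda}$ are all transported along the induced equivalence of derived categories. Hence $\tau^{>-d}\Pi$ is $d$-self-injective if and only if $\Lambda$ is, and by Corollary~\ref{cor:self_inj} this holds if and only if $\add(M_\Pi[d])=\add(M_\Pi[-d])$ in $\calC_{d+1}(\Pi)$.

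Finally we convert shifts into $\tau$. By Remark~\ref{rem:AR}, $\tau\cong[d]$ as functors on $\calC_{d+1}(\Pi)$, so $\tau^{-1}\cong[-d]$. Therefore $\add\tau M_\Pi=\add(M_\Pi[d])$ and $\add\tau^{-1}M_\Pi=\add(M_\Pi[-d])$, which gives the claimed equivalence.
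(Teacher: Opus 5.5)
Your proposal is correct and follows the paper's own argument. The paper's proof simply invokes Corollary~\ref{cor:self_inj} for $M_\Pi$ in $\scrC_{d+1}(\Pi)$ and then rewrites $[\pm d]$ as $\tau^{\pm 1}$ via Remark~\ref{rem:AR}; the checks you add (idempotent completeness of $\calC_{d+1}(\Pi)$, and identifying $\tau^{>-d}\Pi$ with $\tau^{>-d}\End_{\scrC_{d+1}(\Pi)}(M_\Pi)$ via Corollary~\ref{cor:fundamental_domain}) are left implicit there, and for your fallback route to idempotent completeness you would additionally need the fundamental domain to be closed under direct summands in $\per\Pi$.
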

\begin{proof}
It follows from Corollary~\ref{cor:self_inj} since the AR-translation $\tau$ of $\calC_{d+1}(\Pi)$ satisfies $\tau\cong [d]$ by Remark~\ref{rem:AR}.
\end{proof}

\subsection{Examples}

We give an example of Theorem~\ref{thm:cluster_module_cat} in the case where $\scrT$ is a cluster category associated with type $A$ quiver. We refer to \cite{moc2025b} for Auslander--Reiten quivers of $d$-extended module categories.

\begin{example}
In this example, we consider the case when $d=2$ and $\scrT$ is the $3$-cluster category associated with quiver $A_2$. Thus, we have $\scrT\cong \scrC_3(\Pi)$ where $\Pi$ is the $4$-Calabi-Yau DG-algebra associated with $A_2$. 

Then the Auslander--Reiten quiver of $\scrT$ is as follows:
$$
\begin{tikzcd}[column sep =8]
\cdots& P_3^2 && P_0^2 && P_1^2 && P_2^2 && P_3^2 && P_0^2 \\
P_3^1 && P_0^1 && P_1^1 && P_2^1 && P_3^1 && P_0^1 & \cdots
\Ar{2-1}{1-2}{}
\Ar{1-2}{2-3}{}
\Ar{2-3}{1-4}{}
\Ar{1-4}{2-5}{}
\Ar{2-5}{1-6}{}
\Ar{1-6}{2-7}{}
\Ar{2-7}{1-8}{}
\Ar{1-8}{2-9}{}
\Ar{2-9}{1-10}{}
\Ar{1-10}{2-11}{}
\Ar{2-11}{1-12}{}
\tauAr{2-3}{2-1}{}
\tauAr{2-5}{2-3}{}
\tauAr{2-7}{2-5}{}
\tauAr{2-9}{2-7}{}
\tauAr{2-11}{2-9}{}
\tauAr{1-4}{1-2}{}
\tauAr{1-6}{1-4}{}
\tauAr{1-8}{1-6}{}
\tauAr{1-10}{1-8}{}
\tauAr{1-12}{1-10}{}
\end{tikzcd}
$$
The shifts are given by $P_j^1 \mapsto P_{j+1}^2$ and $P_j^2 \mapsto P_{j+2}^1$ where $j\in \mathbb{Z}/4\mathbb{Z}$. We have $\tau=[2]$. Define $M:=P_0^1\oplus P_2^1$. Then $M$ is a $3$-cluster tilting object in $\scrT$ and it satisfies $\add\tau M=\add\tau^{-1}M$ in $\scrT$.
Then the ideal quotient $\scrT/\add M$ has the following Auslander--Reiten quiver: 
$$
\begin{tikzcd}[column sep =8]
\cdots& P_3^2 && P_0^2 && P_1^2 && P_2^2 && P_3^2 && P_0^2 \\
P_3^1 && {} && P_1^1 && {} && P_3^1 && {} & \cdots
\Ar{2-1}{1-2}{}
% \Ar{1-2}{2-3}{}
% \Ar{2-3}{1-4}{}
\Ar{1-4}{2-5}{}
\Ar{2-5}{1-6}{}
% \Ar{1-6}{2-7}{}
% \Ar{2-7}{1-8}{}
\Ar{1-8}{2-9}{}
\Ar{2-9}{1-10}{}
% \Ar{1-10}{2-11}{}
% \Ar{2-11}{1-12}{}
% \tauAr{2-3}{2-1}{}
% \tauAr{2-5}{2-3}{}
% \tauAr{2-7}{2-5}{}
% \tauAr{2-9}{2-7}{}
% \tauAr{2-11}{2-9}{}
\tauAr{1-4}{1-2}{}
\tauAr{1-6}{1-4}{}
\tauAr{1-8}{1-6}{}
\tauAr{1-10}{1-8}{}
\tauAr{1-12}{1-10}{}
\end{tikzcd}
$$
On the other hand, we can check that the endomorphism DG-algebra $\tau_{>-2}\left(\End_{\scrC_3(\Pi)}(M)\right)$ is given by the following DG-quiver with relations:
$$k[\begin{tikzcd}
1 & 2 
\Ar{1-1}{1-2}{"\alpha", red, shift left =0.5ex}
\Ar{1-2}{1-1}{"\beta", red, shift left =0.5ex}
\end{tikzcd}]/(\beta\alpha, \alpha\beta)$$
with zero differential. Then its $2$-extended module category has the following Auslander--Reiten quiver:

$$
\begin{tikzcd}[column sep =8]
\cdots& 
{\color{black}
\begin{xsmallmatrix}
2
\end{xsmallmatrix}
}
 && 
{\color{red}
\begin{xsmallmatrix}
2
\end{xsmallmatrix}
}
 && 
{\color{black}
\begin{xsmallmatrix}
1
\end{xsmallmatrix}
}
 && 
{\color{red}
\begin{xsmallmatrix}
1
\end{xsmallmatrix}
}
 && 
{\color{black}
\begin{xsmallmatrix}
2
\end{xsmallmatrix}
}
 && 
{\color{red}
\begin{xsmallmatrix}
2
\end{xsmallmatrix}
}
 \\
{
\begin{xsmallmatrix}
\color{black}2\\\color{red}1
\end{xsmallmatrix}
}
 && {} && 
{
\begin{xsmallmatrix}
\color{black}1\\\color{red}2
\end{xsmallmatrix}
}
 && {} && 
{
\begin{xsmallmatrix}
\color{black}2\\\color{red}1
\end{xsmallmatrix}
}
 && {} & \cdots
\Ar{2-1}{1-2}{}
% \Ar{1-2}{2-3}{}
% \Ar{2-3}{1-4}{}
\Ar{1-4}{2-5}{}
\Ar{2-5}{1-6}{}
% \Ar{1-6}{2-7}{}
% \Ar{2-7}{1-8}{}
\Ar{1-8}{2-9}{}
\Ar{2-9}{1-10}{}
% \Ar{1-10}{2-11}{}
% \Ar{2-11}{1-12}{}
% \tauAr{2-3}{2-1}{}
% \tauAr{2-5}{2-3}{}
% \tauAr{2-7}{2-5}{}
% \tauAr{2-9}{2-7}{}
% \tauAr{2-11}{2-9}{}
\tauAr{1-4}{1-2}{}
\tauAr{1-6}{1-4}{}
\tauAr{1-8}{1-6}{}
\tauAr{1-10}{1-8}{}
\tauAr{1-12}{1-10}{}
\end{tikzcd}
$$
\end{example}

\begin{example}
In this example, we consider the case when $d=2$ and $\scrT$ is the $3$-cluster category associated with quiver $A_3$. Thus, we have $\scrT\cong \scrC_3(\Pi)$ where $\Pi$ is the $4$-Calabi-Yau DG-algebra associated with $A_3$.

Then the Auslander--Reiten quiver of $\scrT$ is as follows:
$$
\begin{tikzcd}[column sep =8]
\cdots && P_4^3 && P_0^3 && P_1^3 && P_2^3 && P_3^3 && P_4^3 && P_0^3 \\
& P_4^2 && P_0^2 && P_1^2 && P_2^2 && P_3^2 && P_4^2 && P_0^2 \\
P_4^1 && P_0^1 && P_1^1 && P_2^1 && P_3^1 && P_4^1 && P_0^1 & \cdots
\Ar{3-1}{2-2}{}
\Ar{2-2}{3-3}{}
\Ar{3-3}{2-4}{}
\Ar{2-4}{3-5}{}
\Ar{3-5}{2-6}{}
\Ar{2-6}{3-7}{}
\Ar{3-7}{2-8}{}
\Ar{2-8}{3-9}{}
\Ar{3-9}{2-10}{}
\Ar{2-10}{3-11}{}
\Ar{3-11}{2-12}{}
\Ar{2-12}{3-13}{}
\Ar{3-13}{2-14}{}
\Ar{2-2}{1-3}{}
\Ar{1-3}{2-4}{}
\Ar{2-4}{1-5}{}
\Ar{1-5}{2-6}{}
\Ar{2-6}{1-7}{}
\Ar{1-7}{2-8}{}
\Ar{2-8}{1-9}{}
\Ar{1-9}{2-10}{}
\Ar{2-10}{1-11}{}
\Ar{1-11}{2-12}{}
\Ar{2-12}{1-13}{}
\Ar{1-13}{2-14}{}
\Ar{2-14}{1-15}{}
\tauAr{3-3}{3-1}{}
\tauAr{3-5}{3-3}{}
\tauAr{3-7}{3-5}{}
\tauAr{3-9}{3-7}{}
\tauAr{3-11}{3-9}{}
\tauAr{3-13}{3-11}{}
\tauAr{2-4}{2-2}{}
\tauAr{2-6}{2-4}{}
\tauAr{2-8}{2-6}{}
\tauAr{2-10}{2-8}{}
\tauAr{2-12}{2-10}{}
\tauAr{2-14}{2-12}{}
\tauAr{1-5}{1-3}{}
\tauAr{1-7}{1-5}{}
\tauAr{1-9}{1-7}{}
\tauAr{1-11}{1-9}{}
\tauAr{1-13}{1-11}{}
\tauAr{1-15}{1-13}{}
\end{tikzcd}
$$

Shifts are given by $P^1_i \mapsto P^3_{i+1}$, $P^2_i \mapsto P^2_{i+2}$ and $P^3_i \mapsto P^1_{i+3}$ where $i\in \mathbb{Z}/5\mathbb{Z}$. We have $\tau=[2]$ and $\tau^{-1}=[-2]$. Define $M:=P_0^1\oplus P_0^2\oplus P_3^1$. Then $M$ is a $3$-cluster tilting object in $\scrT$ and it does not satisfy $\add\tau M=\add\tau^{-1}M$ in $\scrT$.

Then the ideal quotient $\scrT/\add M$ has the following Auslander--Reiten quiver:
$$
\begin{tikzcd}[column sep =8]
\cdots && P_4^3 && P_0^3 && P_1^3 && P_2^3 && P_3^3 && P_4^3 && P_0^3 \\
& P_4^2 && {} && P_1^2 && P_2^2 && P_3^2 && P_4^2 && {} \\
P_4^1 && {} && P_1^1 && P_2^1 && {} && P_4^1 && {} & \cdots
\Ar{3-1}{2-2}{}
% \Ar{2-2}{3-3}{}
% \Ar{3-3}{2-4}{}
% \Ar{2-4}{3-5}{}
\Ar{3-5}{2-6}{}
\Ar{2-6}{3-7}{}
\Ar{3-7}{2-8}{}
% \Ar{2-8}{3-9}{}
% \Ar{3-9}{2-10}{}
\Ar{2-10}{3-11}{}
\Ar{3-11}{2-12}{}
% \Ar{2-12}{3-13}{}
% \Ar{3-13}{2-14}{}
\Ar{2-2}{1-3}{}
% \Ar{1-3}{2-4}{}
% \Ar{2-4}{1-5}{}
\Ar{1-5}{2-6}{}
\Ar{2-6}{1-7}{}
\Ar{1-7}{2-8}{}
\Ar{2-8}{1-9}{}
\Ar{1-9}{2-10}{}
\Ar{2-10}{1-11}{}
\Ar{1-11}{2-12}{}
\Ar{2-12}{1-13}{}
% \Ar{1-13}{2-14}{}
% \Ar{2-14}{1-15}{}
\tauAr{1-5}{1-3}{}
\tauAr{1-7}{1-5}{}
\tauAr{1-9}{1-7}{}
\tauAr{1-11}{1-9}{}
\tauAr{1-13}{1-11}{}
\tauAr{1-15}{1-13}{}
% \tauAr{2-4}{2-2}{}
% \tauAr{2-6}{2-4}{}
\tauAr{2-8}{2-6}{}
\tauAr{2-10}{2-8}{}
\tauAr{2-12}{2-10}{}
% \tauAr{2-14}{2-12}{}
% \tauAr{1-3}{1-1}{}
% \tauAr{1-5}{1-3}{}
\tauAr{3-7}{3-5}{}
% \tauAr{3-9}{3-7}{}
% \tauAr{3-11}{3-9}{}
% \tauAr{3-13}{3-11}{}
\end{tikzcd}
$$

On the other hand, we can check that the endomorphism DG-algebra $\tau_{>-2}\left(\End_{\scrC_3(\Pi)}(M)\right)$ is given by the following DG-quiver with relations:
$$k[\begin{tikzcd}
1 & 2 & 3
\Ar{1-1}{1-2}{"\alpha"}
\Ar{1-2}{1-3}{"\beta", red, shift left =0.5ex}
\Ar{1-3}{1-2}{"\gamma", red, shift left =0.5ex}
\end{tikzcd}]/(\beta\gamma, \gamma\beta)$$
with zero differential. Then its $2$-extended module category has the following Auslander--Reiten quiver:
$$
\begin{tikzcd}[column sep =8]
\cdots && 
{\color{black}
\begin{xsmallmatrix}
3
\end{xsmallmatrix}}
&& 
{\color{red}
\begin{xsmallmatrix}
3
\end{xsmallmatrix}}
&& 
{
\begin{xsmallmatrix}
2\\1
\end{xsmallmatrix}}
&& 
{\color{red}
\begin{xsmallmatrix}
1
\end{xsmallmatrix}}
&& 
{\color{red}
\begin{xsmallmatrix}
2
\end{xsmallmatrix}}
&& 
{\color{black}
\begin{xsmallmatrix}
3
\end{xsmallmatrix}}
&& 
{\color{red}
\begin{xsmallmatrix}
3
\end{xsmallmatrix}}
\\
& 
{
\begin{xsmallmatrix}
\color{black}3\\\color{red}2
\end{xsmallmatrix}}
&& {} && 
{
\begin{xsmallmatrix}
&2&\\
1&&\color{red}3
\end{xsmallmatrix}}
&& 
{
\begin{xsmallmatrix}
2
\end{xsmallmatrix}}
&& 
{
\begin{xsmallmatrix}
\color{red}2\\1
\end{xsmallmatrix}}
&& 
{
\begin{xsmallmatrix}
\color{black}3\\\color{red}2
\end{xsmallmatrix}}
&& {}
\\
{
\begin{xsmallmatrix}
\color{black}3\\\color{red}1
\end{xsmallmatrix}}
&& {} && 
{
\begin{xsmallmatrix}
1
\end{xsmallmatrix}}
&& 
{
\begin{xsmallmatrix}
\color{black}2\\\color{red}3
\end{xsmallmatrix}}
&& {} && 
{
\begin{xsmallmatrix}
\color{black}3\\\color{red}2\\\color{red}1
\end{xsmallmatrix}}
&& {} & \cdots
\Ar{3-1}{2-2}{}
% \Ar{2-2}{3-3}{}
% \Ar{3-3}{2-4}{}
% \Ar{2-4}{3-5}{}
\Ar{3-5}{2-6}{}
\Ar{2-6}{3-7}{}
\Ar{3-7}{2-8}{}
% \Ar{2-8}{3-9}{}
% \Ar{3-9}{2-10}{}
\Ar{2-10}{3-11}{}
\Ar{3-11}{2-12}{}
% \Ar{2-12}{3-13}{}
% \Ar{3-13}{2-14}{}
\Ar{2-2}{1-3}{}
% \Ar{1-3}{2-4}{}
% \Ar{2-4}{1-5}{}
\Ar{1-5}{2-6}{}
\Ar{2-6}{1-7}{}
\Ar{1-7}{2-8}{}
\Ar{2-8}{1-9}{}
\Ar{1-9}{2-10}{}
\Ar{2-10}{1-11}{}
\Ar{1-11}{2-12}{}
\Ar{2-12}{1-13}{}
% \Ar{1-13}{2-14}{}
% \Ar{2-14}{1-15}{}
\tauAr{1-5}{1-3}{}
\tauAr{1-7}{1-5}{}
\tauAr{1-9}{1-7}{}
\tauAr{1-11}{1-9}{}
\tauAr{1-13}{1-11}{}
\tauAr{1-15}{1-13}{}
% \tauAr{2-4}{2-2}{}
% \tauAr{2-6}{2-4}{}
\tauAr{2-8}{2-6}{}
\tauAr{2-10}{2-8}{}
\tauAr{2-12}{2-10}{}
% \tauAr{2-14}{2-12}{}
% \tauAr{1-3}{1-1}{}
% \tauAr{1-5}{1-3}{}
\tauAr{3-7}{3-5}{}
% \tauAr{3-9}{3-7}{}
% \tauAr{3-11}{3-9}{}
% \tauAr{3-13}{3-11}{}
\end{tikzcd}
$$

This gives an example where the corresponding DG-algebra is not $2$-self-injective. 
\end{example}

% \begin{lemma}
% \label{chara:d_mono_chara_three}
% Consider the following bicartesian diagram in $\scrT$:
% $$\begin{tikzcd}
% X \ar[r,"\iota"] \ar[d,"f"'] & C \ar[d,"\pi"] \\
% Y \ar[r,"g"'] & Z
% \Ar{1-1}{2-2}{"h", red}
% \end{tikzcd}$$
% where $C\in\scrC$. Then the following conditions are equivalent:
% \begin{enumerate}
% 	\item $Qf$ is a $d$-monomorphism in $\scrT\lquot\scrC$.
% 	\item The induced morphism 
% 	$$(g\circ-, \pi\circ-)\colon \calT(*,X)|_{\calC}\oplus \calT(*,C)|_{\calC} \to \calT(*,Y)|_{\calC}$$
% 	is an epimorphism in $\Mod \calC$. 
% \end{enumerate}
% \end{lemma}
% \begin{proof}
% (1) $\Rightarrow$ (2): 
% \end{proof}

\appendix

\section{\texorpdfstring{Morita theory for abelian $d$-truncated DG-categories}{Morita theory for abelian d-truncated DG-categories}}

In this appendix, we give a Morita-theoretic characterization of $d$-extended module categories in abelian $d$-truncated DG-categories. The main result is Theorem~\ref{thm:morita}. We fix a commutative ring $k$ with $\bbU$-small underlying set. We also fix a $\bbV$-small connective DG-category $\scrA$ with $\bbU$-small and $K$-projective hom-complexes. Let $d$ be a positive integer. 

We note that the derived DG-category $\scrD_{\bbV}(\scrA)$, which we simply denote by $\scrD(\scrA)$, of $\scrA$ is $\bbW$-small and locally $\bbV$-small. Furthermore, $\per\dg\scrA$ can be constructed as the DG-category of one-sided twisted complexes on a DG-category that formally adds finite direct sums and direct summands of $\scrA$, so $\per\dg\scrA$ is a $\bbV$-small and locally $\bbU$-small DG-category. These DG-categories are locally $K$-projective by construction, since $\scrA$ is locally $K$-projective.

Hence, we often use representable right DG-modules $\scrA(-,A)$ for $A\in\scrA$, and we denote $\scrA(-,A)$ by $A$ for simplicity. In this identification, we regard $\scrA$ as a full DG-subcategory of $\per\dg\scrA$ via the Yoneda embedding. To avoid confusion, we denote the image of $A\in\scrA\op$ under the Yoneda embedding $\scrA\op\to \per\dg\scrA\op$ by $A\hut$. 

\subsection{\texorpdfstring{$(d+1)$}{(d+1)}-term complexes and their \texorpdfstring{$d$}{d}-iterated kernels}

In this subsection, we introduce the notion of $d$-iterated kernels and $d$-iterated cokernels for $(d+1)$-term complexes in $\scrA$. This notion recovers the usual kernel and cokernel in a connective DG-category (see Definition~\ref{def:exactness} and \cite[Definition~3.49]{che2023}) when $d=1$. 

\begin{definition}
\label{def:complex}
Define the full sub DG-category $\scrK^{[0,d]}(\scrA)\subset\tau^{\leq 0}\bigl(\per\dg\scrA\bigr)$ as follows:
$$\scrK^{[0,d]}(\scrA):=\scrA[-d]*\scrA[-d+1]*\cdots*\scrA[-1]*\scrA.$$
Similarly, we define $\scrK^{[-d,0]}(\scrA)$ by
$$\scrK^{[-d,0]}(\scrA):=\scrA*\scrA[1]*\cdots*\scrA[d-1]*\scrA[d].$$ 
where $\scrA[i]$ is the full DG-subcategory of $\per\dg\scrA$ consisting of objects $A[i]$ for $A\in\scrA$. The notation $*$ is defined in Definition~\ref{def:ast_sub}.
\end{definition}

% \begin{remark}
% Since we can take a DG-enhancement of $\per\scrA$ as one-sided 
% \end{remark}

\begin{lemma}
\label{lem:dual_of_cpx}
The equivalence functor $(-)\hut:=\rhom{\scrA}{-,\scrA}:(\per\dg\scrA)\op\to\per\dg{\scrA\op}$ induces an equivalence 
$$\bigl(\calK^{[0,d]}(\scrA)\bigr)\op\simeq\calK^{[-d,0]}(\scrA\op).$$
In particular, we also have a quasi-equivalence $\scrK^{[0,d]}(\scrA)\simeq\scrK^{[-d,0]}(\scrA\op)$.
\end{lemma}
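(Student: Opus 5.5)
The plan is to use that $(-)\hut=\rhom{\scrA}{-,\scrA}$ is a contravariant triangle equivalence $\per\scrA\to\per\scrA\op$. It is induced by a DG-functor $(\per\dg\scrA)\op\to\per\dg{\scrA\op}$ that is a quasi-equivalence, so it commutes with shifts in the sense that $(M[i])\hut\cong M\hut[-i]$, and it sends triangles to triangles with the arrows reversed. The first statement will follow by showing that $(-)\hut$ sends the generators and the $*$-operation correctly; the second will follow by restricting the DG-functor and applying $\tau^{\leq 0}$.

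First I check the generators. By the Yoneda lemma, $(A)\hut=\scrA(-,A)\hut\cong \scrA(A,-)=A\hut$ is representable in $\per\dg{\scrA\op}$. Hence $(A[i])\hut\cong A\hut[-i]$, so $(-)\hut$ carries $\scrA[-i]$ onto $\scrA\op[i]$, using essential surjectivity on representables.

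Next I treat the $*$-operation. Since the functor is contravariant, a triangle $X\to M\to Y\to X[1]$ maps to a triangle $Y\hut\to M\hut\to X\hut\to Y\hut[1]$. This gives $(\scrX*\scrY)\hut=\scrY\hut*\scrX\hut$ as full subcategories, both inclusions following from essential surjectivity of the equivalence. Applying this inductively, using the associativity noted in Definition~\ref{def:ast_sub}, gives
$$\bigl(\scrA[-d]*\cdots*\scrA\bigr)\hut=\scrA\op*\scrA\op[1]*\cdots*\scrA\op[d],$$
which is exactly $\calK^{[-d,0]}(\scrA\op)$. So $(-)\hut$ restricts to a fully faithful, essentially surjective functor $\bigl(\calK^{[0,d]}(\scrA)\bigr)\op\to\calK^{[-d,0]}(\scrA\op)$ on homotopy categories.

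For the DG-level statement, the DG-functor $(\per\dg\scrA)\op\to\per\dg{\scrA\op}$ is quasi-fully faithful. It therefore restricts to a quasi-fully faithful DG-functor between the full DG-subcategories above. Applying $\tau^{\leq 0}$ preserves quasi-fully-faithfulness, since the full subcategories are defined inside $\tau^{\leq 0}$ of the ambient categories. By the previous step it is essentially surjective on $H^0$, so it is a quasi-equivalence. This should be read with the opposite on the source, matching the displayed equivalence; otherwise one composes with the identification of a DG-category with its opposite's opposite.

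The main obstacle is modest: making the contravariant DG-functor strict enough. I would use a $K$-projective model, for example representing $\rhom{\scrA}{-,\scrA}$ by the Hom-complex into representables on one-sided twisted complexes, which are $K$-projective by construction. The other point needing care is the identification $(M[i])\hut\cong M\hut[-i]$ together with the sign conventions on triangles, but this is standard.
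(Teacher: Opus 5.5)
Your proposal is correct and follows the paper's route. The paper only notes that $\rhom{\scrA}{\scrA[i],\scrA}\cong\scrA[-i]$ and says the rest follows from the definitions. Your check that the contravariant duality reverses the order in $\scrX*\scrY$, and your passage to $\tau^{\leq 0}$, supply exactly the details the paper leaves implicit; you are also right that the DG-level "in particular" must be read with an opposite on the source, as in the displayed equivalence.
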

\begin{proof}
It immediately follows from the definition of $\scrK^{[0,d]}(\scrA)$ and $\scrK^{[-d,0]}(\scrA\op)$ by noting that $\rhom{\scrA}{\scrA[i],\scrA}\cong\rhom{\scrA}{\scrA,\scrA}[-i]\cong \scrA[-i]$ for any $i\in\bbZ$.
\end{proof}

\begin{remark}
Since we denote the image of $A\in\scrA\op$ under the Yoneda embedding $\scrA\op\to \per\dg\scrA\op$ by $A\hut$, the above notation $(-)\hut$ is compatible with our notation for representable left DG-modules. 
\end{remark}

\begin{definition}
\label{def:iterated_kernel}
Let $C^\bullet\in \scrK^{[0,d]}(\scrA)$. The \emph{$d$-iterated kernel} of $C^\bullet$ is defined as the object $\Kerd C^\bullet\in\scrA$ such that there exists an isomorphism
$$\scrA(-,\Kerd C^\bullet)\cong \tau^{\leq 0} C^\bullet$$
in $\der{\scrA}$. By definition, $d$-iterated kernels are unique up to isomorphisms in $\scrA$. 

If for any $C^\bullet\in \scrK^{[0,d]}(\scrA)$, the $d$-iterated kernel $\Kerd C^\bullet$ exists, we say that \emph{$\scrA$ has $d$-iterated kernels}.

Dually, for $C^\bullet\in \scrK^{[-d,0]}(\scrA)$, the \emph{$d$-iterated cokernel} of $C^\bullet$ is defined as the object $\Cokd C^\bullet\in\scrA$ such that there exists an isomorphism
$$\scrA(\Cokd C^\bullet,-)\cong \tau^{\leq 0} (C^\bullet\hut)$$
in $\der{\scrA\op}$. By definition, $d$-iterated cokernels are unique up to isomorphisms in $\scrA$.

If for any $C^\bullet\in \scrK^{[-d,0]}(\scrA)$, the $d$-iterated cokernel $\Cokd C^\bullet$ exists, we say that \emph{$\scrA$ has $d$-iterated cokernels}.
\end{definition}

\begin{remark}
\label{rem:dual_ker_coker}
By Lemma \ref{lem:dual_of_cpx}, we can check that $d$-iterated cokernels in $\scrA$ are precisely the $d$-iterated kernels in $\scrA\op$. 
\end{remark}

\begin{remark}
\label{rem:ker_mor}
Assume that $\scrA$ has $d$-iterated kernels. Then the bimodule 
$$\tau^{\leq 0}(-)\in \der{(\scrK^{[0,d]}(\scrA))\op\lox \scrA}$$ 
is right quasi-representable by definition. Hence, by Theorem \ref{thm:internal_hom} (\cite[Theorem~6.1]{toe2007}), this bimodule induces the following morphism in $\hqe_\bbV$:
$$\Kerd\colon\scrK^{[0,d]}(\scrA)\to \scrA.$$
Dually, if $\scrA$ has $d$-iterated cokernels, we also have a morphism $\Cokd\colon\scrK^{[-d,0]}(\scrA)\to \scrA$ in $\hqe_\bbV$.
\end{remark}

To describe $d$-iterated kernels more explicitly, we need the following lemma and its dual version.

\begin{lemma}
\label{lem:step1}
Let $\calT$ be a triangulated category and $(\calT^{\leq 0},\calT^{\geq 0})$ be a $t$-structure on $\calT$. Consider the following diagram in $\calT$:
$$
\begin{tikzcd}
\Cocone f' & M & \tau^{\leq 0} N & \Cocone f' [1]\\
\Cocone f & M & N & \Cocone f [1] 
\Ar{1-1}{1-2}{}
\Ar{1-2}{1-3}{"{f'}"}
\Ar{1-3}{1-4}{}
\Ar{2-1}{2-2}{}
\Ar{2-2}{2-3}{"{f}"}
\Ar{2-3}{2-4}{}
\Ar{1-1}{2-1}{}
\Ar{1-2}{2-2}{equal}
\Ar{1-3}{2-3}{}
\Ar{1-4}{2-4}{}
\end{tikzcd}
$$
where the rows are distinguished triangles and $M\in \calT^{\leq 0}$. Then the natural morphism 
$$\tau^{\leq 0} \Cocone f'\to \Cocone f' \to \Cocone f$$ 
induces an isomorphism $\tau^{\leq 0} \Cocone f'\cong \tau^{\leq 0} \Cocone f$.
\end{lemma}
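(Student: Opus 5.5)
Set $K:=\Cocone f$ and $K':=\Cocone f'$, and let $\varphi\colon K'\to K$ be the left vertical morphism, so that the square with the identity on $M$ and the canonical morphism $\tau^{\leq 0}N\to N$ commutes. The plan is to identify the cone of $\varphi$ and then apply $\tau^{\leq 0}$.

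\emph{Step 1: the cone of $\varphi$.} By the octahedral axiom (equivalently, the $3\times 3$ lemma) applied to the factorization $f=\bigl(\tau^{\leq 0}N\to N\bigr)\circ f'$, we get a distinguished triangle
$$K'\xrightarrow{\varphi} K\to \Cocone\bigl(\tau^{\leq 0}N\to N\bigr)[1]\to K'[1].$$
The truncation triangle $\tau^{\leq 0}N\to N\to\tau^{\geq 1}N\to$ gives $\Cocone(\tau^{\leq 0}N\to N)\cong \tau^{\geq 1}N[-1]$. Hence
$$\Cone\varphi\cong \tau^{\geq 1}N\in\calT^{\geq 1}.$$

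\emph{Step 2: applying $\tau^{\leq 0}$.} For any $U\in\calT^{\leq 0}$, apply $\calT(U,-)$ to the triangle $K'\to K\to \tau^{\geq 1}N\to K'[1]$. The term $\calT(U,\tau^{\geq 1}N)$ vanishes. The term $\calT(U,\tau^{\geq 1}N[-1])=\calT(U[1],\tau^{\geq 1}N)$ also vanishes, because $U[1]\in\calT^{\leq -1}\subset\calT^{\leq 0}$. Therefore $\calT(U,K')\to\calT(U,K)$ is bijective for every $U\in\calT^{\leq 0}$. Since $\tau^{\leq 0}$ is right adjoint to the inclusion $\calT^{\leq 0}\hookrightarrow\calT$, Yoneda's lemma shows that $\tau^{\leq 0}\varphi$ is an isomorphism.

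\emph{Step 3: the stated composite.} The morphism in the statement is $\tau^{\leq 0}K'\to K'\to K$, and it factors as $\tau^{\leq 0}K'\xrightarrow{\tau^{\leq0}\varphi}\tau^{\leq 0}K\to K$. The morphism it induces into $\tau^{\leq 0}K$ is therefore $\tau^{\leq 0}\varphi$, which is an isomorphism by Step 2.

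Where is $M\in\calT^{\leq 0}$ used? Step 1 does not need it. It ensures that $K'$ is an extension of $M$ by $\tau^{\leq 0}N[-1]$, so $K'\in\calT^{\leq 1}$. This is what makes the result useful in the paper's later iteration, but it is not needed for the isomorphism itself.

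The only delicate point is the octahedral identification in Step 1: one must check that the morphism $K'\to K$ produced by the octahedron can be taken to be the given vertical map $\varphi$, and that its cone is $\tau^{\geq 1}N$ rather than a shift of it. If the given $\varphi$ is not a priori the octahedral one, I would complete the square to a morphism of triangles and use the octahedron to fix the cone. This is legitimate because any two completions differ by a map that factors through $N[-1]$. Step 2 itself only uses $\calT(U,-)$ for $U\in\calT^{\leq 0}$, so the argument is stable under this choice.
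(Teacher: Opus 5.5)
Your route is the paper's: its proof is the single line ``by the octahedral axiom''. Your remark that $M\in\calT^{\leq 0}$ is never used is also correct. However, Step~1 has exactly the shift error you were worried about. The octahedron for $M\xrightarrow{f'}\tau^{\leq 0}N\xrightarrow{\iota}N$ gives a triangle $\Cone f'\to\Cone f\to\Cone\iota\to$, that is, $K'[1]\to K[1]\to\tau^{\geq 1}N\to K'[2]$. Hence $\Cone\varphi\cong\Cocone\iota\cong\tau^{\geq 1}N[-1]$, not $\tau^{\geq 1}N$. You can check this with $M=0$, where $\varphi=\iota[-1]$. The slip is harmless: $\tau^{\geq 1}N[-1]\in\calT^{\geq 2}\subset\calT^{\geq 1}$, so both $\calT(U,\Cone\varphi)$ and $\calT(U,\Cone\varphi[-1])$ still vanish for $U\in\calT^{\leq 0}$. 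With that correction, Steps~2--3 go through unchanged.

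The justification in your last paragraph does not work as written. Two completions $\varphi_1,\varphi_2$ do differ by some $\delta$ factoring through $N[-1]\to K$. But $\calT(U,N[-1])=\calT(U[1],N)$ is in general nonzero for $U\in\calT^{\leq 0}$, so $\delta_*$ need not vanish on $\calT(U,K')$. For example, take $M\neq 0$, $N=M[1]$ and $f=0$. Then $\iota=\id$ and $K'=K=N[-1]\oplus M$. Every $\left(\begin{smallmatrix}1&\gamma\\0&1\end{smallmatrix}\right)$ with $\gamma\colon M\to N[-1]=M$ is a valid vertical map, and for $\gamma=\id_M$ the difference acts nontrivially on $\calT(M,K')$. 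So ``stable under this choice'' does not follow from what you wrote.

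The conclusion is still true for every $\varphi$ making the diagram commute, and the five lemma proves it while bypassing the cone identification entirely. Apply $\calT(U,-)$ with $U\in\calT^{\leq 0}$ to both rows. The vertical maps induced by $\id_M$ and by $\iota$ on $\calT(U,-)$ and on $\calT(U,-[-1])=\calT(U[1],-)$ are bijective, because $U,U[1]\in\calT^{\leq 0}$. Hence $\varphi_*\colon\calT(U,K')\to\calT(U,K)$ is bijective, and your Yoneda argument from Step~2 shows that $\tau^{\leq 0}\varphi$ is invertible.
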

\begin{proof}
It is easy to show by using octahedral axiom.
\end{proof}

The following lemma is a dual version of Lemma \ref{lem:step1}.

\begin{lemma}
\label{lem:step1op}
Let $\calT$ be a triangulated category and $(\calT^{\leq 0},\calT^{\geq 0})$ be a $t$-structure on $\calT$. Consider the following diagram in $\calT$:
$$
\begin{tikzcd}
\Cone f[-1] & M & N & \Cone f\\
\Cone f'[-1] & \tau^{> -d}M &  N & \Cone f'
\Ar{1-1}{1-2}{}
\Ar{1-2}{1-3}{"{f}"}
\Ar{1-3}{1-4}{}
\Ar{2-1}{2-2}{}
\Ar{2-2}{2-3}{"{f'}"}
\Ar{2-3}{2-4}{}
\Ar{1-1}{2-1}{}
\Ar{1-2}{2-2}{}
\Ar{1-3}{2-3}{equal}
\Ar{1-4}{2-4}{}
\end{tikzcd}
$$
where the rows are distinguished triangles and $N\in \calT^{>-d}$. Then the natural morphism 
$$\Cone f\to \Cone f' \to \tau^{> -d} \Cone f'$$
induces an isomorphism $\tau^{> -d} \Cone f\cong \tau^{> -d} \Cone f'$.
\end{lemma}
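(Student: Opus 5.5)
Dualizing Lemma~\ref{lem:step1} will not work directly, because the truncation $\tau^{>-d}$ appears on the left, so I will argue with the octahedral axiom and the truncation triangle for $M$. Set $t\colon M\to\tau^{>-d}M$ for the truncation morphism, so that $\tau^{\leq -d}M\to M\xrightarrow{t}\tau^{>-d}M\to(\tau^{\leq -d}M)[1]$ is distinguished. The commutative square in the statement says $f=f'\circ t$. I apply the octahedral axiom to this composition.

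This produces a distinguished triangle
$$\Cone t\to \Cone f\to \Cone f'\to (\Cone t)[1],$$
and the middle morphism is exactly the natural map $\Cone f\to\Cone f'$ in the diagram. From the truncation triangle, $\Cone t\cong(\tau^{\leq -d}M)[1]$, which lies in $\calT^{\leq -d-1}$; hence $(\Cone t)[1]\in\calT^{\leq -d-2}$. Both objects are therefore in $\calT^{\leq -d}$, meaning they are killed by $\tau^{>-d}$.

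Next I use that $\tau^{>-d}$ is a cohomological-type truncation. For a triangle $A\to B\to C\to A[1]$ with $A\in\calT^{\leq -d}$, the map $\tau^{>-d}B\to\tau^{>-d}C$ is an isomorphism as soon as $A[1]\in\calT^{\leq -d}$ as well, which holds automatically since the aisle is closed under $[1]$. The cleanest justification is by Yoneda: for any $W\in\calT^{>-d}$, apply $\calT(-,W)$ to the triangle. Since $\calT(\calT^{\leq -d},\calT^{>-d})=0$, we get $\calT(A,W)=0$ and $\calT(A[1],W)=0$. The long exact sequence then gives $\calT(C,W)\cong\calT(B,W)$. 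Because $\tau^{>-d}$ is left adjoint to the inclusion of $\calT^{>-d}$, this says $\tau^{>-d}B\to\tau^{>-d}C$ is an isomorphism.

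Applying this with $A=\Cone t$, $B=\Cone f$ and $C=\Cone f'$ shows that $\tau^{>-d}\Cone f\to\tau^{>-d}\Cone f'$ is an isomorphism. Finally, the composite $\Cone f\to\Cone f'\to\tau^{>-d}\Cone f'$ factors uniquely through $\tau^{>-d}\Cone f$ by adjunction, and the induced map is this isomorphism, which is the claim.

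I note that the hypothesis $N\in\calT^{>-d}$ is not actually needed for this argument; it is presumably included for the intended application. The only point that needs care is checking that the octahedral axiom yields the map $\Cone f\to\Cone f'$ compatible with the given diagram. Since the morphism of triangles in the statement is determined by the identity on $N$ together with $t$, it can be chosen to be the octahedral one.
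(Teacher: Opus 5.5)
Your argument is the one the paper intends. Lemma~\ref{lem:step1} is proved there by the octahedral axiom and this lemma is called its dual. Your octahedron for $f=f'\circ t$, with $\Cone t\cong(\tau^{\leq -d}M)[1]\in\calT^{\leq -d-1}$, followed by the Yoneda/adjunction step, is exactly that dual argument. Contrary to your opening remark, formal dualization does work. Passing to $\calT\op$ with the opposite $t$-structure swaps domain and codomain, cones and cocones, and aisle and coaisle, so Lemma~\ref{lem:step1} becomes this statement up to a shift.

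There is one false step: your closing claim that the vertical arrow $\Cone f\to\Cone f'$ is determined by $t$ and $\id_N$. Third arrows of morphisms of triangles are not unique in general. Take $d=1$, $\calT=\der{k}$, $M=k[-1]$, $N=k$, $f=f'=0$ and $t=\id$. Then every $\bigl(\begin{smallmatrix}1&b\\0&1\end{smallmatrix}\bigr)$ on $\Cone f=\Cone f'=N\oplus M[1]$ completes the diagram. So as written you have proved the lemma only for the octahedral fill-in, not for an arbitrary given diagram. That suffices when one constructs the diagram, as in the paper's applications.

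The fix is short and avoids the octahedral axiom altogether. For $W\in\calT^{>-d}$, apply $\calT(-,W)$ to both rows. The maps $\calT(\tau^{>-d}M,W)\to\calT(M,W)$ and $\calT((\tau^{>-d}M)[1],W)\to\calT(M[1],W)$ induced by $t$ are bijective by adjunction, because $W$ and $W[-1]$ both lie in $\calT^{>-d}$. The maps on the $N$-terms are identities. The five lemma then makes $\calT(\Cone f',W)\to\calT(\Cone f,W)$ bijective for every fill-in, and Yoneda gives $\tau^{>-d}\Cone f\cong\tau^{>-d}\Cone f'$. This argument also confirms your observation that the hypothesis $N\in\calT^{>-d}$ is not needed.
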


The following proposition gives a more explicit description of $d$-iterated kernels.

\begin{proposition}
\label{prop:iteraded_ker}
Assume that $\scrA$ has kernels. Consider the following triangles in $\per\scrA$:
$$M^{[d-i,d]}\xrightarrow{f^{d-i}} M^{d-i}\xrightarrow{g^{d-(i-1)}} M^{[d-(i-1),d]}\dashrightarrow$$
for $1\leq i\leq d$. Denote $M^{[d,d]} = M^{d}$. Assume that $M^{d-i}\in\scrA$ for any $0\leq i\leq d$.

Then, we can construct the following commutative diagram in $\per\scrA$ for each $0\leq i\leq d$:
$$
\begin{tikzcd}
& M^{[d-i,d]} & \\
M^{d-(i+1)} && M^{d-i}\\
& K^{d-i} & 
\Ar{2-1}{1-2}{"{g^{d-i}}"}
\Ar{1-2}{2-3}{"{f^{d-i}}"}
\Ar{2-1}{3-2}{"{\varphi^{d-i}}"'}
\Ar{3-2}{2-3}{"{\psi^{d-i}}"'}
\Ar{3-2}{1-2}{"{\theta^{d-i}}"'}
\end{tikzcd}
$$
% $$
% \begin{tikzcd}
% M^{[d-i,d]} & \\
% & M^{d-i}\\
% K^{d-i} &
% \Ar{1-1}{2-2}{"{f^{d-i}}"}
% \Ar{3-1}{2-2}{"{\varphi^{d-i}}"'}
% \Ar{3-1}{1-1}{"{\theta^{d-i}}"'}
% \end{tikzcd}
% $$
which satisfies the following conditions:
\begin{enumerate}
	\item In the case $i=0$, $K^{d}:=M^{d}$, $\theta^{d}=\id_{M^d}$ and $\varphi^{d}:=g^d$. 
	\item In the case $i=d$, $M^{-1}:=0$, $g^{-1}:=0$ and $\varphi^{-1}:=0$.
	\item $K^{d-i}\in\scrA$ and it is isomorphic to $\tau^{\leq 0}\Cocone (\varphi^{d-(i-1)})$ for $1\leq i\leq d$.
	\item $\theta^{d-i}$ induces an isomorphism $K^{d-i}\cong \tau^{\leq 0} M^{[d-i,d]}$ for any $1\leq i\leq d$.
\end{enumerate}
% where $K^{d-i}$ is defined to be the $\tau^{\leq 0}\Cocone \varphi^{d-(i-1)}$ and $\theta^{d-i}$ induces an isomorphism $\tau^{\leq 0}M^{[d-i,d]}\cong K^{d-i}$. For $i=d$, we have the right side of the above diagram. In particular, we have $\theta^{0}\colon M^{[0,d]}\cong K^0$ and it induces an isomorphism $\tau^{\leq 0}M^{[0,d]}\cong K^0$.
% $$
% \begin{tikzcd}[column sep=-2mm]
% M^{[0,d]} && M^{[1,d]} && \cdots && \cdots && M^{[d-i,d]} && \cdots && M^{[d-1,d]} && M^d & \\
% & M^{0} && M^{1} && \cdots && M^{d-(i+1)} && M^{d-i} && \cdots && M^{d-1} && M^d\\
% K^d && K^{d-1} && \cdots && \cdots && K^i && \cdots && K^1 && K^0
% \Ar{1-1}{2-2}{"{f^0}"}
% \Ar{2-2}{1-3}{"{g^0}"}
% \Ar{1-3}{2-4}{"{f^1}"}
% \Ar{2-4}{1-5}{}
% % \Ar{1-5}{2-6}{}
% % \Ar{2-6}{1-7}{}
% \Ar{1-7}{2-8}{}
% \Ar{2-8}{1-9}{"{g^{i-1}}"}
% \Ar{1-9}{2-10}{"{f^i}"}
% \Ar{2-10}{1-11}{}
% % \Ar{1-11}{2-12}{}
% % \Ar{2-12}{1-13}{}
% \Ar{1-13}{2-14}{}
% \Ar{2-14}{1-15}{"{g^{d-1}}"}
% \Ar{1-15}{2-16}{"{f^d}"}
% \Ar{3-1}{2-2}{"{\varphi^d}"'}
% \Ar{2-2}{3-3}{"{\psi^{d-1}}"'}
% \Ar{3-3}{2-4}{"{\varphi^{d-1}}"'}
% \end{tikzcd}
% $$
\end{proposition}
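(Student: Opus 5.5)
We construct the diagrams by induction on $i$, starting from the case $i=0$ given by condition (1): $K^d=M^d$, $\theta^d=\id$, $\varphi^d=g^d$, and $\psi^d$ is forced to be $\id_{M^d}$. The commutativity $\psi^d\varphi^d=f^d g^d$ is then trivial, because $M^{[d,d]}=M^d$ and $f^d=\id$.

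For the inductive step, assume that the diagram for $i-1$ has been built. Thus $K^{d-(i-1)}\in\scrA$, $\theta^{d-(i-1)}$ induces $K^{d-(i-1)}\cong\tau^{\leq 0}M^{[d-(i-1),d]}$, and $\varphi^{d-(i-1)}\colon M^{d-i}\to K^{d-(i-1)}$ is a closed degree-$0$ morphism in $\scrA$ satisfying $\theta^{d-(i-1)}\varphi^{d-(i-1)}=g^{d-(i-1)}$. Since $\scrA$ has kernels, we take a left exact $3$-term complex
$$K^{d-i}\xrightarrow{\psi^{d-i}}M^{d-i}\xrightarrow{\varphi^{d-(i-1)}}K^{d-(i-1)}.$$
By Definition~\ref{def:exactness}, the object $K^{d-i}\in\scrA$ satisfies $\scrA(-,K^{d-i})\cong\tau^{\leq 0}\Cocone(\varphi^{d-(i-1)})$ in $\der{\scrA}$, because representables are connective. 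This gives condition (3).

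To obtain condition (4) and the map $\theta^{d-i}$, we apply Lemma~\ref{lem:step1} in $\calD(\scrA)$ with its standard $t$-structure. We take $M=M^{d-i}\in\calT^{\leq 0}$, $N=M^{[d-(i-1),d]}$ and $f=g^{d-(i-1)}$. The morphism $f'$ is $\varphi^{d-(i-1)}$, regarded as landing in $K^{d-(i-1)}\cong\tau^{\leq 0}N$, and it factors $f$ through the truncation map $\tau^{\leq0}N\to N$; this factorization is exactly the inductive identity $\theta^{d-(i-1)}\varphi^{d-(i-1)}=g^{d-(i-1)}$. Completing the morphism of triangles gives a map $\Cocone f'\to\Cocone f=M^{[d-i,d]}$, and the lemma says that
$$\tau^{\leq 0}\Cocone f'\to \Cocone f'\to M^{[d-i,d]}$$
induces $\tau^{\leq 0}\Cocone f'\cong\tau^{\leq 0}M^{[d-i,d]}$. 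Composing with $K^{d-i}\cong\tau^{\leq0}\Cocone f'$ defines $\theta^{d-i}\colon K^{d-i}\to M^{[d-i,d]}$ with the required property. The lower right triangle $f^{d-i}\theta^{d-i}=\psi^{d-i}$ commutes because both sides are the composite $K^{d-i}\to\Cocone f'\to M^{d-i}$ in the morphism of triangles; the map $\Cocone f'\to M^{d-i}$ is compatible with $\psi^{d-i}$ by construction of the kernel. We then set $\varphi^{d-i}:=\psi^{d-(i+1)}$-preimage data for the next stage. Concretely, since $M^{d-(i+1)}\in\scrA$ is connective, the composite $g^{d-i}\colon M^{d-(i+1)}\to M^{[d-i,d]}$ factors uniquely, up to homotopy, through $\tau^{\leq 0}M^{[d-i,d]}\cong K^{d-i}$; this factor is $\varphi^{d-i}$, and it gives $\theta^{d-i}\varphi^{d-i}=g^{d-i}$. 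The upper left triangle commutes by this choice, and the whole square commutes since $\psi^{d-i}\varphi^{d-i}=f^{d-i}\theta^{d-i}\varphi^{d-i}=f^{d-i}g^{d-i}$. At the final step $i=d$, condition (2) sets $M^{-1}=0$ and $\varphi^{-1}=0$.

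The main obstacle is bookkeeping rather than mathematics. One has to ensure that all these commutativities hold in $\per\scrA$ (equivalently in $\calD(\scrA)$), and that the identifications through $\tau^{\leq0}$ are compatible with the chosen lifts. I would handle this by working entirely in the homotopy category $\calD(\scrA)$ and using the universal property of $\tau^{\leq 0}$: $\mathrm{Hom}(P,\tau^{\leq0}N)\cong\mathrm{Hom}(P,N)$ for connective $P$. This makes every lift unique, so the diagrams automatically commute.
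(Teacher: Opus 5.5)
Your proposal is correct and follows essentially the same route as the paper: induction on $i$, applying Lemma~\ref{lem:step1} to the triangle of $\varphi^{d-(i-1)}$ against that of $g^{d-(i-1)}$ (using $\theta^{d-(i-1)}\varphi^{d-(i-1)}=g^{d-(i-1)}$), taking $K^{d-i}$ to be the kernel $\tau^{\leq 0}\Cocone(\varphi^{d-(i-1)})$ in $\scrA$, and obtaining $\varphi^{d-i}$ from the universal property of $\tau^{\leq 0}$ against the connective object $M^{d-(i+1)}$. The only difference is cosmetic. The paper simply defines $\psi^{d-i}:=f^{d-i}\circ\theta^{d-i}$, whereas you take $\psi^{d-i}$ to be the kernel map and then check $f^{d-i}\theta^{d-i}=\psi^{d-i}$ via the morphism of triangles, which is also valid. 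You should delete the garbled sentence about ``$\psi^{d-(i+1)}$-preimage data'', since the following ``Concretely'' sentence is the actual construction.
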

\begin{proof}
We construct the above diagram by induction on $i$. The case of $i=0$ is trivial. Assume that we have constructed the above diagram for $i-1$. Then we have the following commutative diagram in $\der{\scrA}$:
$$
\begin{tikzcd}
\Cocone \varphi^{d-(i-1)} & M^{d-i} & K^{d-(i-1)} & \Cocone \varphi^{d-(i-1)} [1]\\
M^{[d-i,d]} & M^{d-i} & M^{[d-(i-1),d]} & M^{[d-i,d]} [1] 
\Ar{1-1}{1-2}{}
\Ar{1-2}{1-3}{"{\varphi^{d-(i-1)}}"}
\Ar{1-3}{1-4}{}
\Ar{2-1}{2-2}{"{f^{d-i}}"'}
\Ar{2-2}{2-3}{"{g^{d-(i-1)}}"'}
\Ar{2-3}{2-4}{}
\Ar{1-1}{2-1}{}
\Ar{1-2}{2-2}{equal}
\Ar{1-3}{2-3}{"{\theta^{d-(i-1)}}"'}
\Ar{1-4}{2-4}{}
\end{tikzcd}
$$

Since $\theta^{d-(i-1)}$ induces an isomorphism $K^{d-(i-1)}\cong \tau^{\leq 0} M^{[d-(i-1),d]}$ and $M^{d-i}\in \scrA$, we can apply Lemma \ref{lem:step1} to the above diagram and obtain an isomorphism $\tau^{\leq 0} \Cocone \varphi^{d-(i-1)}\cong \tau^{\leq 0} M^{[d-i,d]}$. Define $K^{d-i}:=\tau^{\leq 0} \Cocone \varphi^{d-(i-1)}$ and $\theta^{d-i}$ to be the composition of 
$$K^{d-i}\cong \tau^{\leq 0} \Cocone \varphi^{d-(i-1)}\cong \tau^{\leq 0} M^{[d-i,d]}\to M^{[d-i,d]}.$$
Since $M^{d-i}$ and $K^{d-(i-1)}$ are in $\scrA$ and $\scrA$ has kernels, we have $K^{d-i}\in\scrA$.  
We also define $\psi^{d-i}:=f^{d-i}\circ\theta^{d-i}$. Since $\theta^{d-(i-1)}$ induces an isomorphism $K^{d-(i-1)}\cong \tau^{\leq 0} M^{[d-(i-1),d]}$ and $M^{d-(i+1)}\in \scrA$, we have a unique morphism $\varphi^{d-i}\colon M^{d-(i+1)}\to K^{d-i}$ such that $\theta^{d-i}\circ \varphi^{d-i}= g^{d-i}$. Thus, we have the following commutative diagram in $\der{\scrA}$: 
$$
\begin{tikzcd}
& M^{[d-i,d]} & \\
M^{d-(i+1)} && M^{d-i}\\
& K^{d-i} &
\Ar{2-1}{1-2}{"{g^{d-i}}"}
\Ar{1-2}{2-3}{"{f^{d-i}}"}
\Ar{2-1}{3-2}{"{\varphi^{d-i}}"'}
\Ar{3-2}{2-3}{"{\psi^{d-i}}"'}
\Ar{3-2}{1-2}{"{\theta^{d-i}}"'}
\end{tikzcd}
$$
This completes the induction step and we have constructed the desired diagram for any $0\leq i\leq d$.
\end{proof}

The following corollary means that $d$-iterated kernels are obtained by iterating kernels $d$ times.

\begin{cor}
\label{cor:iterated_ker_exists}
If $\scrA$ has kernels, then $\scrA$ has $d$-iterated kernels.
\end{cor}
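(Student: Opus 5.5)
Given $C^\bullet\in\scrK^{[0,d]}(\scrA)$, I will present it as an iterated extension, apply Proposition~\ref{prop:iteraded_ker}, and read off $K^0$ as the $d$-iterated kernel. By definition, $\scrK^{[0,d]}(\scrA)=\scrA[-d]*\cdots*\scrA[-1]*\scrA$. Unwinding the associativity of $*$ from Definition~\ref{def:ast_sub}, we obtain objects $M^0,\dots,M^d\in\scrA$ and objects $M^{[d-i,d]}\in\per\scrA$ with $M^{[d,d]}=M^d$ and $M^{[0,d]}\cong C^\bullet$, together with triangles
$$M^{[d-i,d]}\xrightarrow{f^{d-i}} M^{d-i}\xrightarrow{g^{d-(i-1)}} M^{[d-(i-1),d]}\dashrightarrow\qquad(1\leq i\leq d).$$
Here $M^{[d-(i-1),d]}\in \scrA[-(i-1)]*\cdots*\scrA$ and $M^{[d-i,d]}\in\scrA[-i]*\cdots*\scrA$.

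To produce these triangles, I peel off one shift at a time. An object of $\scrA[-i]*(\scrA[-(i-1)]*\cdots*\scrA)$ sits in a triangle $A[-i]\to M^{[d-i,d]}\to N\to A[-i+1]$ with $N\in\scrA[-(i-1)]*\cdots*\scrA$. That decomposition is not yet of the required shape, so I regroup. Using the associativity $(\scrX*\scrY)*\scrZ=\scrX*(\scrY*\scrZ)$, I instead write $C^\bullet\in(\scrA[-d]*\cdots*\scrA[-1])*\scrA$. This gives a triangle $M'\to C^\bullet\to M^0\to M'[1]$ with $M'\in\scrA[-d]*\cdots*\scrA[-1]$. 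Rotating it gives $C^\bullet\to M^0\to M'[1]\dashrightarrow$ with $M'[1]\in\scrA[-(d-1)]*\cdots*\scrA$, which matches the displayed triangle for $i=d$, with $M^{[1,d]}:=M'[1]$. Repeating this on $M'[1]$ by induction on $d$ gives all the required triangles. This bookkeeping of shifts and indices is the only fiddly point, and I expect it to be the main obstacle, mainly in checking that the indices line up with the conventions of Proposition~\ref{prop:iteraded_ker}.

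Since $\scrA$ has kernels and all $M^{d-i}\in\scrA$, Proposition~\ref{prop:iteraded_ker} applies with $i=d$. Condition (4) then gives $K^0\in\scrA$ together with an isomorphism $\theta^0\colon K^0\cong\tau^{\leq 0}M^{[0,d]}\cong\tau^{\leq 0}C^\bullet$ in $\der{\scrA}$. Under the Yoneda identification $K^0=\scrA(-,K^0)$, this says precisely that $K^0$ satisfies the defining property of $\Kerd C^\bullet$ in Definition~\ref{def:iterated_kernel}. Since $C^\bullet$ was arbitrary, $\scrA$ has $d$-iterated kernels.
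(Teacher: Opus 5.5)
Your proposal is correct and is essentially the paper's proof. You write $C^\bullet$ via triangles $M^{[d-i,d]}\to M^{d-i}\to M^{[d-(i-1),d]}\dashrightarrow$, apply Proposition~\ref{prop:iteraded_ker}, and take $K^0$ with $\theta^0\colon K^0\cong\tau^{\leq 0}C^\bullet$ as $\Kerd C^\bullet$; the only difference is that you spell out, via associativity of $*$ and rotation, how these triangles arise, which the paper simply asserts holds by definition of $\scrK^{[0,d]}(\scrA)$.
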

\begin{proof}
Let $C^\bullet\in \scrK^{[0,d]}(\scrA)$. By definition of $\scrK^{[0,d]}(\scrA)$, we have the following triangles in $\per\scrA$:
$$C^{[d-i,d]}\to C^{d-i}\to C^{[d-(i-1),d]}\dashrightarrow$$
for $1\leq i\leq d$ where $C^{[d,d]}:=C^d$, $C^{d-i}\in\scrA$ for any $0\leq i\leq d$ and $C^\bullet=C^{[0,d]}$. By Proposition \ref{prop:iteraded_ker}, we can construct the $\theta^{0}\colon K^0\to C^{[0,d]}$ which induces an isomorphism $\tau^{\leq 0}C^{[0,d]}\cong K^0$. Thus, $K^0\in\scrA$ is the $d$-iterated kernel of $C^\bullet$.
\end{proof}

\subsection{Morita theorem}

In this subsection, we fix a $\bbV$-small abelian $d$-truncated DG-category $\scrA$ with $\bbU$-small and $K$-projective hom-complexes. We give a Morita-theoretic characterization of $d$-extended module categories. 

First, we recall the projective objects in $\scrA$. Projective objects are defined as projective objects in the natural exact structure on $\scrA$ as follows:
\begin{proposition}[{\cite[Proposition~3.14]{moc2025}}]
\label{prop:exact_dg_abel}
By taking all short exact sequences as conflations, $\scrA$ naturally becomes an exact DG-category. In particular, $H^0(\scrA)$ naturally becomes an extriangulated category. Here, deflation in $\scrA$ is $d$-epimorphism and inflation is $d$-monomorphism.
\end{proposition}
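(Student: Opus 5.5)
The plan is to check Chen's axioms for an exact DG-category directly. The class $\calS$ consists of all exact $3$-term complexes in $\scrA$. It is closed under isomorphisms in $\cpx^3(\scrA)$, since left and right exactness are defined through isomorphisms of cohomology that are invariant under homotopy equivalence of $3$-term complexes. The first real step is to identify deflations with $d$-epimorphisms and inflations with $d$-monomorphisms.

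\emph{Inflations are $d$-monomorphisms.} If $X\xrightarrow{f}Y\xrightarrow{g}Z$ is exact, then $f$ is a $d$-monomorphism by Lemma~\ref{lem:ker_id_d_mono}. Conversely, if $f$ is a $d$-monomorphism, take a cokernel of $f$, which exists by finite cocompleteness. This gives a right exact $3$-term complex whose first map is a $d$-monomorphism, so it is left exact by condition (2) of Definition~\ref{def:abelian_d_DG}. Hence $f$ is an inflation.

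\emph{Deflations are $d$-epimorphisms.} This is the dual argument: use the dual of Lemma~\ref{lem:ker_id_d_mono}, finite completeness, and condition (3). Since the situation is self-dual, it then suffices to treat deflations.

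For the axioms, $\id_0$ is trivially a deflation. Compositions of $d$-epimorphisms are $d$-epimorphisms, because both defining conditions on $H^{-d+1}$ and $H^{i}$ for $i\leq -d$ are preserved under composition. In fact the conditions for $i\leq -d$ are vacuous here, since $\scrA$ is $d$-truncated.

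The main step is the pullback axiom. Let $g\colon Y\to Z$ be a $d$-epimorphism and $c\colon Z'\to Z$ arbitrary. I first show that $(g,c)\colon Y\oplus Z'\to Z$ is a $d$-epimorphism. For $i\leq -d$ everything vanishes by $d$-truncatedness. For $i=-d+1$, the map $H^{-d+1}\scrA(Z,*)\to H^{-d+1}\scrA(Y,*)\oplus H^{-d+1}\scrA(Z',*)$ is a monomorphism because its first component already is one.

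Next take a kernel $Y'$ of $(g,c)$. This yields a cartesian square with corners $Y', Y, Z', Z$ and induced map $g'\colon Y'\to Z'$. By condition (3) of Definition~\ref{def:abelian_d_DG}, the $3$-term complex $Y'\to Y\oplus Z'\to Z$ is also right exact. Hence the square is bicartesian.

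It remains to show that $g'$ is a $d$-epimorphism. Take a cokernel $W$ of $g$, which exists by finite cocompleteness. By the dual of Lemma~\ref{lem:chara_d_mono_abel}, $g$ being a $d$-epimorphism means $\Sigma^{d-1}W=0$. Pasting the cocartesian square with the cokernel square of $g$ shows that $W$ is also a cokernel of $g'$, using Chen's pasting lemma for cocartesian squares. By the same lemma, $g'$ is then a $d$-epimorphism. The pushout axiom for inflations is dual.

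The main obstacle I expect is this pasting step. One must make sure that composing a cocartesian square with a right exact $3$-term complex (the cokernel of $g$) yields a right exact $3$-term complex for $g'$, including the degree $-1$ and $-2$ homotopy data. I would argue representably: the two cones $\Cocone$ computed in $\der{\scrA\op}$ are compared via a diagram of triangles, in the style of the octahedral arguments in Lemma~\ref{lem:step1}. Truncating with $\tau^{\leq 0}$ then identifies the two. Finally, the extriangulated structure on $H^0(\scrA)$ follows from Chen's general result that the homotopy category of an exact DG-category is extriangulated.
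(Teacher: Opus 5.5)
Your proposal is correct. The paper does not prove this statement; it only quotes it from \cite[Proposition~3.14]{moc2025}, so there is no in-paper argument to compare with. Your argument is a sound self-contained proof. You first identify inflations and deflations with $d$-monomorphisms and $d$-epimorphisms, using Lemma~\ref{lem:ker_id_d_mono}, its dual, and conditions (2) and (3) of Definition~\ref{def:abelian_d_DG}. You then check Chen's axioms directly: closure under isomorphisms, Ex0, Ex1, Ex2, and dually Ex2$^{\mathrm{op}}$.

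On the step you flag as the main obstacle: pasting of cocartesian squares is valid in this truncated setting. In $\der{\scrA\op}$, replacing the cocone of the first square by its $\tau^{\leq 0}$-truncation changes the second cocone only by a map whose cone is concentrated in degrees $\geq 1$. That difference is invisible after $\tau^{\leq 0}$, so the stable pasting lemma descends.

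You can also bypass both the pasting and the $\Sigma^{d-1}$-criterion. Right exactness of $Y'\to Y\oplus Z'\to Z$ (which you already obtained from condition (3)) together with $H^{-d}\scrA(Y',*)=0$ gives an exact sequence
$$0\to H^{-d+1}\scrA(Z,*)\to H^{-d+1}\scrA(Y,*)\oplus H^{-d+1}\scrA(Z',*)\to H^{-d+1}\scrA(Y',*).$$
Suppose $z\in H^{-d+1}\scrA(Z',*)$ dies in $H^{-d+1}\scrA(Y',*)$. Then $(0,z)$ comes from some $w$ with $w\circ g=0$ and $w\circ c=\pm z$. Since $g$ is a $d$-epimorphism, $w=0$, hence $z=0$. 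This shows directly that $g'$ is a $d$-epimorphism.
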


\begin{definition}
% [{\cite[]{che2024}}]
\label{def:proj}
An object $P\in\scrA$ is called \emph{projective} if it is a projective object in the extriangulated category $H^0(\scrA)$. We denote by $\scrP$ the full sub DG-category of $\scrA$ consisting of projective objects.
\end{definition}

The following lemma means that the DG-functor $\scrA(P,-)\colon \scrA\to \der{k}$ ``preserves'' $d$-epimorphisms.

\begin{lemma}
\label{lem:ex_proj}
Let $P\in \scrA$ be a projective object. Consider the following exact sequence in $\scrA$:
$$X\to Y\to Z\quad (h\colon X\to Z)$$
Then we have the following triangle in $\der{k}$:
$$\scrA(P,X)\to \scrA(P,Y)\to \scrA(P,Z)\to \scrA(P,X)[1].$$
\end{lemma}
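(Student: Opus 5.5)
The idea is to compare the left exactness of the given exact sequence with the homotopy fibre sequence in $\der{k}$, and to use projectivity of $P$ to kill the only obstruction, which lives in degree $0$. Left exactness of $X\xrightarrow{f}Y\xrightarrow{g}Z$ (with homotopy $h$) gives the natural morphism $(f,h)^t\colon \scrA(P,X)\to \Cocone\bigl(\scrA(P,g)\bigr)$, obtained by evaluating $(f\hat,h\hat)^t\colon X\hat\to\Cocone g\hat$ at $P$. This morphism induces isomorphisms on $H^i$ for all $i\leq 0$. Since $\scrA$ is connective, $H^i(\scrA(P,X))=0$ for $i>0$. Therefore it suffices to show that $H^i(\Cocone \scrA(P,g))=0$ for all $i>0$. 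Then $(f,h)^t$ is a quasi-isomorphism, and the defining triangle
$$\Cocone\scrA(P,g)\to\scrA(P,Y)\xrightarrow{g\circ-}\scrA(P,Z)\to \Cocone\scrA(P,g)[1]$$
becomes the desired triangle, with $\scrA(P,X)\to\scrA(P,Y)$ identified with $f\circ-$ because the composite $\scrA(P,X)\to\Cocone\to\scrA(P,Y)$ is $f\circ-$.

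To compute $H^{>0}$ of the cocone, I will use the long exact sequence
$$H^0\scrA(P,Y)\xrightarrow{g\circ-}H^0\scrA(P,Z)\to H^1(\Cocone)\to H^1\scrA(P,Y)=0.$$
All higher $H^i$ with $i\geq 2$ vanish because both $\scrA(P,Y)$ and $\scrA(P,Z)$ are concentrated in nonpositive degrees. So the claim reduces to surjectivity of $\calA(P,Y)\to\calA(P,Z)$.

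By Proposition~\ref{prop:exact_dg_abel}, the exact sequence is a conflation in the exact DG-structure on $\scrA$. Hence $g$ is a deflation, that is, a $d$-epimorphism, and in the extriangulated category $H^0(\scrA)$ it is the deflation of an $\mathbb{E}$-triangle $X\to Y\to Z\dashrightarrow$. By Definition~\ref{def:proj}, $P$ is projective in $H^0(\scrA)$, so $\mathbb{E}(P,X)=0$, equivalently every morphism $P\to Z$ lifts along any deflation. This gives the required surjectivity.

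The main point to check is the compatibility of notions. First, the extriangulated structure on $H^0(\scrA)$ coming from Proposition~\ref{prop:exact_dg_abel} must have as its deflations the images of the $g$'s of exact $3$-term complexes, so that projectivity in the extriangulated sense really means lifting along these $g$. Second, the cocone used in left exactness (Definition~\ref{def:exactness}) must be the same as the homotopy fibre of $\scrA(P,g)$ in $\der{k}$. This is just evaluation of the DG-module $\Cocone g\hat$ at $P$, which commutes with cones since cones are computed objectwise.
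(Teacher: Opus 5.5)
Your proof is correct and follows the same argument as the paper. Projectivity of $P$ makes $\calA(P,Y)\to\calA(P,Z)$ surjective, which forces $H^{>0}$ of $\Cocone\scrA(P,g)$ to vanish, so left exactness upgrades $\scrA(P,X)\to\Cocone\scrA(P,g)$ to a quasi-isomorphism. (The paper states the vanishing for the cocone of $\scrA(P,X)\to\scrA(P,Y)$, which appears to be a slip for the cocone of $\scrA(P,Y)\to\scrA(P,Z)$ that you correctly use.)
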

\begin{proof}
Since $P$ is projective, we have a surjective morphism $\calA(P,Y)\to \calA(P,Z)$ in $\Mod k$. Hence, we can see $H^1\bigl(\Cocone(\scrA(P,X)\to \scrA(P,Y))\bigr)=0$. This implies that the above sequence is a triangle in $\der{k}$.
\end{proof}

\begin{lemma}
\label{lem:chara_1_epi}
Consider the following exact sequence in $\scrA$:
$$K\to X\xrightarrow{f} Y \quad (h\colon K\to Y)$$
Then $f$ is a $1$-epimorphism if and only if $\Sigma K=0$ in $\calA$.
\end{lemma}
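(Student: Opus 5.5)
This is the dual of Lemma~\ref{lem:chara_d_mono_abel} (\cite[Proposition~3.8]{moc2025}) in the case $n=1$, applied to the opposite DG-category. The plan is to pass to $\scrA\op$, where the given exact sequence becomes a left exact $3$-term complex with $f$ on the right.

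\textbf{Step 1: reformulate.} In $\scrA\op$ the exact sequence reads
$$Y\xrightarrow{f\op} X\xrightarrow{} K\quad (h\op\colon Y\to K).$$
Being exact, it is in particular right exact in $\scrA$. By Definition~\ref{def:exactness}, right exactness in $\scrA$ is exactly left exactness of this complex in $\scrA\op$, so $Y$ is a kernel of $X\to K$ in $\scrA\op$. Also, by Definition~\ref{def:n-mono}, $f$ is a $1$-epimorphism in $\scrA$ if and only if $f\op$ is a $1$-monomorphism in $\scrA\op$. Suspension objects in $\scrA$ are loop objects in $\scrA\op$ (Definition~\ref{def:loop_obj}), so $\Sigma K$ in $\scrA$ corresponds to $\Omega K$ in $\scrA\op$.

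\textbf{Step 2: set up the dual lemma.} Lemma~\ref{lem:chara_d_mono_abel} needs a left exact complex whose \emph{first} map is the morphism being tested. Step~1 instead gives $f\op$ as the first map with a kernel $Y$, so I cannot apply the lemma directly. I would therefore prove the statement directly, mimicking the proof of \cite[Proposition~3.8]{moc2025}.

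\textbf{Step 3: reduce to the cocone.} Right exactness gives an isomorphism in $\calD(\calA\op)$ from $\scrA(K,*)$ to $\tau^{\leq 0}\Cocone(\scrA(Y,*)\xrightarrow{-\circ f}\scrA(X,*))$. Left exactness gives that $K\to X$ behaves as a kernel. I would use the long exact cohomology sequence of
$$\Cocone(-\circ f)\to\scrA(Y,*)\to\scrA(X,*).$$
The map $H^i(-\circ f)$ is injective at $i=0$ and bijective for $i\le -1$ exactly when $H^i(\Cocone(-\circ f))=0$ for all $i\leq 0$ \emph{and} $H^1(\Cocone)$ receives nothing. The cokernel of $H^0$ is controlled by left exactness, through the surjection $\scrA(X,*)\to\dots$.

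\textbf{Step 4: identify with $\Sigma K$.} Finally I would identify the vanishing of $\tau^{\le 0}$ of the shifted cocone with the vanishing of $\tau^{\leq0}(\scrA(K,*)[-1])\cong\scrA(\Sigma K,*)$. This uses the isomorphism of $\scrA(K,*)$ with $\tau^{\le0}\Cocone$ from right exactness, together with the observation that $H^1$ of the cocone does not affect the truncation $\tau^{\le 0}(-[-1])$. Then $\Sigma K=0$ if and only if that truncation vanishes, which by Step~3 is exactly the $1$-epimorphism condition.

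\textbf{Main obstacle.} The delicate point is in Step~4, in the shift bookkeeping. I must check that the cocone's $H^0$, which is $H^0$ of $\scrA(K,*)$, does not interfere, and that only $H^{\le -1}$, which matches $H^{\le0}$ of $\scrA(K,*)[-1]$, matters. I expect this to be handled by the exact sequence and connectivity, as in \cite[Proposition~3.8]{moc2025}.
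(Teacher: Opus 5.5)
There is a genuine gap, and it sits in the central identification of Step~3. Right exactness of $K\xrightarrow{k}X\xrightarrow{f}Y$ says that the natural map $\scrA(Y,*)\to\Cocone\bigl(\scrA(X,*)\xrightarrow{-\circ k}\scrA(K,*)\bigr)$ is an isomorphism on $H^{\leq 0}$. It does \emph{not} give $\scrA(K,*)\cong\tau^{\leq 0}\Cocone(-\circ f)$, and that isomorphism is false: it would make $K$ the cokernel of $f$. Already for $d=1$, take an ordinary abelian category and an exact $0\to K\to X\to Y\to 0$ with $K\neq 0$. There the right-hand side is $\ker\bigl(\calA(Y,*)\to\calA(X,*)\bigr)=0$, while the left-hand side is nonzero. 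The correct relation is shifted by one degree. Apply the octahedral axiom to the composite $\scrA(Y,*)\to\Cocone(-\circ k)\to\scrA(X,*)$. By right exactness and connectivity, the first map has cocone concentrated in degrees $\geq 2$; the second map has cocone $\scrA(K,*)[-1]$. This gives
$$\tau^{\leq 0}\Cocone(-\circ f)\cong\tau^{\leq 0}\bigl(\scrA(K,*)[-1]\bigr)\cong\scrA(\Sigma K,*),$$
that is, $\Cok f\cong\Sigma K$.

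This is why your ``main obstacle'' cannot be resolved along your route. Under your identification, $H^0(\Cocone(-\circ f))$ would be $H^0(\scrA(K,*))$, which contains $\id_K$ and does not vanish when $\Sigma K=0\neq K$. No appeal to connectivity fixes this: you would in effect be proving ``$f$ is a $1$-epimorphism iff $K=0$'', which is false already for $d=1$.

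Two further points. First, the condition that ``$H^1$ of the cocone receives nothing'' is spurious: by the long exact sequence, $f$ is a $1$-epimorphism exactly when $H^i(\Cocone(-\circ f))=0$ for all $i\leq 0$. Second, Step~2 misreads Lemma~\ref{lem:chara_d_mono_abel}. There the tested morphism is the \emph{second} map, detected by its kernel, so its dual detects $n$-epimorphisms by their cokernel. The missing bridge is precisely $\Cok f\cong\Sigma K$.

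With the corrected isomorphism the proof is immediate, using only right exactness: $f$ is a $1$-epimorphism iff $H^{\leq 0}\Cocone(-\circ f)=0$, iff $H^{\leq -1}\scrA(K,*)=0$, iff $\Sigma K=0$. The paper extracts the same information from the right triangle $K\to X\to Y\to\Sigma K$ of \cite[Proposition~2.18]{moc2025}. It rotates this to $X\to Y\to\Sigma K\to\Sigma X$ and reads off the long exact sequence $\calA(\Sigma Y,-)\to\calA(\Sigma X,-)\to\calA(\Sigma K,-)\to\calA(Y,-)\to\calA(X,-)$.
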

\begin{proof}
By \cite[Proposition~2.18]{moc2025}, we have the right triangulated structure $(\Sigma, \nabla)$ on $\calA$. Consider the following right triangle in $\calA$:
$$K\to X\to Y\to \Sigma K.$$
By rotating the above right triangle, we also have the following right triangle in $\calA$:
$$X\to Y\to \Sigma K\to \Sigma X.$$
Thus, we have the following long exact sequence:
$$\calA(\Sigma Y,-)\to \calA(\Sigma X,-)\to \calA(\Sigma K, -)\to \calA(Y, -)\to \calA(X, -)$$
Thus, if $f$ is a $1$-epimorphism, we have $\Sigma K=0$ in $\calA$. Conversely, if $\Sigma K=0$ in $\calA$, we have $\Sigma^i K=0$ for any $1\leq i\leq d-1$. Hence, we have $\calA(\Sigma^i K,-)=0$ for any $1\leq i\leq d-1$. This implies that the morphism $\calA(Y,-)\to \calA(X,-)$ is injective and $\calA(\Sigma^i Y,-)\to \calA(\Sigma^i X,-)$ is an isomorphism for $1\leq i\leq d-1$. Thus, $f$ is a $1$-epimorphism.
\end{proof}

In Lemma~\ref{lem:ex_proj}, we have seen that the DG-functor $\scrA(P,-)\colon \scrA\to \der{k}$ preserves $d$-epimorphisms. The following proposition means that $\scrA(P,-)$ also preserves $1$-epimorphisms. 

\begin{proposition}
\label{lem:pres_1_epi}
Let $P$ be a projective object in $\scrA$. If $f\colon X\to Y$ is a $1$-epimorphism in $\scrA$, then the morphism $\scrA(P,X)\to \scrA(P,Y)$ induces a surjective morphism $H^{-d+1}(\scrA(P,X))\to H^{-d+1}(\scrA(P,Y))$ and an isomorphism $H^i(\scrA(P,X))\cong H^i(\scrA(P,Y))$ for any $i>-d+1$.
\end{proposition}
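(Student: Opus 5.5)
The plan is to reduce the statement to the exact sequence case, Lemma~\ref{lem:ex_proj}, and then read off the cohomological degrees using Lemma~\ref{lem:chara_1_epi} and $d$-truncatedness.

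First, complete $f$ to a sequence. Since $\scrA$ is finitely complete, $f$ has a kernel, giving a left exact $3$-term complex
$$K\xrightarrow{k} X\xrightarrow{f} Y \quad (h\colon K\to Y).$$
A $1$-epimorphism is in particular a $d$-epimorphism: the conditions for $n=1$ (injectivity on $H^0$ and isomorphisms on all $H^i$, $i\le -1$) imply those for $n=d$. So by condition (3) of Definition~\ref{def:abelian_d_DG} this complex is exact.

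Second, apply Lemma~\ref{lem:ex_proj}. It gives a triangle
$$\scrA(P,K)\to\scrA(P,X)\to\scrA(P,Y)\to\scrA(P,K)[1]$$
in $\der{k}$, hence a long exact sequence
$$\cdots\to H^i\scrA(P,K)\to H^i\scrA(P,X)\to H^i\scrA(P,Y)\to H^{i+1}\scrA(P,K)\to\cdots.$$

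Third, control the cohomology of $\scrA(P,K)$. By Lemma~\ref{lem:chara_1_epi}, $\Sigma K=0$. I claim that then $H^i\scrA(K,-)=0$ for all $i\le -1$, i.e.\ $\scrA(K,-)$ is concentrated in degree $0$. By Definition~\ref{def:loop_obj}, $\scrA(\Sigma K,*)\cong\tau_{\le0}(\scrA(K,*)[-1])$, so $H^{i}\scrA(\Sigma K,*)\cong H^{i-1}\scrA(K,*)$ for $i\le0$. Hence $\Sigma K=0$ forces $H^{j}\scrA(K,*)=0$ for all $j\le -1$.

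This controls maps out of $K$, but the long exact sequence needs maps into $K$, namely $H^i\scrA(P,K)$. I expect this to be the main obstacle. Dually to the loop objects, $\calA(\Sigma^jK,P)\cong H^{-j}\scrA(K,P)$. What is actually needed is $H^{i}\scrA(P,K)=0$ for $-d+1< i\le 0$; with the $d$-truncation $H^{\le -d}\scrA(P,K)=0$, this would also give the isomorphism in degree $-d+1$ that the statement does not require.

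My first attempt for this step uses the right-triangle long exact sequence for $K\to X\to Y\to\Sigma K$, as in the proof of Lemma~\ref{lem:chara_1_epi}, but applied covariantly with $\calA(P,-)$ and with $\Omega$-shifts. I would then combine projectivity of $P$ with respect to $d$-epimorphisms, dimension-shifting along the exact structure of Proposition~\ref{prop:exact_dg_abel}. The aim is to show that $\calA(P,\Omega^jY)$ receives surjections from $\calA(P,\Omega^jX)$ for $0\le j\le d-2$. The point would be that $\Omega^j f$ is again a $d$-epimorphism up to the terms killed by $\Sigma K=0$.

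Given this, exactness of
$$H^{i}\scrA(P,X)\to H^{i}\scrA(P,Y)\to H^{i+1}\scrA(P,K)$$
yields surjectivity in degree $-d+1$, and the vanishing of the $K$-terms in degrees $-d+2,\dots,0$ gives the isomorphisms for $i>-d+1$.

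If that dimension-shifting fails, the fallback is to identify $H^{i}\scrA(P,K)$ directly. I would use the exactness of $K\to X\to Y$ and the truncation triangle for $\scrA(P,-)$ applied to the exact sequence with $\Omega$-shifts, checking degree by degree with the five lemma.
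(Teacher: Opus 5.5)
Your reduction follows the same outline as the paper's proof. You take the kernel $K\to X\xrightarrow{f}Y$, note that a $1$-epimorphism is a $d$-epimorphism so the sequence is exact, get the triangle $\scrA(P,K)\to\scrA(P,X)\to\scrA(P,Y)\to\scrA(P,K)[1]$ from Lemma~\ref{lem:ex_proj}, and get $\Sigma K=0$ from Lemma~\ref{lem:chara_1_epi}. You also correctly isolate the crux: $H^i\scrA(P,K)=0$ for $-d+1<i\le 0$. But that vanishing is the entire content of the proposition, and you do not prove it. What you do derive, $H^{\le -1}\scrA(K,*)=0$ (and $\calA(\Sigma^jK,P)\cong H^{-j}\scrA(K,P)$), concerns morphisms \emph{out of} $K$; it says nothing by itself about morphisms from $P$ into $K$. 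The dimension-shifting plan is not carried out, and its stated aim would not suffice even if achieved. Surjectivity of $H^{-j}\scrA(P,X)\to H^{-j}\scrA(P,Y)$ for $0\le j\le d-2$ only makes $H^{-j+1}\scrA(P,K)\to H^{-j+1}\scrA(P,X)$ injective. It does not give the vanishing of $H^{-j}\scrA(P,K)$ needed for injectivity in degree $-j$, and it does not reach degree $-d+1$.

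The missing idea is that $\Sigma K=0$ forces $K$ to be concentrated in the bottom degree, i.e.\ $K\cong\Sigma^{d-1}\Omega^{d-1}K$. The paper gets this (Lemmas~\ref{lem:truncation} and~\ref{lem:chara_sig_zero}) from a structural input: the embedding of $\scrA$ as the $d$-extended heart of a $t$-structure on a stable DG-category \cite[Theorem~5.4]{plogmann2026}, together with \cite[Theorem~3.50]{moc2025}. The argument then runs as follows.
\begin{itemize}
\item For $-d+1<i\le 0$ one has $H^i\scrA(P,K)\cong\calA(P,\Omega^{-i}K)$, and $\Omega^{-i}K\cong\Sigma^{d-1+i}\Omega^{d-1}K$ is a genuine suspension.
\item $\calA(P,\Sigma M)=0$ for every $M$. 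Indeed, $0\to\Sigma M$ is a $d$-epimorphism, since both defining conditions reduce to $H^{\le -d}\scrA(M,*)=0$, which is $d$-truncatedness. Hence every morphism $P\to\Sigma M$ factors through $0$.
\end{itemize}
Your proposal contains no substitute for this input, which turns information about $\scrA(K,*)$ into information about $\scrA(*,K)$.

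Separately, your parenthetical remark is wrong. Vanishing in degrees $>-d+1$ does not give an isomorphism in degree $-d+1$; that would require $H^{-d+1}\scrA(P,K)\to H^{-d+1}\scrA(P,X)$ to vanish. For example, in $\demdg{k}$ with $d\ge 2$, the morphism $k[d-1]\to 0$ is a $1$-epimorphism. Yet $k\cong H^{-d+1}\bigl(\demdg{k}(k,k[d-1])\bigr)\to 0$ is not injective.
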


To prove this, we show some technical lemmata.

\begin{lemma}
\label{lem:truncation}
For any $K\in\scrA$, we have the following conflation:
$$\Sigma^{d-1}\Omega^{d-1}K\to K \to \Omega \Sigma K$$ 
\end{lemma}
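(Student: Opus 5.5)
To produce the conflation, I would use the unit of the adjunction $\Sigma\dashv\Omega$ (Remark~\ref{rem:loop_obj}), $\eta_K\colon K\to\Omega\Sigma K$, and show that it is a $d$-epimorphism whose kernel is $\Sigma^{d-1}\Omega^{d-1}K$. Since deflations in the exact structure of Proposition~\ref{prop:exact_dg_abel} are exactly the $d$-epimorphisms, this gives the conflation.

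\emph{Step 1: realize $\eta_K$ concretely.} Since $\scrA$ is finitely cocomplete, take the cokernel of $K\to 0$. The resulting cocartesian square $K\to 0\to\Sigma K$ presents $\Sigma K$ as a suspension object. Its cartesian replacement, i.e.\ the kernel of $0\to\Sigma K$, is $\Omega\Sigma K$, and the comparison map $K\to\Omega\Sigma K$ is $\eta_K$. On representables, $\scrA(\Sigma K,*)\cong\tau_{\le 0}(\scrA(K,*)[-1])$ means that $H^i\scrA(\Sigma K,*)\cong H^{i-1}\scrA(K,*)$ for $i\le 0$. Dually, $\Omega\Sigma K$ corepresents $\tau_{\le0}(\scrA(\Sigma K,*)[1])$ at the level of right modules. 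Thus $\eta_K$ induces isomorphisms
\[
H^0\scrA(\Omega\Sigma K,*)\to H^0\scrA(K,*)
\]
on the part of $\scrA(K,*)$ "coming from" $\Sigma$. In particular, $\eta_K$ kills exactly the negative homotopy that is not suspended. I would check the $d$-epimorphism condition via Lemma~\ref{lem:chara_1_epi}-type arguments applied to the right triangle
\[
K\to 0\to\Sigma K\to\Sigma K
\]
of \cite[Proposition~2.18]{moc2025}.

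\emph{Step 2: identify the kernel.} Take the kernel $F\to K$ of $\eta_K$; it exists by finite completeness. Since $\scrA$ is abelian $d$-truncated and $\eta_K$ is a $d$-epimorphism, the left exact $3$-term complex $F\to K\to\Omega\Sigma K$ is also right exact (Definition~\ref{def:abelian_d_DG}(3)), so it is a conflation. It remains to show $F\cong\Sigma^{d-1}\Omega^{d-1}K$. The strategy is to compare the long exact sequences of $\Omega^j$ and $\Sigma^j$ obtained from the left and right triangulated structures (Fact~\ref{prop:htpy_cat_lef}, \cite[Proposition~2.18]{moc2025}).

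On the one hand, applying $\Sigma$ to the conflation gives $\Sigma F\to\Sigma K\to\Sigma\Omega\Sigma K$. Since the counit $\Sigma\Omega\Sigma K\to\Sigma K$ splits $\Sigma\eta_K$, we get $\Sigma F=0$. On the other hand, $\Omega^{d-1}$ of $\eta_K$ lands in $\Omega^{d}\Sigma K=0$ by Remark~\ref{rem:d-trunc_loop}, so $\Omega^{d-1}F\cong\Omega^{d-1}K$. Objects with $\Sigma F=0$ should be exactly those recovered from their $(d-1)$-fold loops via $\Sigma^{d-1}\Omega^{d-1}$; these are the "bottom-degree" objects. The counit $\Sigma^{d-1}\Omega^{d-1}F\to F$ should then be an isomorphism. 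I would verify this on corepresented modules: the cohomology of $\scrA(F,*)$ is concentrated in degree $-(d-1)$ when $\Sigma F=0$, and it matches that of $\Sigma^{d-1}\Omega^{d-1}F$.

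\emph{Main obstacle.} The hardest part is this last identification: showing that $\Sigma F=0$ forces the counit $\Sigma^{d-1}\Omega^{d-1}F\to F$ to be an isomorphism, and that $\eta_K$ really is a $d$-epimorphism rather than merely surjective on $H^0$. I would attack it by induction on $d$ using Lemma~\ref{lem:chara_d_mono_abel} and its dual. If this becomes unwieldy, the fallback is to compute everything in $\der{\scrA\op}$ via truncations $\tau_{\le -(d-1)}$ and $\tau_{>-(d-1)}$ of $\scrA(K,*)$. In that picture the conflation is the image of the truncation triangle, and it becomes representable by the existence of (co)kernels.
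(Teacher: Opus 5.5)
\textbf{There is a genuine gap.} Your candidate map is the right one: in the paper's model, $\eta_K$ is the truncation map and its kernel is $\Sigma^{d-1}\Omega^{d-1}K$. However, neither of the two claims that carry the proof is established, and the one argument you give for $\Sigma F=0$ does not work.

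\textbf{Step 1 can be repaired.} The map $0\to\Sigma K$ is a $d$-epimorphism, because $H^{i}\scrA(\Sigma K,*)\cong H^{i-1}\scrA(K,*)=0$ for $i\leq -d+1$. So Definition~\ref{def:abelian_d_DG}(3) makes $\Omega\Sigma K\to 0\to\Sigma K$ right exact. Comparing the two resulting descriptions of $\scrA(\Sigma K,*)$ shows that $\eta_K^*$ is an isomorphism on $H^{\leq -1}$, hence $\eta_K$ is a $d$-epimorphism.

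\textbf{The splitting argument for $\Sigma F=0$ does not work.} The repair above says nothing about $H^0$, and $H^0$ is exactly what $\Sigma F=0$ amounts to. From the conflation $F\xrightarrow{k}K\xrightarrow{\eta_K}\Omega\Sigma K$ you get a right triangle and an exact sequence
\[
\calA(\Sigma\Omega\Sigma K,W)\xrightarrow{(\Sigma\eta_K)^*}\calA(\Sigma K,W)\xrightarrow{(\Sigma k)^*}\calA(\Sigma F,W)\xrightarrow{\delta^*}\calA(\Omega\Sigma K,W)\xrightarrow{\eta_K^*}\calA(K,W).
\]
The fact that the counit splits $\Sigma\eta_K$ only makes $(\Sigma\eta_K)^*$ surjective. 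That gives $(\Sigma k)^*=0$ and hence $\calA(\Sigma F,W)\cong\ker\eta_K^*$. So $\Sigma F=0$ holds if and only if $\eta_K^*$ is injective on $H^0$, i.e.\ iff $\eta_K$ is a $1$-epimorphism. By Lemma~\ref{lem:chara_1_epi} this is just the goal restated, and the splitting does not bear on it.

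\textbf{The identification of $F$ is the real missing step.} Your observation $\Omega^{d-1}F\cong\Omega^{d-1}K$ is fine. But even granting $\Sigma F=0$, the implication $\Sigma F=0\Rightarrow\Sigma^{d-1}\Omega^{d-1}F\cong F$ is precisely Lemma~\ref{lem:chara_sig_zero}. The paper only obtains that lemma afterwards, from the same stable embedding used for this lemma. It is not a formal consequence of $\Sigma\dashv\Omega$ and the one-sided long exact sequences, and you leave it as a hope.

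\textbf{The fallback does not supply it.} Iterating the defining property of $\Sigma$ gives $\tau^{\leq -(d-1)}\scrA(K,*)\cong\scrA(\Sigma^{d-1}K,*)[d-1]$. This is governed by $\Sigma^{d-1}K$, not by $\Sigma^{d-1}\Omega^{d-1}K$. Moreover, the desired conflation distributes $H^0\scrA(K,*)$ over both outer terms, so it is not the image of a standard truncation triangle of a Hom-module in $\der{\scrA\op}$.

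\textbf{What the paper does instead.} The missing idea is a $t$-structure in which the object $K$ itself, not its Hom-module, can be truncated. The paper gets this from \cite[Theorem~5.4]{plogmann2026}: $\scrA$ embeds quasi-fully faithfully into a stable DG-category $\scrT$ as the $d$-extended heart of a $t$-structure. The truncation triangle of $K$ in $\scrT$ has all three terms in that heart, so it is a conflation in $\scrA$. Then \cite[Theorem~3.50]{moc2025} identifies its outer terms with $\Sigma^{d-1}\Omega^{d-1}K$ and $\Omega\Sigma K$.
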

\begin{proof}
By using \cite[Theorem~5.4.]{plogmann2026}, we have a quasi-fully faithful morphism $\scrA\to \scrT$ in $\hqe$ where $\scrT$ is a stable DG-category with $t$-structure $(\scrT^{\leq 0}, \scrT^{\geq 0})$, and the essential image of $\scrA$ in $\scrT$ corresponds to the $d$-extended heart $\scrT^{[-d+1,0]}$ (see \cite[Definition~3.40]{moc2025}). In this situation, there exists the following conflation in $\scrA$:
$$\tau^{\leq-d}K\to K \to \tau^{> -d} K$$
where $\tau^{\leq -d}$ and $\tau^{> -d}$ are the truncation functors associated with the $t$-structure on $\scrT$. By \cite[Theorem~3.50]{moc2025}, we have $\tau^{\leq -d}K\cong \Sigma^{d-1}\Omega^{d-1}K$ and $\tau^{> -d}K\cong \Omega \Sigma K$ in $\calA$. Thus, we have the desired conflation.
\end{proof}

\begin{lemma}
\label{lem:chara_sig_zero}
For any $K\in\scrA$, we have $\Sigma K=0$ in $\calA$ if and only if the morphism $\Sigma^{d-1}\Omega^{d-1}K\to K$ in Lemma \ref{lem:truncation} is an isomorphism.
\end{lemma}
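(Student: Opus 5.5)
Throughout, the conflation of Lemma~\ref{lem:truncation} is written as
$$\Sigma^{d-1}\Omega^{d-1}K\xrightarrow{u} K\xrightarrow{v}\Omega\Sigma K.$$
I would argue directly from this conflation, using the realization of $\scrA$ as the $d$-extended heart $\scrT^{[-d+1,0]}$ from the proof of Lemma~\ref{lem:truncation}. In that realization $u$ corresponds to $\tau^{\leq -d}K\to K$ and $v$ to $K\to\tau^{>-d}K$. Both directions then reduce to identifying $\Omega\Sigma K$ with zero.

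\emph{($\Rightarrow$)} Suppose $\Sigma K=0$ in $\calA$. Since $\Omega$ is an additive functor on $\calA$ (Remark~\ref{rem:loop_obj}), $\Omega\Sigma K=\Omega 0=0$. The conflation then becomes $\Sigma^{d-1}\Omega^{d-1}K\to K\to 0$. It remains to see that the inflation $u$ is an isomorphism. In the stable envelope $\scrT$ the conflation comes from a triangle $\tau^{\leq -d}K\to K\to \tau^{>-d}K\to$. If the third term vanishes, $u$ is an isomorphism in $H^0(\scrT)$, and hence in $\calA$ because $\scrA\to\scrT$ is quasi-fully faithful. Alternatively, one can argue intrinsically: a right exact $3$-term complex with zero third term, together with Definition~\ref{def:exactness}, forces $u\hut$ to be a quasi-isomorphism in degrees $\le 0$.

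\emph{($\Leftarrow$)} Suppose $u$ is an isomorphism. Then the triangle $\tau^{\leq -d}K\to K\to\tau^{>-d}K\to$ in $H^0(\scrT)$ forces $\tau^{>-d}K=0$, i.e.\ $\Omega\Sigma K=0$ by \cite[Theorem~3.50]{moc2025}. The remaining task, and the main obstacle, is to pass from $\Omega\Sigma K=0$ to $\Sigma K=0$, since $\Omega$ is not faithful in general. The approach I would take is to write $L:=\Sigma K$ and use the adjunction $\Sigma\dashv\Omega$. For every $Y$,
$$\calA(\Sigma L,Y)\cong\calA(L,\Omega Y).$$
Moreover, $\Sigma K$ lies in the part of the heart where $\tau^{\leq -d}$ vanishes after one suspension; concretely, in $\scrT$ we have $\Sigma K\cong \tau^{>-d}(K[1])$ restricted appropriately. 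Then $\Omega\Sigma K=\tau^{>-d}\bigl((\Sigma K)[-1]\bigr)$, and $(\Sigma K)[-1]$ has cohomology concentrated in degrees $[-d+2,1]$ shifted. Vanishing of its $\tau^{>-d}$-part should force all of these cohomologies to vanish, because they sit in degrees $>-d$. I would make this precise by computing $H^i$ with respect to the $t$-structure on $\scrT$. Those cohomologies then vanish, so $\Sigma K\cong 0$.

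If the $t$-structure bookkeeping turns out awkward, the fallback is to work in $\calA$. By Lemma~\ref{lem:chara_1_epi}, $\Sigma K=0$ is equivalent to $K\to 0\to 0$ being a $1$-epimorphism situation. One would then verify the $\calA(\Sigma^i(-),-)$ conditions directly from $u$ being an isomorphism, using that $\Sigma^{d}=0$ (Remark~\ref{rem:d-trunc_loop}). This gives $\Sigma\Sigma^{d-1}\Omega^{d-1}K=\Sigma^d\Omega^{d-1}K=0$, and therefore $\Sigma K\cong\Sigma\Sigma^{d-1}\Omega^{d-1}K=0$.

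Indeed, this last argument settles ($\Leftarrow$) immediately, and I would present it as the main proof of that direction. If $u$ is an isomorphism, then applying $\Sigma$ gives
$$\Sigma K\cong\Sigma^{d}\Omega^{d-1}K=0$$
by $d$-truncatedness (Remark~\ref{rem:d-trunc_loop}). The hard direction is therefore ($\Rightarrow$), and it is handled by the triangle argument given above.
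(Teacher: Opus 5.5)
Your proof is correct. For ($\Rightarrow$) it is essentially the paper's argument: once $\Sigma K=0$, the third term $\Omega\Sigma K$ of the conflation of Lemma~\ref{lem:truncation} vanishes, so the truncation triangle in $\scrT$ shows that $u$ is an isomorphism.

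For ($\Leftarrow$) you take a different and more economical route. The paper's proof reads both directions off the embedding $\scrA\to\scrT$. There, $u$ is an isomorphism if and only if $\tau^{>-d}K\cong\Omega\Sigma K$ vanishes. One then still has to see that $\Omega\Sigma K=0$ forces $\Sigma K=0$. This needs a cohomology computation in $\scrT$, since $\Omega$ is not conservative in general; it is exactly the step you flagged as the obstacle. Your final argument bypasses it entirely. Applying the functor $\Sigma$ to the isomorphism $u$ gives $\Sigma K\cong\Sigma^{d}\Omega^{d-1}K$. This object is zero by Remark~\ref{rem:d-trunc_loop}, because $\scrA$ is $d$-truncated and has loop objects. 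So your direction is purely formal and intrinsic to $\calA$: it uses nothing about the $t$-structure beyond the existence of the conflation.

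In the write-up, delete the exploratory $t$-structure sketch and the ``fallback'' via Lemma~\ref{lem:chara_1_epi}; the latter is garbled as stated and is not needed.

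Also fix one slip in your parenthetical intrinsic argument for ($\Rightarrow$). Right exactness of $\Sigma^{d-1}\Omega^{d-1}K\xrightarrow{u}K\to 0$ does not force an isomorphism. It only makes precomposition with $u$ bijective on $H^{<0}$ and injective on $H^0$. What you want is left exactness, which holds because conflations are exact. Since the cocone of $\scrA(*,K)\to 0$ is $\scrA(*,K)$ itself, left exactness says that $u\circ-\colon\scrA(*,\Sigma^{d-1}\Omega^{d-1}K)\to\scrA(*,K)$ is bijective on $H^{\le 0}$. Hence $u$ is an isomorphism by Yoneda. With this correction, your whole proof is intrinsic to $\scrA$ once Lemma~\ref{lem:truncation} is granted.
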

\begin{proof}
It immediately follows from the embedding to the stable DG-category $\scrT$ in the proof of Lemma \ref{lem:truncation}.
\end{proof}

\begin{lemma}
Let $P$ be a projective object in $\scrA$. If $K$ satisfies $\Sigma K=0$ in $\calA$, then we have $H^{i}(\scrA(P,K))=0$ for any $i>-d+1$.
\end{lemma}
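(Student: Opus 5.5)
We reduce the claim to the case of a suspended object, using Lemma~\ref{lem:chara_sig_zero}. If $\Sigma K=0$ in $\calA$, then Lemma~\ref{lem:chara_sig_zero} gives an isomorphism $K\cong \Sigma^{d-1}\Omega^{d-1}K$. Putting $L:=\Omega^{d-1}K$, it therefore suffices to show
$$H^i\bigl(\scrA(P,\Sigma^{d-1}L)\bigr)=0 \qquad \text{for all } L\in\scrA \text{ and } i>-d+1.$$

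\textbf{Induction on $n$.} We prove, for $0\leq n\leq d-1$, that
$$H^i\bigl(\scrA(P,\Sigma^{n}L)\bigr)=0 \qquad \text{for } i>-d+n,\ i\le 0.$$
The case $n=0$ holds because $\scrA$ is $d$-truncated and connective. For the step, write $\Sigma^{n+1}L=\Sigma(\Sigma^n L)$ and take a left $\scrP$-approximation, or more generally an embedding into an object. Concretely, we choose an exact sequence
$$M\to I\to \Sigma M\quad(h),$$
with $M:=\Sigma^nL$, such that $\scrA(P,I)$ is well controlled.

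A cleaner choice is the defining cocartesian square of $\Sigma M$: it is the cokernel of $M\to 0$. This gives a right exact $3$-term complex $M\to 0\to \Sigma M$. It need not be left exact, since $M\to 0$ is not a $d$-monomorphism in general.

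So we use the conflation structure of Proposition~\ref{prop:exact_dg_abel} instead. Take a $d$-epimorphism $Q\to \Sigma M$ from a projective $Q$ (enough projectives is implicit in the context of the Morita theorem). Its kernel $N$ gives an exact sequence
$$N\to Q\to \Sigma M.$$
By Lemma~\ref{lem:ex_proj} we obtain a triangle
$$\scrA(P,N)\to\scrA(P,Q)\to\scrA(P,\Sigma M)\to \scrA(P,N)[1].$$

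\textbf{Alternative via the stable embedding.} The long exact sequence above is awkward to control directly, so I would instead argue through the embedding used in the proof of Lemma~\ref{lem:truncation}. There $\scrA\simeq\scrT^{[-d+1,0]}$ inside a stable DG-category $\scrT$ with a $t$-structure, and $\Sigma^{d-1}\Omega^{d-1}K\cong\tau^{\le -d}K$. Hence the object $K$ with $\Sigma K=0$ lies in $\scrT^{[-d+1,-d+1]}$, i.e.\ it is a shift $H[d-1]$ of a heart object $H$.

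The projective $P$ lies in $\scrT^{[-d+1,0]}$. Then
$$H^i\scrA(P,K)=\calT(P,H[d-1+i]).$$
For $i>-d+1$ we have $d-1+i\ge 1$, so this equals $\calT(P,H[j])$ with $j\ge1$, an Ext-type group from $P$ into the heart.

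\textbf{Main obstacle.} The hard step is showing that projectivity of $P$ in the extriangulated category $H^0(\scrA)$ forces these positive Ext groups to vanish. The plan is to argue by induction on $j$, using the conflations $H[j-1]\to E\to H[j]$ in $\scrA$ for $1\le j\le d-1$. These come from a heart monomorphism $H\to E'$ into an object with $\calT(P,E'[j])$ already known to vanish, or via the splitting argument: a class in $\calT(P,H[j])$ defines a conflation (a pullback along $P\to H[j]$) ending at $P$. Since $P$ is projective, that conflation splits, so the class vanishes. Making this pullback identification precise inside $\scrT^{[-d+1,0]}$ is where the work lies.
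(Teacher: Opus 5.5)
Your reduction is the same as the paper's: $K\cong\Sigma^{d-1}\Omega^{d-1}K$ by Lemma~\ref{lem:chara_sig_zero}. Your identification $H^i(\scrA(P,K))\cong\calT(P,H[d-1+i])$ through the stable embedding matches the paper's intrinsic $H^i(\scrA(P,K))\cong H^0(\scrA(P,\Omega^{-i}\Sigma^{d-1}\Omega^{d-1}K))\cong H^0(\scrA(P,\Sigma^{d-1+i}\Omega^{d-1}K))$, because $H[j]\cong\Sigma^{j}H$ for a heart object $H$ and $0\le j\le d-1$. But you stop exactly at the only nontrivial point: showing $\calA(P,\Sigma^{j}H)=\calT(P,H[j])=0$ for $1\le j\le d-1$. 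You call this the main obstacle and leave it as a plan, either an induction through unspecified heart monomorphisms or a pullback whose status as a conflation you never check. So as written the proof has a genuine gap. Your first, abandoned induction also cannot be used as stated. Its base case $H^i(\scrA(P,L))=0$ for $-d<i\le 0$ is false (e.g.\ $H^0(\scrA(P,P))\neq 0$), since $d$-truncatedness only kills degrees $\le -d$.

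The missing idea is short and needs neither induction nor a pullback: for $j\ge 1$, the zero morphism $0\to\Sigma^{j}L$ is itself a $d$-epimorphism.
\begin{itemize}
\item By Definition~\ref{def:n-mono}, $0\to Y$ is a $d$-epimorphism iff $H^{-d+1}(\scrA(Y,*))=0$; the conditions in degrees $\le -d$ hold automatically by $d$-truncatedness.
\item Iterating the defining property of suspension objects gives $H^{-d+1}(\scrA(\Sigma^{j}L,*))\cong H^{-d+1-j}(\scrA(L,*))$. This vanishes because $-d+1-j\le -d$.
\item In your stable picture the same fact reads $\calT(H[j],B[-d+1])=\calT(H,B[-d+1-j])=0$ for all $B\in\scrT^{[-d+1,0]}$, since $B[-d+1-j]\in\scrT^{\geq 1}$.
\end{itemize}
Hence $0\to\Sigma^{j}L$ is a deflation for the exact structure of Proposition~\ref{prop:exact_dg_abel}. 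Projectivity of $P$ then makes $\calA(P,0)\to\calA(P,\Sigma^{j}L)$ surjective, so $\calA(P,\Sigma^{j}L)=0$. Your splitting idea is this same argument in disguise: the conflation you would pull back along $\alpha\colon P\to H[j]$ is $H[j-1]\to 0\to H[j]$. Its only real input is that $0\to H[j]$ is a deflation, and once you know that, projectivity applies directly without forming the pullback.
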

\begin{proof}
Since $\Sigma K=0$ in $\calA$, we have $\Sigma^{d-1}\Omega^{d-1}K\cong K$ by Lemma \ref{lem:chara_sig_zero}. Hence, we have 
$$H^i(\scrA(P,K))\cong H^i(\scrA(P,\Sigma^{d-1}\Omega^{d-1}K))\cong H^0(\scrA(P,\Omega^{-i}\Sigma^{d-1}\Omega^{d-1}K))$$
for any $-d+1< i \leq 0$. Since $d-1+i>0$, we have $\Omega^{-i}\Sigma^{d-1}\Omega^{d-1}K\cong \Sigma^{d-1+i}\Omega^{d-1}K$ and thus, 
the morphism $0\to \Sigma^{d-1-i}\Omega^{d-1}K$ is an $d$-epimorphism. Thus, the morphism 
$$H^0(\scrA(P,0))\to H^0(\scrA(P,\Sigma^{d-1-i}\Omega^{d-1}K))$$
is surjective and we have $H^0(\scrA(P,\Sigma^{d-1-i}\Omega^{d-1}K))=0$. This completes the proof.
\end{proof}

\begin{proof}[Proof of Proposition~\ref{lem:pres_1_epi}]
Let $f$ be a $1$-epimorphism and $K$ be a kernel of $f$. Then we have $\Sigma K=0$ in $\calA$ by Lemma \ref{lem:chara_1_epi}. Hence, we have $H^i(\scrA(P,K))=0$ for any $-d+1< i$ by the above lemma. Thus, the morphism $\scrA(P,X)\to \scrA(P,Y)$ induces a surjective morphism $H^{-d+1}(\scrA(P,X))\to H^{-d+1}(\scrA(P,Y))$ and an isomorphism $H^i(\scrA(P,X))\cong H^i(\scrA(P,Y))$ for any $i>-d+1$.
\end{proof}

\begin{lemma}
\label{lem:right_ex_proj}
Let $P$ be a projective object in $\scrA$. Consider the following right exact sequence in $\scrA$:
$$X\xrightarrow{f} Y\xrightarrow{g} Z\quad (h\colon X\to Z)$$
Then we have the following isomorphism in $\der{k}$:
$$\tau^{>-d}\Cone\bigl(\scrA(P,X)\to \scrA(P,Y)\bigr)\cong \scrA(P,Z).$$
\end{lemma}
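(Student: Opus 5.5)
Let $Z'$ be a cokernel of $f$ obtained from finite cocompleteness of $\scrA$. I would first reduce to the case where the right exact sequence is built from this $Z'$. By the dual of Lemma~\ref{lem:unique_ker}, any right exact $3$-term complex $X\xrightarrow{f}Y\xrightarrow{g}Z$ is isomorphic in $\cpx^3(\scrA)$ to $X\xrightarrow{f}Y\to Z'$. Since $\scrA(P,-)$ respects isomorphisms of $3$-term complexes, it is enough to treat this case.

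Next I factor $f$ through its image. Take a kernel $K\xrightarrow{k}X$ of $f$ and let $I$ be the cokernel of $k$. The complex $K\to X\to I$ is right exact, and $k$ is a $d$-monomorphism by Lemma~\ref{lem:ker_id_d_mono}. Condition (2) of Definition~\ref{def:abelian_d_DG} then makes $K\to X\to I$ exact, so $X\to I$ is a $d$-epimorphism. By the usual abelian-type argument in \cite{moc2025}, the induced map $I\to Y$ is a $d$-monomorphism, and $I\to Y\to Z$ is exact with $Z$ the cokernel of $f$. This gives two short exact sequences in the exact DG-category of Proposition~\ref{prop:exact_dg_abel}:
$$K\to X\to I,\qquad I\to Y\to Z.$$

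Apply Lemma~\ref{lem:ex_proj} to both sequences. This gives triangles
$$\scrA(P,K)\to\scrA(P,X)\to\scrA(P,I)\to,\qquad \scrA(P,I)\to\scrA(P,Y)\to\scrA(P,Z)\to$$
in $\der{k}$. The second triangle yields $\scrA(P,Z)\cong\Cone(\scrA(P,I)\to\scrA(P,Y))$. By the octahedral axiom applied to $\scrA(P,X)\to\scrA(P,I)\to\scrA(P,Y)$, there is a triangle
$$\Cone\bigl(\scrA(P,X)\to\scrA(P,I)\bigr)\to\Cone\bigl(\scrA(P,X)\to\scrA(P,Y)\bigr)\to\Cone\bigl(\scrA(P,I)\to\scrA(P,Y)\bigr)\to.$$
The first term is $\scrA(P,K)[1]$, so its cohomology is concentrated in degrees $\leq -d$, because $\scrA$ is $d$-truncated and connective. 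Applying $\tau^{>-d}$ therefore kills the first term. The third term is $\scrA(P,Z)$, which already lies in degrees $(-d,0]$. Hence $\tau^{>-d}\Cone(\scrA(P,X)\to\scrA(P,Y))\cong\scrA(P,Z)$. This is the same truncation argument as in Lemma~\ref{lem:step1op}.

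The main obstacle is the image factorisation: showing that $I\to Y$ is a $d$-monomorphism and that $Z$ is its cokernel. My first attempt would be via the embedding into a stable DG-category from \cite[Theorem~5.4]{plogmann2026}, as used in Lemma~\ref{lem:truncation}. There $\scrA$ is the $d$-extended heart, so $I=\tau^{>-d}\Cone(k)$ and the required exactness becomes a $t$-structure computation. Alternatively, the computation can be done directly inside $\scrA(P,-)$ by the same truncation argument, with the cones formed in $\der{k}$.
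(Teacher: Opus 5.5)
Your overall strategy matches the paper's: factor $f$, apply Lemma~\ref{lem:ex_proj} to the second piece, use the octahedral axiom, then truncate. However, you factor through the wrong object, and for $d\geq 2$ two of your steps fail.

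First, $I=\Cok(k)$ is the \emph{coimage} of $f$, and in general $I\to Y\to Z$ is not exact with $Z=\Cok(f)$. Take $d=2$, $k$ a field, $\scrA$ the $2$-extended module category of $k$ (complexes with cohomology in degrees $-1,0$), and $f\colon 0\to Y=k[1]$. Then $K=\Omega Y=k$ and $I=\Sigma K=k[1]$. The induced map $I\to Y$ is the isomorphism determined by the homotopy $h$ of the left exact complex $K\to 0\to Y$. So $\Cok(I\to Y)=0$, while $Z=\Cok(f)=k[1]$.

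Second, your degree count is wrong. $\scrA(P,K)$ has cohomology in $[-d+1,0]$, so $\scrA(P,K)[1]$ has cohomology in $[-d,-1]$, not in degrees $\leq -d$, and $\tau^{>-d}$ does not kill it. In the example with $P=k$ it is $k[1]$, with cohomology in degree $-1$. Your argument is valid only for $d=1$, where coimage and image coincide.

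The missing idea is to use the other factorization $X\xrightarrow{e_1}\Im f\xrightarrow{m_d}Y$, where $e_1$ is a $1$-epimorphism and $m_d$ a $d$-monomorphism (\cite[Theorem~3.21]{moc2025}).
\begin{itemize}
\item For this factorization $\Im f\to Y\to Z$ is exact (\cite[Proposition~3.22]{moc2025}). Lemma~\ref{lem:ex_proj} then gives $\scrA(P,Z)\cong\Cone\bigl(\scrA(P,\Im f)\to\scrA(P,Y)\bigr)$.
\item The first cone is controlled by Proposition~\ref{lem:pres_1_epi}, not by the full kernel of $f$. That result says $\scrA(P,-)$ turns the $1$-epimorphism $e_1$ into a map that is an isomorphism on $H^{>-d+1}$ and surjective on $H^{-d+1}$.
\item Hence $\Cone\bigl(\scrA(P,X)\to\scrA(P,\Im f)\bigr)$ has cohomology in degrees $\leq -d$, and your octahedral and truncation step goes through.
\end{itemize}
This input is not a formal consequence of the axioms you invoked. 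The paper derives it from $\Sigma K'=0$ for the kernel $K'$ of $e_1$ (Lemma~\ref{lem:chara_1_epi}). Via the embedding of $\scrA$ into a stable DG-category as a $d$-extended heart (Lemma~\ref{lem:truncation}), this forces $\scrA(P,K')$ to be concentrated in degree $-d+1$.
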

\begin{proof}
By \cite[Theorem~3.21.]{moc2025}, we can take a factorization of $f\colon X\to Y$ as $X\xrightarrow{e_1} \Im f\xrightarrow{m_d} Y$ in $H^0(\scrA)$ where $e_1$ is a $1$-epimorphism and $m_d$ is a $d$-monomorphism. In this case, we have the following exact sequence in $\scrA$ (see \cite[Proposition~3.22.]{moc2025}):
$$\Im f\xrightarrow{m_d} Y\xrightarrow{g} Z\quad (h'\colon \Im f\to Z).$$
By Lemma \ref{lem:ex_proj}, we have the following triangle in $\der{k}$:
$$\scrA(P,\Im f)\to \scrA(P,Y)\to \scrA(P,Z)\to \scrA(P,\Im f)[1].$$
On the other hand, the morphism $\scrA(P,X)\to \scrA(P,\Im f)$ induces an isomorphism $H^i(\scrA(P,X))\cong H^i(\scrA(P,\Im f))$ for any $i>-d+1$ and a surjective morphism $H^{-d+1}(\scrA(P,X))\to H^{-d+1}(\scrA(P,\Im f))$ by Lemma \ref{lem:pres_1_epi}. Hence, the cone $\Cone(\scrA(P,X)\to \scrA(P,\Im f))$ is in $\calD^{\leq -d+1}(\scrA)$. By the octahedral axiom, we have the following triangle in $\der{k}$:
$$\Cone\bigl(\scrA(P,X)\to \scrA(P,\Im f)\bigr)\to \Cone\bigl(\scrA(P,X)\to \scrA(P,Y)\bigr)\to \scrA(P,Z)\rightdasharrow.$$
Thus, we have $\tau^{>-d}\Cone\bigl(\scrA(P,X)\to \scrA(P,Y)\bigr)\cong \scrA(P,Z)$.
\end{proof}

\begin{proposition}
\label{prop:proj_d_cok}
Let $M^\bullet$ be an object in $\scrK^{[-d,0]}(\scrA)$. Then we have the following isomorphism in $\der{\scrP}$:
$$\scrA(-,\Cokd M^\bullet)|_{\scrP}\cong \tau^{>-d}M^\bullet|_{\scrP}.$$
\end{proposition}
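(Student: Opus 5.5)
The plan is to induct on $d$ by peeling off one term at a time, dualizing the iterated construction of Proposition~\ref{prop:iteraded_ker} and using Lemma~\ref{lem:right_ex_proj} at each step. Write $M^\bullet = M^{[-d,0]}$ with triangles in $\per\scrA$
$$M^{-i}\to M^{[-i,0]}\to M^{[-(i-1),0]}\dashrightarrow ,$$
or more conveniently, as in the dual of Proposition~\ref{prop:iteraded_ker}, triangles $M^{[-d,-(i+1)]}\to M^{-i}\to M^{[-d,-i]}$ with $M^{-i}\in\scrA$. By Remark~\ref{rem:dual_ker_coker} and Corollary~\ref{cor:iterated_ker_exists} applied to $\scrA\op$, the dual construction gives objects $C^{-i}\in\scrA$ with $C^{-d}=M^{-d}$ and right exact $3$-term complexes
$$C^{-(i+1)}\to M^{-i}\to C^{-i}\quad (h_i),$$
so that $C^{-i}$ is a cokernel of the composite $C^{-(i+1)}\to M^{-i}$. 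Moreover, $\scrA(C^{-i},-)\cong\tau^{\leq 0}\bigl((M^{[-d,-i]})\hut\bigr)$, and $C^0=\Cokd M^\bullet$.

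The induction claim is
$$\scrA(-,C^{-i})|_{\scrP}\cong \tau^{>-d}\bigl(M^{[-d,-i]}|_{\scrP}\bigr)$$
in $\der{\scrP}$ (with appropriate shifts so that $M^{[-d,-i]}$ is viewed in degrees $[-(d-i),0]$). The base case $i=d$ is trivial, since $\scrA$ is $d$-truncated and $C^{-d}=M^{-d}\in\scrA$. For the inductive step, Lemma~\ref{lem:right_ex_proj} (applied objectwise in $P\in\scrP$, naturally in $P$) gives
$$\scrA(-,C^{-i})|_{\scrP}\cong\tau^{>-d}\Cone\bigl(\scrA(-,C^{-(i+1)})|_{\scrP}\to M^{-i}|_{\scrP}\bigr).$$
By the inductive hypothesis, the source is $\tau^{>-d}$ of the restriction of $M^{[-d,-(i+1)]}$, shifted appropriately. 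The dual form of Lemma~\ref{lem:step1op} then says that replacing the source of a map by its $\tau^{>-d}$-truncation does not change $\tau^{>-d}$ of the cone, because the target $M^{-i}|_{\scrP}$ lies in degrees $>-d$. Hence the right-hand side is $\tau^{>-d}\bigl(M^{[-d,-i]}|_{\scrP}\bigr)$, using the triangle $M^{[-d,-(i+1)]}\to M^{-i}\to M^{[-d,-i]}$. Setting $i=0$ gives the statement.

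The main obstacle is naturality. Lemma~\ref{lem:right_ex_proj} is stated for a single projective $P$, as an isomorphism in $\der{k}$, whereas we need an isomorphism of DG-modules over $\scrP$. To handle this, we first construct the comparison morphism $\Cone(\scrA(-,C^{-(i+1)})|_{\scrP}\to \scrA(-,M^{-i})|_{\scrP})\to\scrA(-,C^{-i})|_{\scrP}$ globally in $\der{\scrP}$ from the null-homotopy $h_i$. We then check that it becomes a quasi-isomorphism after applying $\tau^{>-d}$ by evaluating at each $P$, where Lemma~\ref{lem:right_ex_proj} and its proof apply. A second point to verify is that the composite $C^{-(i+1)}\to M^{-i}$ is compatible with the map $M^{[-d,-(i+1)]}\to M^{-i}$ under the inductive isomorphism. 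This should follow from the uniqueness of factorization through $\tau^{\leq 0}$, exactly as $\varphi^{d-i}$ was produced in Proposition~\ref{prop:iteraded_ker}.
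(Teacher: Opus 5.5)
Your proposal follows the paper's proof: dualize Proposition~\ref{prop:iteraded_ker} to get the right exact complexes $C^{-(i+1)}\to M^{-i}\to C^{-i}$ with $C^0\cong\Cokd M^\bullet$, apply Lemma~\ref{lem:right_ex_proj} at each step, and carry $\tau^{>-d}$ through the triangles $M^{[-d,-(i+1)]}\to M^{-i}\to M^{[-d,-i]}$ using Lemma~\ref{lem:step1op} (which is itself the statement you describe, not its dual). You flag two points that the paper leaves implicit in its commutative diagrams, naturality in $P\in\scrP$ and compatibility of $C^{-(i+1)}\to M^{-i}$ with $M^{[-d,-(i+1)]}\to M^{-i}$; the second is settled as you suggest, by taking the comparison maps $M^{[-d,-i]}\to C^{-i}$ to be the duals of the maps $\theta$ from Proposition~\ref{prop:iteraded_ker}, since these are built from the same triangle fill-ins and $C^{-(i+1)}\to M^{-i}$ is by construction dual to the factorization of $f_\vee$ through $\theta$.
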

\begin{proof}
Take the following diagram in $\per\scrA$:
$$
\begin{tikzcd}[column sep={12mm,between origins},row sep={13mm}]
M^{[-d,-d]} && M^{[-d,-d+1]} && \cdots &&  \cdots && M^{[-d,-1]} && M^{[-d,0]}\\
& M^{-d+1} && M^{-d+2} && \cdots && \cdots && M^{0} &
\Ar{1-1}{2-2}{"f^{-d}"'}
\Ar{2-2}{1-3}{"g^{-d+1}"}
\Ar{1-3}{2-4}{"f^{-d+1}"'}
\Ar{2-4}{1-5}{"g^{-d+2}"}
\Ar{2-8}{1-9}{"g^{-1}"}
\Ar{1-9}{2-10}{"f^{-1}"'}
\Ar{2-10}{1-11}{"g^0"}
\end{tikzcd}
$$
where $M^{[-d,-d+(i-1)]}\xrightarrow{f^{-d+(i-1)}} M^{-d+i}\xrightarrow{g^{-d+i}} M^{[-d,-d+i]}\dashrightarrow$ is a triangle in $\per\scrA$ for any $1\leq i\leq d$, $M^\bullet=M^{[-d,0]}$, and $M^{-d}:=M^{[-d,-d]} ,M^{-d+i}\in\scrA$ for any $1\leq i\leq d$. Then, by Proposition~\ref{prop:iteraded_ker}, we can construct the following commutative diagram in $\der{\scrA\op}$ for each $0\leq i\leq d$:

$$
\begin{tikzcd}
& M^{[-d,-d+i]}\hut & \\
M^{-d+i}\hut && M^{-d+i+1}\hut\\
& K^{-d+i}&
\Ar{1-2}{2-1}{"{g^{-d+i}\hut}"'}
\Ar{2-3}{1-2}{"{f^{-d+i}\hut}"'}
\Ar{3-2}{2-1}{"{\varphi^{-d+i}}"}
\Ar{2-3}{3-2}{"{\psi^{-d+i}}"}
\Ar{3-2}{1-2}{"{\theta^{-d+i}}"}
\end{tikzcd}
$$
which satisfies the following conditions:
\begin{enumerate}
	\item In the case $i=0$, $K^{-d}:=M^{-d}\hut$,  $\theta^{-d}=\id_{K^{-d}}$ and $\varphi^{-d}:=f^{-d}\hut$.
	\item In the case $i=d$, $K^{1}:=0$, $g^{1}:=0$ and $f^0:=0$.
	\item $K^{-d+i}$ is isomorphic to $\tau^{\leq 0}\Cocone (\varphi^{-d+(i-1)})$ for $1\leq i\leq d$.
	\item $\theta^{-d+i}$ induces an isomorphism $K^{-d+i}\cong \tau^{>-d} M^{[-d,-d+i]}\hut$ for any $1\leq i\leq d$.
\end{enumerate}
Since $\scrA$ has cokernels, we have $K^{-d+i}\in\scrA\op$ for any $1\leq i\leq d$. In particular, we have $K^0\cong (\Cokd M^\bullet)\hut$ by definition of $d$-iterated cokernel. Take $C^{-d+i}\in\scrA$ such that $C^{-d+i}\cong K^{-d+i}\hut$ for $1\leq i\leq d$. 

In this situation, we have the following right exact sequence in $\scrA$ by the condition (3) above:
$$C^{-d+(i-1)}\xrightarrow{p^{-d+(i-1)}} M^{-d+i}\xrightarrow{q^{-d+i}} C^{-d+i}\quad (h\colon C^{-d+(i-1)}\to C^{-d+i}).$$
for any $1\leq i\leq d$. By Lemma~\ref{lem:right_ex_proj}, we have an isomorphism 
$$\tau^{>-d}\Cone\bigl(C^{-d+(i-1)}|_{\scrP}\to M^{-d+i}|_{\scrP}\bigr)\cong C^{-d+i}|_{\scrP}$$
for any $1\leq i\leq d$. Then, by using Lemma~\ref{lem:step1op} inductively, we can construct the following commutative diagram in $\der{\scrP}$ for each $0\leq i\leq d$:

$$
\begin{tikzcd}
& M^{[-d,-d+i]}|_{\scrP} & \\
M^{-d+i}|_{\scrP} && M^{-d+i+1}|_{\scrP}\\
& C^{-d+i}|_{\scrP}&
\Ar{2-1}{1-2}{"{g^{-d+i}|_{\scrP}}"}
\Ar{1-2}{2-3}{"{f^{-d+i}|_{\scrP}}"}
\Ar{2-1}{3-2}{"{q^{-d+i}|_{\scrP}}"'}
\Ar{3-2}{2-3}{"{p^{-d+i}|_{\scrP}}"'}
\Ar{1-2}{3-2}{"{\zeta^{-d+i}}"'}
\end{tikzcd}
$$
which satisfies the following conditions:
\begin{enumerate}
	\item In the case $i=0$, $C^{-d} :=K^{-d}\hut$, $\zeta^{-d}=\id_{C^{-d}}$ and $q^{-d}:=g^{-d}$.
	\item In the case $i=d$, $C^{1}:=0$, $g^{1}:=0$ and $f^0:=0$.
	\item $C^{-d+i}|_{\scrP}$ is isomorphic to $\tau^{>-d}\Cone (p^{-d+(i-1)}|_{\scrP})$ for $1\leq i\leq d$.
	\item $\zeta^{-d+i}$ induces an isomorphism $C^{-d+i}|_{\scrP}\cong \tau^{>-d} M^{[-d,-d+i]}|_{\scrP}$ for any $1\leq i\leq d$.
\end{enumerate}
% This construction is obtained by applying Lemma~\ref{lem:step1op} inductively by same way as in the proof of Proposition~\ref{prop:iteraded_ker}. 
In particular, we have $C^0|_{\scrP}\cong \tau^{>-d} M^{[-d,0]}|_{\scrP}=\tau^{>-d} M^\bullet|_{\scrP}$. On the other hand, we also have $C^0\cong \Cokd M^\bullet$. Thus, we have $\scrA(-,\Cokd M^\bullet)|_{\scrP}\cong \tau^{>-d} M^\bullet|_{\scrP}$.
% By applying Lemma~\ref{lem:right_ex_proj} inductively, the object $C^{-d+i}|_{\scrP}\cong \tau^{>-d} \bigl(\Cone (\psi^{-d+(i-1)})\bigr)|_{\scrP}$ is isomorphic to representables for any $1\leq i\leq d$. 
%  $g^{-d}=\id_{C^{-d}}$ and $\varphi^{-d}=\id_{C^{-d}}$ for $i=0$ and $C^{-1}:=0$, $g^{-1}:=0$ and $\phi^{-1}:=0$ for $i=d$. 
% $$C^{[-d,-d+i]}\to C^{-d+i}\to C^{[-d,-d+(i-1)]}\dashrightarrow$$
% for $1\leq i\leq d$ where $C^{[-d,-d]}:=C^{-d}$, $C^{-d+i}\in\scrA$ for any $0\leq i\leq d$ and $C^\bullet=C^{[-d,0]}$. Then we can construct the following commutative diagram in $\der{\scrP}$ for each $0\leq i\leq d$:
% $$
% \begin{tikzcd}
% & M^{[d-i,d]} & \\
% M^{d-(i+1)} && M^{d-i}\\
% & K^{d-i} &
% \Ar{2-1}{1-2}{"{g^{d-i}}"}
% \Ar{1-2}{2-3}{"{f^{d-i}}"}
% \Ar{2-1}{3-2}{"{\varphi^{d-i}}"'}
% \Ar{3-2}{2-3}{"{\psi^{d-i}}"'}
% \Ar{3-2}{1-2}{"{\theta^{d-i}}"'}
% \end{tikzcd}
% $$
% \textcolor{blue}{need to add.}
\end{proof}

\begin{lemma}
\label{lem:claim1}
Let $P^\bullet, Q^\bullet$ be objects in $\scrK^{[-d,0]}(\scrP)\subset\scrK^{[-d,0]}(\scrA)$. Then we have the following isomorphism in $\der{k}$:
$$\rhom{\scrA}{-[d],\tau^{\leq-d}Q^\bullet}|_{\scrP}\lox_{\scrP}\rhom{\scrA}{P^\bullet,-[d]}|_{\scrP}\xrightarrow{m}\rhom{\scrA}{P^\bullet, \tau^{\leq-d}Q^\bullet}$$
where $m$ is the morphism induced by the composition of morphisms in $\der{\scrA}$.
\end{lemma}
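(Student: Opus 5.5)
The plan is to notice that neither $Q^\bullet$ nor the truncation $\tau^{\leq -d}$ plays any real role. I will prove the stronger statement: for every $X\in\der{\scrA}$ and every $P^\bullet\in\scrK^{[-d,0]}(\scrP)$, the composition map
$$m_{P^\bullet}\colon\rhom{\scrA}{-[d],X}|_{\scrP}\lox_{\scrP}\rhom{\scrA}{P^\bullet,-[d]}|_{\scrP}\to\rhom{\scrA}{P^\bullet,X}$$
is an isomorphism in $\der{k}$. The lemma is then the case $X=\tau^{\leq -d}Q^\bullet$. The method is dévissage in the variable $P^\bullet$: check the case of shifted projectives, then extend along the triangles defining $\scrK^{[-d,0]}(\scrP)=\scrP*\scrP[1]*\cdots*\scrP[d]$.

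\emph{Step 1 (base case).} Take $P^\bullet=P[j]$ with $P\in\scrP$ and $0\leq j\leq d$. Since representables are $K$-projective,
$$\rhom{\scrA}{P[j],-[d]}|_{\scrP}\cong \scrA(P,-)|_{\scrP}[d-j]=\scrP(P,-)[d-j]$$
as left DG-modules over $\scrP$, so it is a shifted representable. The derived Yoneda lemma then gives
$$N\lox_{\scrP}\scrP(P,-)[d-j]\cong N(P)[d-j]$$
for any right DG-module $N$ over $\scrP$. Applying this to $N=\rhom{\scrA}{-[d],X}|_{\scrP}$ gives
$$\rhom{\scrA}{P[d],X}[d-j]\cong\rhom{\scrA}{P[j],X}.$$
One then checks that under these identifications $m_{P[j]}$ is exactly this Yoneda isomorphism. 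This holds because the Yoneda map is itself induced by composition, i.e.\ it sends $\alpha\otimes\beta$ to $\alpha\circ\beta$.

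\emph{Step 2 (dévissage).} Both sides of $m_{P^\bullet}$ are triangulated functors of $P^\bullet$, contravariant, from $(\per\dg\scrA)\op$ to $\der{k}$. On the left this holds because $\rhom{\scrA}{-,-[d]}$ sends triangles to triangles of left $\scrP$-modules and $\lox_{\scrP}$ is exact. On the right it is clear. The map $m$ is natural in $P^\bullet$, so it is a morphism of triangulated functors. An object of $\scrP*\cdots*\scrP[d]$ is an iterated extension of objects $P[j]$ with $0\leq j\leq d$. Induction on the length, together with the five lemma applied to the long exact cohomology sequences, therefore shows that $m_{P^\bullet}$ is an isomorphism for all $P^\bullet\in\scrK^{[-d,0]}(\scrP)$.

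\emph{Main obstacle.} The delicate point is Step 2's claim that $m$ is a natural transformation of triangulated functors, compatible with the connecting morphisms, and not merely objectwise natural in the homotopy category. To handle this I would realize everything at the DG level. Work in $\per\dg\scrA$ with $K$-projective (cofibrant) models, so that cones are strict. Then $m$ is a closed morphism of DG-bimodules over $(\per\dg\scrA)\op$, namely the composition map
$$\rhom{\scrA}{-[d],X}|_{\scrP}\ox_{\scrP}\rhom{\scrA}{?,-[d]}|_{\scrP}\to\rhom{\scrA}{?,X}.$$
A morphism of DG-bimodules automatically commutes with strict cones in the variable $?$, which gives the required compatibility. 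The remaining checks are routine verifications that the identifications in Step 1 are compatible with $m$.
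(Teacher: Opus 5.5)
Your proposal is correct and follows the paper's route. It reduces, via $\scrK^{[-d,0]}(\scrP)=\scrP*\scrP[1]*\cdots*\scrP[d]$, to the case $P^\bullet=P[i]$, where $\rhom{\scrA}{P[i],-[d]}|_{\scrP}$ is a shifted representable and the derived Yoneda isomorphism identifies $m$ with $\rhom{\scrA}{P[d],\tau^{\leq -d}Q^\bullet}[d-i]\cong\rhom{\scrA}{P[i],\tau^{\leq -d}Q^\bullet}$. The paper leaves implicit the d\'evissage and the compatibility of $m$ with connecting morphisms, which you handle at the DG level. As you note, its argument also never uses any property of $\tau^{\leq -d}Q^\bullet$.
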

\begin{proof}
Since $P^\bullet\in \scrK^{[-d,0]}(\scrP)$, it is enough to check that the morphism $m$ induces an isomorphism in the case $P^\bullet=P[i]$ with $P\in\scrP$ and $0\leq i\leq d$. In this case, the morphism $m$ is given by the following composition of morphisms in $\der{\scrA}$:
\begin{align*}
\rhom{\scrA}{-[d],\tau^{\leq -d}Q^\bullet}|_{\scrP}&\lox_{\scrP}\rhom{\scrA}{P[i],-[d]}|_{\scrP}\\
&\cong \rhom{\scrA}{-[d],\tau^{\leq-d}Q^\bullet}|_{\scrP}\lox_{\scrP}\rhom{\scrP}{P[i],-[d]}\\
&\cong \rhom{\scrA}{-[d],\tau^{\leq-d}Q^\bullet}|_{\scrP}\lox_{\scrP}\scrP(P,-)[d-i]\\
&\cong \rhom{\scrA}{P[d],\tau^{\leq-d}Q^\bullet}[d-i]\\
&\cong \rhom{\scrA}{P[i],\tau^{\leq-d}Q^\bullet}
\end{align*}
Thus, the morphism $m$ induces an isomorphism in this case. This completes the proof.
% \textcolor{blue}{Can we state in general case?}
\end{proof}

\begin{lemma}
\label{lem:claim2}
Let $Q^\bullet$ be an object in $\scrK^{[-d,0]}(\scrP)\subset\scrK^{[-d,0]}(\scrA)$. Then we have the following isomorphism in $\der{\scrP}$:
$$\tau^{\leq 0}\rhom{\scrA}{-[d],\tau^{\leq -d}Q^\bullet}|_{\scrP}\cong\tau^{\leq 0} \rhom{\scrA}{-[d],Q^\bullet}|_{\scrP}.$$
\end{lemma}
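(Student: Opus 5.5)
Consider the truncation triangle of $Q^\bullet$ with respect to the standard $t$-structure on $\der{\scrA}$:
$$\tau^{\leq -d}Q^\bullet\to Q^\bullet\to \tau^{>-d}Q^\bullet\dashrightarrow.$$
Applying the exact functor $\rhom{\scrA}{-[d],\,?}|_{\scrP}$ gives a triangle in $\der{\scrP}$. After applying $\tau^{\leq 0}$, the first map becomes an isomorphism once the third term, viewed as an object of $\der{\scrP}$, lies in $\calD^{\geq 1}$ (up to checking the edge degree). Indeed, the long exact cohomology sequence then gives isomorphisms $H^i$ for $i\leq -1$ and an isomorphism at $i=0$ as soon as $H^0$ and $H^{-1}$ of the third term vanish, i.e.\ as soon as the third term is concentrated in degrees $\geq 1$. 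So the whole proof reduces to a vanishing statement: for $P\in\scrP$ and $i\leq 0$,
$$H^i\rhom{\scrA}{P[d],\tau^{>-d}Q^\bullet}=H^{i-d}\rhom{\scrA}{P,\tau^{>-d}Q^\bullet}=0 .$$

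Next, I would compute the cohomology of $\rhom{\scrA}{P,N}$ for $N=\tau^{>-d}Q^\bullet$. Since $P$ is representable, $\rhom{\scrA}{P,N}\cong N(P)$, so $H^{j}\rhom{\scrA}{P,N}=H^j(N)(P)$. By construction, $N$ has cohomology only in degrees $-d+1,\dots,0$. The relevant degrees are $j=i-d\leq -d$, where $H^j(N)=0$. This gives the required vanishing for all $i\leq 0$, and even for $i\leq d$.

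The argument then finishes by the long exact sequence. In $\der{\scrP}$ we have the triangle
$$\rhom{\scrA}{-[d],\tau^{\leq-d}Q^\bullet}|_{\scrP}\to \rhom{\scrA}{-[d],Q^\bullet}|_{\scrP}\to \rhom{\scrA}{-[d],\tau^{>-d}Q^\bullet}|_{\scrP}\dashrightarrow,$$
and the third term has vanishing $H^i$ for $i\leq 0$. Hence the first map induces isomorphisms on $H^i$ for $i\leq 0$: injectivity at $i=0$ uses $H^{-1}$ of the third term, and surjectivity uses $H^0$ of the third term. Therefore $\tau^{\leq 0}$ of it is an isomorphism in $\der{\scrP}$.

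The main thing to be careful about is a bookkeeping point. The truncation $\tau^{\leq -d}$ should be the one in $\der{\scrA}$, and representability of $P$ must let us identify $\rhom{\scrA}{P[d],-}$ with evaluation shifted by $-d$. Beyond that there is no real obstacle, and in fact the objects $Q^\bullet\in\scrK^{[-d,0]}(\scrP)$ play no special role. This is consistent with using the lemma together with Lemma~\ref{lem:claim1}: that lemma handles the tensor side, and the present lemma only identifies the connective parts.
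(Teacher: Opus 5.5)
Your proof is correct and is essentially the paper's argument. Both use Yoneda to identify $\rhom{\scrA}{P[d],N}$ with $N(P)[-d]$, and both observe that $\tau^{\leq 0}$ of the $[-d]$-shift only sees the degrees $\leq -d$ of $Q^\bullet$; the paper writes this as a chain of isomorphisms, while you unpack it through the truncation triangle and its long exact sequence.

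One small slip, which your argument never uses: the third term's $H^i$ at $P$ is $H^{i-d}(\tau^{>-d}Q^\bullet)(P)$. This can be nonzero for $1\leq i\leq d$, so the remark ``and even for $i\leq d$'' is false. Only the vanishing for $i\leq 0$ holds, and that is all you need.
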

\begin{proof}
It immediately follows from the following isomorphisms in $\der{\scrP}$:
\begin{align*}
\tau^{\leq 0}\rhom{\scrA}{-[d],\tau^{\leq -d}Q^\bullet}|_{\scrP}
&\cong \tau^{\leq 0} \bigl(\rhom{\scrA}{-,\tau^{\leq -d}Q^\bullet}|_{\scrP}[-d]\bigr)\\
&\cong \tau^{\leq 0} \bigl((\tau^{\leq -d}Q^\bullet)|_{\scrP}[-d]\bigr)\\
&\cong \tau^{\leq 0} \bigl(Q^\bullet|_{\scrP}[-d]\bigr)\\
&\cong \tau^{\leq 0} \bigl(\rhom{\scrA}{-,Q^\bullet}|_{\scrP}[-d]\bigr)\\
&\cong \tau^{\leq 0} \rhom{\scrA}{-[d],Q^\bullet}|_{\scrP}.\qedhere
\end{align*}
\end{proof}

\begin{proposition}
\label{prop:claim3}
Let $P^\bullet, Q^\bullet$ be objects in $\scrK^{[-d,0]}(\scrP)\subset\scrK^{[-d,0]}(\scrA)$. Then we have the following isomorphism in $\der{k}$:
$$\tau^{\leq 0}\rhom{\scrA}{-[d],Q^\bullet}|_{\scrP}\lox_{\scrP}\tau^{\leq 0}\rhom{\scrA}{P^\bullet,-[d]}|_{\scrP\op}\to \tau^{\leq 0}\rhom{\scrA}{P^\bullet, \tau^{\leq -d}Q^\bullet}.$$
\end{proposition}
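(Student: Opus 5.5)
The plan is to combine Lemma~\ref{lem:claim1} and Lemma~\ref{lem:claim2} through connectivity of the tensor product over the connective DG-category $\scrP$. Write
\[
L:=\rhom{\scrA}{-[d],\tau^{\leq -d}Q^\bullet}|_{\scrP}, \qquad L':=\rhom{\scrA}{-[d],Q^\bullet}|_{\scrP}, \qquad R:=\rhom{\scrA}{P^\bullet,-[d]}|_{\scrP\op}.
\]
The map in question is obtained by applying $\tau^{\leq 0}$ to the composition map $m$ of Lemma~\ref{lem:claim1}. We precompose with the canonical morphism
\[
\tau^{\leq 0}L'\lox_{\scrP}\tau^{\leq 0}R\;\cong\;\tau^{\leq 0}L\lox_{\scrP}\tau^{\leq 0}R\;\to\; L\lox_{\scrP}R,
\]
where the first isomorphism is Lemma~\ref{lem:claim2}. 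The goal is then to show that
\[
\tau^{\leq 0}L\lox_{\scrP}\tau^{\leq 0}R\to \tau^{\leq 0}(L\lox_{\scrP}R)
\]
is an isomorphism; composing with $\tau^{\leq 0}m$, which is an isomorphism by Lemma~\ref{lem:claim1}, finishes the argument.

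First step: the left side lies in $\calD^{\leq 0}$. Since $\scrP$ is connective, the derived tensor product of two connective modules is connective. Concretely, we replace $\tau^{\leq 0}L$ by a semi-free resolution built from representables $\scrP(-,P')[n]$ with $n\geq 0$ and use $\scrP(-,P')\lox_{\scrP}N\cong N(P')$.

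Second step: the cone is coconnective. Consider the triangles $\tau^{\leq 0}L\to L\to \tau^{>0}L$ and the analogous one for $R$. These show that the cone of $\tau^{\leq 0}L\lox\tau^{\leq 0}R\to L\lox R$ is an extension of $\tau^{>0}L\lox R$ and $\tau^{\leq 0}L\lox\tau^{>0}R$. It therefore suffices to show that both of these terms lie in $\calD^{\geq 1}$.

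We do this by bounding the modules concretely. For $P'\in\scrP$, $L(P')\cong (\tau^{\leq -d}Q^\bullet)(P')[-d]$, which is concentrated in degrees $\leq 0$, since $Q^\bullet\in\scrK^{[-d,0]}$ and is built from shifts $P[j]$ with $0\le j\le d$. So $\tau^{>0}L=0$ and the first term vanishes. For $R$, $R(P')=\rhom{\scrA}{P^\bullet,P'[d]}$, and $P^\bullet$ is an iterated extension of $P[j]$ with $0\leq j\leq d$. The point to check is whether $\tau^{\leq 0}L\lox\tau^{>0}R$ lies in $\calD^{\geq 1}$; tensoring a connective module with a coconnective one need not be coconnective, so this is the main obstacle.

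To handle it, I would first reduce, as in Lemma~\ref{lem:claim1}, to $P^\bullet=P[i]$ by dévissage. This is legitimate because both sides are exact in $P^\bullet$ up to the truncation, and the five lemma applies once everything is connective. In that case $R\cong\scrP(P,-)[d-i]$ is a shifted representable, by the computation already used in Lemma~\ref{lem:claim1} (projectivity of $P$ is used there). Tensoring with a representable is just evaluation, so
\[
L\lox_{\scrP}R\cong L(P)[d-i].
\]
Both sides of the desired isomorphism then reduce to $\tau^{\leq 0}$ of $L(P)[d-i]$, computed either via $L$ or via $L'$, and these agree by Lemma~\ref{lem:claim2}.

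If the dévissage through $\tau^{\leq 0}$ fails to be exact, the fallback is to use Proposition~\ref{prop:proj_d_cok}. This would identify $\tau^{\leq 0}R$ with a representable-type module attached to a $d$-iterated cokernel, and evaluation again gives the isomorphism.
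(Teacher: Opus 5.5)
Your overall structure matches the paper's. You replace $\tau^{\leq 0}L'$ by $\tau^{\leq 0}L$ using Lemma~\ref{lem:claim2}, show that $\tau^{\leq 0}L\lox_{\scrP}\tau^{\leq 0}R\to\tau^{\leq 0}(L\lox_{\scrP}R)$ is an isomorphism, and conclude with Lemma~\ref{lem:claim1}. However, there is a gap at the step you yourself call the main obstacle: it is never actually closed.

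The d\'evissage to $P^\bullet=P[i]$ needs the functor $P^\bullet\mapsto\tau^{\leq 0}L\lox_{\scrP}\tau^{\leq 0}R_{P^\bullet}$ to send the triangles $P_0\to P^\bullet\to X\to P_0[1]$ (with $X\in\scrP[1]*\cdots*\scrP[d]$) to triangles. The truncation $\tau^{\leq 0}$ does not do this in general. Your justification (``exact in $P^\bullet$ up to the truncation'', ``once everything is connective'') presupposes that $R_{P^\bullet}$ is connective for arbitrary $P^\bullet$. You never prove this, and in the preceding sentences you treat it as doubtful, so the argument is circular at its one nontrivial point. The fallback through Proposition~\ref{prop:proj_d_cok} does not help either: that result describes right $\scrP$-modules of the form $\scrA(-,\Cokd M^\bullet)|_{\scrP}$ and says nothing about the left module $R$.

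The missing observation takes one line and removes the obstacle altogether: $R=\rhom{\scrA}{P^\bullet,-[d]}|_{\scrP\op}$ lies in $\calD^{\leq 0}$ for every $P^\bullet\in\scrK^{[-d,0]}(\scrP)$. Indeed, $P^\bullet$ is an iterated extension of objects $P_j[j]$ with $P_j\in\scrP$ and $0\leq j\leq d$. So $R$ is an iterated extension of the left modules $P'\mapsto\scrA(P_j,P')[d-j]$. Each of these lies in $\calD^{\leq -(d-j)}\subseteq\calD^{\leq 0}$ because $\scrA$ is connective, and $\calD^{\leq 0}$ is closed under extensions. Hence $\tau^{>0}R=0$. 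Together with your $\tau^{>0}L=0$, both pieces of the cone you consider vanish. The comparison map is then an isomorphism, since $\tau^{\leq 0}L\to L$, $\tau^{\leq 0}R\to R$ and $\tau^{\leq 0}(L\lox_{\scrP}R)\to L\lox_{\scrP}R$ are all isomorphisms. No d\'evissage is needed at this stage; it is needed only inside Lemma~\ref{lem:claim1}, where no truncations occur and exactness in $P^\bullet$ is automatic.

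This is the direct route the paper takes. It records that $L$ and $\rhom{\scrA}{P^\bullet,\tau^{\leq -d}Q^\bullet}$ have no positive cohomology, deduces that the comparison map is an isomorphism, and chains Lemmas~\ref{lem:claim2} and~\ref{lem:claim1}. The connectivity of $R$ is the implicit input that makes that deduction valid.
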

\begin{proof}
First, we note that $H^{>0}\rhom{\scrA}{P^\bullet, \tau^{\leq -d}Q^\bullet}=0$. Indeed, since $P^\bullet\in \scrK^{[-d,0]}(\scrP)$, it is enough to check that $H^{>0}\rhom{\scrA}{P[i], \tau^{\leq -d}Q^\bullet}=0$ for any $P\in\scrP$ and $0\leq i\leq d$. This is true since $\tau^{\leq -d}Q^\bullet$ has cohomology only in degrees $\leq -d$. Thus, we have natural isomorphisms in $\der{k}$:
$$\tau^{\leq 0}\rhom{\scrA}{P^\bullet, \tau^{\leq -d}Q^\bullet}\cong \rhom{\scrA}{P^\bullet, \tau^{\leq -d}Q^\bullet}.$$
Note that we also have $H^{>0}\rhom{\scrA}{-[d], \tau^{\leq -d}Q^\bullet}|_{\scrP}=0$.
This implies that the morphism 
\begin{align*}
	\tau^{\leq 0}\rhom{\scrA}{-[d],\tau^{\leq -d}Q^\bullet}|_{\scrP}\lox_{\scrP}\tau^{\leq 0}&\rhom{\scrA}{P^\bullet,-[d]}|_{\scrP\op}\\
	&\to \tau^{\leq 0}\bigl(\rhom{\scrA}{-[d],\tau^{\leq -d}Q^\bullet}|_{\scrP}\lox_{\scrP}\rhom{\scrA}{P^\bullet,-[d]}|_{\scrP\op}\bigr)
\end{align*}
is an isomorphism. Hence, by Lemma~\ref{lem:claim1} and Lemma~\ref{lem:claim2}, we have the following isomorphisms in $\der{k}$:
\begin{align*}
\tau^{\leq 0}\rhom{\scrA}{-[d],\tau^{\leq -d}Q^\bullet}|_{\scrP}\lox_{\scrP}\tau^{\leq 0}&\rhom{\scrA}{P^\bullet,-[d]}|_{\scrP\op}\\
&\cong \tau^{\leq 0}\bigl(\rhom{\scrA}{-[d],\tau^{\leq -d}Q^\bullet}|_{\scrP}\lox_{\scrP}\rhom{\scrA}{P^\bullet,-[d]}|_{\scrP\op}\bigr)\\
&\cong \tau^{\leq 0}\bigl(\rhom{\scrA}{-[d],Q^\bullet}|_{\scrP}\lox_{\scrP}\rhom{\scrA}{P^\bullet,-[d]}|_{\scrP\op}\bigr)\\
&\cong \tau^{\leq 0}\rhom{\scrA}{P^\bullet, \tau^{\leq -d}Q^\bullet}.\qedhere
\end{align*}
\end{proof}

\begin{proposition}
\label{prop:standard_triangle}
For any $P^\bullet, Q^\bullet$ in $\scrK^{[-d,0]}(\scrP)\subset\scrK^{[-d,0]}(\scrA)$, we have the following triangle in $\der{k}$:
\begin{align*}
\tau^{\leq 0}\rhom{\scrA}{-[d],Q^\bullet}&|_{\scrP}\lox_{\scrP}\tau^{\leq 0}\rhom{\scrA}{P^\bullet,-[d]}|_{\scrP\op}\\
&\to \tau^{\leq 0}\rhom{\scrA}{P^\bullet,Q^\bullet}
\to \scrA(\Cokd P^\bullet,\Cokd Q^\bullet)\rightdasharrow.
\end{align*}
\end{proposition}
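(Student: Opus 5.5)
Apply $\tau^{\leq 0}\rhom{\scrA}{P^\bullet,-}$ to the truncation triangle of $Q^\bullet$ with respect to the standard $t$-structure on $\der{\scrA}$ (or on $\der{\scrP}$ after restriction), namely
$$\tau^{\leq -d}Q^\bullet\to Q^\bullet\to \tau^{>-d}Q^\bullet\dashrightarrow,$$
and identify the three resulting terms. Proposition~\ref{prop:claim3} identifies the first term with the derived tensor product in the statement. The middle term is $\tau^{\leq 0}\rhom{\scrA}{P^\bullet,Q^\bullet}$ by definition. It remains to identify the third term with $\scrA(\Cokd P^\bullet,\Cokd Q^\bullet)$ and to check that truncation does not spoil the triangle.

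For the third term, I would argue as follows. Since $P^\bullet\in\scrK^{[-d,0]}(\scrP)$ is built from objects $P[i]$, $0\leq i\leq d$, the functor $\rhom{\scrA}{P^\bullet,-}$ depends only on the restriction to $\scrP$. More precisely, $\rhom{\scrA}{P^\bullet,N}\cong\rhom{\scrP}{P^\bullet,N|_{\scrP}}$, by the same dévissage along the cones as in Lemma~\ref{lem:claim1}. Proposition~\ref{prop:proj_d_cok} gives $\tau^{>-d}Q^\bullet|_{\scrP}\cong\scrA(-,\Cokd Q^\bullet)|_{\scrP}$, so the third term becomes $\tau^{\leq 0}\rhom{\scrP}{P^\bullet,\scrA(-,\Cokd Q^\bullet)|_{\scrP}}$. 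Dévissage over $P^\bullet$ again turns this into $\tau^{\leq 0}$ of an iterated cocone of the complexes $\scrA(P^{-d+i},\Cokd Q^\bullet)$, each of which lies in degrees $(-d,0]$. Dually to Proposition~\ref{prop:iteraded_ker}, with $\Cokd P^\bullet$ obtained by iterated cokernels as in the proof of Proposition~\ref{prop:proj_d_cok}, the object $\tau^{\leq 0}$ of this iterated cocone is $\scrA(\Cokd P^\bullet,\Cokd Q^\bullet)$. This holds by the defining property $\scrA(\Cokd P^\bullet,-)\cong\tau^{\leq 0}(P^\bullet\hut)$ evaluated at $\Cokd Q^\bullet$, using $\Cokd Q^\bullet\in\scrA$. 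The cleanest way to organize this is to write
$$\rhom{\scrA}{P^\bullet,N}\cong P^\bullet\hut(N)$$
for $N=\Cokd Q^\bullet$, and then to note that $\tau^{\leq0}$ of evaluation agrees with evaluation of $\tau^{\leq 0}$. This needs care, but since $\scrA$ is connective, truncation commutes with evaluation at a representable.

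To keep the triangle after applying $\tau^{\leq 0}$, I would show that the connecting map induces zero on $H^1$. Equivalently, $H^1\rhom{\scrA}{P^\bullet,\tau^{\leq -d}Q^\bullet}=0$, which was already observed in the proof of Proposition~\ref{prop:claim3}: $H^{>0}$ vanishes there. Hence $H^0$ of the middle term surjects onto $H^0$ of the third term. A triangle $A\to B\to C$ with $H^1(A)=0$ and $A,B,C$ concentrated in degrees $\leq 0$ after truncation remains a triangle after $\tau^{\leq 0}$, exactly as in the proof of Lemma~\ref{lem:ex_proj}.

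The main obstacle will be the identification of the third term. One must make sure that the isomorphism of Proposition~\ref{prop:proj_d_cok} is compatible with the map $Q^\bullet\to\tau^{>-d}Q^\bullet$, so that the composite is the natural map induced by $\Cokd$. One must also confirm that maps out of $P^\bullet$ into a module that is representable on $\scrP$ compute morphisms out of $\Cokd P^\bullet$. For the latter I would first treat $P^\bullet=P\in\scrP$, which holds by Yoneda, and then induct along the triangles defining $P^\bullet$, applying Lemma~\ref{lem:step1} at each step to absorb the truncations.
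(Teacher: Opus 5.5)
Your proposal is correct and follows essentially the same route as the paper: you apply $\rhom{\scrA}{P^\bullet,-}$ to the truncation triangle $\tau^{\leq -d}Q^\bullet\to Q^\bullet\to\tau^{>-d}Q^\bullet$ and use the vanishing of $H^{>0}\rhom{\scrA}{P^\bullet,\tau^{\leq -d}Q^\bullet}$ to pass to $\tau^{\leq 0}$. You then identify the first term by Proposition~\ref{prop:claim3}, and the third term by Proposition~\ref{prop:proj_d_cok} together with the defining property $\scrA(\Cokd P^\bullet,-)\cong\tau^{\leq 0}(P^\bullet\hut)$. The paper checks the third-term identification only on the generators $P[i]$ and then passes to $P^\bullet$; your version applies the module-level isomorphism of Proposition~\ref{prop:proj_d_cok} before truncating, which makes the naturality behind that passage explicit.
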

\begin{proof}
First, we already have the following triangle in $\der{k}$:
$$\rhom{\scrA}{P^\bullet, \tau^{\leq -d}Q^\bullet} \to \rhom{\scrA}{P^\bullet, Q^\bullet} \to \rhom{\scrA}{P^\bullet, \tau^{> -d}Q^\bullet} \rightdasharrow.$$
As in the proof of Proposition~\ref{prop:claim3}, $H^{>0}\rhom{\scrA}{-[d],\tau^{\leq -d}Q^\bullet}=0$ holds. Thus, the following is also a triangle in $\der{k}$:
$$\tau^{\leq 0}\rhom{\scrA}{P^\bullet, \tau^{\leq -d}Q^\bullet} \to \tau^{\leq 0}\rhom{\scrA}{P^\bullet, Q^\bullet} \to \tau^{\leq 0}\rhom{\scrA}{P^\bullet, \tau^{> -d}Q^\bullet} \rightdasharrow.$$
The left-hand side of the above triangle is isomorphic to $$\tau^{\leq 0}\rhom{\scrA}{-[d],Q^\bullet}|_{\scrP}\lox_{\scrP}\tau^{\leq 0}\rhom{\scrA}{P^\bullet,-[d]}|_{\scrP\op}$$
by Proposition~\ref{prop:claim3}. On the other hand, we have the following isomorphisms in $\der{k}$ for any $P\in\scrP$ and $0\leq i\leq d$:
\begin{align*}
\tau^{\leq 0}\rhom{\scrA}{P[i], \tau^{> -d}Q^\bullet}
&\cong \tau^{\leq 0}\bigl(\scrA(P,\Cokd Q^\bullet)[-i]\bigr)\\
&\cong \tau^{\leq 0}\rhom{\scrA}{P[i], \Cokd Q^\bullet}
\end{align*}
where the first isomorphism follows from Proposition~\ref{prop:proj_d_cok}. Thus, we have 
$$\tau^{\leq 0}\rhom{\scrA}{P^\bullet, \tau^{> -d}Q^\bullet}\cong \tau^{\leq 0}\rhom{\scrA}{P^\bullet, \Cokd Q^\bullet}$$
in $\der{k}$. By the definition of $d$-iterated cokernel, we also have 
$$\rhom{\scrA}{P^\bullet, \Cokd Q^\bullet}\cong \scrA(\Cokd P^\bullet,\Cokd Q^\bullet)$$
in $\der{k}$. Combining the above isomorphisms, we have the desired triangle in $\der{k}$.
\end{proof}

% \begin{lemma}
% \label{lem:ff}
% The functor $\scrA(-,*)|_{\scrP}\colon \scrA\to \der{\scrP}$ is quasi fully faithful.
% \end{lemma}

\begin{proposition}
\label{prop:standard_triangle_2}
For any $Q^\bullet\in\scrK^{[-d,0]}(\scrP)$, we have the following triangle in $\der{\scrK^{[-d,0]}(\scrP)}$:
\begin{align*}
\scrK^{[-d,0]}(\scrP)(-&,Q^\bullet)|_{\scrP[d]}\lox_{\scrP[d]}\scrK^{[-d,0]}(\scrP)(-, -)|_{(\scrP[d])\op}\\
&\to \scrK^{[-d,0]}(\scrP)(-,Q^\bullet)|_{\scrP}
\to \scrA(\Cokd-,\Cokd Q^\bullet)|_{\scrP}\rightdasharrow.
\end{align*}
\end{proposition}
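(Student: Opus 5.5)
The plan is to obtain this triangle as the module-level version of Proposition~\ref{prop:standard_triangle}, letting the first variable run over $\scrK^{[-d,0]}(\scrP)$ with $Q^\bullet$ fixed. Write $\scrK:=\scrK^{[-d,0]}(\scrP)$. By construction $\scrK$ is a full DG-subcategory of $\tau^{\leq 0}(\per\dg\scrA)$, so for $P^\bullet\in\scrK$ we have $\scrK(P^\bullet,Q^\bullet)=\tau^{\leq 0}\rhom{\scrA}{P^\bullet,Q^\bullet}$. Likewise, for $P\in\scrP$ we have $\scrK(P[d],Q^\bullet)=\tau^{\leq 0}\rhom{\scrA}{P[d],Q^\bullet}$ and $\scrK(P^\bullet,P[d])=\tau^{\leq 0}\rhom{\scrA}{P^\bullet,P[d]}$. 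Evaluated at $P^\bullet$, the three terms of the claimed triangle are therefore exactly the three terms of Proposition~\ref{prop:standard_triangle}, after identifying $\scrP[d]$ with $\scrP$ via the shift. The middle restriction ``$|_{\scrP}$'' I read as restriction along $\Cokd$, that is, the $\scrK$-module $P^\bullet\mapsto \scrA(\Cokd P^\bullet,\Cokd Q^\bullet)$ obtained through the morphism $\Cokd\colon\scrK\to\scrA$ of Remark~\ref{rem:ker_mor}.

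First I build the two maps as morphisms of right $\scrK$-modules, not merely objectwise. The first map is the composition (multiplication) morphism
\[
m\colon \scrK(-,Q^\bullet)|_{\scrP[d]}\lox_{\scrP[d]}\scrK(-,-)|_{(\scrP[d])\op}\to\scrK(-,Q^\bullet).
\]
This is the counit $\epsilon$ of the adjunction $-\lox_{\scrP[d]}\scrK\dashv(-)|_{\scrP[d]}$ applied to the representable $\scrK(-,Q^\bullet)$, exactly as in \eqref{eq:standard_tri}. The second map is the action of $\Cokd$ on morphisms, $\scrK(-,Q^\bullet)\to\scrA(\Cokd-,\Cokd Q^\bullet)$, which is natural in the first variable because $\Cokd$ is a morphism in $\hqe$. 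The composite of these two maps is null, since anything factoring through $\scrP[d]$ becomes zero after $\Cokd$ (note $\Cokd(P[d])=0$ by $d$-truncatedness). Hence we get an induced morphism from $\Cone(m)$ to the third term.

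Second, I check that this morphism is an isomorphism in $\der{\scrK}$. It suffices to do this objectwise, i.e.\ after evaluating at each $P^\bullet\in\scrK$, since a morphism of DG-modules that is a quasi-isomorphism at each object is an isomorphism. Evaluating at $P^\bullet$ recovers the triangle of Proposition~\ref{prop:standard_triangle}, provided the maps there coincide with $m$ and with the $\Cokd$-action. For the first map this holds by Lemma~\ref{lem:claim1} together with Proposition~\ref{prop:claim3}, where the map is induced by composition.

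The main obstacle is the second map: I must show that the identification $\tau^{\leq0}\rhom{\scrA}{P^\bullet,\tau^{>-d}Q^\bullet}\cong\scrA(\Cokd P^\bullet,\Cokd Q^\bullet)$ in Proposition~\ref{prop:standard_triangle} (obtained via Proposition~\ref{prop:proj_d_cok}) is the one induced by the functor $\Cokd$, and that it is natural in $P^\bullet$. My approach is to use the bimodule description of $\Cokd$ in Remark~\ref{rem:ker_mor}, namely $\tau^{\leq0}((-)\hut)$ representing $\Cokd$, and to check compatibility on the generators $P[i]$ with $0\leq i\leq d$, from which $\scrK$ is built by extensions. On these generators both identifications reduce to the Yoneda isomorphism $\scrA(P,\Cokd Q^\bullet)[-i]$, so they agree. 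A two-out-of-three argument over the defining triangles of $P^\bullet$ then propagates the agreement to all of $\scrK$.
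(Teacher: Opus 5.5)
Your reduction is the paper's own: its proof is the single sentence that the claim follows from Proposition~\ref{prop:standard_triangle} and the definition of Hom-complexes in $\scrK:=\scrK^{[-d,0]}(\scrP)$. You are right that what this hides is naturality in $P^\bullet$. However, the two devices you use to supply it do not work as stated.

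\textbf{The objectwise check.} Write $A$, $B=\scrK(-,Q^\bullet)$ and $C=\scrA(\Cokd-,\Cokd Q^\bullet)$ for the three terms.

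Evaluated at $P^\bullet$, your fill-in $\phi$ is only some morphism $\Cone(m_{P^\bullet})\to C_{P^\bullet}$ extending $g_{P^\bullet}$. Such morphisms are determined only up to adding $\chi\circ\delta$ with $\chi\colon A_{P^\bullet}[1]\to C_{P^\bullet}$ arbitrary, and they need not be invertible (think of $B_{P^\bullet}=0$). So knowing that each evaluation $A_{P^\bullet}\to B_{P^\bullet}\to C_{P^\bullet}$ extends to \emph{some} triangle does not show that $\phi_{P^\bullet}$ is a quasi-isomorphism.

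The uniqueness you need holds in $\der{\scrK}$. By the adjunction of which $m$ is the counit, $\mathrm{Hom}_{\der{\scrK}}(A[1],C)\cong\mathrm{Hom}_{\der{\scrP[d]}}(B|_{\scrP[d]}[1],C|_{\scrP[d]})=0$, since $\Cokd(P[d])=0$. Hence you must produce one triangle of $\scrK$-modules whose first two maps are $m$ and $g$; then $\phi$ is automatically its comparison isomorphism.

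That triangle is available. Regard $\rhom{\scrA}{-,?}$ as a $\scrK$-module in the first variable (via $\scrK\subset\tau^{\le 0}(\per\dg\scrA)\to\per\dg\scrA$) and apply it to $\tau^{\le -d}Q^\bullet\to Q^\bullet\to\tau^{>-d}Q^\bullet\to$. This gives a triangle in $\der{\scrK}$, which $\tau^{\le 0}$ preserves because the first term has no positive cohomology. Its terms are identified as follows:
\begin{itemize}
\item The first term is identified with $A$, compatibly with $m$, by Proposition~\ref{prop:claim3}.
\item The third term is identified with $C$ by the isomorphism of Proposition~\ref{prop:proj_d_cok}, followed by the bimodule isomorphism defining $\Cokd$. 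The former is an isomorphism in $\der{\scrP}$, and it induces one of $\rhom{\scrA}{-,?}$ on $\scrK$ because objects of $\scrK$ are built from $\scrP$.
\end{itemize}
Both identifications are morphisms of $\scrK$-modules, not merely objectwise isomorphisms.

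\textbf{Identifying the middle map with the $\Cokd$-action.} Propagating agreement from the generators $P[i]$ by two-out-of-three fails for two reasons. First, $P^\bullet\mapsto\tau^{\le 0}\rhom{\scrA}{P^\bullet,\tau^{>-d}Q^\bullet}$ and $P^\bullet\mapsto\scrA(\Cokd P^\bullet,\Cokd Q^\bullet)$ do not take the defining triangles of $P^\bullet$ to triangles: there is a truncation, and $\Cokd$ is not exact. Second, two-out-of-three detects invertibility, not equality of two maps.

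Use Yoneda instead. Both candidates are morphisms in $\der{\scrK}$ out of the representable module $B=\scrK(-,Q^\bullet)$. So they coincide as soon as they send $\id_{Q^\bullet}$ to the same class in $H^0\scrA(\Cokd Q^\bullet,\Cokd Q^\bullet)$. This is a single check at $P^\bullet=Q^\bullet$. Agreement up to an automorphism of $\Cokd Q^\bullet$ already suffices, since such an automorphism can be absorbed into the connecting morphism.
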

\begin{proof}
It immediately follows from Proposition~\ref{prop:standard_triangle} and the definition of Hom-complexes in $\scrK^{[-d,0]}(\scrP)$. 
\end{proof}

\begin{proposition}
\label{prop:ess_surj}
The morphism $\Cokd\colon\scrK^{[-d,0]}(\scrP)\to \scrA$ is essentially surjective in $\hqe$.
\end{proposition}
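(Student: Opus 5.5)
Given $X\in\scrA$, the plan is to build a projective resolution of length $d$ and show that its $d$-iterated cokernel recovers $X$. The setting is that of the subsection: $\scrA$ is abelian $d$-truncated, and we are implicitly assuming it has enough projectives (the progenerating hypothesis of the appendix). Essential surjectivity in $\hqe$ just means every object of $H^0(\scrA)$ is isomorphic to $\Cokd P^\bullet$ for some $P^\bullet\in\scrK^{[-d,0]}(\scrP)$.

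First choose a deflation ($d$-epimorphism) $q^0\colon P^0\to X$ with $P^0\in\scrP$, and complete it to a conflation $X^{-1}\to P^0\to X$ using Proposition~\ref{prop:exact_dg_abel}; here $X^{-1}$ is the kernel of $q^0$. Iterating gives conflations $X^{-(i+1)}\to P^{-i}\to X^{-i}$ for $0\le i\le d-1$ with $X^0=X$. Then set $P^{-d}:=X^{-d}$.

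The key observation is that $X^{-d}$ is itself projective. By Lemma~\ref{lem:ex_proj}, each conflation induces triangles of $\scrA(P,-)$ for $P\in\scrP$, so $\mathrm{Ext}$-type dimension shifting applies. More concretely, the iterated kernels are loop objects up to projectives, and $d$-truncatedness (Remark~\ref{rem:d-trunc_loop}) forces the $d$-th syzygy to have vanishing $\mathrm{Ext}^1$ in the extriangulated category $H^0(\scrA)$. I expect this to be the main obstacle. I would try to show that $\mathbb{E}(X^{-d},Y)\cong \mathbb{E}^{d+1}(X,Y)$ vanishes, using that a conflation $A\to B\to C$ gives $\mathbb{E}(C,A)\cong H^1$ of the cone. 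With $\Omega^d=0$, this amounts to $H^{-d}(\scrA(X,Y))=0$ up to a shift, by splicing the triangles $\scrA(X^{-i},Y)\to\scrA(P^{-i},Y)\to\scrA(X^{-(i+1)},Y)$ in the dual direction. If this fails, I would instead keep $X^{-d}$ arbitrary and exploit that it only enters through $\tau^{>-d}$.

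Next, assemble the resolution into an object of $\scrK^{[-d,0]}(\scrP)$. In $\per\dg\scrA$, I would inductively build $P^{[-d,-d+i]}$ as cones $P^{[-d,-d+i-1]}\to P^{-d+i}\to P^{[-d,-d+i]}$. The maps $P^{[-d,-d+i-1]}\to P^{-d+i}$ are lifted along the isomorphism $\tau^{>-d}P^{[-d,-d+i-1]}|_{\scrP}\cong X^{-d+i-1}|_{\scrP}$, which is available because morphisms out of representables only see $\tau^{\le 0}$ and the targets are in $\scrA$ (in the spirit of Proposition~\ref{prop:iteraded_ker}). This yields $P^\bullet=P^{[-d,0]}\in\scrP*\scrP[1]*\cdots*\scrP[d]$.

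Finally, identify $\Cokd P^\bullet$ with $X$ by dualising. The conflations are right exact sequences, so by the defining property of cokernels (Definition~\ref{def:exactness}) each $X^{-d+i}$ satisfies $\scrA(X^{-d+i},-)\cong\tau^{\le0}\Cocone\bigl(\scrA(P^{-d+i},-)\to\scrA(X^{-d+i-1},-)\bigr)$. Applying the dual of Lemma~\ref{lem:step1} inductively, exactly as in the proof of Proposition~\ref{prop:proj_d_cok}, we get $\scrA(X,-)\cong\tau^{\le 0}(P^\bullet\hut)$. By Definition~\ref{def:iterated_kernel}, this says $X\cong\Cokd P^\bullet$. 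Alternatively, Proposition~\ref{prop:proj_d_cok} already gives $\scrA(-,\Cokd P^\bullet)|_{\scrP}\cong\tau^{>-d}P^\bullet|_{\scrP}\cong\scrA(-,X)|_{\scrP}$, and one can conclude from full faithfulness of restriction to $\scrP$ on $\scrA$, although that would need a separate argument.
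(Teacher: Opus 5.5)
There is a genuine gap. The whole construction rests on your ``key observation'' that the $d$-th syzygy $X^{-d}$ is projective, and this is false. For $d=1$ it would say that every object of an abelian category with enough projectives has projective dimension at most $1$. Concretely, let $\Lambda=k[x]/(x^2)$ and $X=k$ in $\demdg{\Lambda}$ for any $d\geq 1$. The cocone of $\Lambda\to k$ is $x\Lambda\cong k$, concentrated in degree $0$, so every syzygy of $k$ is again $k$. Moreover $\mathbb{E}(k,k)\cong\mathrm{Ext}^1_\Lambda(k,k)\neq 0$, so $k$ is not projective.

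Your heuristic conflates loop objects with syzygies, which Remark~\ref{rem:loop_obj} explicitly distinguishes. The vanishing $\Omega^d=0$ (Remark~\ref{rem:d-trunc_loop}) kills $H^{-n}(\scrA(X,Y))$ for $n\geq d$, i.e.\ maps out of negative shifts. Projectivity of $X^{-d}$ is instead about the positive extension groups $\mathbb{E}^{d+1}$. These are not controlled by $d$-truncatedness and do not appear in the cohomology of the connective Hom-complexes $\scrA(X^{-i},Y)$, so no dimension shifting along those complexes can produce the vanishing.

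The fallback of keeping $X^{-d}$ as the last term only exhibits $X$ as $\Cokd$ of an object of $\scrK^{[-d,0]}(\scrA)$, not of $\scrK^{[-d,0]}(\scrP)$. The alternative via full faithfulness of restriction to $\scrP$ is essentially as hard as the Morita theorem this proposition feeds into, and it would still need a correct $P^\bullet$.

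The missing idea, which is how the paper argues, is to take one more conflation $X^{-d-1}\to P^{-d}\to X^{-d}$ with $P^{-d}\in\scrP$. You then build $P^\bullet$ from $P^{-d},\dots,P^0$ and discard $X^{-d-1}$. The price is that the intermediate comparisons are \emph{not} isomorphisms. In the example above with $d=2$, $\Cone(P^{-2}\xrightarrow{x}P^{-1})$ has $H^{-1}\cong k$, while $X^{-1}=k$ lives in degree $0$. So two of your steps break at the first stage once $X^{-d}$ is replaced by $P^{-d}$: the assembly step, which lifts along isomorphisms $\tau^{>-d}P^{[-d,-d+i-1]}|_{\scrP}\cong X^{-d+i-1}|_{\scrP}$, and the final inductive use of the dual of Lemma~\ref{lem:step1}.

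What is needed instead is to compare $X^{-d+i}\hut$ with the truncated iterated cocones $C^{-d+i}\hut$ through maps $\theta^{-d+i}$ that are not isomorphisms, and to track the error. Because $P^{-d}\to X^{-d}$ is a $d$-epimorphism, $\Cocone(\theta^{-d+1})\in\der{\scrA\op}^{>-d+2}$. An octahedral argument then moves the error up one degree at each of the $d$ steps, giving $\Cocone(\theta^{0})\in\der{\scrA\op}^{>1}$. Since $X\hut$ and $C^0\hut$ have cohomology only in degrees $\leq 0$, $\theta^0$ is an isomorphism, and hence $X\cong\Cokd P^\bullet$.
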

\begin{proof}
Let $A\in \scrA$ be an object. We will show that $A$ is a $d$-iterated cokernel of some object in $\scrK^{[-d,0]}(\scrP)$. Since $\scrA$ has enough projectives, we can take conflations 
$$A^{-(i+1)}\xrightarrow{f^{-(i+1)}} P^{-i}\xrightarrow{g^{-i}} A^{-i}\quad (h^{-i}\colon A^{-(i+1)}\to A^{-i})$$
for $0\leq i\leq d$ where $P^{-i}\in \scrP$ and $A^0:=A$. Then it induces the following diagram in $\per\scrA$. 

$$
\begin{tikzcd}[column sep={12mm,between origins},row sep={13mm}]
A^{-d-1} && A^{-d} && \cdots &&  \cdots && A^{-1} && A^0\\
& P^{-d} && P^{-d+1} && \cdots && \cdots && P^{0} &
\Ar{1-1}{2-2}{"f^{-d-1}"'}
\Ar{2-2}{1-3}{"g^{-d}"}
\Ar{1-3}{2-4}{"f^{-d}"'}
\Ar{2-4}{1-5}{"g^{-d+1}"}
\Ar{2-8}{1-9}{"g^{-1}"}
\Ar{1-9}{2-10}{"f^{0}"'}
\Ar{2-10}{1-11}{"g^0"}
\end{tikzcd}
$$

Thus, by applying the functor $(-)\hut$, we have the following diagram in $\per\scrA\op$:

$$
\begin{tikzcd}[column sep={12mm,between origins},row sep={13mm}]
A^{-d-1}\hut && A^{-d}\hut && \cdots &&  \cdots && A^{-1}\hut && A^0\hut\\
& P^{-d}\hut && P^{-d+1}\hut && \cdots && \cdots && P^{0}\hut &
\Ar{2-2}{1-1}{"f^{-d-1}\hut"'}
\Ar{1-3}{2-2}{"g^{-d}\hut"}
\Ar{2-4}{1-3}{"f^{-d}\hut"'}
\Ar{1-5}{2-4}{"g^{-d+1}\hut"}
\Ar{1-9}{2-8}{"g^{-1}\hut"}
\Ar{2-10}{1-9}{"f^{0}\hut"'}
\Ar{1-11}{2-10}{"g^0\hut"}
\end{tikzcd}
$$
Note that $A^{-i}\hut\in\calA\op$ for any $0\leq i\leq d$ since they are cokernels of morphisms in $\scrA$.

Then, we have the following commutative diagram in $\der{\scrA\op}$:
$$\begin{tikzcd}
 & A^{-d+1}\hut & \\
P^{-d+2}\hut & & P^{-d+1}\hut\\
 & C^{-d+1}\hut &
\Ar{2-1}{1-2}{"{f^{-d+1}\hut}"}
\Ar{1-2}{2-3}{"{g^{-d+1}\hut}"}
\Ar{2-1}{3-2}{"{\varphi^{-d+1}}"'}
\Ar{3-2}{2-3}{"{\psi^{-d+1}}"'}
\Ar{1-2}{3-2}{"{\theta^{-d+1}}"'}
\end{tikzcd}$$
where $C^{-d+1}\hut:=\tau^{\leq 0}\Cocone (g^{-d}\hut\circ f^{-d}\hut)$. The morphism $\psi^{-d+1}\colon C^{-d+1}\hut\to P^{-d+1}\hut$ is a composition of morphisms 
$$C^{-d+1}\hut\to \Cocone (g^{-d}\hut\circ f^{-d}\hut)\to P^{-d+1}\hut.$$
The morphism $\theta^{-d+1}$ is induced by truncating the morphism 
$$\xi^{-d+1}\colon\Cocone(f^{-d}\hut)\to \Cocone(g^{-d}\hut\circ f^{-d}\hut)$$
by $\tau^{\leq 0}$, which makes the following diagram commute in $\der{\scrA\op}$:
$$
\begin{tikzcd}
\Cocone(f^{-d}\hut) & P^{-d+1}\hut & A^{-d}\hut & \Cocone(f^{-d}\hut)[1]\\
\Cocone(g^{-d}\hut\circ f^{-d}\hut) & P^{-d+1}\hut & P^{-d+1}\hut & \Cocone(g^{-d}\hut\circ f^{-d}\hut)[1]
\Ar{1-1}{1-2}{}
\Ar{1-2}{1-3}{"{f^{-d}\hut}"}
\Ar{1-3}{1-4}{}
\Ar{2-1}{2-2}{}
\Ar{2-2}{2-3}{"{g^{-d}\hut\circ f^{-d}\hut}"'}
\Ar{2-3}{2-4}{}
\Ar{1-1}{2-1}{"{\xi^{-d+1}}"}
\Ar{1-2}{2-2}{equal}
\Ar{1-3}{2-3}{"{g^{-d}\hut}"}
\Ar{1-4}{2-4}{"{\xi^{-d+1}[1]}"}
\end{tikzcd}
$$
The morphism $\varphi^{-d+1}$ is a composition of morphisms
$$P^{-d+2}\hut\xrightarrow{f^{-d+1}\hut} A^{-d+1}\hut\xrightarrow{\theta^{-d+1}} C^{-d+1}\hut.$$

In this situation, we can construct the following commutative diagram in $\der{\scrA\op}$ for each $1\leq i\leq d$ inductively:

$$
\begin{tikzcd}
 & A^{-d+i}\hut & \\
P^{-d+(i+1)}\hut & & P^{-d+i}\hut\\
 & C^{-d+i}\hut &
\Ar{2-1}{1-2}{"{f^{-d+i}\hut}"}
\Ar{1-2}{2-3}{"{g^{-d+i}\hut}"}
\Ar{2-1}{3-2}{"{\varphi^{-d+i}}"'}
\Ar{3-2}{2-3}{"{\psi^{-d+i}}"'}
\Ar{1-2}{3-2}{"{\theta^{-d+i}}"'}
\end{tikzcd}
$$
where $C^{-d+i}\hut:=\tau^{\leq 0}\Cocone (\varphi^{-d+(i-1)})$ and $\psi^{-d+i}$ is a composition of morphisms 
$$C^{-d+(i-1)}\hut\to \Cocone (\varphi^{-d+(i-1)})\to P^{-d+i}\hut.$$
The morphism $\theta^{-d+i}$ is induced by truncating the morphism 
$$\xi^{-d+i}\colon\Cocone(f^{-d+(i-1)}\hut)\to \Cocone(\varphi^{-d+(i-1)})$$ 
by $\tau^{\leq 0}$, which makes the following diagram commute in $\der{\scrA\op}$:
$$
\begin{tikzcd}
\Cocone(f^{-d+(i-1)}\hut) & P^{-d+i}\hut & A^{-d+(i-1)}\hut & \Cocone(f^{-d+(i-1)}\hut)[1]\\
\Cocone(\varphi^{-d+(i-1)}\hut) & P^{-d+i}\hut & C^{-d+(i-1)}\hut & \Cocone(\varphi^{-d+(i-1)}\hut)[1]
\Ar{1-1}{1-2}{}
\Ar{1-2}{1-3}{"{f^{-d+(i-1)}\hut}"}
\Ar{1-3}{1-4}{}
\Ar{2-1}{2-2}{}
\Ar{2-2}{2-3}{"{\varphi^{-d+(i-1)}\hut}"'}
\Ar{2-3}{2-4}{}
\Ar{1-1}{2-1}{"{\xi^{-d+i}}"}
\Ar{1-2}{2-2}{equal}
\Ar{1-3}{2-3}{"{\theta^{-d+(i-1)}}"}
\Ar{1-4}{2-4}{"{\xi^{-d+i}[1]}"}
\end{tikzcd}
$$
The morphism $\varphi^{-d+i}$ is a composition of morphisms 
$$P^{-d+(i+1)}\hut\xrightarrow{f^{-d+i}\hut} A^{-d+i}\hut\xrightarrow{\theta^{-d+i}} C^{-d+i}\hut.$$

We will show that the mapping cocone $\Cocone \theta^{-d+i}$ of the morphism $\theta^{-d+i}\colon A^{-d+i}\hut\to C^{-d+i}\hut$ satisfies 
$\Cocone \theta^{-d+i}\in \der{\scrA\op}^{> -d+i+1}$ for each $1\leq i\leq d$. We use induction on $i$. 

First, we show the case $i=1$. Since the morphism $g^{-d}$ is a $d$-epimorphism in $\scrA$, we have $\Cocone (g^{-d}\hut)\in \der{\scrA\op}^{> -d+1}$. Thus, we also have $\Cocone \bigl(\xi^{-d+1}\bigr)[1]\in \der{\scrA\op}^{> -d+1}$. Since $\theta^{-d+1}$ is obtained by truncating $\xi^{-d+1}$, we can see $\Cocone \theta^{-d+1}\in \der{\scrA\op}^{> -d+2}$ by using the octahedral axiom. The induction step is similar to the case $i=1$. 

Therefore, we have $\Cocone \theta^0\in \der{\scrA\op}^{> 1}$. In particular, $\theta^0\colon A^0\hut\to C^0\hut$ is an isomorphism in $\der{\scrA\op}$. This implies that $A\cong C^0$ in $\scrA$. On the other hand, $C^0$ is a $d$-iterated cokernel of some object in $\scrK^{[-d,0]}(\scrP)$ by the construction and proof of Proposition~\ref{prop:iteraded_ker}. Thus, $A$ is a $d$-iterated cokernel of some object in $\scrK^{[-d,0]}(\scrP)$.
% \color{blue}
% Let $A\in \scrA$ be an object. We will show that $A$ is a $d$-iterated cokernel of some object in $\scrK^{[-d,0]}(\scrP)$. Since $\scrA$ is enough projectives, we can take a conflations 
% $$A^{-(i+1)}\to P^{-i}\to A^{-i}\quad (h^{-i}\colon A^{-(i+1)}\to A^{-i})$$
% for $0\leq i\leq d$ where $P^{-i}\in \scrP$ and $A^0:=A$. Then it induces the following diagram in $\per\scrA\op$:
% $$
% \begin{tikzcd}
% \Cocone && P^{-i}\hut & A^{-(i+1)}\hut & \Cocone [1]\\
% & A^{-i}\hut & & 
% \Ar{1-1}{1-3}{}
% \Ar{1-3}{1-4}{}
% \Ar{1-4}{1-5}{}
% \Ar{2-2}{1-1}{"\varphi^{-i}"}
% \Ar{2-2}{1-3}{}
% \end{tikzcd}
% $$
% for $0\leq i\leq d$ where $A^{-i}\hut$ for some $\varphi^{-i}$ induces an isomorphism $A^{-i}\hut\cong \tau^{\leq 0}\Cocone \varphi^{-i}$ for any $0\leq i\leq d$. Thus, we have the following diagram in $\per\scrA\op$:
% $$
% \begin{tikzcd}
% P^{[-d,0]}\hut && P^{[-d,-1]}\hut && \cdots && P^{[-d,-(d-1)]}\hut && P^{-d}\hut &  
% \end{tikzcd}
% $$ 
% \normalcolor
\end{proof}

\begin{cor}
\label{cor:ideal}
The restricted morphism $\Cok_d\colon\scrK^{[-d,0]}(\scrP)\to \scrA$ of $\Cokd\colon\scrK^{[-d,0]}(\scrA)\to \scrA$ defines a DG-quotient of $\scrK^{[-d,0]}(\scrP)$ by $\scrP[d]$. In particular, we have an isomorphism in $\hqe_\bbV$:
$$\scrK^{[-d,0]}(\scrP)\lquot\scrP[d]\cong \scrA.$$
\end{cor}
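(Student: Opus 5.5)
We will verify the three conditions of Fact~\ref{prop:char_quot_functor} for the morphism $\Cokd\colon \scrK^{[-d,0]}(\scrP)\to\scrA$, with $\scrK^{[-d,0]}(\scrP)$ in the role of $\scrT$ and $\scrP[d]$ in the role of $\scrC$. Since the characterization is phrased for DG-functors, we first replace $\Cokd$, a morphism in $\hqe_\bbV$, by an honest DG-functor. Concretely, we take a cofibrant replacement of $\scrK^{[-d,0]}(\scrP)$ (identity on objects), or we work directly with the right quasi-representable bimodule of Remark~\ref{rem:ker_mor}. The conditions to be checked are invariant under quasi-equivalence, so this replacement does no harm.

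\emph{Condition (1).} Essential surjectivity of $H^0(\Cokd)$ is Proposition~\ref{prop:ess_surj}. We also need $\Cokd(P[d])\cong 0$ for $P\in\scrP$. By definition, $\scrA(\Cokd(P[d]),-)\cong\tau^{\leq 0}\bigl((P[d])\hut\bigr)\cong \tau^{\leq 0}(P\hut[-d])$, and the right-hand side vanishes because $P\hut$ is connective and $d\geq 1$. Hence $\Cokd(P[d])=0$.

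\emph{Condition (2).} For $Q^\bullet\in\scrK^{[-d,0]}(\scrP)$ we must show that $\Cocone$ of the map $\scrK^{[-d,0]}(\scrP)(*,Q^\bullet)\to\scrA(\Cokd *,\Cokd Q^\bullet)$ lies in the image of $-\lox_{\scrP[d]}\scrK^{[-d,0]}(\scrP)|_{(\scrP[d])\op}$. The standard triangle of Proposition~\ref{prop:standard_triangle_2} already has exactly this form. The step we expect to be the main obstacle is the restriction appearing there: as printed, the triangle is stated after restriction to $\scrP$, whereas we need it as a triangle of modules over all of $\scrK^{[-d,0]}(\scrP)$.

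To get around this we argue in three steps.
\begin{itemize}
\item Proposition~\ref{prop:standard_triangle} holds for arbitrary $P^\bullet$, so the triangle exists objectwise in $\der{k}$.
\item Its maps are induced by composition and by the natural map $\scrK^{[-d,0]}(\scrP)(P^\bullet,Q^\bullet)\to\scrA(\Cokd P^\bullet,\Cokd Q^\bullet)$, so they are natural in $P^\bullet$ and assemble into a triangle in $\der{\scrK^{[-d,0]}(\scrP)}$.
\item The first term is $\scrK^{[-d,0]}(\scrP)(-,Q^\bullet)|_{\scrP[d]}\lox_{\scrP[d]}\scrK^{[-d,0]}(\scrP)(*,-)|_{(\scrP[d])\op}$. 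Here the $\tau^{\leq 0}$ truncations are harmless because $\scrK^{[-d,0]}(\scrP)$ is the connective full subcategory of $\tau^{\leq0}\per\dg\scrA$. Moreover, the first map is the counit $m^{\scrP[d]}_{Q^\bullet}$.
\end{itemize}
It follows that $\Cocone$ of the quotient map is isomorphic to this counit term, so the counit $\epsilon$ applied to it is an isomorphism, which is condition (2).

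\emph{Condition (3).} This is the dual statement. We apply the same argument to $\scrA\op$, using Lemma~\ref{lem:dual_of_cpx} and Remark~\ref{rem:dual_ker_coker} to identify $\scrK^{[-d,0]}(\scrP)$ with $\scrK^{[0,d]}$ of the injectives-side data. Alternatively, we run the symmetric version of Propositions~\ref{prop:claim3} and \ref{prop:standard_triangle} with the roles of $P^\bullet$ and $Q^\bullet$ swapped, factoring through $\scrP[d]$ on the left. The isomorphism $\scrK^{[-d,0]}(\scrP)\lquot\scrP[d]\cong\scrA$ in $\hqe_\bbV$ then follows from Fact~\ref{prop:char_quot_functor} together with the uniqueness of DG-quotients.
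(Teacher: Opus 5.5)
Your overall route is the paper's: it also feeds Proposition~\ref{prop:ess_surj}, the vanishing of $\Cokd$ on $\scrP[d]$, and the triangle of Proposition~\ref{prop:standard_triangle_2} into Fact~\ref{prop:char_quot_functor}. The paper is terser than you are and never mentions condition (3). Two of your steps need repair, however.

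\textbf{Condition (1).} The vanishing $\tau^{\leq 0}(P\hut[-d])=0$ does not come from connectivity. We have $H^i(P\hut[-d])=H^{i-d}(\scrA(P,-))$, and for $i\leq 0$ this is zero because $\scrA$ is $d$-truncated, i.e.\ $H^{j}(\scrA(P,-))=0$ for $j\leq -d$. Connectivity only kills the degrees $i>d$.

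\textbf{Condition (3).} Your first route is circular. Passing to $\scrA\op$ turns $\scrP$ into injectives and $\Cokd$ into $\Kerd$. So condition (3) for $\Cokd$ becomes condition (2) for $\Kerd$ on complexes of injectives of $\scrA\op$, which your argument does not cover. Dualizing your condition-(2) argument only yields condition (3) for that functor, i.e.\ condition (2) for $\Cokd$ again. What you actually need is that the triangle of Proposition~\ref{prop:standard_triangle} is natural in $Q^\bullet$ for fixed $P^\bullet$; no swapping of roles is involved.

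The same point also repairs your ``assemble into a triangle'' step in condition (2). Objectwise triangles with natural maps do not by themselves give a triangle of modules. The fix covers both variables at once:
\begin{enumerate}
\item The triangle is obtained by applying $\tau^{\leq 0}\rhom{\scrA}{P^\bullet,-}$ to the truncation triangle $\tau^{\leq -d}Q^\bullet\to Q^\bullet\to\tau^{>-d}Q^\bullet$.
\item That truncation triangle is functorial in $Q^\bullet$, and $\rhom{\scrA}{-,X}$ restricted to $\scrK^{[-d,0]}(\scrP)$ is a triangulated functor of $X$. Hence one gets a genuine triangle of modules in each variable.
\item The truncation $\tau^{\leq 0}$ preserves this triangle because its first term has no positive cohomology.
\item The outer terms are then identified using the composition map (Proposition~\ref{prop:claim3}) and Proposition~\ref{prop:proj_d_cok} together with the definition of $\Cokd$.
\end{enumerate}
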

\begin{proof}
By Proposition~\ref{prop:ess_surj}, the morphism $\Cokd\colon\scrK^{[-d,0]}(\scrP)\to \scrA$ is essentially surjective in $\hqe$. Moreover, it is easy to check that the kernel of $\Cokd\colon\scrK^{[-d,0]}(\scrP)\to \scrA$ is precisely $\scrP[d]$. Thus, by combining Proposition~\ref{prop:standard_triangle_2} and Proposition~\ref{prop:char_quot_functor}, we can see that the morphism $\Cokd\colon\scrK^{[-d,0]}(\scrP)\to \scrA$ defines a DG-quotient of $\scrK^{[-d,0]}(\scrP)$ by $\scrP[d]$. This completes the proof.
\end{proof}

\begin{cor}
\label{cor:extended_module}
Let $\scrC$ be a connective DG-category and $\demdg{\scrC}$ be the $d$-extended module category of $\scrC$. Then we have the following isomorphism in $\hqe_\bbV$:
$$\scrK^{[-d,0]}(\scrP)\lquot\scrP[d]\cong \demdg{\scrC}$$
where $\scrP$ is the full sub DG-category of $\demdg{\scrC}$ consisting of projective objects.
\end{cor}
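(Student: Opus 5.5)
The plan is to deduce the statement from Corollary~\ref{cor:ideal} applied to the abelian $d$-truncated DG-category $\scrA:=\demdg{\scrC}$. Most of the work is checking that the hypotheses of the appendix hold for $\demdg{\scrC}$. By Remark~\ref{rem:demfpdg} we may replace $\scrC$ by $\tau^{>-d}\scrC$, so we may assume $\scrC$ is $d$-truncated, and, after cofibrant replacement in $\hqe$, locally $K$-projective.

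\textbf{Step 1: $\demdg{\scrC}$ is abelian $d$-truncated.} The object $\demdg{\scrC}$ is the extended heart $\scrD(\scrC)^{[-d+1,0]}$ of the standard $t$-structure on the pretriangulated DG-category $\scrD(\scrC)$. By \cite[\S3]{moc2025}, such a $d$-extended heart is an abelian $d$-truncated DG-category; this is the DG-analogue of Stefanich's result that $d$-extended hearts are abelian $(d,1)$-categories.

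The mechanism is as follows. Kernels are computed as $\tau^{>-d}\Cocone$, and cokernels as $\tau^{\leq 0}\Cone$, taken in $\scrD(\scrC)$; exactness of $3$-term complexes then reduces to triangles in $\scrD(\scrC)$. The conditions on $d$-monos and $d$-epis in Definition~\ref{def:abelian_d_DG} become statements that the relevant truncation does nothing. I would cite this rather than reprove it, since Lemma~\ref{lem:truncation} already uses the converse embedding from \cite{plogmann2026}.

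\textbf{Step 2: enough projectives, identified as $\add\scrC$.} For $C\in\scrC$ and a $d$-epimorphism $M\to N$ in $\demdg{\scrC}$, Yoneda gives $H^0\scrD(\scrC)(C,M)=H^0(M(C))$. The cocone of a $d$-epimorphism lies in $\scrD^{\leq 0}$ after truncation, so $H^0(M(C))\to H^0(N(C))$ is surjective. Hence each $C$ is projective.

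Conversely, for every $M\in\demdg{\scrC}$, choose a surjection $\bigoplus C_j\to H^0M$ lifted to $\bigoplus C_j\to M$. The cone of this map has $H^0=0$, so the map is a $d$-epimorphism in the extended heart. Here one must either allow infinite sums, or note that $\add\scrC$ should be enlarged to arbitrary sums of representables.

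The standard splitting argument, as in Proposition~\ref{prop:enough_proj}, then shows that $\scrP$ consists of summands of sums of representables.

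\textbf{Step 3: apply Corollary~\ref{cor:ideal}.} With $\scrA=\demdg{\scrC}$, Corollary~\ref{cor:ideal} gives $\scrK^{[-d,0]}(\scrP)\lquot\scrP[d]\cong\scrA=\demdg{\scrC}$ in $\hqe_\bbV$. The universe bookkeeping is routine: $\demdg{\scrC}$ is $\bbV$-small with $\bbU$-small Hom-complexes, as required in the appendix.

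\textbf{Main obstacle.} The main obstacle is Step 1 together with the size issue in Step 2. Projective covers of arbitrary modules need infinite sums, so $\scrP$ is not $\add\scrC$ but its closure under $\bbU$-small sums. I would first try to verify the abelian axioms directly via the $t$-structure calculus of Lemmas~\ref{lem:step1} and \ref{lem:step1op}, applied to kernels $\tau^{>-d}\Cocone$ and cokernels $\tau^{\leq 0}\Cone$.
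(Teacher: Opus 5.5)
Your proposal is correct and follows the paper's argument. The paper's entire proof is ``apply Corollary~\ref{cor:ideal} to $\scrA=\demdg{\scrC}$''; your Steps~1--2 (the extended heart is abelian $d$-truncated, projectives are summands of small sums of truncated representables, size conditions) simply make explicit the hypotheses the paper leaves implicit. One slip to fix in Step~1: the truncations are swapped. Since $\Cocone f$ has cohomology in degrees $[-d+1,1]$ and $\Cone f$ in degrees $[-d,0]$, the kernel is $\tau^{\leq 0}\Cocone f$ and the cokernel is $\tau^{>-d}\Cone f$. Your $\tau^{>-d}\Cocone f$ and $\tau^{\leq 0}\Cone f$ leave these objects unchanged, so they generally lie outside $\demdg{\scrC}$.
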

\begin{proof}
It immediately follows from Corollary~\ref{cor:ideal}. 
\end{proof}

\begin{definition}
\label{def:compact_proj_gen}
Let $\scrA$ be an abelian $d$-truncated DG-category with arbitrary coproducts indexed by a $\bbU$-small set in $H^0(\scrA)$. A full sub DG-category $\scrC$ of $\scrA$ is called a \emph{compact projective generating set} if the following conditions are satisfied:
\begin{enumerate}
  \item $\scrC$ is a $\bbU$-small DG-category.
	\item Any object $C\in\scrC$ is a $\bbU$-compact projective object in $H^0(\scrA)$ that is, the functor 
	$$H^0(\scrA)(C,-)\colon H^0(\scrA)\to \Mod k$$ 
	preserves arbitrary coproducts indexed by a $\bbU$-small set.
	\item For any object $A\in \scrA$, there exists an $d$-epimorphism $\bigoplus_{i\in I} C_i\to A$ in $\scrA$ for some family of objects $\{C_i\}_{i\in I}$ in $\scrC$.
\end{enumerate}
\end{definition}

\begin{theorem}
\label{thm:morita}
Let $\scrA$ be an abelian $d$-truncated DG-category such that $H^0(\scrA)$ has arbitrary coproducts and a compact projective generating set $\scrC\subset\scrA$. Then the morphism $\scrA(-,*)|_{\scrC}\colon \scrA\to \demdg{\scrC}$ is an isomorphism in $\hqe$.
\end{theorem}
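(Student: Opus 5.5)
The plan is to run the argument of Corollary~\ref{cor:ideal} twice and compare the two quotient presentations. Write $\scrP$ for the projectives of $\scrA$ and $\scrP'$ for the projectives of $\demdg{\scrC}$. We will show that $F:=\scrA(-,*)|_{\scrC}$ sends $\Kerd$/$\Cokd$-presentations by projectives in $\scrA$ to the corresponding presentations in $\demdg{\scrC}$, and is quasi-fully faithful on the projectives themselves.

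\textbf{Step 1: $F$ lands in $\demdg{\scrC}$.} Since $\scrA$ is $d$-truncated and connective, $H^i(\scrA(C,A))=0$ for $i>0$ and $i\le -d$. So $F$ is well defined.

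\textbf{Step 2: identify the projectives.} Every $C\in\scrC$ is projective in $\scrA$. By condition (3) of Definition~\ref{def:compact_proj_gen}, every projective of $\scrA$ is a summand of a coproduct $\bigoplus C_i$. On the module side, $F(C)=\scrA(-,C)|_{\scrC}$ is the representable $\scrC(-,C)$, which is projective in $\demdg{\scrC}$. By compactness (condition (2)), $F$ takes coproducts of objects of $\scrC$ to coproducts of representables. Hence $F$ is quasi-fully faithful on $\scrP$: $F(\bigoplus C_i)=\bigoplus \scrC(-,C_i)$, and by Yoneda together with compactness, $\scrA(\bigoplus C_i,\bigoplus C_j)\simeq \prod_i\bigoplus_j\scrC(C_i,C_j)$ on both sides. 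It also induces an equivalence of $H^0$ onto $\scrP'$, since projectives in $\demdg{\scrC}$ are summands of coproducts of representables, using that $H^0(\scrA)$ is idempotent complete because it has countable coproducts. Consequently $F$ induces a quasi-equivalence $\scrK^{[-d,0]}(\scrP)\to\scrK^{[-d,0]}(\scrP')$ carrying $\scrP[d]$ onto $\scrP'[d]$.

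\textbf{Step 3: $F$ commutes with $\Cokd$.} By Proposition~\ref{prop:proj_d_cok} restricted to $\scrC\subset\scrP$, we have $F(\Cokd M^\bullet)\cong \tau^{>-d}M^\bullet|_{\scrC}$, which is exactly $\Cokd$ of $F(M^\bullet)$ computed in $\demdg{\scrC}$. Therefore the square formed by $\Cokd\colon\scrK^{[-d,0]}(\scrP)\to\scrA$, the analogous $\Cokd$ for $\demdg{\scrC}$, and $F$ commutes in $\hqe$. For this one compares bimodules via Fact~\ref{thm:internal_hom}; the isomorphism of Proposition~\ref{prop:proj_d_cok} is natural in $M^\bullet$, because it is built from truncations.

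\textbf{Step 4: conclude.} Apply Corollary~\ref{cor:ideal} to $\scrA$, and to $\demdg{\scrC}$ (as in Corollary~\ref{cor:extended_module}; $\demdg{\scrC}$ is abelian $d$-truncated with enough projectives). Both horizontal maps are then DG-quotients by $\scrP[d]$, respectively $\scrP'[d]$. Together with the quasi-equivalence on the left from Step 2, the universal property of DG-quotients gives that $F$ is an isomorphism in $\hqe$.

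The expected obstacle is Step 3, specifically the naturality needed to get commutativity in $\hqe$ rather than just objectwise isomorphisms. A secondary issue is that Proposition~\ref{prop:ess_surj} needs resolutions by projectives in $\demdg{\scrC}$, which may be infinite coproducts. I would first try to handle the naturality by writing both composites as right quasi-representable bimodules on $\scrK^{[-d,0]}(\scrP)\op\lox\scrC$ and checking that the truncation map $M^\bullet|_{\scrC}\to\tau^{>-d}M^\bullet|_{\scrC}$ realizes the isomorphism of bimodules.
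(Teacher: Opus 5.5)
Your proposal is correct and follows essentially the same route as the paper. Like the paper, you identify the projectives of $\scrA$ with those of $\demdg{\scrC}$ via $\scrA(-,*)|_{\scrC}$, and present both $\scrA$ and $\demdg{\scrC}$ as $\scrK^{[-d,0]}(\scrP)\lquot\scrP[d]$ by Corollary~\ref{cor:ideal} (through $\Cokd$ and $\tau^{>-d}$ respectively). You then check commutativity of the resulting square in $\hqe$ through the bimodule isomorphism $\scrA(-,\Cokd *)|_{\scrC}\cong\tau^{>-d}(-)|_{\scrC}$ coming from Proposition~\ref{prop:proj_d_cok}. You also plan to realize this isomorphism via the natural truncation map $M^\bullet|_{\scrC}\to\tau^{>-d}M^\bullet|_{\scrC}$ and then verify it objectwise, which addresses the naturality point that the paper treats only as a ``direct consequence''.
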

\begin{proof}
First, we note that the full sub DG-category $\scrP$ of projective objects in $\scrA$ corresponds to the full sub DG-category consisting of direct summands of infinite direct sums of objects in $\scrC$. Since $\scrC$ is a compact projective generating set in $\scrA$, the full sub DG-category of projective objects in $\demdg{\scrC}$ is quasi-equivalent to $\scrP$ by the DG-functor $\scrA(-,*)|_{\scrC}\colon \scrA\to \demdg{\scrC}$. We identify the full sub DG-category of projective objects in $\demdg{\scrC}$ with $\scrP$ via the above quasi-equivalence.

Thus, by Corollary~\ref{cor:ideal}, we have the following isomorphisms in $\hqe$:
$$\Cokd\colon \scrK^{[-d,0]}(\scrP)\lquot\scrP[d]\xrightarrow{\sim}\scrA \quad\tau^{> -d}\colon \scrK^{[-d,0]}(\scrP)\lquot\scrP[d] \xrightarrow{\sim} \demdg{\scrC}.$$
Thus, we need to check that the following diagram in $\hqe$ is commutative:
$$
\begin{tikzcd}
\scrK^{[-d,0]}(\scrP) \ar{r}{\Cokd} & \scrA \ar{d}{\scrA(-,*)|_{\scrC}}\\
\scrK^{[-d,0]}(\scrP) \ar{r}{\tau^{> -d}} & \demdg{\scrC}
\Ar{1-1}{2-1}{equal}
\end{tikzcd}
$$
To prove the commutativity of the above diagram, it is enough to check that the following are isomorphic in $\der{\scrK^{[-d,0]}(\scrP)\op\lox_k \scrC}$ :
$$\scrA(-,\Cokd*)|_{\scrC}\cong \tau^{> -d}(-)|_{\scrC}$$
This is a direct consequence of Proposition~\ref{prop:proj_d_cok}. This completes the proof.
\end{proof}

\begin{definition}
\label{def:fp}
Let $\scrC$ be a small connective DG-category. An object $M\in \demdg{\scrC}$ is called \emph{finitely presented} if there exists an object $P^\bullet\in \scrK^{[-d,0]}(\scrC)$ such that $M$ is isomorphic to $\tau^{> -d}P^\bullet$.
\end{definition}

\begin{cor}
\label{cor:morita}
Let $\scrA$ be a locally $K$-projective abelian $d$-truncated DG-category with progenerating subcategory $\scrB$ (i.e., $\scrB$ is a small full sub DG-category of $\scrA$ consisting of projective objects such that any object in $\scrA$ admits a $d$-epimorphism from a finite direct sum of objects of $\scrB$). Assume that $H^0(\scrA)$ is essentially small. Then the following DG-functor is a quasi-equivalence:
$$\scrA(-,*)|_{\scrB}\colon \scrA\to \demfpdg{\scrB}$$
where $\demfpdg{\scrB}$ is the finitely presented $d$-extended module category of $\scrB$.
\end{cor}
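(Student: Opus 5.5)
The plan is to mirror the proof of Theorem~\ref{thm:morita}, with finite sums replacing infinite ones. Let $\scrP$ be the full DG-subcategory of projective objects in $\scrA$. We may replace $\scrB$ by its additive closure $\add\scrB$; this changes neither $\demfpdg{\scrB}$ nor the restriction functor up to quasi-equivalence.

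\textbf{Step 1: projectives are summands of $\add\scrB$.} Every projective $P$ admits a $d$-epimorphism $B\to P$ with $B\in\add\scrB$. Since $P$ is projective, this $d$-epimorphism splits in $H^0(\scrA)$, so $P$ is a direct summand of $B$.

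\textbf{Step 2: resolve by $\add\scrB$ instead of $\scrP$.} Work with $\scrK^{[-d,0]}(\add\scrB)$ rather than $\scrK^{[-d,0]}(\scrP)$. Proposition~\ref{prop:proj_d_cok}, Proposition~\ref{prop:standard_triangle_2} and Proposition~\ref{prop:ess_surj} only use that the terms are projective and that every object admits a $d$-epimorphism from one of them. In Proposition~\ref{prop:ess_surj} we choose each $P^{-i}\in\add\scrB$, which is possible by the progenerating hypothesis. Corollary~\ref{cor:ideal} then gives an isomorphism in $\hqe$
$$\Cokd\colon \scrK^{[-d,0]}(\add\scrB)\lquot(\add\scrB)[d]\xrightarrow{\sim}\scrA.$$
The essential smallness of $H^0(\scrA)$, together with local $K$-projectivity, makes all these DG-quotients exist within the fixed universe.

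\textbf{Step 3: identify the $d$-extended module side.} By Definition~\ref{def:fp}, the essential image of
$$\tau^{>-d}(-)|_{\scrB}\colon \scrK^{[-d,0]}(\add\scrB)\to\demdg{\scrB}$$
is exactly $\demfpdg{\scrB}$. I would show that this functor is also a DG-quotient by $(\add\scrB)[d]$ via Fact~\ref{prop:char_quot_functor}, applying Proposition~\ref{prop:standard_triangle} inside $\demdg{\scrB}$. For this, two facts about $\demdg{\scrB}$ are needed.
\begin{enumerate}
\item Representables $\scrB(-,B)$ are projective in $\demdg{\scrB}$. This follows by Yoneda, since $\rhom{\scrB}{\scrB(-,B),M}=M(B)$.
\item The $d$-iterated cokernel of $P^\bullet$ is $\tau^{>-d}P^\bullet$.
\end{enumerate}
The cleanest route is to note that $\demfpdg{\scrB}$ is itself an abelian $d$-truncated DG-category in which $\add\scrB$ is progenerating. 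Then Corollary~\ref{cor:ideal}, applied to $\demfpdg{\scrB}$, gives
$$\scrK^{[-d,0]}(\add\scrB)\lquot(\add\scrB)[d]\cong\demfpdg{\scrB}.$$
Alternatively, one can argue directly with truncation triangles in $\der{\scrB}$, exactly as in Corollary~\ref{cor:extended_module}.

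\textbf{Step 4: compatibility.} Commutativity in $\hqe$ reduces, as in Theorem~\ref{thm:morita}, to an isomorphism of bimodules
$$\scrA(-,\Cokd *)|_{\scrB}\cong\tau^{>-d}(*)|_{\scrB}.$$
This is Proposition~\ref{prop:proj_d_cok}, restricted from $\scrP$ to $\scrB$. Therefore $\scrA(-,*)|_{\scrB}$ is a composite of isomorphisms in $\hqe$. Since it is an honest DG-functor, it is a quasi-equivalence.

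The main obstacle is Step 3. Verifying that $\demfpdg{\scrB}$ has kernels and cokernels, and hence is abelian $d$-truncated, is not automatic. I would first try to avoid this by checking the hypotheses of Fact~\ref{prop:char_quot_functor} directly. Essential surjectivity onto $\demfpdg{\scrB}$ holds by definition. The triangle condition should come from the $t$-structure truncation triangle $\tau^{\leq-d}P^\bullet\to P^\bullet\to\tau^{>-d}P^\bullet$, combined with the computation in Lemma~\ref{lem:claim1}, which only uses projectivity of representables.
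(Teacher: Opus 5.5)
Your proposal follows the paper's proof, which is a rerun of Theorem~\ref{thm:morita}: Corollary~\ref{cor:ideal} on the $\scrA$ side, a matching quotient description on the module side, and Proposition~\ref{prop:proj_d_cok} for the compatibility. Working with $\scrK^{[-d,0]}(\add\scrB)$ instead of all projectives is a harmless variant.

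One correction in Step 3: drop the ``cleanest route''. By Remark~\ref{rem:morita}, $\demfpdg{\scrB}$ is not abelian $d$-truncated in general. In the present situation its abelianness is something you would only learn from the corollary itself, so applying Corollary~\ref{cor:ideal} to it is circular.

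Your fallback is the argument that actually works, and it supplies the step the paper's one-line proof leaves implicit. For $N\in\demdg{\scrB}$ one has
$\tau^{\leq 0}\rhom{\scrB}{P^\bullet,N}\cong\tau^{\leq 0}\rhom{\scrB}{\tau^{>-d}P^\bullet,N}$,
so $\tau^{>-d}$ plays the role of $\Cokd$. The proofs of Lemma~\ref{lem:claim1} through Proposition~\ref{prop:standard_triangle} then go through verbatim in $\der{\scrB}$. They use only Yoneda for $\add\scrB$, with Proposition~\ref{prop:proj_d_cok} replaced by this tautology. This yields conditions (2) and (3) of Fact~\ref{prop:char_quot_functor}.
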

\begin{proof}
The proof is similar to the proof of Theorem~\ref{thm:morita} by replacing $\scrC$ with $\scrB$ and $\demdg{\scrC}$ with $\demfpdg{\scrB}$. 
\end{proof}

\begin{remark}
\label{rem:morita}
In general, the full sub DG-category $\demfpdg{\scrB}$ of $\demdg{\scrB}$ is not an abelian $d$-truncated DG-category. 
\end{remark}

\begin{cor}
\label{cor:morita_2}
Let $k$ be a field and $\scrA$ be a locally finite abelian $d$-truncated DG-category over $k$ with progenerating subcategory $\scrB$. Assume that $H^0(\scrA)$ is essentially small. Then the following morphism is an isomorphism in $\hqe$:
$$\scrA(-,*)|_{\scrB}\colon \scrA\to \demfddg{\scrB}$$
where $\demfddg{\scrB}$ is the finite-dimensional $d$-extended module category of $\scrB$, that is, the full sub DG-category of $\demdg{\scrB}$ consisting of objects $M$ such that $H^i(M)$ is finite-dimensional for any $i\in\bbZ$.
\end{cor}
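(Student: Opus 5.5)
Corollary~\ref{cor:morita_2} follows from Corollary~\ref{cor:morita}: the only new input is the identification $\demfpdg{\scrB}=\demfddg{\scrB}$ as full DG-subcategories of $\demdg{\scrB}$ under the hypotheses. First, $\scrA$ is locally $K$-projective, since $k$ is a field and every complex of $k$-vector spaces is $K$-projective. Corollary~\ref{cor:morita} therefore applies and gives a quasi-equivalence $\scrA(-,*)|_{\scrB}\colon\scrA\to\demfpdg{\scrB}$. It remains to check that the essential image of $\demfpdg{\scrB}$ in $H^0(\demdg{\scrB})$ equals that of $\demfddg{\scrB}$.

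\emph{Inclusion $\demfpdg{\scrB}\subset\demfddg{\scrB}$.} Since $\scrA$ is locally finite, each $H^i(\scrB(B,B'))$ is finite-dimensional, and it vanishes outside $-d<i\le 0$. An object of $\scrK^{[-d,0]}(\scrB)$ is an iterated extension (via the triangles defining $*$) of finitely many shifts $B[j]$ with $0\le j\le d$. The long exact cohomology sequences then show that $H^i(P^\bullet)(B)$ is finite-dimensional for all $i$ and all $B$. Truncation preserves this, so $\tau^{>-d}P^\bullet$ lies in $\demfddg{\scrB}$. (Here ``finite-dimensional'' means objectwise on $\scrB$, as in the Remark following Definition~\ref{def:demfpdg}.)

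\emph{Inclusion $\demfddg{\scrB}\subset\demfpdg{\scrB}$.} This is the step I expect to need care, and the cleanest route is to avoid module theory and argue inside $\scrA$. By Corollary~\ref{cor:morita} and Proposition~\ref{prop:proj_d_cok}, it suffices to show that every $M\in\demfddg{\scrB}$ is isomorphic to $\scrA(-,A)|_{\scrB}$ for some $A\in\scrA$. I would induct on the total dimension $\sum_{i,B}\dim H^i(M)(B)$ over indecomposable $B$, assuming $\scrB$ has finitely many objects up to summands, as in the locally finite setting. The steps are as follows.
\begin{enumerate}
\item Choose a finite-dimensional generating subspace of $H^0(M)$ and realize it by a morphism $P^0\to M$ with $P^0\in\add\scrB$ that is surjective on $H^0$.
\item Take the cocone truncated by $\tau^{>-d}$. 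Its total cohomology dimension is controlled by those of $M$ and $P^0$.
\item Iterate $d$ times to reach an object of $\scrK^{[-d,0]}(\scrB)$ whose $\tau^{>-d}$-truncation is $M$, exactly as in Definition~\ref{def:demfpdg}.
\end{enumerate}
The needed fact is that each successive cocone again lies in $\demfddg{\scrB}$. This holds because $\add\scrB$ has finite-dimensional cohomology, so the dimension count closes up.

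With both inclusions established, the two full subcategories coincide, and composing with the quasi-equivalence of Corollary~\ref{cor:morita} yields the stated isomorphism in $\hqe$.
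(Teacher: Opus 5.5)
Your overall route is the paper's: identify $\demfpdg{\scrB}$ with $\demfddg{\scrB}$ and then apply Corollary~\ref{cor:morita}. Your remark that local $K$-projectivity is automatic over a field fills in a point the paper leaves implicit.

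The gap is in the inclusion $\demfddg{\scrB}\subset\demfpdg{\scrB}$. There you assume that $\scrB$ has only finitely many objects up to summands, ``as in the locally finite setting''. Local finiteness (Hom-finiteness) does not give this. Your steps (1) and (2) depend on it: it is what makes $H^0(M)$ finitely generated under your objectwise reading. It is also what makes $P^0$, and hence the cocone, have finite-dimensional cohomology, since in general $\scrB(-,B)$ is only objectwise finite-dimensional.

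The assumption cannot be derived from the stated hypotheses either. Take $d=1$ and let $\scrB$ be the $k$-linear path category of the quiver with one arrow $n\to 0$ for each $n\geq 1$. Put $\scrA=\mathrm{mod}\,\scrB$; this is an essentially small, Hom-finite abelian category with progenerating subcategory $\scrB$. The simple module at the vertex $0$ is one-dimensional, but it is not finitely presented, because the radical of $\scrB(-,0)$ is $\bigoplus_{n\geq 1}\scrB(-,n)$, which is not finitely generated. So the corollary genuinely needs an extra finiteness hypothesis, e.g.\ that $\scrB$ is essentially a single DG-algebra, as in the application to $\tau^{>-d}\Pi$. The paper's ``one can easily check'' tacitly uses the same restriction. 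You should state it as an added hypothesis, not as a consequence of the setting.

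Even under that hypothesis, two steps of your sketch need repair.

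First, the induction on total dimension does not work, because the cocone of $P^0\to M$ is in general not smaller than $M$. For $d=1$ it is a syzygy, e.g.\ the radical of a projective cover of a simple module. No induction is needed. If $P^0\to M$ with $P^0\in\add\scrB$ is surjective on $H^0$, the long exact sequence shows directly that its cocone $M^{-1}$ has cohomology only in degrees $-d<i\leq 0$, and that this cohomology is finite-dimensional. So no truncation is required, and repeating this $d+1$ times gives the conflations of Definition~\ref{def:demfpdg}.

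Second, Corollary~\ref{cor:morita} uses Definition~\ref{def:fp}, so you still have to splice the conflations into a single object:
\begin{itemize}
\item Put $X_d:=P^{-d}$, and let $X_{j-1}$ be the cone of the composite $X_j\to M^{-j}\to P^{-j+1}$.
\item By the octahedral axiom, the induced map $X_{j-1}\to M^{-j+1}$ has cone $M^{-(d+1)}[d+2-j]$.
\item Hence $X_0\in\scrK^{[-d,0]}(\scrB)$, and $\tau^{>-d}X_0\cong M$, because $M^{-(d+1)}[d+1]$ has cohomology only in degrees $\leq -d-1$.
\end{itemize}

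With these corrections, your argument proves under the added hypothesis what the paper only asserts.
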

\begin{proof}
One can easily check that $M\in\dem{\scrB}$ is finitely presented if and only if it has finite-dimensional cohomologies for a locally finite DG-category $\scrB$. Thus, we have $\demfddg{\scrB}=\demfpdg{\scrB}$. Therefore, the morphism $\scrA(-,*)|_{\scrB}\colon \scrA\to \demfddg{\scrB}$ is an isomorphism in $\hqe$ by Corollary~\ref{cor:morita}.
\end{proof}

\bibliographystyle{mybstwithlabels.bst}
\bibliography{myrefs}

\end{document}